\documentclass[a4j,10pt,showkeys]{article}

\usepackage{amsmath,amsthm,amssymb}
\usepackage{geometry}
\usepackage[dvipsnames]{xcolor}
\usepackage{cancel}
\usepackage{mathtools}
\usepackage{bm}
\usepackage{color}
\usepackage{mathrsfs}
\usepackage{hyperref}
\hypersetup{
    colorlinks=true,
    citecolor=NavyBlue,
    linkcolor=Red,
    urlcolor=OliveGreen,
}
\usepackage[capitalize]{cleveref}

\theoremstyle{plain}
\newtheorem{theo}{Theorem}[section]
\newtheorem{lemm}{Lemma}[section]
\newtheorem{prop}{Proposition}[section]
\newtheorem{cor}{Corollary}[section]

\theoremstyle{definition}
\newtheorem{defi}{Definition}[section]
\newtheorem{rem}{Remark}[section]
\newtheorem{exam}{Example}[section]

\newtheorem{assum}{Assumption} % セクション番号と紐付けない

\makeatletter
\let\c@lemm\c@theo
\let\c@prop\c@theo
\let\c@cor\c@theo
\let\c@defi\c@theo
\let\c@rem\c@theo
\let\c@exam\c@theo
\let\c@prob\c@theo
\makeatother

\crefname{theo}{Theorem}{Theorems}
\crefname{lemm}{Lemma}{Lemmas}
\crefname{prop}{Proposition}{Propositions}
\crefname{cor}{Corollary}{Corollaries}
\crefname{claim}{Claim}{Claims}
\crefname{defi}{Definition}{Definitions}
\crefname{rem}{Remark}{Remarks}
\crefname{assum}{Assumption}{Assumptions}
\crefname{exam}{Example}{Examples}
\crefname{prob}{Problem}{Problems}

\newcommand{\bE}{\mathbb{E}}
\newcommand{\bF}{\mathbb{F}}

\newcommand{\bN}{\mathbb{N}}

\newcommand{\bP}{\mathbb{P}}
\newcommand{\bQ}{\mathbb{Q}}
\newcommand{\bR}{\mathbb{R}}

\newcommand{\bW}{\mathbb{W}}

\newcommand{\cA}{\mathcal{A}}
\newcommand{\cB}{\mathcal{B}}

\newcommand{\cD}{\mathcal{D}}
\newcommand{\cE}{\mathcal{E}}
\newcommand{\cF}{\mathcal{F}}
\newcommand{\cG}{\mathcal{G}}
\newcommand{\cH}{\mathcal{H}}

\newcommand{\cM}{\mathcal{M}}

\newcommand{\cP}{\mathcal{P}}

\newcommand{\cS}{\mathcal{S}}
\newcommand{\cT}{\mathcal{T}}

\newcommand{\cV}{\mathcal{V}}

\newcommand{\cX}{\mathcal{X}}

\newcommand{\sC}{\mathscr{C}}

\newcommand{\sE}{\mathscr{E}}

\newcommand{\sS}{\mathscr{S}}

\newcommand{\sY}{\mathscr{Y}}

\newcommand{\fp}{\mathfrak{p}}
\newcommand{\fq}{\mathfrak{q}}
\newcommand{\fa}{\mathfrak{a}}
\newcommand{\bfP}{\mathbf{P}}
\newcommand{\bfQ}{\mathbf{Q}}

\newcommand{\ep}{\varepsilon}
\newcommand{\diff}{\mathrm{d}}
\newcommand{\dmu}{\,\mu(\diff\theta)}
\newcommand{\supp}{{\mathrm{supp}\,\mu}}
\newcommand{\Law}{\mathrm{Law}}

\newcommand{\TV}{\mathrm{TV}}
\newcommand{\KL}{\mathrm{KL}}
\newcommand{\UE}{\mathrm{UE}}
\newcommand{\LG}{\mathrm{LG}}
\newcommand{\BDG}{\mathrm{BDG}}
\newcommand{\subLG}{\mathrm{subLG}}
\newcommand{\Lyap}{\mathrm{Lyap}}
\newcommand{\Lip}{\mathrm{Lip}}
\newcommand{\loc}{\mathrm{loc}}
\newcommand{\op}{\mathrm{op}}
\newcommand{\tr}{\mathrm{tr}}
\newcommand{\fractional}{\mathrm{frac}}
\newcommand{\relmiddle}[1]{\mathrel{}\middle#1\mathrel{}}
\newcommand{\1}{\mbox{\rm{1}}\hspace{-0.25em}\mbox{\rm{l}}}
\DeclareMathOperator*{\esssup}{ess\,sup}
\makeatletter
\newcommand{\vnorm}{\@ifstar\@opnorms\@opnorm}
\newcommand{\@opnorms}[1]{%
  \left|\mkern-1.5mu\left|\mkern-1.5mu\left|
   #1
  \right|\mkern-1.5mu\right|\mkern-1.5mu\right|
}
\newcommand{\@opnorm}[2][]{%
  \mathopen{#1|\mkern-1.5mu#1|\mkern-1.5mu#1|}
  #2
  \mathclose{#1|\mkern-1.5mu#1|\mkern-1.5mu#1|}
}
\makeatother

\providecommand{\keywords}[1]{\textbf{Keywords:} #1}
\makeatletter

\@addtoreset{equation}{section}
\makeatother
\def\widebar{\accentset{{\cc@style\underline{\mskip10mu}}}}
\numberwithin{equation}{section}
\allowdisplaybreaks

\title{Exponential ergodicity and finite-dimensional approximation for Markovian lifts of stochastic Volterra equations}

\author{
Yushi Hamaguchi\footnote{Graduate School of Science, Department of Mathematics, Kyoto University. Email: \href{mailto:hamaguchi@math.kyoto-u.ac.jp}{hamaguchi@math.kyoto-u.ac.jp}}\ \footnote{The author was supported by JSPS KAKENHI Grant Number 22K13958.}
}

\begin{document}
\maketitle

%% Abstract

\begin{abstract}
This paper investigates the long-time asymptotics and the existence of stationary solutions for a class of stochastic Volterra equations (SVEs). To address the non-Markovian nature of SVEs, we employ a Markovian lifting technique, formulating a Markovian lift as the solution to a stochastic evolution equation (SEE) on a Gelfand triplet. Our main objective is to establish the ergodicity of this Markovian lift via the generalized Harris' theorem, which in turn yields the asymptotic results for the original SVE. Despite the challenges posed by the highly degenerate, infinite-dimensional nature of the SEE, we achieve this by constructing a generalized coupling and a distance function that exploit the structural properties arising from the non-local operators in its coefficients. Furthermore, we prove that the invariant probability measure and, more generally, the stationary law on the path space of the SEE can be weakly approximated by those of finite-dimensional SDEs. This yields a novel approximation result for the stationary solution of the original SVE, while offering a rigorous mathematical framework that supports the validity of the Markovian embedding concept widely utilized in statistical physics.
\end{abstract}

%% Keywords

\keywords
Stochastic Volterra equation; stochastic evolution equation; Markovian lift; ergodicity.

%% MSC

\textbf{2020 Mathematics Subject Classification}: 37A25; 60H15; 45D05; 60G22.

%37A25 Ergodicity, mixing, rates of mixing
%60H15 Stochastic partial differential equations (aspects of stochastic analysis)
%45D05 Volterra integral equations
%60G22 Fractional processes, including fractional Brownian motion

%60H20 Stochastic integral equations
%60H50 Regularization by noise
%45A05 Linear integral equations
%93E20 Optimal stochastic control
%93B52 Feedback control
%49K45 Optimality conditions for problems involving randomness
%45G05 Singular nonlinear integral equations
%49N15 Duality theory (optimization)
%45B05 Fredholm integral equations
%34A08 Fractional ordinary differential equations and fractional differential inclusions
%26A33 Fractional derivatives and integrals

%%%%%%%%%%%%%%%%%%%%%%%%%%%%%%%%%%
%%%%%%%%%%%%%%%%%%%%%%%%%%%%%%%%%%
%% Section
%%%%%%%%%%%%%%%%%%%%%%%%%%%%%%%%%%
%%%%%%%%%%%%%%%%%%%%%%%%%%%%%%%%%%

\section{Introduction}\label{sec_intro}

The analysis of the long-time asymptotic behavior and the existence of stationary solutions for stochastic differential equations (SDEs) is a fundamental problem in stochastic analysis. In this paper, we investigate these properties for \emph{stochastic Volterra equations} (SVEs) of the following form:
\begin{equation}\label{eq_SVE}
	X_t=x(t)+\int^t_0K_b(t-s)b(X_s)\,\diff s+\int^t_0K_\sigma(t-s)\sigma(X_s)\,\diff W_s,\ \ t>0.
\end{equation}
Here, $b:\bR^n\to\bR^n$ and $\sigma:\bR^n\to\bR^{n\times d}$ denote the drift and diffusion coefficients, respectively, $W$ is a $d$-dimensional Brownian motion, and $x$ is a given $\bR^n$-valued function called the forcing term, which is possibly random but independent of the Brownian motion. Introducing the (matrix-valued) functions $K_b,K_\sigma:(0,\infty)\to\bR^{n\times n}$, which are called kernels, allows us to model dynamics exhibiting both sample path roughness as well as memory effects that cannot be captured by SDEs. Prominent examples include the rough volatility model in mathematical finance \cite{BaFrFuGaJaRo23} and the (over-damped) generalized Langevin equation in statistical physics \cite{KuToHa91,LiLiLu17,Zw01}. However, the inherent non-Markovian nature of the SVE \eqref{eq_SVE} poses significant challenges for the analysis of long-time asymptotics, as standard results on the ergodicity of Markov processes---which are central to the study of SDEs---cannot be directly applied. Furthermore, the solution $X$ to \eqref{eq_SVE} typically falls outside the framework of semimartingales, which presents another significant challenge, as the classical It\^{o} calculus is no longer directly available.

In order to overcome the difficulties arising from these non-Markovian and non-semimartingale features, various ``Markovian lifting'' techniques have been developed in the literature. By introducing suitable Markovian dynamics called \emph{Markovian lifts}, which typically evolve in infinite-dimensional spaces, one can represent the solution of an SVE as a projection of such a lift. There are at least two well-established procedures for constructing Markovian lifts of SVEs. The first is based on representing the kernels by the Laplace transforms of certain measures, an approach adopted, for example, in \cite{CaCo98,CuTe20,Ha24,Ha25a,HaSt19} across various frameworks. This approach is related to the Markovian embedding for generalized Langevin equations \cite{Go12,Ku04} and multi-factor approximation of rough volatility models \cite{AbiJaEu19,AlKe24,BaBr23}; see also \cite{AbiJaMiPh21,Ha25b} for applications to optimal control problems of SVEs. The second is based on the analysis of the dynamics of auxiliary processes, such as conditional future paths or the past paths, as seen in \cite{BeDeKr22,GaPa25,Ha05,ViZh19}. We also refer to the recent work \cite{BiBoCaFr25}, where an abstract framework is introduced to unify the two distinct approaches in \cite{BeDeKr22,Ha24}. Notably, the approach established in our previous works \cite{Ha24,Ha25a} allows for the study of Markovian lifts within the framework of stochastic evolution equations (SEEs) on Gelfand triplets. This framework is particularly tractable, by virtue of the Hilbert space structure and a connection to well-established theory of monotone stochastic partial differential equations (SPDEs) initiated by Pardoux \cite{Pa72,Pa75} and further developed by Krylov and Rozovski\u{\i} \cite{KrRo79}; see also the monographs \cite{GaMa10,LiRo15,Pa21} for a comprehensive treatment of monotone SPDEs.

In the present paper, we adopt and further generalize the methodology of \cite{Ha24,Ha25a}. Specifically, under the assumption that the kernels $K_b$ and $K_\sigma$ admit the representations $K_b(t)=\int_{[0,\infty)}e^{-\theta t}M_b(\theta)\dmu$ and $K_\sigma(t)=\int_{[0,\infty)}e^{-\theta t}M_\sigma(\theta)\dmu$ for some Borel measure $\mu$ on $[0,\infty)$ and measurable maps $M_b,M_\sigma:[0,\infty)\to\bR^{n\times n}$ satisfying suitable integrability conditions, we transform the SVE \eqref{eq_SVE} into the following SEE:
\begin{equation}\label{eq_SEE}
	\diff Y_t(\theta)=-\theta Y_t(\theta)\,\diff t+M_b(\theta)b(\mu[Y_t])\,\diff t+M_\sigma(\theta)\sigma(\mu[Y_t])\,\diff W_t,\ \ t\geq0,\ \theta\in[0,\infty).
\end{equation}
Here, $\mu[Y_t]$ represents a non-local term defined by the integral $\mu[Y_t]=\int_{[0,\infty)}Y_t(\theta)\dmu$. The SEE is rigorously formulated within a Gelfand triplet of Hilbert spaces $\cV\hookrightarrow\cH\hookrightarrow\cV^*$, which consists of weighted $L^2$ spaces with respect to the measure $\mu$. The weighted integrability structure of this Gelfand triplet accounts for the singularity of the kernels $K_b(t)$ and $K_\sigma(t)$ at $t=0$. In this paper, we assume these kernels to be merely $L^1$ and $L^2$ near the origin, respectively, thereby allowing for their divergence as $t\downarrow0$. This choice of integrability is minimal, as it corresponds to the necessary conditions for the Lebesgue and stochastic integrals in \eqref{eq_SVE} to be well-defined. The resulting solution $Y=(Y_t)_{t\geq0}$ to the SEE \eqref{eq_SEE} is a time-homogeneous Markov process on the Hilbert space $\cH$, from which the solution to the original SVE \eqref{eq_SVE} is recovered via the formula $X_t=\mu[Y_t]$. In this sense, the infinite-dimensional process $Y$ serves as a Markovian lift of $X$. For a more detailed exposition of this framework, the reader is referred to \cref{sec_pre}.

Our primary objective is to establish the ergodicity of this Markovian lift. Specifically, our main result (\cref{theo_main}) provides a spectral gap-type estimate and the exponential weak convergence of transition probabilities associated with the SEE \eqref{eq_SEE} toward its unique invariant probability measure. These results, in turn, ensure the long-time asymptotics and the existence of stationary solutions for the original SVE \eqref{eq_SVE}, as shown in \cref{cor_stationary-path}. To the best of our knowledge, the only existing work addressing the ergodicity of such a lift for SVEs is the preprint \cite{BiBoCaFr25}; see also \cite{BeDeKr22} for a study on the long-time limiting distributions of SVEs using an alternative class of lifts based on conditional future paths, while assuming strong regularity for the kernels. In \cite{BiBoCaFr25}, the authors introduce an abstract framework of Markovian lifts to prove a form of ergodicity without relying on the specific structure of the SEE. However, such generality necessitates stringent technical assumptions on the Lipschitz constants and growth conditions of the coefficients (similar constraints are also imposed in \cite{BiBoFr25} where limit distributions for SVEs are studied without resorting to Markovian lifts), and a spectral gap cannot be expected therein. In contrast, the present paper significantly relaxes these constraints by employing more natural Lyapunov-type conditions and instead assuming uniform ellipticity for the diffusion coefficient $\sigma$, while providing not only exponential weak ergodicity but also a spectral gap-type estimate for the SEE \eqref{eq_SEE}. The spectral gap is crucial in its own right, as it ensures the stability of invariant probability measures, as will be elaborated below. Our results are also comparable to the works \cite{BeFrKr23,FrJi24,JaPaSp25}, where ergodicity for a particular class of SVEs called affine Volterra processes is investigated by relying on the analysis of Volterra-type Riccati equations arising from the specific affine structure. We also refer to the recent preprint \cite{GnPa25}, which establishes the so-called ``fake stationarity'' for SVEs with a linear drift coefficient $b$. For a given solution process, this concept refers to the invariance under the time-shifts of merely marginal distributions or moments up to a certain degree. In contrast, we do not assume a linear or affine structure for the coefficients, and the resulting stationary solution for the original SVE \eqref{eq_SVE} possesses strict stationarity, meaning that all finite-dimensional distributions are invariant under the time-shifts. Detailed statements of our main results are provided in \cref{sec_main}.

The proof of our main result is based on the \emph{generalized Harris' theorem} developed by Hairer, Mattingly, and Scheutzow \cite{HaMaSc11} and further extended in \cite{Bu14,DuFoMo16}; see also the monograph \cite{Ku18}. This theorem extends the classical Harris' theorem (see, e.g., \cite{HaMa11,MeTw09}), originally developed to establish ergodicity with respect to the total variation distance, to a broader setting involving weighted Wasserstein-type distances. Such an extension is crucial in our framework, as the Markovian lift generally lacks the strong Feller property (see \cite[Theorem 3.4]{Ha24}); consequently, ergodicity with respect to the total variation distance is not to be expected. We review the precise statement of the generalized Harris' theorem from \cite{HaMaSc11} in \cref{sec_pre}. Within this framework, the most critical and challenging step is the construction of a suitable distance(-like) function $d$ that satisfies the ``contraction'' and the ``$d$-smallness'' conditions required in this theorem. In \cite{BuKuSc20}, the authors provide a set of verifiable sufficient conditions for the assumptions of the generalized Harris' theorem in terms of the notion of a \emph{generalized coupling} between transition probabilities. Accordingly, our task reduces to the construction of an appropriate generalized coupling and a corresponding distance-like function. However, this construction is still non-trivial, as it requires a deep analysis tailored to the specific structure of the model. A particularly formidable challenge we have to overcome is that the SEE \eqref{eq_SEE} is a \emph{highly degenerate, infinite-dimensional system}, meaning that the state space $\cH$ is typically infinite-dimensional while the noise, represented by the Brownian motion $W$, is finite-dimensional. In fact, the driving noise $W$ can be as low as one-dimensional if the original SVE \eqref{eq_SVE} is so, whereas the state space of the SEE \eqref{eq_SEE} remains infinite-dimensional. This stands in stark contrast to some existing works on ergodicity for ``effectively elliptic'' degenerate SPDEs via the generalized coupling approach (e.g., \cite{BuKuSc20,GlMaRi17,Ha02,HaMaSc11,Ng23}), where the noise dimension is assumed to be sufficiently large to ensure that the determining modes are adequately excited. Despite these challenges, we succeed in constructing a generalized coupling and a distance function satisfying the required conditions in the general results of \cite{BuKuSc20,HaMaSc11} by exploiting the specific structural properties of the SEE \eqref{eq_SEE} induced by the non-local operator $\mu[\cdot]$ in its coefficients. These specific structural features are typically lost in abstract settings, such as in \cite{BiBoCaFr25}, where ergodicity for an abstract Markovian lift of an SVE is obtained via direct estimates of the \emph{synchronous coupling} between transition probabilities under much more restrictive conditions mentioned above. It is also worth noting that, as a byproduct of our proof, we derive the \emph{asymptotic log-Harnack inequality} for the SEE \eqref{eq_SEE}. This result extends our previous work \cite[Section 4]{Ha24}, which was limited to scalar kernels, to the more general setting involving matrix-valued kernels, where $K_b$ and $K_\sigma$ may differ and $K_b$ is not necessarily square-integrable. This inequality is of independent interest, as it implies several key properties for the associated Markov semigroup, including the \emph{asymptotic strong Feller property}, \emph{asymptotic irreducibility}, and \emph{uniqueness of the invariant probability measure}, while providing a \emph{gradient estimate} and an \emph{asymptotic heat kernel estimate}; we refer to \cite[Theorem 2.1]{BaWaYu19} for these concepts and general results. The detailed proofs of our main results are deferred to \cref{sec_proof}.

As an application of the aforementioned spectral gap result, \cref{theo_approximation-IPM} demonstrates that the invariant probability measure and, more generally, the stationary law on the path space for the SEE \eqref{eq_SEE} can be weakly approximated by those of finite-dimensional Markovian SDEs. These results translate to the original SVE \eqref{eq_SVE} via \cref{cor_approximation-IPM}, extending previous studies on Markovian approximations of SVEs, such as \cite{AbiJaEu19,AlKe24,BaBr23}. While those works focus on the finite-dimensional approximation of solutions to SVEs over finite time horizons, the approximation of stationary solutions has remained an open problem. Furthermore, our approximation result provides a rigorous justification for the underlying principle of the ``Markovian embedding'' concept, a heuristic yet powerful idea in statistical physics that dates back to the pioneering works of Mori \cite{Mo65} and Zwanzig \cite{Zw73} and has been widely utilized in the analysis of generalized Langevin equations (see, e.g., \cite{Go12,Ku04} and references therein). Specifically, this approach involves representing a non-Markovian process as a projection of a higher-dimensional Markovian system through the introduction of auxiliary variables. This procedure typically assumes that the kernels are expressed as finite sums of exponentials, in which case the associated auxiliary Markovian dynamics are finite-dimensional. In contrast, for general kernels (such as the power-law kernels discussed below), the associated Markovian dynamics become infinite-dimensional and are typically interpreted as ``formal limits'' in the physics literature \cite{Ga09,Ku04}. While the ergodicity of such infinite-dimensional systems has been rigorously established in specific settings \cite{GlHeMcNg20}, the connection between the stationary solutions of finite-dimensional approximations and those of the limiting infinite-dimensional system has not been fully explored. Although our Markovian lift adopts a slightly different formulation from the classical Mori--Zwanzig approach employed in \cite{GlHeMcNg20}, our results provide a rigorous justification for the convergence of stationary solutions associated with these approximations. In this way, our work provides a rigorous mathematical framework to support the validity of Markovian embedding-type procedures across a wide range of physical contexts. In particular, our results are expected to provide a solid mathematical basis for related concepts such as the so-called functional Fokker--Planck equation formally derived in \cite{KaSo20} in the context of statistical physics (see also \cite{HaTh82} and \cite[Chapter 13]{Ga09} for formal developments in the SPDE literature). Detailed statements of our approximation results are provided in \cref{sec_approximation}.

To conclude this introductory section, let us discuss the applicability and limitations of our results, with a view toward applications to the theory of generalized Langevin equations, a central topic in statistical physics. Such equations describe the dynamics of a particle in contact with a heat bath where the friction exhibits a memory effect \cite{KuToHa91,Zw01}. In \cite{LiLiLu17}, the authors derive a one-dimensional over-damped generalized Langevin equation with a power-law kernel. By normalizing physical constants such as the temperature and the Boltzmann constant, the resulting dimensionless equation is expressed as:
\begin{equation}\label{eq_GLE}
	X_t=X_0-\frac{1}{\Gamma(\alpha)}\int^t_0(t-s)^{\alpha-1}V'(X_s)\,\diff s+G_t,\ \ t>0,
\end{equation}
where $\alpha\in(0,1)$ is a constant, $V:\bR\to\bR$ is an external potential, and $G$ is a centered Gaussian process. To satisfy the fluctuation--dissipation theorem (a fundamental principle in statistical physics establishing a precise relationship between the friction kernel and the random force, required for the system to reach thermal equilibrium \cite{Zw01}), the process $G$ must be a fractional Brownian motion with Hurst parameter $\alpha/2$, scaled by the constant factor $\sqrt{2/\Gamma(1+\alpha)}$ (see \cite[Sections 2 and 3]{LiLiLu17}). In this case, by virtue of the Mandelbrot--van Ness representation of fractional Brownian motion (see, e.g., \cite[Theorem 1.3.1]{Mi08}), the equation \eqref{eq_GLE} can be rewritten as
\begin{equation}\label{eq_GLE-FDT}
	X_t=x(t)-\frac{1}{\Gamma(\alpha)}\int^t_0(t-s)^{\alpha-1}V'(X_s)\,\diff s+\frac{\sqrt{2\sin\frac{\pi\alpha}{2}}}{\Gamma(\frac{1+\alpha}{2})}\int^t_0(t-s)^{\frac{\alpha-1}{2}}\,\diff W_s,\ \ t>0,
\end{equation}
with the forcing term
\begin{equation*}
	x(t)=X_0+\frac{\sqrt{2\sin\frac{\pi\alpha}{2}}}{\Gamma(\frac{1+\alpha}{2})}\int^0_{-\infty}\left\{(t-s)^{\frac{\alpha-1}{2}}-(-s)^{\frac{\alpha-1}{2}}\right\}\,\diff W_s,\ \ t>0,
\end{equation*}
where $W=(W_t)_{t\in\bR}$ is a two-sided Brownian motion. This model corresponds to the SVE \eqref{eq_SVE} with fractional (or power-law) kernels $K_b(t)=\frac{1}{\Gamma(\alpha_b)}t^{\alpha_b-1}$ and $K_\sigma(t)=\frac{1}{\Gamma(\alpha_\sigma)}t^{\alpha_\sigma-1}$, where the exponents $\alpha_b\in(0,1)$ and $\alpha_\sigma\in(\frac{1}{2},1)$ are related by $\alpha_b=2\alpha_\sigma-1=\alpha$. Notably, the two kernels $K_b$ and $K_\sigma$ are distinct, and both exhibit a singularity at $t=0$. More importantly, for $\alpha\leq1/2$, a gap in regularity emerges: $K_b$ is only locally integrable, while $K_\sigma$ is locally square-integrable near the origin. While existing works \cite{BeDeKr22,BiBoCaFr25,CaCo98,CuTe20,GaPa25,HaSt19,ViZh19}, including our own \cite{Ha24, Ha25a}, typically require both kernels to be locally square-integrable, the general framework of Markovian lifts established in this paper (\cref{sec_pre}) removes this limitation. Specifically, by extending \cite{Ha24, Ha25a}, it allows for a locally $L^1$ drift kernel $K_b$, which is essential for the analysis of the over-damped generalized Langevin equation but remains beyond the reach of prior methodologies. It should be noted, however, that our main results on ergodicity in \cref{sec_main} and \cref{sec_approximation} require the kernels $K_b(t)$ and $K_\sigma(t)$ to decay exponentially as $t\to\infty$, thereby excluding the pure power-law kernels mentioned above. Instead, our main results apply to tempered fractional kernels (also known as gamma-fractional kernels) $K_b(t)=\frac{1}{\Gamma(\alpha_b)}t^{\alpha_b-1}e^{-\kappa_bt}$ and $K_\sigma(t)=\frac{1}{\Gamma(\alpha_\sigma)}t^{\alpha_\sigma-1}e^{-\kappa_\sigma t}$ with $\alpha_b\in(0,1)$, $\alpha_\sigma\in(\frac{1}{2},1)$ and $\kappa_b,\kappa_\sigma>0$. Such kernels arise in dynamics exhibiting the subdiffusive--diffusive crossover behavior, characterized by subdiffusive behavior in the short-time regime followed by a transition to normal diffusion in the long-time limit \cite{LiSaKa17,MoSaSaPaChMe18,SaSoMeCh17}.

The remainder of this paper is structured as follows. \cref{sec_pre} provides the necessary preliminaries on the generalized Harris' theorem and the Markovian lifting framework. Our main results on ergodicity and their associated consequences are stated in \cref{sec_main}, while the key steps of the proofs for these results are deferred to \cref{sec_proof}. \cref{sec_approximation} is dedicated to the detailed analysis of the finite-dimensional approximation of stationary solutions. Finally, some technical lemmas are collected in the \hyperref[appendix]{Appendix}.

%%%%%%%%%%%%%%%%%%%%%%%%%%%%%%%%%%
%%%%%%%%%%%%%%%%%%%%%%%%%%%%%%%%%%
%% Section
%%%%%%%%%%%%%%%%%%%%%%%%%%%%%%%%%%
%%%%%%%%%%%%%%%%%%%%%%%%%%%%%%%%%%

\section{Preliminaries}\label{sec_pre}

This section provides the necessary preliminaries for our analysis. First, we summarize the notation used throughout this paper. Next, we recall the generalized Harris' theorem developed in \cite{HaMaSc11} and introduce the Markovian lifting framework for SVEs by following and further extending the approach in \cite{Ha24,Ha25a}; these two frameworks serve as the main tools in the subsequent sections.

Throughout this paper, the standard Euclidean norm and inner product on a finite-dimensional Euclidean space are denoted by $|\cdot|$ and $\langle\cdot,\cdot\rangle$, respectively. For a matrix $A$, $A^\top$ denotes its transpose, and $|A|_\op$ denotes its operator norm. For a square matrix $A$, $\tr[A]$ denotes its trace. We denote the $n\times n$ identity matrix by $I_{n\times n}$.

For a separable Banach space $(\cX,\|\cdot\|_\cX)$ and each $T>0$, we denote by $C([0,T];\cX)$ the separable Banach space of $\cX$-valued continuous functions on $[0,T]$ with the norm $\|f\|_{C([0,T];\cX)}:=\sup_{t\in[0,T]} \|f(t)\|_\cX$. Similarly, we define $L^2(0,T;\cX)$ as the separable Banach space of $\diff t$-equivalence classes of square-integrable functions on $(0,T)$ with the norm $\|f\|_{L^2(0,T;\cX)}:=(\int^T_0\|f(t)\|_\cX^2\,\diff t)^{1/2}$. Based on these, we denote by $C([0,\infty);\cX)$ the set of $\cX$-valued continuous functions on $[0,\infty)$, equipped with the metric
\begin{equation}\label{eq_metric-C}
	d_{C([0,\infty);\cX)}(f,g):=\sum_{T\in\bN}\frac{1}{2^T}\left\{\|f-g\|_{C([0,T];\cX)}\wedge1\right\},
\end{equation}
which makes it a complete separable metric space. Similarly, $L^2_\loc(0,\infty;\cX)$ denotes the set of $\diff t$-equivalence classes of Borel measurable maps $f:(0,\infty)\to\cX$ that are square-integrable on every finite interval $(0,T)$. This space is equipped with the metric
\begin{equation}\label{eq_metric-L^2}
	d_{L^2_\loc(0,\infty;\cX)}(f,g):=\sum_{T\in\bN}\frac{1}{2^T}\left\{\|f-g\|_{L^2(0,T;\cX)}\wedge1\right\}.
\end{equation}
This also makes $L^2_\loc(0,\infty;\cX)$ a complete separable metric space. Throughout this paper, we strictly distinguish individual functions from their equivalence classes when considering their distributions.

For normed spaces $\cX_1,\cX_2$, $L(\cX_1,\cX_2)$ denotes the set of bounded linear operators from $\cX_1$ to $\cX_2$. It is equipped with the operator norm $\|\cdot\|_{L(\cX_1,\cX_2)}$. When $\cX_1$ and $\cX_2$ are separable Hilbert spaces, $L_2(\cX_1,\cX_2)\subset L(\cX_1,\cX_2)$ denotes the space of Hilbert--Schmidt operators from $\cX_1$ to $\cX_2$, which is a separable Hilbert space equipped with the Hilbert--Schmidt norm $\|\cdot\|_{L_2(\cX_1,\cX_2)}$.

For each measurable space $(E,\cE)$, $\cP(E)$ denotes the set of all probability measures on $E$. For each $x\in E$, $\delta_x\in\cP(E)$ denotes the Dirac measure at the point $x$. For each $A\in\cE$, $\1_A:E\to\{0,1\}$ denotes the indicator function of $A$. For $\nu_1,\nu_2\in\cP(E)$, $\sC(\nu_1,\nu_2)\subset\cP(E\times E)$ denotes the set of couplings between $\nu_1$ and $\nu_2$; that is, $\nu\in\sC(\nu_1,\nu_2)$ if $\nu$ is a probability measure on the product space $(E\times E,\cE\otimes\cE)$ whose first and second marginals are $\nu_1$ and $\nu_2$, respectively. For $\nu_1,\nu_2\in\cP(E)$, $d_\TV(\nu_1,\nu_2):=\sup_{A\in\cE}|\nu_1(A)-\nu_2(A)|$ denotes the total variation distance, which is known to admit the following coupling representation:
\begin{equation*}
	d_\TV(\nu_1,\nu_2)=\inf_{\nu\in\sC(\nu_1,\nu_2)}\int_{E\times E}\1_{\{x_1\neq x_2\}}\,\nu(\diff x_1,\diff x_2).
\end{equation*}
For $E$-valued random variable $\xi$ on a probability space $(\Omega,\cF,\bP)$, we denote its law under $\bP$ by $\Law_\bP(\xi):=\bP\circ\xi^{-1}\in\cP(E)$. The expectation under $\bP$ is denoted by $\bE_\bP[\cdot]$, or simply by $\bE[\cdot]$ if the underlying probability measure $\bP$ is clear from the context. For each probability kernel $P:E\times\cE\to[0,1]$ on $E$ and each bounded measurable function $f:E\to\bR$, we define $Pf:E\to\bR$ by $(Pf)(x):=\int_Ef(x')\,P(x,\diff x')$ for $x\in E$. This definition extends to measurable functions $f:E\to[0,\infty]$, in which case $Pf$ may take values in $[0,\infty]$. Furthermore, for each $\nu\in\cP(E)$, we define $P^*\nu\in\cP(E)$ by $(P^*\nu)(A):=\int_EP(x,A)\,\nu(\diff x)$ for $A\in\cE$. When $E$ is a topological space, it is always assumed to be equipped with the Borel $\sigma$-algebra $\cE=\cB(E)$.

%%%%%%%%%%%%
%% Subsection
%%%%%%%%%%%%

\subsection{Generalized Harris' theorem}

We now recall the generalized Harris' theorem developed by Hairer, Mattingly, and Scheutzow \cite{HaMaSc11}. This theorem provides a general criterion for spectral gap-type estimates and the weak convergence of Markov semigroups toward their unique invariant probability measures.

Let $E$ be a Polish space. As in \cite{HaMaSc11}, we call a function $d:E\times E\to[0,\infty)$ a \emph{distance-like function} if it is symmetric, lower semi-continuous, and such that $d(x_1,x_2)=0$ if and only if $x_1=x_2$. Given a distance-like function $d:E\times E\to[0,\infty)$, we define $\bW_d:\cP(E)\times\cP(E)\to[0,\infty]$ by
\begin{equation}\label{eq_defi-W_d}
	\bW_d(\nu_1,\nu_2):=\inf_{\nu\in\sC(\nu_1,\nu_2)}\int_{E\times E}d(x_1,x_2)\,\nu(\diff x_1,\diff x_2),\ \ \nu_1,\nu_2\in\cP(E).
\end{equation}
Note that if $d$ is a distance function on $E$, $\bW_d$ coincides with the usual $L^1$-Wasserstein metric. On the other hand, if $d(x_1,x_2)=\1_{\{x_1\neq x_2\}}$, $\bW_d$ coincides with the total variation distance. Following \cite{HaMaSc11}, we adopt the following pair of notions which are central to our analysis.

%% Definition

\begin{defi}
Let $P:E\times\cB(E)\to[0,1]$ be a probability kernel on a Polish space $E$, and let $d:E\times E\to[0,1]$ be a distance-like function (bounded by $1$).
\begin{itemize}
\item
We say that the distance-like function $d$ is \emph{contracting} for $P$ if there exists a constant $\alpha\in(0,1)$ such that
\begin{equation*}
	\bW_d(P(x_1,\cdot),P(x_2,\cdot))\leq\alpha d(x_1,x_2)
\end{equation*}
for any $x_1,x_2\in E$ with $d(x_1,x_2)<1$.
\item
A set $B\subset E$ is said to be \emph{$d$-small} for $P$ if there exists a constant $\ep>0$ such that
\begin{equation*}
	\bW_d(P(x_1,\cdot),P(x_2,\cdot))\leq1-\ep
\end{equation*}
for any $x_1,x_2\in B$.
\end{itemize}
\end{defi}

%% Remark

\begin{rem}
The contractivity condition above imposes no restrictions on pairs $(x_1,x_2)$ with $d(x_1,x_2)=1$, even though this set might be large. Notably, if we take $d(x_1,x_2)=\1_{\{x_1\neq x_2\}}$, which corresponds to the total variation distance, the contractivity is trivially satisfied by \emph{any} probability kernel since the condition $d(x_1,x_2)<1$ implies $x_1=x_2$. In this setting, the notion of a $d$-small set coincides with the notion of a ``small set'', one of the crucial assumptions in the classical Harris' theorem on ergodicity with respect to the total variation distance (see, e.g., \cite{HaMa11,MeTw09}). For more detailed discussions, see \cite[Section 4]{HaMaSc11}. Furthermore, \cite[Example 3.1]{Bu14} provides a simple example of a probability kernel $P$ on $([0,1),\cB([0,1)))$ where the entire state space $[0,1)$ is $d$-small with respect to the Euclidean metric $d$, whereas there are no non-trivial small sets in the classical sense.
\end{rem}

Let $\{P_t\}_{t\geq0}$ be a Markov semigroup, that is, a family of probability kernels on a Polish space $E$ such that $P_0f=f$ and $P_{s+t}f=P_sP_tf$ for any $s,t\geq0$ and any bounded measurable functions $f:E\to\bR$. Below we summarize some fundamental terminology used throughout this paper:
\begin{itemize}
\item
We say that $\{P_t\}_{t\geq0}$ satisfies the \emph{Feller property} if the map $E\ni x\mapsto P_tf(x)$ is continuous for any $t\geq0$ and any bounded continuous function $f:E\to\bR$.
\item
We say that $\{P_t\}_{t\geq0}$ is \emph{measurable} if the map $[0,\infty)\times E\ni(t,x)\mapsto P_tf(x)$ is $\cB([0,\infty))\otimes\cB(E)$-measurable for any bounded measurable function $f:E\to\bR$. This condition is clearly satisfied if $\{P_t\}_{t\geq0}$ is stochastically continuous and has the Feller property; here, \emph{stochastic continuity} means that $\lim_{t\downarrow0}P_tf(x)=f(x)$ for all $x\in E$ and every bounded continuous function $f:E\to\bR$. Indeed, these properties in conjunction with the semigroup property imply that $(t,x)\mapsto P_tf(x)$ is jointly measurable for every bounded continuous function $f:E\to\bR$, which extends to all bounded measurable functions by a standard monotone class argument.
\item
A probability measure $\pi\in\cP(E)$ is called an \emph{invariant probability measure} for $\{P_t\}_{t\geq0}$ if $P^*_t\pi=\pi$ for any $t\geq0$.
\item
A measurable function $V:E\to[0,\infty)$ is called a \emph{Lyapunov function} for $\{P_t\}_{t\geq0}$ if there exist constants $\gamma_V,C_V,K_V>0$ such that the bound
\begin{equation}\label{eq_Lyapunov-def}
	P_tV(x)\leq C_Ve^{-\gamma_Vt}V(x)+K_V
\end{equation}
holds for any $x\in E$ and any $t\geq0$.
\end{itemize}

%% Theorem

\begin{theo}[The generalized Harris' theorem; {\cite[Theorem 4.8 and Corollary 4.11]{HaMaSc11}}]\label{theo_HaMaSc11}
Let $\{P_t\}_{t\geq0}$ be a measurable Markov semigroup on a Polish space $E$ satisfying the Feller property\footnote{In \cite[Corollary 4.11]{HaMaSc11}, the measurability of $\{P_t\}_{t\geq0}$ is implicitly assumed.}. Assume that $\{P_t\}_{t\geq0}$ admits a continuous Lyapunov function $V:E\to[0,\infty)$. Suppose further that there exists a distance-like function $d:E\times E\to[0,1]$ satisfying the following conditions:
\begin{itemize}
\item
$d_0\leq\sqrt{d}$ for some compatible metric $d_0$ on $E$.
\item
There exists $t_1>0$ such that, for every $t\geq t_1$, the distance-like function $d$ is contracting for $P_t$.
\item
There exists $t_2>0$ such that, for every $t\geq t_2$, the level set $\{x\in E\,|\,V(x)\leq4K_V\}$ is $d$-small for $P_t$, where $K_V>0$ is the constant appearing in \eqref{eq_Lyapunov-def}.
\end{itemize}
Then, $\{P_t\}_{t\geq0}$ possesses a unique invariant probability measure $\pi\in\cP(E)$. Furthermore, there exist constants $r>0$ and $t_0>0$ such that the following spectral gap-type estimate holds:
\begin{equation}\label{eq_HaMaSc11-spectral-gap}
	\bW_{d_V}\big(P_t^*\nu_1,P_t^*\nu_2\big)\leq e^{-rt}\bW_{d_V}(\nu_1,\nu_2)
\end{equation}
for all $\nu_1,\nu_2\in\cP(E)$ and all $t\geq t_0$. Here, $d_V:E\times E\to[0,\infty)$ is a distance-like function defined by
\begin{equation*}
	d_V(x_1,x_2):=\sqrt{d(x_1,x_2)\big(1+V(x_1)+V(x_2)\big)}
\end{equation*}
for $x_1,x_2\in E$.
\end{theo}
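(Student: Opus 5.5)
The plan is to reduce the continuous-time statement to a one-step contraction for the single kernel $P:=P_{t_*}$ at a suitably large time $t_*$, and then to pass back to continuous time using the Lyapunov bound and the semigroup property. This follows the structure of \cite[Theorem 4.8]{HaMaSc11}.

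\emph{Choice of the time step.} Fix $t_*\geq t_1\vee t_2$ so large that $C_Ve^{-\gamma_Vt_*}\leq\gamma$ for some $\gamma\in(0,1)$ small, to be chosen. Then $PV\leq\gamma V+K_V$. With $\beta>0$ a small parameter, define the auxiliary metric-like function
\[
\tilde d(x_1,x_2):=\sqrt{d(x_1,x_2)\bigl(1+\beta V(x_1)+\beta V(x_2)\bigr)}.
\]
This is comparable to $d_V$ up to constants depending on $\beta$.

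\emph{One-step contraction $\bW_{\tilde d}(P(x_1,\cdot),P(x_2,\cdot))\leq\bar\alpha\,\tilde d(x_1,x_2)$ for some $\bar\alpha<1$.} This is the core step and splits into three cases.

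\textbf{Case 1: $d(x_1,x_2)<1$.} Take an almost-optimal coupling for $\bW_d(P(x_1,\cdot),P(x_2,\cdot))\leq\alpha d(x_1,x_2)$. Apply Cauchy--Schwarz to $\int\sqrt{d(1+\beta V+\beta V)}$, which gives the bound $\sqrt{\alpha d}\,\sqrt{1+\beta(PV(x_1)+PV(x_2))}$. Since $PV\leq\gamma V+K_V$, the second factor is at most $\sqrt{1+2\beta K_V+\beta\gamma(V(x_1)+V(x_2))}$. Choose $\beta$ with $2\beta K_V$ small, so the product is at most $\sqrt{\alpha(1+2\beta K_V)}\,\tilde d$, which is a strict contraction.

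\textbf{Case 2: $d(x_1,x_2)=1$ and $V(x_1)+V(x_2)\geq 4K_V$.} Use the trivial bound $d\leq1$ together with the same Cauchy--Schwarz argument. This gives $\tilde d$ of the image at most $\sqrt{1+2\beta K_V+\beta\gamma(V(x_1)+V(x_2))}$. Because $V(x_1)+V(x_2)$ is large, the terms $\beta\gamma(V_1+V_2)+2\beta K_V$ are a fixed fraction smaller than $\beta(V_1+V_2)$ once $\gamma$ is small. This yields contraction relative to $\sqrt{1+\beta(V_1+V_2)}$.

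\textbf{Case 3: $d(x_1,x_2)=1$ and $V(x_1)+V(x_2)<4K_V$.} Both points lie in $\{V\leq 4K_V\}$, so the $d$-smallness bound $\bW_d\leq1-\ep$ applies. The same estimate as in Case 1, with $\alpha$ replaced by $1-\ep$, gives contraction provided $\beta$ is small relative to $\ep$.

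\emph{Passing from Dirac masses to general measures.} Since $d$ is lower semicontinuous, a measurable selection of near-optimal couplings (as in \cite[Section 4]{HaMaSc11}) lets one integrate the pointwise bound. Equivalently, one uses convexity of $\bW_{\tilde d}$ in the pair of measures. This gives $\bW_{\tilde d}(P^*\nu_1,P^*\nu_2)\leq\bar\alpha\,\bW_{\tilde d}(\nu_1,\nu_2)$.

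\emph{Existence and uniqueness of the invariant measure.} The assumption $d_0\leq\sqrt d\leq\tilde d$ makes $\bW_{\tilde d}$ dominate $\bW_{d_0}$. Iterating the contraction shows that $(P^{n})^*\delta_x$ is Cauchy in $\bW_{d_0}$, with $V$-moments kept bounded. Completeness of $(\cP(E),\bW_{d_0\wedge1})$ then gives a limit $\pi$, and the Feller property shows that $\pi$ is $P$-invariant and unique. Averaging $\pi$ over $[0,t_*]$, using measurability of the semigroup, and invoking uniqueness gives invariance of $\pi$ under every $P_t$.

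\emph{Extension to all $t\geq t_0$.} Write $t=nt_*+s$ with $s\in[t_*,2t_*)$, and repeat the one-step argument for each $P_s$ in this range, with uniform constants; this works because all the hypotheses hold uniformly for $s\geq t_1\vee t_2$. This produces $\bar\alpha^{\,n}$-decay. Converting $\tilde d$ back to $d_V$ costs only constants, and these are absorbed by taking $t_0$ large and a slightly smaller rate $r$.

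\textbf{Main obstacle.} I expect the hardest part to be the bookkeeping in Cases 2 and 3: choosing $\gamma$ and $\beta$ so that a single $\bar\alpha<1$ works in all three cases simultaneously. A second difficulty is that the constant-absorbing step requires the estimate to hold exactly as stated, with $d_V$ on both sides and no prefactor. If a direct choice of $\beta=1$ fails, the fallback is the approach of \cite{HaMaSc11}: prove the contraction for a $\beta$-weighted variant, then use the equivalence of $d_V$ and $\tilde d$ together with enough iterations to absorb the constant.
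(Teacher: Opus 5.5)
Your argument is the one behind the cited result \cite[Theorem 4.8 and Corollary 4.11]{HaMaSc11}; the paper gives no proof of its own. It has three stages: a one-step contraction of $\tilde d_\beta$ under $P_{t_*}$ via the three cases, existence by a Cauchy argument, and transfer to all large $t$ with the constant absorbed into the rate. The case bookkeeping works. With $\gamma\le 1/4$, Case~2 gives the factor $\bigl((1+3\beta K_V)/(1+4\beta K_V)\bigr)^{1/2}<1$ for every $\beta>0$, so $\beta$ only has to be small for Cases~1 and~3.

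The step that fails is existence. You invoke completeness of $(\cP(E),\bW_{d_0\wedge1})$, but this holds only when $d_0$ is a \emph{complete} metric. The statement only provides a compatible one, and with a merely compatible $d_0$ the existence claim is false. Here is a counterexample:
\begin{itemize}
\item Take $E=[0,\infty)$ and $P_t(x,\cdot)=\delta_{x+t}$, a measurable Feller semigroup.
\item Take $V\equiv0$, a continuous Lyapunov function for any $\gamma_V,C_V,K_V>0$.
\item Take $d=d_0$ with $d_0(x,y):=|e^{-x}-e^{-y}|\le1$. Then $d_0$ is a compatible metric with $d_0\le\sqrt d$.
\item Since $\bW_d(P_t(x,\cdot),P_t(y,\cdot))=e^{-t}d(x,y)$, $d$ is contracting for every $P_t$, and $E=\{V\le4K_V\}$ is $d$-small with $\ep=1-e^{-t}$.
\end{itemize}
Yet the translation semigroup has no invariant probability measure. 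The sequence your argument builds, $P_n^*\delta_0=\delta_n$, is $\bW_{d_0}$-Cauchy but has no limit in $\cP(E)$, which is exactly where your proof breaks. Uniqueness and the contraction estimate do hold in this example; only existence fails. So you must assume $d_0$ is complete, which is the form of the hypothesis in the source. In the paper's application $d=d_\Phi$ is itself a complete metric on $\cH$, so \cref{theo_main} is unaffected.

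A smaller point concerns the extension step. The hypotheses do not hold ``uniformly'' in $s\ge t_1\vee t_2$, because the contraction constant $\alpha$ and the smallness constant $\ep$ may depend on $t$. You do not need uniformity, only a uniform bound:
\begin{itemize}
\item Contractivity gives $\bW_d(P_s(x,\cdot),P_s(y,\cdot))\le d(x,y)$ for all $x,y$, trivially when $d(x,y)=1$.
\item Combined with $P_sV\le C_VV+K_V$, the same Cauchy--Schwarz step gives $\bW_{\tilde d}(P_s^*\nu_1,P_s^*\nu_2)\le\max\{1+2\beta K_V,C_V\}^{1/2}\,\bW_{\tilde d}(\nu_1,\nu_2)$ uniformly in $s\ge t_1$.
\item Together with $\bar\alpha^n$ from $P_{t_*}^n$, this yields the $C\bar\alpha^{n}$ decay whose constant you then absorb.
\end{itemize}
For invariance under every $P_u$, it is simpler to note that $P_u^*\pi$ is $P_{t_*}$-invariant and then use uniqueness. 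That uniqueness argument needs $\int V\,\diff\pi<\infty$ for invariant measures, so that $\bW_{\tilde d}(\pi,\pi')<\infty$.
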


%% Remark

\begin{rem}
\begin{itemize}
\item[(i)]
If the distance-like function $d:E\times E\to[0,1]$ itself is a compatible metric on $E$, then the first assumption trivially holds.
\item[(ii)]
If $V$ is a Lyapunov function for $\{P_t\}_{t\geq0}$, then every invariant probability measure $\pi$ must satisfy $\int_EV(x)\,\pi(\diff x)<\infty$; see \cite[Lemma 4.1]{Bu14}. Hence, taking $\nu_1=\delta_x$ and $\nu_2=\pi$ in \eqref{eq_HaMaSc11-spectral-gap} immediately yields the following exponential ergodicity: there exists a constant $C>0$ such that
\begin{equation*}
	\bW_{d_V}\big(P_t(x,\cdot),\pi\big)\leq C\big(1+V(x)^{1/2}\big)e^{-rt}
\end{equation*}
for all $x\in E$ and all $t\geq0$.
\item[(iii)]
As shown in \cite[Proposition 2.8]{HaStVo14}, the estimate \eqref{eq_HaMaSc11-spectral-gap} yields the $L^2$-spectral gap estimate
\begin{equation*}
	\sup_{\substack{f\in L^2(E,\pi)\\f\neq\pi f}}\frac{\|P_tf-\pi f\|_{L^2(E,\pi)}}{\|f-\pi f\|_{L^2(E,\pi)}}\leq e^{-rt},
\end{equation*}
provided that the set of bounded, $d_V$-Lipschitz functions is dense in $L^2(E,\pi)$ and the Markov semigroup $\{P_t\}_{t\geq0}$ is reversible with respect to $\pi$.\footnote{Note that the density of the set of bounded, $d_V$-Lipschitz functions in $L^2(E,\pi)$ is automatically satisfied whenever $d_0\leq\sqrt{d}$ for some compatible metric $d_0$ on $E$.} Here, $(L^2(E,\pi),\|\cdot\|_{L^2(E,\pi)})$ denotes the Hilbert space of (equivalence classes of) $\pi$-square-integrable real-valued functions, and $\pi f:=\int_Ef(x)\,\pi(\diff x)$.
\end{itemize}
\end{rem}

In order to apply the generalized Harris' theorem, the crucial and practically challenging step is to construct a distance(-like) function $d$ which satisfies all the requirements in \cref{theo_HaMaSc11}; the construction of a Lyapunov function $V$ is equally essential and is typically addressed separately. In \cite{BuKuSc20}, the authors provide a set of verifiable sufficient conditions for the required assumptions in \cref{theo_HaMaSc11} in terms of the so-called \emph{generalized coupling} between the transition probabilities $P_t(x_1,\cdot)$ and $P_t(x_2,\cdot)$ with $x_1,x_2\in E$. Hence, the problem we have to address reduces to how to construct a ``good'' generalized coupling and a distance(-like) function $d$. However, this latter task remains non-trivial, as it requires a deep analysis depending on each model. We address this issue within our framework of Markovian lifts of SVEs, to be introduced in the next subsection, where the specific structure of the lifted SEE \eqref{eq_SEE} plays a crucial role. For more details, see the discussion in \cref{subsec_proof-coupling}.

%%%%%%%%%%%%
%% Subsection
%%%%%%%%%%%%

\subsection{Markovian lifts of SVEs}

In this subsection, we recall and further generalize the Markovian lifting framework developed in our previous works \cite{Ha24,Ha25a} (see also \cite{Ha25b}). Before doing so, let us present the following standard well-posedness result for the SVE \eqref{eq_SVE}.

%% Proposition

\begin{prop}\label{prop_SVE}
Let $K_b,K_\sigma:(0,\infty)\to\bR^{n\times n}$ be measurable maps such that $\int^T_0|K_b(t)|_\op\,\diff t<\infty$ and $\int^T_0|K_\sigma(t)|_\op^2\,\diff t<\infty$ for any $T>0$. Suppose that $b:\bR^n\to\bR^n$ and $\sigma:\bR^n\to\bR^{n\times d}$ are measurable maps such that $|b(x)-b(x')|\leq C_{b,\Lip}|x-x'|$ and $|\sigma(x)-\sigma(x')|\leq C_{\sigma,\Lip}|x-x'|$ for all $x,x'\in\bR^n$ with some constants $C_{b,\Lip},C_{\sigma,\Lip}>0$. Assume we are given a filtered probability space $(\Omega,\cF,\bF,\bP)$ satisfying the usual conditions, a $d$-dimensional $\bF$-Brownian motion $W$, and an $\cF_0\otimes\cB((0,\infty))$-measurable map $x:\Omega\times(0,\infty)\to\bR^n$ such that $\int^T_0|x(t)|^2\,\diff t<\infty$ a.s.\ for any $T>0$. Then, there exists a unique progressively measurable process $X=(X_t)_{t>0}$ satisfying $\int^T_0|X_t|^2\,\diff t<\infty$ a.s.\ for any $T>0$ such that the equality in \eqref{eq_SVE} holds for $\diff t\otimes\diff\bP$-a.e.\ $(t,\omega)\in(0,\infty)\times\Omega$. The uniqueness is understood in the sense that any two such solutions coincide $\diff t\otimes\diff\bP$-a.e.\ on $(0,\infty)\times\Omega$.
\end{prop}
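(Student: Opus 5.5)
We prove \cref{prop_SVE} by a Picard-type fixed-point argument in a weighted space of progressively measurable processes, working on a fixed horizon $[0,T]$ and then patching horizons by uniqueness. The forcing term is only square-integrable in time and may be random, and the kernels may be singular at the origin, so we avoid sup-norm estimates. Instead we use the space $\cS_T$ of ($\diff t\otimes\diff\bP$-equivalence classes of) progressively measurable processes $X$ on $(0,T]$ with $\int_0^T|X_t|^2\,\diff t<\infty$ a.s.

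Since $x$ is only a.s.\ square-integrable, with no moments assumed, we first localize in $\Omega$ using $\cF_0$-measurability. For $N\in\bN$ set $A_N:=\{\int_0^T|x(t)|^2\,\diff t\leq N\}\in\cF_0$. On $A_N$ we solve the equation with forcing $x\1_{A_N}$ in the Banach space $L^2_\lambda$ of progressively measurable processes with norm
\[
\|X\|_\lambda^2:=\bE\int_0^Te^{-\lambda t}|X_t|^2\,\diff t .
\]
Since $A_N\in\cF_0$, multiplying by $\1_{A_N}$ commutes with stochastic integration. The solutions for different $N$ are then consistent by uniqueness and glue to a solution, because $\bigcup_NA_N=\Omega$ up to a null set.

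For the contraction, define
\[
\Phi(X)_t:=x(t)+\int_0^tK_b(t-s)b(X_s)\,\diff s+\int_0^tK_\sigma(t-s)\sigma(X_s)\,\diff W_s .
\]
We choose progressively measurable versions of the integrals; the stochastic convolution is measurable in $(t,\omega)$ by a standard stochastic Fubini or measurable-version argument.

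\begin{itemize}
\item For the drift term, Young's convolution inequality with weight gives
\[
\Big\|\int_0^\cdot K_b(\cdot-s)f_s\,\diff s\Big\|_\lambda\leq\Big(\int_0^Te^{-\lambda t/2}|K_b(t)|_\op\,\diff t\Big)\|f\|_\lambda .
\]
\item For the diffusion term, the Itô isometry and Fubini give
\[
\bE\int_0^Te^{-\lambda t}\Big|\int_0^tK_\sigma(t-s)g_s\,\diff W_s\Big|^2\diff t\leq\Big(\int_0^Te^{-\lambda t}|K_\sigma(t)|_\op^2\,\diff t\Big)\|g\|_\lambda^2 .
\]
\end{itemize}

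Both weighted kernel integrals tend to $0$ as $\lambda\to\infty$ by dominated convergence, using $K_b\in L^1(0,T)$ and $K_\sigma\in L^2(0,T)$. The Lipschitz bounds on $b$ and $\sigma$ (which also yield linear growth) then show that $\Phi$ maps $L^2_\lambda$ into itself and is a strict contraction for $\lambda$ large. This gives existence and uniqueness in $L^2_\lambda$ on $A_N$.

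The main obstacle is uniqueness in the larger class $\cS_T$, since a competing solution need not have finite second moment. We localize with the stopping times
\[
\tau_R:=\inf\Big\{t:\int_0^t(|X_s|^2+|X'_s|^2)\,\diff s\geq R\Big\}.
\]
The difference $\Delta:=X-X'$ satisfies, for $t\leq\tau_R$, the Volterra equation with integrands truncated by $\1_{\{s\leq\tau_R\}}$, because stochastic integrals with integrand $\1_{\{s\leq\tau_R\}}$ agree before $\tau_R$. Applying the same weighted estimates to $\1_{\{t\leq\tau_R\}}\Delta_t$ shows its $\|\cdot\|_\lambda$-norm, which is finite by construction, is at most a constant $<1$ times itself, hence zero. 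Letting $R\to\infty$ gives $\Delta=0$ $\diff t\otimes\diff\bP$-a.e. The same stopping argument supplies the consistency across $N$ and across horizons $T\in\bN$, which defines $X$ on $(0,\infty)$.
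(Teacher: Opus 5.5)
Your argument is correct and follows essentially the same route as the paper, which invokes a known $L^2$ well-posedness result for SVEs with locally $L^1$/$L^2$ kernels and combines it with a localization in $\Omega$ over $\cF_0$-measurable sets determined by the forcing term $x$. The difference is that you prove that $L^2$ result yourself, by the contraction in the exponentially weighted norm $\|\cdot\|_\lambda$, and you use the stopping times $\tau_R$ to prove uniqueness among solutions without moment bounds explicitly; the paper leaves that step to the cited results and a ``standard localization argument''.
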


The above result can be proved by applying \cite[Proposition 2.4]{Ha21} (see also \cite[Theorem 3.1]{Zh10}) in conjunction with a standard localization argument with respect to the forcing term $x$; we omit the details here for brevity as the procedure is standard in the context of stochastic analysis. We view the solution space of the SVE \eqref{eq_SVE} as the Polish space $L^2_\loc(0,\infty;\bR^n)$ equipped with the complete metric defined by \eqref{eq_metric-L^2}. By virtue of the Yamada--Watanabe-type result in \cite{Ku14}, uniqueness in law holds for the SVE \eqref{eq_SVE} under the setting of \cref{prop_SVE}. Consequently, the law of the $\diff t$-equivalence class of the solution $X$ on $L^2_\loc(0,\infty;\bR^n)$ is uniquely determined by the law of the $\diff t$-equivalence class of the forcing term $x$.

%% Remark

\begin{rem}
In the setting of \cref{prop_SVE} where the kernels $K_b$ and $K_\sigma$ are assumed to be merely (locally) $L^1$ and $L^2$, we cannot expect pathwise time regularity for the solution $X$. To recover pathwise regularity, additional conditions on the kernels $K_b$ and $K_\sigma$ are required; see, e.g., \cite[Theorem 3.3]{Zh10}.
\end{rem}

We now present a Markovian lifting framework. First, let us introduce the following definition.

%% Definition

\begin{defi}\label{defi_lifting-basis}
We call a triplet $(\mu,M_b,M_\sigma)$ a \emph{lifting basis} if $\mu$ is a Borel measure on $[0,\infty)$, $M_b,M_\sigma:[0,\infty)\to\bR^{n\times n}$ are matrix-valued Borel measurable maps, and they satisfy
\begin{align}
	\label{eq_integrability-mu}
	&\int_{[0,\infty)}(1+\theta)^{-1/2}\dmu<\infty,\\
	\label{eq_integrability-M_b}
	&\int_{[0,\infty)}(1+\theta)^{-3/2}|M_b(\theta)|_\op^2\dmu<\infty\ \ \text{and}\\
	\label{eq_integrability-M_sigma}
	&\int_{[0,\infty)}(1+\theta)^{-1/2}|M_\sigma(\theta)|_\op^2\dmu<\infty.
\end{align}
We say that a pair of kernels $K_b,K_\sigma:(0,\infty)\to\bR^{n\times n}$ is \emph{liftable} if there exists a lifting basis $(\mu,M_b,M_\sigma)$ such that
\begin{equation}\label{eq_liftable}
	K_b(t)=\int_{[0,\infty)}e^{-\theta t}M_b(\theta)\dmu\ \ \text{and}\ \ K_\sigma(t)=\int_{[0,\infty)}e^{-\theta t}M_\sigma(\theta)\dmu
\end{equation}
for any $t\in(0,\infty)$. In this case, we say that the lifting basis $(\mu,M_b,M_\sigma)$ generates the pair $(K_b,K_\sigma)$.
\end{defi}

%% Remark

\begin{rem}
\begin{itemize}
\item[(i)]
A slight modification of \cite[Lemma 4.1]{Ha25b} shows that a pair $(K_b,K_\sigma)$ of kernels is liftable if and only if each of the matrix components $K_b^{i,j},K_\sigma^{i,j}:(0,\infty)\to\bR$ for $i,j\in\{1,\dots,n\}$ is of the form
\begin{equation*}
	K_b^{i,j}=K_b^{i,j,+}-K_b^{i,j,-}\ \ \text{and}\ \ K_\sigma^{i,j}=K_\sigma^{i,j,+}-K_\sigma^{i,j,-}
\end{equation*}
for some completely monotone functions $K_b^{i,j,\pm},K_\sigma^{i,j,\pm}:(0,\infty)\to[0,\infty)$ such that
\begin{equation}\label{eq_integrability-K}
	\int^T_0K_b^{i,j,\pm}(t)\,\diff t<\infty\ \ \text{and}\ \ \int^T_0t^{-1/2}K_\sigma^{i,j,\pm}(t)\,\diff t<\infty
\end{equation}
for any $T\in(0,\infty)$. In particular, by virtue of \cite[Lemma 2.1]{Ha24}, every liftable pair $(K_b,K_\sigma)$ satisfies
\begin{equation}\label{eq_integrability-K'}
	\int^T_0|K_b(t)|_\op\,\diff t<\infty\ \ \text{and}\ \ \int^T_0|K_\sigma(t)|_\op^2\,\diff t<\infty
\end{equation}
for any $T\in(0,\infty)$.
\item[(ii)]
If $(K_{b,1},K_{\sigma,1})$ and $(K_{b,2},K_{\sigma,2})$ are liftable, then for any $A_{b,1},A_{b,2},A_{\sigma,1},A_{\sigma,2}\in\bR^{n\times n}$, the pair $(A_{b,1}K_{b,1}+A_{b,2}K_{b,2},A_{\sigma,1}K_{\sigma,1}+A_{\sigma,2}K_{\sigma,2})$ is also liftable. Indeed, given a lifting basis $(\mu_i,M_{b,i},M_{\sigma,i})$ which generates $(K_{b,i},K_{\sigma,i})$ for each $i\in\{1,2\}$, the lifting basis $(\mu,M_b,M_\sigma)$ given by
\begin{equation*}
	\mu=\mu_1+\mu_2,\ \ M_b=\frac{\diff\mu_1}{\diff\mu}A_{b,1}M_{b,1}+\frac{\diff\mu_2}{\diff\mu}A_{b,2}M_{b,2},\ \ M_\sigma=\frac{\diff\mu_1}{\diff\mu}A_{\sigma,1}M_{\sigma,1}+\frac{\diff\mu_2}{\diff\mu}A_{\sigma,2}M_{\sigma,2},
\end{equation*}
generates $(A_{b,1}K_{b,1}+A_{b,2}K_{b,2},A_{\sigma,1}K_{\sigma,1}+A_{\sigma,2}K_{\sigma,2})$.
\item[(iii)]
For a given liftable pair $(K_b,K_\sigma)$ of kernels, a lifting basis $(\mu,M_b,M_\sigma)$ generating it is not unique. Indeed, there is flexibility in the choice of the measure $\mu$; however, once a measure $\mu$ is given, the maps $M_b$ and $M_\sigma$ satisfying \eqref{eq_liftable} are determined up to a $\mu$-null set.
\item[(iv)]
The seemingly technical set of integrability conditions \eqref{eq_integrability-mu}, \eqref{eq_integrability-M_b} and \eqref{eq_integrability-M_sigma} for a lifting basis $(\mu,M_b,M_\sigma)$ relates to the conditions \eqref{eq_integrability-K} and \eqref{eq_integrability-K'} for the generated kernels $K_b$ and $K_\sigma$. This set of conditions is natural when we consider the case where $K_b=K_\sigma=KI_{n\times n}$ for some scalar completely monotone function $K:(0,\infty)\to[0,\infty)$, a case adopted in \cite{Ha24,Ha25a}. Indeed, in this case, Bernstein's theorem ensures that there exists a \emph{unique} Borel measure $\mu$ on $[0,\infty)$ such that $K(t)=\int_{[0,\infty)}e^{-\theta t}\dmu$ for any $t>0$. By \cite[Lemma 2.1]{Ha24}, the condition $\int^T_0t^{-1/2}K(t)\,\diff t<\infty$ for any $T\in(0,\infty)$ is equivalent to the integrability condition \eqref{eq_integrability-mu} for the measure $\mu$; furthermore, both conditions imply that $\int^T_0K(t)^2\,\diff t<\infty$ for any $T\in(0,\infty)$, which is a necessary condition for the Volterra-type stochastic integral $\int^t_0K(t-s)\,\diff W_s$ to be well-defined. In this case, the triplet $(\mu,I_{n\times n},I_{n\times n})$ is a lifting basis which generates $(KI_{n\times n},KI_{n\times n})$ in the sense of \cref{defi_lifting-basis}. Our set of integrability conditions \eqref{eq_integrability-mu}, \eqref{eq_integrability-M_b} and \eqref{eq_integrability-M_sigma} thus serves as a natural generalization of the benchmark case to settings where the two matrix-valued kernels $K_b$ and $K_\sigma$ may differ and $K_b$ is only (locally) $L^1$. Similar integrability conditions are also employed in \cite{Ha25b}.
%Also, there are any other choices of a tuple $(\mu,M_b,M_\sigma)$ which satisfies \eqref{eq_liftable} but does not satisfy the set of integrability conditions \eqref{eq_integrability-mu}, \eqref{eq_integrability-M_b} and \eqref{eq_integrability-M_sigma}. However, as will be seen in the subsequent discussion, these seemingly technical integrability conditions play crucial roles in constructing a tractable framework of Markovian lifts of SVEs with matrix-valued kernels. Similar integrability conditions are assumed in \cite{Ha25b} as well.
\end{itemize}
\end{rem}

%% Example

\begin{exam}\label{exam_liftable}
\begin{itemize}
\item[(i)]
(Sum-of-exponentials type kernels)
Consider the kernels
\begin{equation*}
	K^{\exp}_b(t)=\sum^N_{i=1}e^{-\kappa_it}M_{b,i},\ \ K^{\exp}_\sigma(t)=\sum^N_{i=1}e^{-\kappa_it}M_{\sigma,i},\ \ t>0,
\end{equation*}
where $M_{b,i},M_{\sigma,i}\in\bR^{n\times n}$ for each $i\in\{1,\dots,N\}$ with $N\in\bN$, and $\kappa_i\geq0$ are distinct constants. Such kernels are frequently employed to approximate more general kernels, as seen in the studies of the Markovian embedding for generalized Langevin equations \cite{Go12,Ku04} and multi-factor approximation of rough volatility models \cite{AbiJaEu19,AlKe24,BaBr23}. Clearly, the pair of kernels $(K^{\exp}_b,K^{\exp}_\sigma)$ above is liftable. As a lifting basis $(\mu,M_b,M_\sigma)$ which generates this pair, we can take
\begin{equation*}
	\mu=\sum^N_{i=1}\delta_{\kappa_i},\ \ M_b(\theta)=\sum^N_{i=1}\1_{\{\kappa_i\}}(\theta)M_{b,i},\ \ M_\sigma(\theta)=\sum^N_{i=1}\1_{\{\kappa_i\}}(\theta)M_{\sigma,i}.
\end{equation*}
In this case, we have $\supp=\{\kappa_i\}^N_{i=1}$.
\item[(ii)]
(Tempered fractional kernels, also known as gamma-fractional kernels)
Consider the kernels
\begin{equation*}
	K^\fractional_b(t)=\frac{1}{\Gamma(\alpha_b)}t^{\alpha_b-1}e^{-\kappa_bt}I_{n\times n},\ \ K^\fractional_\sigma(t)=\frac{1}{\Gamma(\alpha_\sigma)}t^{\alpha_\sigma-1}e^{-\kappa_\sigma t}I_{n\times n},\ \ t>0,
\end{equation*}
where $\alpha_b\in(0,1)$, $\alpha_\sigma\in(\frac{1}{2},1)$ and $\kappa_b,\kappa_\sigma\in[0,\infty)$. The case $\kappa_b=\kappa_\sigma=0$ corresponds to the fractional kernels appearing in the Riemann--Liouville and Caputo fractional derivatives \cite{SaKiMa87}, which arise in the over-damped generalized Langevin equation \eqref{eq_GLE-FDT}. In contrast, the case where $\kappa_b$ and/or $\kappa_\sigma$ are positive corresponds to exponentially tempered fractional kernels. Such kernels arise in dynamics exhibiting the subdiffusive--diffusive crossover behavior \cite{LiSaKa17,MoSaSaPaChMe18,SaSoMeCh17}. Notably, unlike $\alpha_\sigma$, we allow $\alpha_b$ to be smaller than $1/2$, so that $K^\fractional_b$ is not necessarily locally square-integrable. This is crucial in application to the over-damped generalized Langevin equation \eqref{eq_GLE-FDT} derived in \cite{LiLiLu17}, where the exponents $\alpha_b$ and $\alpha_\sigma$ are related by $\alpha_b=2\alpha_\sigma-1\in(0,1)$ in view of the fluctuation--dissipation theorem. The pair of kernels $(K^\fractional_b,K^\fractional_\sigma)$ above is liftable. To see this, let $\gamma_b$ and $\gamma_\sigma$ be constants such that
\begin{equation*}
	(2\alpha_b-1)\vee\frac{1}{2}<\gamma_b<\left(2\alpha_b+\frac{1}{2}\right)\wedge1\ \ \text{and}\ \ (2\alpha_\sigma-1)\vee\frac{1}{2}<\gamma_\sigma<\left(2\alpha_\sigma-\frac{1}{2}\right)\wedge1.
\end{equation*}
For example, we can take $\gamma_b=\frac{\alpha_b+1}{2}$ and $\gamma_\sigma=\alpha_\sigma$, but now we keep the above flexibility. Set
\begin{align*}
	&\mu(\diff\theta)=\left\{(\theta-\kappa_b)^{-\gamma_b}\1_{(\kappa_b,\infty)}(\theta)+(\theta-\kappa_\sigma)^{-\gamma_\sigma}\1_{(\kappa_\sigma,\infty)}(\theta)\right\}\,\diff\theta,\\
	&M_b(\theta)=\Gamma(\alpha_b)^{-1}\Gamma(1-\alpha_b)^{-1}\left\{(\theta-\kappa_b)^{-\gamma_b}+(\theta-\kappa_\sigma)^{-\gamma_\sigma}\1_{(\kappa_\sigma,\infty)}(\theta)\right\}^{-1}(\theta-\kappa_b)^{-{\alpha_b}}\1_{(\kappa_b,\infty)}(\theta)I_{n\times n},\\
	&M_\sigma(\theta)=\Gamma(\alpha_\sigma)^{-1}\Gamma(1-\alpha_\sigma)^{-1}\left\{(\theta-\kappa_b)^{-\gamma_b}\1_{(\kappa_b,\infty)}(\theta)+(\theta-\kappa_\sigma)^{-\gamma_\sigma}\right\}^{-1}(\theta-\kappa_\sigma)^{-{\alpha_\sigma}}\1_{(\kappa_\sigma,\infty)}(\theta)I_{n\times n}.
\end{align*}
By a direct computation, one can show that $(\mu,M_b,M_\sigma)$ is a lifting basis which generates $(K^\fractional_b,K^\fractional_\sigma)$ in the sense of \cref{defi_lifting-basis}. In this case, we have $\supp=[\kappa_b\wedge\kappa_\sigma,\infty)$.
\end{itemize}
\end{exam}

Let $\mu$ be a Borel measure on $[0,\infty)$ satisfying \eqref{eq_integrability-mu}. We denote by $\cH=\cH_\mu$ the set of all $\mu$-equivalence classes of Borel measurable maps $y:[0,\infty)\to\bR^n$ such that
\begin{equation*}
	\|y\|_\cH:=\left(\int_{[0,\infty)}(1+\theta)^{-1/2}|y(\theta)|^2\dmu\right)^{1/2}<\infty.
\end{equation*}
We equip $\cH$ with the inner product
\begin{equation*}
	\langle y_1,y_2\rangle_\cH:=\int_{[0,\infty)}(1+\theta)^{-1/2}\langle y_1(\theta),y_2(\theta)\rangle\dmu,\ \ y_1,y_2\in\cH,
\end{equation*}
which induces the norm $\|\cdot\|_\cH$ on $\cH$. Then, $(\cH,\|\cdot\|_\cH,\langle\cdot,\cdot\rangle_\cH)$ is a separable Hilbert space. We identify the topological dual $\cH^*$ of $\cH$ with $\cH$ itself by the Riesz isomorphism $y\mapsto\langle y,\cdot\rangle_\cH$. Also, we denote by $\cV=\cV_\mu$ the set of all $\mu$-equivalence classes of Borel measurable maps $y:[0,\infty)\to\bR^n$ such that
\begin{equation*}
	\|y\|_\cV:=\left(\int_{[0,\infty)}(1+\theta)^{1/2}|y(\theta)|^2\dmu\right)^{1/2}<\infty.
\end{equation*}
We equip $\cV$ with the norm $\|\cdot\|_\cV$. Then $(\cV,\|\cdot\|_\cV)$ is a separable and reflexive Banach space, and the embedding $\cV\hookrightarrow\cH$ is continuous and dense. Under the identification of $\cH^*$ with $\cH$, we have a continuous and dense embedding $\cH\hookrightarrow\cV^*$, where the topological dual $\cV^*$ of $\cV$ is now identified with the space of all $\mu$-equivalence classes of Borel measurable maps $y:[0,\infty)\to\bR^n$ such that
\begin{equation*}
	\|y\|_{\cV^*}:=\left(\int_{[0,\infty)}(1+\theta)^{-3/2}|y(\theta)|^2\dmu\right)^{1/2}<\infty.
\end{equation*}
Note that the duality pairing $\langle\cdot,\cdot\rangle_{\cV^*,\cV}$ between $\cV^*$ and $\cV$ is compatible with the inner product $\langle\cdot,\cdot\rangle_\cH$ on $\cH$ in the sense that $\langle y_1,y_2\rangle_{\cV^*,\cV}=\langle y_1,y_2\rangle_\cH$ whenever $y_1\in\cH\subset\cV^*$ and $y_2\in\cV\subset\cH$. Hence, we have a \emph{Gelfand triplet} $\cV\hookrightarrow\cH\hookrightarrow\cV^*$.

As shown in \cite[Lemma 2.5]{Ha24}, the space $\cV$ is continuously embedded into $L^1(\mu)=L^1(\mu;\bR^n)$, but there is no relation between $\cH$ and $L^1(\mu)$ in general. For each $y\in\cV$, we set
\begin{equation*}
	\mu[y]:=\int_{[0,\infty)}y(\theta)\dmu.
\end{equation*}
Then, the map $\mu[\cdot]:\cV\to\bR^n$ is a bounded linear operator. Moreover, as shown in \cite[Lemma 2.5]{Ha24}, the following holds:

%% Lemma

\begin{lemm}\label{lemm_mu-ep}
For any $\ep>0$, there exists a constant $C_{\mu,\ep}>0$ such that
\begin{equation*}
	\big|\mu[y]\big|^2\leq\ep\|y\|_\cV^2+C_{\mu,\ep}\|y\|_\cH^2
\end{equation*}
for any $y\in\cV$.
\end{lemm}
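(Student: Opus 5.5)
The plan is to split the integral defining $\mu[y]$ into a low-frequency part $\theta\le R$ and a high-frequency part $\theta>R$, with the threshold $R$ chosen in terms of $\ep$. Each part is then bounded by Cauchy--Schwarz with a different weight.

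On the high-frequency part, write $|y(\theta)|=(1+\theta)^{-1/4}\cdot(1+\theta)^{1/4}|y(\theta)|$. Cauchy--Schwarz gives
\begin{equation*}
	\Big|\int_{(R,\infty)}y(\theta)\dmu\Big|^2\leq\int_{(R,\infty)}(1+\theta)^{-1/2}\dmu\cdot\|y\|_\cV^2.
\end{equation*}
By the integrability condition \eqref{eq_integrability-mu} and dominated convergence, the factor $\int_{(R,\infty)}(1+\theta)^{-1/2}\dmu$ tends to $0$ as $R\to\infty$. Fix $R$ so that this factor is at most $\ep/2$.

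On the low-frequency part $[0,R]$, write $|y(\theta)|=(1+\theta)^{1/4}\cdot(1+\theta)^{-1/4}|y(\theta)|$. Cauchy--Schwarz gives
\begin{equation*}
	\Big|\int_{[0,R]}y(\theta)\dmu\Big|^2\leq\int_{[0,R]}(1+\theta)^{1/2}\dmu\cdot\|y\|_\cH^2.
\end{equation*}
The first factor is finite, since on $[0,R]$
\begin{equation*}
	\int_{[0,R]}(1+\theta)^{1/2}\dmu\leq(1+R)\int_{[0,R]}(1+\theta)^{-1/2}\dmu<\infty.
\end{equation*}

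To combine the two parts, use $|a+b|^2\leq2|a|^2+2|b|^2$. This yields the claim with
\begin{equation*}
	C_{\mu,\ep}=2(1+R)\int_{[0,\infty)}(1+\theta)^{-1/2}\dmu.
\end{equation*}
The same Cauchy--Schwarz estimate over all of $[0,\infty)$ also shows that $\mu[y]$ is well defined for $y\in\cV$, i.e.\ that $y\in L^1(\mu)$, so the splitting of the integral is legitimate.

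The only mildly delicate step is the tail bound: it requires the finiteness in \eqref{eq_integrability-mu}, so that the tail integral actually vanishes as $R\to\infty$. The paper quotes this lemma from \cite[Lemma 2.5]{Ha24}, and the argument above is the standard one.
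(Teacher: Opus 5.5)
Your argument is correct and is the standard one behind \cite[Lemma 2.5]{Ha24}, which the paper cites rather than reproves. You split at a threshold and apply Cauchy--Schwarz with complementary weights on each piece; your tail estimate is the same bound $\|\overline{\mu}_m[\cdot]\|_{L(\cV;\bR^n)}\leq(\int_{[m,\infty)}(1+\theta)^{-1/2}\dmu)^{1/2}$ used in the proof of \cref{prop_SEE-well-posed} (iii). One cosmetic point: your constant vanishes when $\mu=0$, so take, e.g., $\max\{C_{\mu,\ep},1\}$ to meet the requirement $C_{\mu,\ep}>0$.
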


%% Remark

\begin{rem}
\begin{itemize}
\item[(i)]
Since $\|y_1+y_2\|_\cV^2+\|y_1-y_2\|_\cV^2=2(\|y_1\|_\cV^2+\|y_2\|_\cV^2)$ for any $y_1,y_2\in\cV$, the Banach space $(\cV,\|\cdot\|_\cV)$ becomes a Hilbert space equipped with the inner product which induces the norm $\|\cdot\|_\cV$. Similarly, the Banach space $(\cV^*,\|\cdot\|_{\cV^*})$ becomes a Hilbert space with the inner product which induces the norm $\|\cdot\|_{\cV^*}$.
\item[(ii)]
We set $\|y\|_\cV=\infty$ for each $y\in\cH\setminus\cV$. Then, the function $\|\cdot\|_\cV:\cH\to[0,\infty]$ is lower semi-continuous and hence $\cB(\cH)$-measurable (see, e.g., \cite[Exercise 4.2.3]{LiRo15}).
\item[(iii)]
We have $\cB(\cV)=\{A\cap\cV\,|\,A\in\cB(\cH)\}\subset\cB(\cH)$ and $\cB(\cH)=\{A\cap\cH\,|\,A\in\cB(\cV^*)\}\subset\cB(\cV^*)$, where the Borel $\sigma$-algebras are generated by their respective norm topology. Indeed, since the natural embedding $i:\cV\to\cH$ is continuous and injective, the Lusin--Suslin theorem (see, e.g., \cite[Theorem 15.1]{Ke95}) implies that $B=i(B)\in\cB(\cH)$ for any $B\in\cB(\cV)$, and hence $\cB(\cV)\subset\{A\cap\cV\,|\,A\in\cB(\cH)\}\subset\cB(\cH)$. On the other hand, again by the continuity of $i:\cV\to\cH$, we have $\{A\cap\cV\,|\,A\in\cB(\cH)\}\subset\cB(\cV)$, and hence $\cB(\cV)=\{A\cap\cV\,|\,A\in\cB(\cH)\}\subset\cB(\cH)$. The relations $\cB(\cH)=\{A\cap\cH\,|\,A\in\cB(\cV^*)\}\subset\cB(\cV^*)$ can be proved similarly.
\end{itemize}
\end{rem}

Let $(\mu,M_b,M_\sigma)$ be a lifting basis, and let $b:\bR^n\to\bR^n$ and $\sigma:\bR^n\to\bR^{n\times d}$ be Borel measurable maps. We regard \eqref{eq_SEE} as an SEE on the Gelfand triplet $\cV\hookrightarrow\cH\hookrightarrow\cV^*$ specified above. Before defining the notion of a solution to the SEE \eqref{eq_SEE}, let us make some remarks on the coefficients.

%% Remark

\begin{rem}\label{rem_SEE-framework}
\begin{itemize}
\item[(i)]
By the integrability condition \eqref{eq_integrability-M_b}, the map $\cM_b:x\mapsto(\theta\mapsto M_b(\theta)x)$ is a bounded linear operator from $\bR^n$ to $\cV^*$, and we have
\begin{equation*}
	\|\cM_b\|_{L(\bR^n;\cV^*)}\leq\left(\int_{[0,\infty)}(1+\theta)^{-3/2}|M_b(\theta)|_\op^2\dmu\right)^{1/2}<\infty.
\end{equation*}
Similarly, by \eqref{eq_integrability-M_sigma}, the map $\cM_\sigma:x\mapsto(\theta\mapsto M_\sigma(\theta)x)$ is a bounded linear operator from $\bR^n$ to $\cH$, and we have
\begin{equation*}
	\|\cM_\sigma\|_{L(\bR^n;\cH)}\leq\left(\int_{[0,\infty)}(1+\theta)^{-1/2}|M_\sigma(\theta)|_\op^2\dmu\right)^{1/2}<\infty.
\end{equation*}
\item[(ii)]
For each $t\geq0$, define $\cS(t):\cH\to\cH$ by $(\cS(t)y)(\theta):=e^{-\theta t}y(\theta)$, $\theta\in[0,\infty)$, for $y\in\cH$. As shown in \cite[Lemma 2.5]{Ha24}, $\{S(t)\}_{t\geq0}$ is a contraction semigroup on $\cH$, and the associated infinitesimal generator $\cA:\cD(\cA)\subset\cH\to\cH$ is given by
\begin{align*}
	&\cD(\cA)=\left\{y\in\cH\relmiddle|\int_{[0,\infty)}(1+\theta)^{3/2}|y(\theta)|^2\dmu<\infty\right\},\\
	&(\cA y)(\theta)=-\theta y(\theta),\ \ \theta\in[0,\infty),\ \ \text{for $y\in\cD(\cA)$}.
\end{align*}
Moreover, it holds that
\begin{equation}\label{eq_A-nonpositive}
	\langle\cA y,y\rangle_\cH=\|y\|_\cH^2-\|y\|_\cV^2\leq0
\end{equation}
for any $y\in\cD(\cA)$. Furthermore, $\cS(t)$ is a bounded linear operator from $\cH$ to $\cV$ for any $t>0$, and $\cS(\cdot):y\mapsto(\cS(t)y)_{t\in(0,T]}$ is a bounded linear operator from $\cH$ to $L^2(0,T;\cV)$ for any $T>0$. For more details, see \cite[Section 2]{Ha24}.
\item[(iii)]
For each $t\geq0$, it is easy to see that $\|\cS(t)y\|_{\cV^*}\leq\|y\|_{\cV^*}$ for any $y\in\cH$. Consequently, the operator $\cS(t):\cH\to\cH$ (viewed as a map into $\cV^*$) uniquely extends to a contraction linear operator $\widetilde{\cS}(t):\cV^*\to\cV^*$. The family $\{\widetilde{\cS}(t)\}_{t\geq0}$ then forms a contraction semigroup on the larger Hilbert space $\cV^*$. Following the same argument as in the proof of \cite[Lemma 2.5]{Ha24}, we can show that the infinitesimal generator $\widetilde{\cA}:\cD(\widetilde{\cA})\subset\cV^*\to\cV^*$ of $\{\widetilde{\cS}(t)\}_{t\geq0}$ is given by
\begin{align*}
	&\cD(\widetilde{\cA})=\left\{y\in\cV^*\relmiddle|\int_{[0,\infty)}(1+\theta)^{-3/2}\theta^2|y(\theta)|^2\dmu<\infty\right\},\\
	&(\widetilde{\cA}y)(\theta)=-\theta y(\theta),\ \ \theta\in[0,\infty),\ \ \text{for $y\in\cD(\widetilde{\cA})$}.
\end{align*}
Note that $\cD(\widetilde{\cA})=\cV$ with the norm equivalence $\frac{1}{\sqrt{2}}\|y\|_\cV\leq\|y\|_{\cD(\widetilde{\cA})}\leq\|y\|_\cV$ for any $y\in\cV$, where $\|y\|_{\cD(\widetilde{\cA})}:=(\|\widetilde{\cA}y\|_{\cV^*}^2+\|y\|_{\cV^*}^2)^{1/2}$. The operator $\widetilde{\cA}:\cV\to\cV^*$ is the unique extension of the operator $\cA:\cD(\cA)\to\cH$ (viewed as a map into $\cV^*$). Combining the equality \eqref{eq_A-nonpositive}, the density of $\cD(\cA)$ in $\cV$, and the compatibility of the duality pairing ${}_{\cV^*}\langle\cdot,\cdot\rangle_\cV$ with the inner product $\langle\cdot,\cdot\rangle_\cH$ in $\cH$, we obtain
\begin{equation}\label{eq_A-nonpositive'}
	\langle\widetilde{\cA}y,y\rangle_{\cV^*,\cV}=\|y\|_\cH^2-\|y\|_\cV^2\leq0
\end{equation}
for any $y\in\cV$. For simplicity, in what follows, the operators $\widetilde{\cS}(t)$ and $\widetilde{\cA}$ will be denoted again by $\cS(t)$ and $\cA$, respectively.
\end{itemize}
\end{rem}

%% Definition

\begin{defi}\label{defi_SEE-solution}
Let $(\mu,M_b,M_\sigma)$ be a lifting basis, and let $b:\bR^n\to\bR^n$ and $\sigma:\bR^n\to\bR^{n\times d}$ be Borel measurable maps. Suppose that we are given a $d$-dimensional Brownian motion $W$ on a complete probability space $(\Omega,\cF,\bP)$, with respect to a filtration $\bF=(\cF_t)_{t\geq0}$ satisfying the usual conditions. We say that an $\cH$-valued adapted process $Y$ is a solution of the SEE \eqref{eq_SEE} if the following hold:
\begin{itemize}
\item[(i)]
The sample path $t\mapsto Y_t$ is strongly continuous in $\cH$ a.s., and satisfies $\int^T_0\|Y_t\|_\cV^2\,\diff t<\infty$ for any $T>0$ a.s.
\item[(ii)]
For any $T>0$, $\int^T_0\big\{|b(\mu[Y_t])|+|\sigma(\mu[Y_t])|^2\big\}\,\diff t<\infty$ a.s.
\item[(iii)]
The following equality holds in $\cV^*$ for any $t\geq0$ a.s.:
\begin{equation}\label{eq_SEE-sol-strong}
	Y_t=Y_0+\int^t_0\big\{\cA Y_s+\cM_bb(\mu[Y_s])\big\}\,\diff s+\int^t_0\cM_\sigma\sigma(\mu[Y_s])\,\diff W_s.
\end{equation}
\end{itemize}
\end{defi}

%% Remark

\begin{rem}\label{rem_SEE-solution}
\begin{itemize}
\item[(i)]
By the progressive measurability of the $\cH$-valued process $Y$ and the integrability condition in \cref{defi_SEE-solution} (i), combined with the fact that $\cB(\cV)=\{A\cap\cV\,|A\in\cB(\cH)\}\subset\cB(\cH)$, the process $\widetilde{Y}:=Y\1_\cV(Y)$ is a $\cV$-valued progressively measurable process such that $Y_t=\widetilde{Y}_t$ in $\cH$ for $\diff t\otimes\diff\bP$-a.e.\ $(t,\omega)\in[0,\infty)\times\Omega$. The processes $\mu[Y_t]$ and $\cA Y_t$ are then defined by $\mu[\widetilde{Y}_t]$ and $\cA\widetilde{Y}_t$, which are $\bR^n$-valued and $\cV^*$-valued progressively measurable processes, respectively. For simplicity, we denote this version $\widetilde{Y}$ again by $Y$ when no confusion can arise.
\item[(ii)]
In the right-hand side of \eqref{eq_SEE-sol-strong}, the integral with respect to $\diff s$ is understood as a Bochner integral in $\cV^*$, while the integral with respect to $\diff W_s$ is understood as a stochastic integral in $\cH$. These integrals are well-defined thanks to the integrability conditions (i) and (ii) in \cref{defi_SEE-solution}; see also \cref{rem_SEE-framework}. The solution $Y$ is an $\cH$-valued continuous adapted process, but it is not necessarily a semimartingale in $\cH$; it can only be regarded as a semimartingale when viewed as a process in $\cV^*$. This is because the $\cV^*$-valued Bochner integral in \eqref{eq_SEE-sol-strong} (which indeed takes values in $\cH$) is not necessarily of bounded variation as an $\cH$-valued process.
\item[(iii)]
The definition of the solution to the SEE \eqref{eq_SEE} follows the standard ``variational approach'' for monotone SPDEs on a Gelfand triplet, a framework initiated by Pardoux \cite{Pa72,Pa75} and further developed by Krylov and Rozovski\u{\i} \cite{KrRo79}; see also the monographs \cite{GaMa10,LiRo15,Pa21}. Indeed, as demonstrated in the proof of \cref{prop_SEE-well-posed} below, the SEE \eqref{eq_SEE} with Lipschitz continuous coefficients $b:\bR^n\to\bR^n$ and $\sigma:\bR^n\to\bR^{n\times d}$ satisfies the standard ``monotonicity conditions'' on the Gelfand triplet $\cV\hookrightarrow\cH\hookrightarrow\cV^*$. Solutions in the sense of \cref{defi_SEE-solution} are commonly referred to as ``variational solutions'' in the literature. Furthermore, such solutions can be interpreted as ``analytically strong solutions'' on the largest Hilbert space $\cV^*$ by viewing the operator $\cA:\cV\to\cV^*$ as the infinitesimal generator of the contraction semigroup $\{\cS(t)\}_{t\geq0}$ on $\cV^*$. In this setting, the coefficients $y\mapsto\cM_bb(\mu[y\1_\cV(y)])$ and $y\mapsto \cM_\sigma\sigma(\mu[y\1_\cV(y)])$ are regarded as measurable maps from $\cV^*$ to $\cV^*$ and $L_2(\bR^d;\cV^*)$, respectively. However, it should be noted that these coefficients are generally not continuous as maps from $\cV^*$ (or even from $\cH$) to their respective target spaces.
\end{itemize}
\end{rem}

%% Proposition

\begin{prop}\label{prop_SEE-well-posed}
Suppose that we are given a $d$-dimensional Brownian motion $W$ defined on a complete probability space $(\Omega,\cF,\bP)$, with respect to a filtration $\bF=(\cF_t)_{t\geq0}$ satisfying the usual conditions. Let $(\mu,M_b,M_\sigma)$ be a lifting basis, and let $b:\bR^n\to\bR^n$ and $\sigma:\bR^n\to\bR^{n\times d}$ be measurable maps. Assume that there exist constants $C_{b,\Lip},C_{\sigma,\Lip}>0$ such that $|b(x)-b(x')|\leq C_{b,\Lip}|x-x'|$ and $|\sigma(x)-\sigma(x')|\leq C_{\sigma,\Lip}|x-x'|$ for all $x,x'\in\bR^n$. Then, the following assertions hold:
\begin{itemize}
\item[(i)]
For any $\cH$-valued, $\cF_0$-measurable random variable $Y_0$, there exists a unique solution $Y=(Y_t)_{t\geq0}$ to the SEE \eqref{eq_SEE} with the prescribed initial condition $Y_0$. Furthermore, this solution forms a time-homogeneous Markov process on $\cH$.
\item[(ii)]
There exists a constant $C>0$, which depends only on the lifting basis $(\mu,M_b,M_\sigma)$ and the constants $C_{b,\Lip}$ and $C_{\sigma,\Lip}$, such that, for any two solutions $Y$ and $Y'$ of the SEE \eqref{eq_SEE} driven by the same $d$-dimensional Brownian motion $W$, we have
\begin{equation}\label{eq_SEE-apriori}
	\bE\left[\sup_{t\in[0,T]}\|Y_t\|_\cH^2+\int^T_0\|Y_t\|_\cV^2\,\diff t\relmiddle|\cF_0\right]\leq Ce^{CT}\left(1+\|Y_0\|_\cH^2\right)
\end{equation}
and
\begin{equation}\label{eq_SEE-apriori'}
	\bE\left[\sup_{t\in[0,T]}\|Y_t-Y'_t\|_\cH^2+\int^T_0\|Y_t-Y'_t\|_\cV^2\,\diff t\relmiddle|\cF_0\right]\leq Ce^{CT}\|Y_0-Y'_0\|_\cH^2
\end{equation}
for any $T>0$ a.s.
\item[(iii)]
Denote by $(K_b,K_\sigma)$ the liftable pair of kernels generated by $(\mu,M_b,M_\sigma)$. Let $Y$ be a solution of the SEE \eqref{eq_SEE}. Then, the $\bR^n$-valued progressively measurable process $X_t:=\mu[Y_t]$, $t\geq0$, is the solution of the SVE \eqref{eq_SVE} with the forcing term given by $x(t)=\mu[\cS(t)Y_0]=\int_{[0,\infty)}e^{-\theta t}Y_0(\theta)\dmu$ for $t>0$.
\end{itemize}
\end{prop}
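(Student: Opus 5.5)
The plan is to verify the standard hypotheses of the variational theory of monotone SPDEs (Krylov--Rozovski\u{\i}, as in \cite[Theorem 4.2.4]{LiRo15}) on the Gelfand triplet $\cV\hookrightarrow\cH\hookrightarrow\cV^*$. The drift is $A(y):=\cA y+\cM_bb(\mu[y])$, a map $\cV\to\cV^*$, and the diffusion is $B(y):=\cM_\sigma\sigma(\mu[y])\in L_2(\bR^d;\cH)$. Hemicontinuity is immediate: $\cA$ is linear and bounded $\cV\to\cV^*$, $\mu[\cdot]:\cV\to\bR^n$ is bounded, and $b,\sigma$ are continuous. Boundedness $\|A(y)\|_{\cV^*}\leq C(1+\|y\|_\cV)$ follows from $\|\cA y\|_{\cV^*}\leq\|y\|_\cV$, \cref{rem_SEE-framework}(i), the linear growth of $b$, and $|\mu[y]|\leq C\|y\|_\cV$.

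The key step is weak monotonicity and coercivity, which hinge on \cref{lemm_mu-ep}. For $y,y'\in\cV$, using \eqref{eq_A-nonpositive'} we get
\begin{align*}
2\langle A(y)-A(y'),y-y'\rangle_{\cV^*,\cV}+\|B(y)-B(y')\|_{L_2}^2
&\leq 2\|y-y'\|_\cH^2-2\|y-y'\|_\cV^2\\
&\quad+2\|\cM_b\|C_{b,\Lip}|\mu[y-y']|\,\|y-y'\|_\cV+\|\cM_\sigma\|^2C_{\sigma,\Lip}^2|\mu[y-y']|^2 .
\end{align*}
Apply Young's inequality to the cross term, then \cref{lemm_mu-ep} with $\ep$ small, so that all $\|y-y'\|_\cV^2$ contributions are absorbed into $-\|y-y'\|_\cV^2$. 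This leaves a bound $C\|y-y'\|_\cH^2$, which is monotonicity. The same computation with $y'=0$ and the linear growth of $b,\sigma$ gives coercivity:
\[
2\langle A(y),y\rangle+\|B(y)\|^2_{L_2}\leq -\tfrac12\|y\|_\cV^2+C(1+\|y\|_\cH^2).
\]
The theorem then yields existence and uniqueness of a variational solution for each $\cF_0$-measurable $Y_0$. For $Y_0$ not square-integrable, we localize on $\{\|Y_0\|_\cH\leq N\}\in\cF_0$ and patch the solutions together using uniqueness. The Markov property and time-homogeneity follow in the standard way from pathwise uniqueness and the measurable dependence of the solution on the initial datum, as in \cite[Proposition 4.3.5]{LiRo15}.

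For (ii), apply It\^o's formula for $\|Y_t\|_\cH^2$ (respectively $\|Y_t-Y'_t\|_\cH^2$) in the Gelfand triplet. Using the coercivity (respectively monotonicity) bound, this keeps a $-\frac12\int\|\cdot\|_\cV^2$ term on the left. We then control the supremum of the martingale part via BDG and Young's inequality, and close with Gronwall. Everything is done conditionally on $\cF_0$, for instance by working on $\{\|Y_0\|_\cH\leq N\}$ and letting $N\to\infty$. The constants depend only on $C_{b,\Lip}$, $C_{\sigma,\Lip}$, $\|\cM_b\|$, $\|\cM_\sigma\|$ and $C_{\mu,\ep}$.

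For (iii), which I expect to be the main obstacle because $\mu[\cdot]$ is not continuous on $\cH$ or $\cV^*$, use the mild formulation. Regarding the solution as an analytically strong solution in $\cV^*$ (\cref{rem_SEE-framework}(iii)) gives the variation-of-constants formula
\[
Y_t=\cS(t)Y_0+\int_0^t\cS(t-s)\cM_bb(\mu[Y_s])\,\diff s+\int_0^t\cS(t-s)\cM_\sigma\sigma(\mu[Y_s])\,\diff W_s
\]
in $\cV^*$. Pointwise in $\theta$ this reads $Y_t(\theta)=e^{-\theta t}Y_0(\theta)+\int_0^te^{-\theta(t-s)}M_b(\theta)b(X_s)\,\diff s+\int_0^te^{-\theta(t-s)}M_\sigma(\theta)\sigma(X_s)\,\diff W_s$ for $\mu$-a.e.\ $\theta$ and a.e.\ $t$. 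We then integrate in $\mu(\diff\theta)$ and interchange with the $\diff s$ integral (Fubini) and the $\diff W_s$ integral (stochastic Fubini), which recovers \eqref{eq_liftable} and hence \eqref{eq_SVE} with $x(t)=\mu[\cS(t)Y_0]$. Justifying these interchanges requires, for a.e.\ $t$, integrability estimates of the form $\int_0^T\int(\int_0^te^{-\theta(t-s)}|M_b(\theta)|\,|b(X_s)|\,\diff s)\dmu\,\diff t<\infty$ and the analogous $L^2$ bound for $M_\sigma$. These are exactly where \eqref{eq_integrability-mu}--\eqref{eq_integrability-M_sigma} and \cite[Lemma 2.1]{Ha24} enter, for example via $\int_0^Te^{-\theta t}\,\diff t\leq C(1+\theta)^{-1}$ together with Cauchy--Schwarz against $(1+\theta)^{\pm1/2}$ weights. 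I would first do this for bounded stopped processes and then localize.
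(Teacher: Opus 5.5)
Your treatment of (i) and (ii) matches the paper's proof. It uses the same verification of (H1)--(H4) for \cite[Theorem 4.2.4]{LiRo15} via \eqref{eq_A-nonpositive'}, Young's inequality and \cref{lemm_mu-ep}, the same $\cF_0$-partition of the initial datum, and the same It\^o/BDG/Gronwall estimate. One detail to add: uniqueness in the class of \cref{defi_SEE-solution} is read off from \eqref{eq_SEE-apriori'}, so that estimate must be proved for arbitrary solutions with only a.s.\ integrability. You therefore need the paper's stopping times $\tau_N$ on $\|Y_t-Y'_t\|_\cH^2$ and $\int_0^t\|Y_s-Y'_s\|_\cV^2\,\diff s$. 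Restricting to $\{\|Y_0\|_\cH\le N\}$ alone neither makes the stochastic integral a martingale nor justifies absorbing $\frac12\bE[\sup_s\|Y_s-Y'_s\|_\cH^2\,|\,\cF_0]$.

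For (iii) you take a genuinely different route at the interchange step. The paper never evaluates the mild formula pointwise in $\theta$. It applies the truncated functionals $\mu_m[y]=\int_{[0,m)}y(\theta)\dmu$, which are bounded on $\cV^*$ and hence commute with the $\cV^*$-valued Bochner and stochastic integrals. This gives an exact identity with the truncated kernels $\mu_m[\cS(t-s)\cM_b]$ and $\mu_m[\cS(t-s)\cM_\sigma]$. It then lets $m\to\infty$ in $L^2(\Omega\times(0,T))$ using $\|\overline{\mu}_m[\cdot]\|_{L(\cV;\bR^n)}\to0$, Young's convolution inequality and the It\^o isometry.

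You instead represent the $\cV^*$-valued convolution by a $\mu\otimes\bP$-measurable family of scalar stochastic integrals, then integrate against $\mu(\diff\theta)$ using Fubini and stochastic Fubini. This is more transparent, since the SVE appears directly as the $\mu$-integral of the explicitly solved $\theta$-wise linear equations. It costs two ingredients the paper avoids. First, you need that jointly measurable version. Second, you need a stochastic Fubini theorem on the infinite $\sigma$-finite space $([0,\infty),\mu)$, obtained for instance by passing to the finite measure $(1+\theta)^{-1/2}\dmu$ and absorbing the density into the integrand. Its hypothesis is the mixed bound $\int\|e^{-\theta(t-\cdot)}M_\sigma(\theta)\sigma(X_\cdot)\|_{L^2(\Omega\times(0,t))}\dmu<\infty$, an $L^1(\mu)$ bound on $L^2$ norms rather than an $L^2(\mu)$ bound.

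Because $\bE|\sigma(X_s)|^2$ is merely integrable in $s$, that bound is only available for a.e.\ $t$. That is exactly what your $t$-integrated estimates give, via $\int_0^Te^{-2\theta t}\,\diff t\lesssim(1+\theta)^{-1}$ and $\int(1+\theta)^{-1/2}|M_\sigma(\theta)|_\op\dmu<\infty$ (Cauchy--Schwarz with \eqref{eq_integrability-mu} and \eqref{eq_integrability-M_sigma}). The drift term likewise uses $\int(1+\theta)^{-1}|M_b(\theta)|_\op\dmu<\infty$. With these points made explicit your argument is complete. The paper's truncation reaches the same conclusion using only bounded functionals and elementary $L^2$ estimates.
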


%% Proof

\begin{proof}
To establish the well-posedness stated in assertion (i), we apply a general result concerning the well-posedness of monotone SPDEs (see, e.g., \cite[Theorem 4.2.4]{LiRo15}), combined with a standard localization technique. To this end, we verify the following conditions (H1)--(H4) required in \cite[Theorem 4.2.4]{LiRo15} (with $\alpha=2$ in their notation):
\begin{itemize}
\item[(H1)]
(\emph{Hemicontinuity})
For any $y_1,y_2,y_3\in\cV$, the function
\begin{equation*}
	\bR\ni\lambda\mapsto\big\langle \cA(y_1+\lambda y_2)+\cM_bb\big(\mu[y_1+\lambda y_2]\big),y_3\big\rangle_{\cV^*,\cV}\in\bR
\end{equation*}
is continuous.
\item[(H2)]
(\emph{Weak monotonicity})
There exists a constant $C_{\mathrm{H}2}\in\bR$ such that, for any $y_1,y_2\in\cV$, it holds that
\begin{align*}
	&2\big\langle \cA(y_1-y_2)+\cM_b\big(b(\mu[y_1])-b(\mu[y_2])\big),y_1-y_2\big\rangle_{\cV^*,\cV}+\big\|\cM_\sigma\big(\sigma(\mu[y_1])-\sigma(\mu[y_2])\big)\big\|_{L_2(\bR^d;\cH)}^2\\
	&\leq C_{\mathrm{H}2}\|y_1-y_2\|_\cH^2.
\end{align*}
\item[(H3)]
(\emph{Coercivity})
There exist constants $C_{\mathrm{H}3,1}\in\bR$, $C_{\mathrm{H}3,2}>0$ and $C_{\mathrm{H}3,3}>0$ such that, for any $y\in\cV$, it holds that
\begin{equation*}
	2\big\langle \cA y+\cM_bb(\mu[y]),y\big\rangle_{\cV^*,\cV}+\big\|\cM_\sigma\sigma(\mu[y])\big\|_{L_2(\bR^d;\cH)}^2\leq C_{\mathrm{H}3,1}\|y\|_\cH^2-C_{\mathrm{H}3,2}\|y\|_\cV^2+C_{\mathrm{H}3,3}.
\end{equation*}
\item[(H4)]
(\emph{Boundedness})
There exists a constant $C_{\mathrm{H}4}>0$ such that, for any $y\in\cV$, it holds that
\begin{equation*}
	\big\|\cA y+\cM_bb(\mu[y])\big\|_{\cV^*}\leq C_{\mathrm{H}4}\big(1+\|y\|_\cV\big).
\end{equation*}
\end{itemize}
First, since $\cA:\cV\to\cV^*$, $\mu[\cdot]:\cV\to\bR^n$, $b:\bR^n\to\bR^n$ and $\cM_b:\bR^n\to\cV^*$ are (strongly) continuous, we see that condition (H1) (hemicontinuity) holds. Second, observe that, for any $y_1,y_2\in\cV$,
\begin{align*}
	&2\big\langle \cA(y_1-y_2)+\cM_b\big(b(\mu[y_1])-b(\mu[y_2])\big),y_1-y_2\big\rangle_{\cV^*,\cV}+\big\|\cM_\sigma\big(\sigma(\mu[y_1])-\sigma(\mu[y_2])\big)\big\|_{L_2(\bR^d;\cH)}^2\\
	&\leq2\langle\cA(y_1-y_2),y_1-y_2\rangle_{\cV^*,\cV}+2\|\cM_b\|_{L(\bR^n;\cV^*)}C_{b,\Lip}\big|\mu[y_1-y_2]\big|\|y_1-y_2\|_\cV\\
	&\hspace{1cm}+\|\cM_\sigma\|_{L(\bR^n;\cH)}^2C_{\sigma,\Lip}^2\big|\mu[y_1-y_2]\big|^2\\
	&\leq2\|y_1-y_2\|_\cH^2-\frac{3}{2}\|y_1-y_2\|_\cV^2+\big(2\|\cM_b\|_{L(\bR^n;\cV^*)}^2C_{b,\Lip}^2+\|\cM_\sigma\|_{L(\bR^n;\cH)}^2C_{\sigma,\Lip}^2\big)\big|\mu[y_1-y_2]\big|^2,
\end{align*}
where we used \eqref{eq_A-nonpositive'} and Young's inequality in the last line. By using \cref{lemm_mu-ep} with $\ep>0$ given by $\ep=2^{-1}(2\|\cM_b\|_{L(\bR^n;\cV^*)}^2C_{b,\Lip}^2+\|\cM_\sigma\|_{L(\bR^n;\cH)}^2C_{\sigma,\Lip}^2)^{-1}$, we obtain
\begin{equation}\label{eq_SEE-monotonicity}
\begin{split}
	&2\big\langle \cA(y_1-y_2)+\cM_b\big(b(\mu[y_1])-b(\mu[y_2])\big),y_1-y_2\big\rangle_{\cV^*,\cV}+\big\|\cM_\sigma\big(\sigma(\mu[y_1])-\sigma(\mu[y_2])\big)\big\|_{L_2(\bR^d;\cH)}^2\\
	&\leq\widetilde{C}_{\mathrm{H}2}\|y_1-y_2\|_\cH^2-\|y_1-y_2\|_\cV^2,
\end{split}
\end{equation}
where $\widetilde{C}_{\mathrm{H}2}:=2+C_{\mu,\ep}(2\|\cM_b\|_{L(\bR^n;\cV^*)}^2C_{b,\Lip}^2+\|\cM_\sigma\|_{L(\bR^n;\cH)}^2C_{\sigma,\Lip}^2)$ with $C_{\mu,\ep}>0$ arising from \cref{lemm_mu-ep}. In particular, condition (H2) (weak monotonicity) is satisfied. Similarly, noting the linear growth properties of $b:\bR^n\to\bR^n$ and $\sigma:\bR^n\to\bR^{n\times d}$, a calculation analogous to the one above confirms that condition (H3) (coercivity) also holds. Lastly, since $\cA:\cV\to\cV^*$, $\cM_b:\bR^n\to\cV^*$ and $\mu[\cdot]:\cV\to\bR^n$ are bounded linear operators, and $b:\bR^n\to\bR^n$ exhibits linear growth, we conclude that condition (H4) (boundedness) is satisfied.

Based on conditions (H1)--(H4) established above, \cite[Theorem 4.2.4]{LiRo15} implies that, provided the $\cH$-valued, $\cF_0$-measurable initial condition $Y_0$ is square-integrable, the SEE \eqref{eq_SEE} admits a (unique) solution $Y$ satisfying
\begin{equation*}
	\bE\left[\sup_{t\in[0,T]}\|Y_t\|_\cH^2+\int^T_0\|Y_t\|_\cV^2\,\diff t\right]<\infty
\end{equation*}
for any $T>0$. For a general $Y_0$ that is not necessarily square-integrable, we consider the $\cF_0$-measurable partition $\{A_N\}_{N\in\bN}$ of $\Omega$ defined by $A_N:=\{N-1\leq\|Y_0\|_\cH<N\}$. Denoting by $Y^N=(Y^N_t)_{t\geq0}$ the solution to the SEE \eqref{eq_SEE} with the initial condition $Y_0\1_{A_N}$ and setting $Y_t:=\sum^\infty_{N=1}Y^N_t\1_{A_N}$ for $t\geq0$, we see that $Y=(Y_t)_{t\geq0}$ is a solution to the SEE \eqref{eq_SEE} in the sense of \cref{defi_SEE-solution} with the initial condition $Y_0$. Uniqueness of the solution follows from the estimate \eqref{eq_SEE-apriori'} in assertion (ii), which is established below. The fact that the solution to the SEE \eqref{eq_SEE} forms a time-homogeneous Markov process follows, once again, from the general result \cite[Proposition 4.3.5]{LiRo15} on monotone SPDEs.

Next, we prove assertion (ii). Specifically, we focus on the proof of estimate \eqref{eq_SEE-apriori'}, as estimate \eqref{eq_SEE-apriori} can be derived in an analogous manner. Let $Y$ and $Y'$ be two solutions of \eqref{eq_SEE}. Applying It\^{o}'s formula for the squared norm $\|\cdot\|_\cH^2$ (see \cite[Theorem 4.2.5]{LiRo15} or \cref{lemm_Ito} below) to the $\cH$-valued process $Y-Y'$ yields
\begin{align*}
	&\big\|Y_t-Y'_t\big\|_\cH^2\\
	&=\big\|Y_0-Y'_0\big\|_\cH^2+2\int^t_0\Big\langle Y_s-Y'_s,\cM_\sigma\big(\sigma(\mu[Y_s])-\sigma(\mu[Y'_s])\big)\,\diff W_s\Big\rangle_\cH\\
	&\hspace{0.2cm}+\int^t_0\left\{2\Big\langle \cA (Y_s-Y'_s)+\cM_b\big(b(\mu[Y_s])-b(\mu[Y'_s])\big),Y_s-Y'_s\Big\rangle_{\cV^*,\cV}+\Big\|\cM_\sigma\big(\sigma(\mu[Y_s])-\sigma(\mu[Y'_s])\big)\Big\|_{L_2(\bR^d;\cH)}^2\right\}\,\diff s
\end{align*}
for any $t\geq0$ a.s. By the monotonicity estimate \eqref{eq_SEE-monotonicity}, we have
\begin{equation}\label{eq_SEE-apriori-Ito}
\begin{split}
	&\big\|Y_t-Y'_t\big\|_\cH^2+\int^t_0\big\|Y_s-Y'_s\big\|_\cV^2\,\diff s\\
	&\leq\big\|Y_0-Y'_0\big\|_\cH^2+\widetilde{C}_{\mathrm{H}2}\int^t_0\big\|Y_s-Y'_s\big\|_\cH^2\,\diff s+2\int^t_0\Big\langle Y_s-Y'_s,\cM_\sigma\big(\sigma(\mu[Y_s])-\sigma(\mu[Y'_s])\big)\,\diff W_s\Big\rangle_\cH
\end{split}
\end{equation}
for any $t\geq0$ a.s. For each $N\in\bN$, set $\tau_N:=\inf\{t\geq0\,|\,\|Y_t-Y'_t\|_\cH^2>N\,\text{or}\,\int^t_0\|Y_s-Y'_s\|_\cV^2\,\diff s>N\}$. Since $Y$ and $Y'$ are $\cH$-valued continuous adapted processes such that $\int^T_0\|Y_t\|_\cV^2\,\diff t<\infty$ and $\int^T_0\|Y'_t\|_\cV^2\,\diff t<\infty$ a.s.\ for any $T>0$, $\{\tau_N\}_{N\in\bN}$ is an increasing sequence of stopping times such that $\lim_{N\to\infty}\tau_N=\infty$ a.s. Fix $N\in\bN$. Since $\cM_\sigma\in L(\bR^n;\cH)$, $\sigma:\bR^n\to\bR^{n\times d}$ is Lipschitz continuous, and $\mu[\cdot]\in L(\cV;\bR^n)$, by the definition of the stopping time $\tau_N$, we see that the stochastic integral in the right-hand side of \eqref{eq_SEE-apriori-Ito} stopped at $\tau_N$ is a martingale. Hence, taking the conditional expectations $\bE[\cdot|\cF_0]$ on both sides of \eqref{eq_SEE-apriori-Ito}, we obtain
\begin{equation}\label{eq_SEE-apriori-1}
	\bE\left[\int^{t\wedge\tau_N}_0\big\|Y_s-Y'_s\big\|_\cV^2\,\diff s\relmiddle|\cF_0\right]\leq\big\|Y_0-Y'_0\big\|_\cH^2+\widetilde{C}_{\mathrm{H}2}\int^t_0\bE\left[\sup_{r\in[0,s\wedge\tau_N]}\big\|Y_r-Y'_r\big\|_\cH^2\relmiddle|\cF_0\right]\,\diff s
\end{equation}
for any $t\geq0$ a.s. Furthermore, applying the conditional Burkholder--Davis--Gundy inequality\footnote{For any one-dimensional continuous martingale $M=(M_t)_{t\geq0}$ on a filtered probability space $(\Omega,\cF,\bF,\bP)$ and any $A\in\cF_0$, applying the standard Burkholder--Davis--Gundy inequality to the martingale $(M_t\1_A)_{t\geq0}$ yields that $\bE[\sup_{t\in[0,T]}|M_t|\1_A]\leq C_\BDG\bE[\langle M\rangle_T^{1/2}\1_A]$, which implies that $\bE[\sup_{t\in[0,T]}|M_t||\cF_0]\leq C_\BDG\bE[\langle M\rangle_T^{1/2}|\cF_0]$ a.s.\ for any $T>0$.} to \eqref{eq_SEE-apriori-Ito} shows that, for some universal constant $C_\BDG>0$,
\begin{align*}
	&\bE\left[\sup_{s\in[0,t\wedge\tau_N]}\big\|Y_s-Y'_s\big\|_\cH^2\relmiddle|\cF_0\right]\\
	&\leq\big\|Y_0-Y'_0\big\|_\cH^2+\widetilde{C}_{\mathrm{H}2}\int^t_0\bE\left[\sup_{r\in[0,s\wedge\tau_N]}\big\|Y_r-Y'_r\big\|_\cH^2\relmiddle|\cF_0\right]\,\diff s\\
	&\hspace{0.5cm}+2C_\BDG\bE\left[\left(\int^{t\wedge\tau_N}_0\big\|Y_s-Y'_s\big\|_\cH^2\big\|\cM_\sigma\big(\sigma(\mu[Y_s])-\sigma(\mu[Y'_s])\big)\big\|_{L_2(\bR^d;\cH)}^2\,\diff s\right)^{1/2}\relmiddle|\cF_0\right]
\end{align*}
for any $t\geq0$ a.s. Since $\bE[\sup_{s\in[0,t\wedge\tau_N]}\|Y_s-Y'_s\|_\cH^2|\cF_0]<\infty$ a.s.\ by the definition of the stopping time $\tau_N$, the above estimate and Young's inequality yield
\begin{equation}\label{eq_SEE-apriori-2}
\begin{split}
	\frac{1}{2}\bE\left[\sup_{s\in[0,t\wedge\tau_N]}\big\|Y_s-Y'_s\big\|_\cH^2\relmiddle|\cF_0\right]&\leq\big\|Y_0-Y'_0\big\|_\cH^2+\widetilde{C}_{\mathrm{H}2}\int^t_0\bE\left[\sup_{r\in[0,s\wedge\tau_N]}\big\|Y_r-Y'_r\big\|_\cH^2\relmiddle|\cF_0\right]\,\diff s\\
	&\hspace{0.5cm}+2C_\BDG^2\bE\left[\int^{t\wedge\tau_N}_0\big\|\cM_\sigma\big(\sigma(\mu[Y_s])-\sigma(\mu[Y'_s])\big)\big\|_{L_2(\bR^d;\cH)}^2\,\diff s\relmiddle|\cF_0\right]
\end{split}
\end{equation}
for any $t\geq0$ a.s. Concerning the last conditional expectation in the right-hand side of \eqref{eq_SEE-apriori-2}, since $\cM_\sigma\in L(\bR^n;\cH)$, $\sigma:\bR^n\to\bR^{n\times d}$ is Lipschitz continuous, and $\mu[\cdot]\in L(\cV;\bR^n)$, there exists a constant $\widetilde{C}>0$, which depends only on $\mu,M_\sigma$ and the Lipschitz constant $C_{\sigma,\Lip}$ of $\sigma$, such that
\begin{equation}\label{eq_SEE-apriori-3}
	\bE\left[\int^{t\wedge\tau_N}_0\big\|\cM_\sigma\big(\sigma(\mu[Y_s])-\sigma(\mu[Y'_s])\big)\big\|_{L_2(\bR^d;\cH)}^2\,\diff s\relmiddle|\cF_0\right]\leq\widetilde{C}\bE\left[\int^{t\wedge\tau_N}_0\big\|Y_s-Y'_s\big\|_\cV^2\,\diff s\relmiddle|\cF_0\right]
\end{equation}
for any $t\geq0$ a.s. Combining \eqref{eq_SEE-apriori-1}, \eqref{eq_SEE-apriori-2} and \eqref{eq_SEE-apriori-3}, we obtain
\begin{align*}
	&\bE\left[\sup_{s\in[0,t\wedge\tau_N]}\big\|Y_s-Y'_s\big\|_\cH^2+\int^{t\wedge\tau_N}_0\big\|Y_s-Y'_s\big\|_\cV^2\,\diff s\relmiddle|\cF_0\right]\\
	&\leq C\big\|Y_0-Y'_0\big\|_\cH^2+C\int^t_0\bE\left[\sup_{r\in[0,s\wedge\tau_N]}\big\|Y_r-Y'_r\big\|_\cH^2\relmiddle|\cF_0\right]\,\diff s
\end{align*}
for any $t\geq0$ a.s.\ with a constant $C>0$ depending only on the lifting basis $(\mu,M_b,M_\sigma)$ and the constants $C_{b,\Lip}$ and $C_{\sigma,\Lip}$. Thus, Gronwall's inequality implies that
\begin{equation*}
	\bE\left[\sup_{t\in[0,T\wedge\tau_N]}\big\|Y_t-Y'_t\big\|_\cH^2+\int^{T\wedge\tau_N}_0\big\|Y_t-Y'_t\big\|_\cV^2\,\diff t\relmiddle|\cF_0\right]\leq Ce^{CT}\big\|Y_0-Y'_0\big\|_\cH^2
\end{equation*}
for any $T>0$ a.s. Then, taking the limit $N\to\infty$ and using the conditional monotone convergence theorem yield the desired estimate \eqref{eq_SEE-apriori'}. The estimate \eqref{eq_SEE-apriori} can be proved in the same manner as above by using the coercivity (H3) instead of the monotonicity \eqref{eq_SEE-monotonicity}; hence, its proof is omitted.

Now we prove assertion (iii). Let $Y=(Y_t)_{t\geq0}$ be a solution to the SEE \eqref{eq_SEE} in the sense of \cref{defi_SEE-solution}. By virtue of a standard localization technique with respect to the initial condition $Y_0$, without loss of generality, we may assume that $\bE[\|Y_0\|_\cH^2]<\infty$. In this case, by \eqref{eq_SEE-apriori}, we have
\begin{equation}\label{eq_SEE-integrability}
	\bE\left[\sup_{t\in[0,T]}\|Y_t\|_\cH^2+\int^T_0\|Y_t\|_\cV^2\,\diff t\right]<\infty.
\end{equation}
Note that the solution $Y$ to the SEE \eqref{eq_SEE} in the sense of \cref{defi_SEE-solution} can be regarded as an analytically strong solution on $\cV^*$; see \cref{rem_SEE-solution}. Hence, the arguments in \cite[Chapter 5]{DaPrZa14} show that it is also an analytically mild solution on $\cV^*$ in the sense that
\begin{equation}\label{eq_SEE-mild}
	Y_t=\cS(t)Y_0+\int^t_0\cS(t-s)\cM_bb(\mu[Y_s])\,\diff s+\int^t_0\cS(t-s)\cM_\sigma\sigma(\mu[Y_s])\,\diff W_s\ \ \text{in $\cV^*$}
\end{equation}
for any $t\geq0$ a.s.; see also \cite[Proposition 2.9 and Proposition 2.10]{FrKn01} for more details on relationships between analytically strong, weak and mild solutions for SEEs on separable Hilbert spaces. Let $m\in\bN$, and define maps $\mu_m[\cdot]:\cV^*\to\bR^n$ and $\overline{\mu}_m[\cdot]:\cV\to\bR^n$ by
\begin{align*}
	&\mu_m[y]:=\int_{[0,m)}y(\theta)\dmu\ \ \text{for $y\in\cV^*$},\ \ \text{and}\\
	&\overline{\mu}_m[y]:=\mu[y]-\mu_m[y]=\int_{[m,\infty)}y(\theta)\dmu\ \ \text{for $y\in\cV$}.
\end{align*}
Clearly, $\mu_m[\cdot]$ is a bounded linear operator from $\cV^*$ to $\bR^n$, and $\overline{\mu}_m[\cdot]$ is a bounded linear operator from $\cV$ to $\bR^n$. Furthermore, by the Cauchy--Schwarz inequality, we have
\begin{equation*}
	\big\|\overline{\mu}_m[\cdot]\big\|_{L(\cV;\bR^n)}=\sup_{\substack{y\in\cV\\\|y\|_\cV\leq1}}\big|\overline{\mu}_m[y]\big|\leq\sup_{\substack{y\in\cV\\\|y\|_\cV\leq1}}\int_{[m,\infty)}|y(\theta)|\dmu\leq\left(\int_{[m,\infty)}(1+\theta)^{-1/2}\dmu\right)^{1/2}.
\end{equation*}
Noting the integrability condition \eqref{eq_integrability-mu} for the measure $\mu$, the dominated convergence theorem yields
\begin{equation}\label{eq_mu-approximation}
	\lim_{m\to\infty}\big\|\overline{\mu}_m[\cdot]\big\|_{L(\cV;\bR^n)}=0.
\end{equation}
We apply the operator $\mu_m[\cdot]\in L(\cV^*;\bR^n)$ to both sides of \eqref{eq_SEE-mild}. Since any bounded linear operator from $\cV^*$ to $\bR^n$ commutes with Bochner and stochastic integrals on $\cV^*$ (see, e.g., \cite[Proposition 1.6 and Proposition 4.30]{DaPrZa14}), we obtain
\begin{equation}\label{eq_SEE-mild-mu_m}
	\mu_m[Y_t]=\mu_m[\cS(t)Y_0]+\int^t_0\mu_m[\cS(t-s)\cM_b]b(\mu[Y_s])\,\diff s+\int^t_0\mu_m[\cS(t-s)\cM_\sigma]\sigma(\mu[Y_s])\,\diff W_s\ \ \text{in $\bR^n$}
\end{equation}
a.s.\ for any $t\geq0$. We take the limit $m\to\infty$ in each term of \eqref{eq_SEE-mild-mu_m}. Recalling that $x(t)=\mu[\cS(t)Y_0]$, $K_b(t)=\mu[\cS(t)\cM_b]$ and $K_\sigma(t)=\mu[\cS(t)\cM_\sigma]$ for a.e.\ $t\geq0$, to establish assertion (iii), it suffices to show that, for any $T>0$,
\begin{align}
	\label{eq_SEE-Volterra-1}
	&\lim_{m\to\infty}\bE\left[\int^T_0\big|\overline{\mu}_m[Y_t]\big|^2\,\diff t\right]=0,\\
	\label{eq_SEE-Volterra-2}
	&\lim_{m\to\infty}\bE\left[\int^T_0\big|\overline{\mu}_m[\cS(t)Y_0]\big|^2\,\diff t\right]=0,\\
	\label{eq_SEE-Volterra-3}
	&\lim_{m\to\infty}\bE\left[\int^T_0\left|\int^t_0\overline{\mu}_m[\cS(t-s)\cM_b]b(\mu[Y_s])\,\diff s\right|^2\,\diff t\right]=0,\ \ \text{and}\\
	\label{eq_SEE-Volterra-4}
	&\lim_{m\to\infty}\bE\left[\int^T_0\left|\int^t_0\overline{\mu}_m[\cS(t-s)\cM_\sigma]\sigma(\mu[Y_s])\,\diff W_s\right|^2\,\diff t\right]=0.
\end{align}
First, \eqref{eq_SEE-Volterra-1} follows from \eqref{eq_SEE-integrability} and \eqref{eq_mu-approximation}. Similarly, since the map $\cS(\cdot):y\mapsto(\cS(t)y)_{t\in(0,T]}$ is a bounded linear operator from $\cH$ to $L^2(0,T;\cV)$, again by \eqref{eq_mu-approximation} we obtain \eqref{eq_SEE-Volterra-2}. As for the expectation in \eqref{eq_SEE-Volterra-3}, by Young's convolution inequality, we have
\begin{equation*}
	\bE\left[\int^T_0\left|\int^t_0\overline{\mu}_m[\cS(t-s)\cM_b]b(\mu[Y_s])\,\diff s\right|^2\,\diff t\right]\leq\left(\int^T_0\big|\overline{\mu}_m[\cS(t)\cM_b]\big|_\op\,\diff t\right)^2\,\bE\left[\int^T_0\big|b(\mu[Y_t])\big|^2\,\diff t\right].
\end{equation*}
Since $b:\bR^n\to\bR^n$ exhibits linear growth and $\mu[\cdot]\in L(\cV;\bR^n)$, it follows from \eqref{eq_SEE-integrability} that $\bE[\int^T_0|b(\mu[Y_t])|^2\,\diff t]<\infty$. Furthermore, observe that
\begin{align*}
	\int^T_0\big|\overline{\mu}_m[\cS(t)\cM_b]\big|_\op\,\diff t&\leq\int^T_0\int_{[m,\infty)}e^{-\theta t}|M_b(\theta)|_\op\dmu\,\diff t\\
	&\leq\int_{[m,\infty)}\theta^{-1}|M_b(\theta)|_\op\dmu\\
	&\leq\left(\int_{[m,\infty)}\theta^{-1/2}\dmu\right)^{1/2}\left(\int_{[m,\infty)}\theta^{-3/2}|M_b(\theta)|_\op^2\dmu\right)^{1/2},
\end{align*}
where we used Tonelli's theorem and the elementary inequality $\int^T_0e^{-\theta t}\,\diff t\leq\theta^{-1}$ in the second inequality, and the Cauchy--Schwarz inequality in the third inequality. Recalling the integrability conditions \eqref{eq_integrability-mu} and \eqref{eq_integrability-M_b} on $\mu$ and $M_b$, the dominated convergence theorem ensures that the last term in the above estimate converges to zero as $m\to\infty$. Hence, we obtain \eqref{eq_SEE-Volterra-3}. It remains to prove \eqref{eq_SEE-Volterra-4}. By It\^{o}'s isometry and Young's convolution inequality, we obtain
\begin{align*}
	\bE\left[\int^T_0\left|\int^t_0\overline{\mu}_m[\cS(t-s)\cM_\sigma]\sigma(\mu[Y_s])\,\diff W_s\right|^2\,\diff t\right]&=\bE\left[\int^T_0\int^t_0\big|\overline{\mu}_m[\cS(t-s)\cM_\sigma]\sigma(\mu[Y_s])\big|^2\,\diff s\diff t\right]\\
	&\leq\int^T_0\big|\overline{\mu}_m[\cS(t)\cM_\sigma]\big|_\op^2\,\diff t\,\bE\left[\int^T_0\big|\sigma(\mu[Y_t])\big|^2\,\diff t\right]
\end{align*}
for any $m\in\bN$. Since $\sigma:\bR^n\to\bR^{n\times d}$ exhibits linear growth and $\mu[\cdot]\in L(\cV;\bR^n)$, it follows from \eqref{eq_SEE-integrability} that $\bE[\int^T_0|\sigma(\mu[Y_t])|^2\,\diff t]<\infty$. Together with the facts that $\cM_\sigma\in L(\bR^n;\cH)$ and $\cS(\cdot)\in L(\cH;L^2(0,T;\cV))$, along with the convergence given in \eqref{eq_mu-approximation}, this implies that \eqref{eq_SEE-Volterra-4} holds. Consequently, by taking the limit $m\to\infty$ in \eqref{eq_SEE-mild-mu_m}, we conclude that assertion (iii) is satisfied. This completes the proof.
\end{proof}

Define
\begin{equation*}
	\Lambda:=\left\{\eta:[0,\infty)\to\cH\relmiddle|\text{$t\mapsto\eta_t$ is strongly continuous in $\cH$ and}\ \int^T_0\|\eta_t\|_\cV^2\,\diff t<\infty\ \text{for any $T>0$}\right\}
\end{equation*}
and
\begin{equation}\label{eq_Lambda-metric}
	d_\Lambda(\eta,\eta'):=\sum_{T\in\bN}\frac{1}{2^T}\big\{\|\eta-\eta'\|_{\Lambda_T}\wedge1\big\}\ \ \text{for $\eta,\eta'\in\Lambda$},
\end{equation}
where
\begin{equation}\label{eq_Lambda-norm}
	\|\eta\|_{\Lambda_T}:=\left(\sup_{t\in[0,T]}\|\eta_t\|_\cH^2+\int^T_0\|\eta_t\|_\cV^2\,\diff t\right)^{1/2}\ \ \text{for $\eta\in\Lambda$}.
\end{equation}
Then, $(\Lambda,d_\Lambda)$ is a complete separable metric space; see \cref{lemm_app-Lambda}. A sequence $\{\eta^k\}_{k\in\bN}$ in $\Lambda$ converges to an element $\eta\in\Lambda$ with respect to the metric $d_\Lambda$ if and only if $\lim_{k\to\infty}\|\eta_k-\eta\|_{\Lambda_T}=0$ for any $T>0$. By virtue of the final assertion in \cref{lemm_app-Lambda} (see also \cref{rem_app-Lambda} (ii)), for each solution $Y$ to the SEE \eqref{eq_SEE} on a complete probability space $(\Omega,\cF,\bP)$, the sample-path map $\Omega\ni\omega\mapsto(Y_t(\omega))_{t\geq0}$ defines a $\Lambda$-valued Borel measurable random variable. Under the setting in \cref{prop_SEE-well-posed}, the law of the path of a solution $Y$ on $\Lambda$ is determined by the distribution of the initial condition $Y_0$ on $\cH$. This follows from the uniqueness in law for the SEE \eqref{eq_SEE}, which is ensured by the pathwise uniqueness and a Yamada--Watanabe-type result for general SEEs (see, e.g., \cite[Appendix E]{LiRo15}). When $Y_0$ is distributed according to a prescribed initial distribution $\nu\in\cP(\cH)$, we denote the law of the solution $Y$ on the path space $\Lambda$ by $\bfP^\nu\in\cP(\Lambda)$. The Markov semigroup $\{P_t\}_{t\geq0}$ on $\cH$ associated with the SEE \eqref{eq_SEE} is then given by
\begin{equation*}
	P_t(y,A):=\bfP^{\delta_y}\big(\left\{\eta\in\Lambda\relmiddle|\eta_t\in A\right\}\big),\ \ y\in\cH,\ A\in\cB(\cH),\ t\geq0.
\end{equation*}
The estimate \eqref{eq_SEE-apriori'} in particular implies that the Markov semigroup $\{P_t\}_{t\geq0}$ on $\cH$ satisfies the Feller property. Furthermore, the Markov semigroup $\{P_t\}_{t\geq0}$ is clearly stochastically continuous, and hence it is measurable.

%% Remark

\begin{rem}\label{rem_path-distribution}
\begin{itemize}
\item[(i)]
More generally than the Feller property of $\{P_t\}_{t\geq0}$, the estimate \eqref{eq_SEE-apriori'} implies that the map $\nu\mapsto\bfP^\nu$ is continuous from $\cP(\cH)$ to $\cP(\Lambda)$, both of which are equipped with their respective topology of weak convergence of probability measures. For each $\tau\in[0,\infty)$, denote by $\sS_\tau:\Lambda\to\Lambda$ the time-shift operator on $\Lambda$, which is defined by $\sS_\tau\eta:=(\eta_{t+\tau})_{t\geq0}$ for each $\eta\in\Lambda$. The time-homogeneous Markov property implies that $\bfP^\nu\circ\sS_\tau^{-1}=\bfP^{P_\tau^*\nu}$.
\item[(ii)]
Consider the SVE \eqref{eq_SVE} with the liftable pair of kernels $(K_b,K_\sigma)$ generated by a lifting basis $(\mu,M_b,M_\sigma)$. Given a free term $x$ of the form $x(t)=\mu[\cS(t)Y_0]$ for some $\cH$-valued random variable $Y_0$ distributed according to a prescribed probability measure $\nu\in\cP(\cH)$, we denote the law of the $\diff t$-equivalence class of the solution $X=(X_t)_{t>0}$ on $L^2_\loc(0,\infty;\bR^n)$ by $\bfQ^\nu\in\cP(L^2_\loc(0,\infty;\bR^n))$. By \cref{prop_SEE-well-posed} (iii), we have $\bfQ^\nu=\bfP^\nu\circ\mu[\cdot]^{-1}$. Here, with a slight abuse of notation, $\mu[\cdot]$ is understood as a map from $\Lambda$ to $L^2_\loc(0,\infty;\bR^n)$, which maps each $\eta\in\Lambda$ to the $\diff t$-equivalence class of $(\mu[\eta_t])_{t\geq0}$. Since $\cV\ni y\mapsto\mu[y]\in\bR^n$ is a bounded linear operator, by the definition of $\Lambda$, the map $\mu[\cdot]:\Lambda\to L^2_\loc(0,\infty;\bR^n)$ is continuous. Consequently, the continuity of $\cP(\cH)\ni\nu\mapsto\bfP^\nu\in\cP(\Lambda)$ implies the continuity of $\cP(\cH)\ni\nu\mapsto\bfQ^\nu\in\cP(L^2_\loc(0,\infty;\bR^n))$ with respect to the topology of weak convergence of probability measures. With a slight abuse notation, we again denote the time-shift operator $\xi\mapsto\xi_{\cdot+\tau}$ on $L^2_\loc(0,\infty;\bR^n)$ by $\sS_\tau$ for each $\tau\in[0,\infty)$. Then, we have $\bfQ^\nu\circ\sS_\tau^{-1}=\bfQ^{P_\tau^*\nu}$.
\end{itemize}
\end{rem}

To apply the generalized Harris' theorem of \cite{HaMaSc11} (see \cref{theo_HaMaSc11}), the construction of a suitable distance(-like) function is essential. To this end, a ``change-of-norm'' technique plays a crucial role in our framework. More precisely, we introduce the following definition.

%% Definition

\begin{defi}\label{defi_Phi}
Let $(\mu,M_b,M_\sigma)$ be a lifting basis. We call a measurable map $\Phi:[0,\infty)\to\bR^{n\times n}$ an \emph{admissible weight function} if the matrix $\Phi(\theta)\in\bR^{n\times n}$ is symmetric, positive definite and satisfies
\begin{equation}\label{eq_weight-bound}
	|\Phi(\theta)|_\op\leq C_\Phi(1+\theta)^{-1/2}\ \ \text{and}\ \ \big|\Phi(\theta)^{-1}\big|_\op\leq C_\Phi(1+\theta)^{1/2}
\end{equation}
for $\mu$-a.e.\ $\theta\in[0,\infty)$ for some constant $C_\Phi>0$. For each admissible weight function $\Phi$, define
\begin{align*}
	&\|y\|_\Phi:=\left(\int_{[0,\infty)}\langle\Phi(\theta)y(\theta),y(\theta)\rangle\dmu\right)^{1/2}\ \ \text{for $y\in\cH$},\\
	&\vnorm{y}_\Phi:=\left(\int_{[0,\infty)}\theta\langle\Phi(\theta)y(\theta),y(\theta)\rangle\dmu\right)^{1/2}\ \ \text{for $y\in\cV$},\\
	&\mu_{b,\Phi}[y]:=\int_{[0,\infty)}M_b(\theta)^\top\Phi(\theta)y(\theta)\dmu\ \ \text{for $y\in\cV$},\\
	&\mu_{\sigma,\Phi}[y]:=\int_{[0,\infty)}M_\sigma(\theta)^\top\Phi(\theta)y(\theta)\dmu\ \ \text{for $y\in\cH$},\ \ \text{and}\\
	&Q_{\sigma,\Phi}:=\int_{[0,\infty)}M_\sigma(\theta)^\top\Phi(\theta)M_\sigma(\theta)\dmu.
\end{align*}
Furthermore, define a map $d_\Phi:\cH\times\cH\to[0,1]$ by
\begin{equation*}
	d_\Phi(y_1,y_2):=\|y_1-y_2\|_\Phi\wedge1\ \ \text{for $y_1,y_2\in\cH$.}
\end{equation*}
\end{defi}

%% Remark

\begin{rem}\label{rem_Phi}
\begin{itemize}
\item[(i)]
Notice that $\|y\|_\cH=\|y\|_{\Phi_0}$ and $\|y\|_\cV=(\|y\|_{\Phi_0}^2+\vnorm{y}_{\Phi_0}^2)^{1/2}$ for the admissible weight function $\Phi_0(\theta)=(1+\theta)^{-1/2}I_{n\times n}$.
\item[(ii)]
For every admissible weight function $\Phi$, the map $y\mapsto\|y\|_\Phi$ defines an equivalent norm in $\cH$. This norm is induced by the inner product
\begin{equation*}
	\langle y_1,y_2\rangle_\Phi:=\int_{[0,\infty)}\langle\Phi(\theta)y_1(\theta),y_2(\theta)\rangle\dmu,\ \ y_1,y_2\in\cH.
\end{equation*}
The map $d_\Phi:\cH\times\cH\to[0,1]$ is a distance function on $\cH$ which is compatible to the original norm-topology of $\cH$. Similarly, the map $y\mapsto(\|y\|_\Phi^2+\vnorm{y}_\Phi^2)^{1/2}$ defines an equivalent norm in $\cV$. If we identify $\cH^*$ with $\cH$ by the Riesz isomorphism $y\mapsto\langle y,\cdot\rangle_\Phi$ with respect to the inner product $\langle\cdot,\cdot\rangle_\Phi$, which may be different from the isomorphism $y\mapsto\langle y,\cdot\rangle_\cH$, then the corresponding duality pairing $\langle\cdot,\cdot\rangle_{\cV^*,\cV,\Phi}$ between $\cV^*$ and $\cV$ is given as the unique extension of the bounded bilinear functional $\cH\times\cV\ni(y_1,y_2)\mapsto\langle y_1,y_2\rangle_\Phi\in\bR$ to $\cV^*\times\cV$.
\item[(iii)]
For each admissible weight function $\Phi$, it is clear from the definition that
\begin{equation}\label{eq_A-Phi}
	\langle\cA y,y\rangle_{\cV^*,\cV,\Phi}=-\vnorm{y}_\Phi^2
\end{equation}
for any $y\in\cV$. The integrability condition \eqref{eq_integrability-M_b} on $M_b$ and the bound \eqref{eq_weight-bound} for $\Phi$ ensure that the map $\mu_{b,\Phi}[\cdot]$ is a bounded linear operator from $\cV$ to $\bR^n$. This is the adjoint operator of $\cM_b\in L(\bR^n;\cV^*)$ with respect to the duality pairing $\langle\cdot,\cdot\rangle_{\cV^*,\cV,\Phi}$, that is,
\begin{equation}\label{eq_mu-b-Phi}
	\langle\cM_b x,y\rangle_{\cV^*,\cV,\Phi}=\int_{[0,\infty)}\big\langle \Phi(\theta)M_b(\theta)x,y(\theta)\big\rangle\dmu=\langle x,\mu_{b,\Phi}[y]\rangle
\end{equation}
for any $x\in\bR^n$ and $y\in\cV$. Similarly, the integrability condition \eqref{eq_integrability-M_sigma} on $M_\sigma$ and the bound \eqref{eq_weight-bound} for $\Phi$ ensure that the map $\mu_{\sigma,\Phi}[\cdot]$ is a bounded linear operator from $\cH$ to $\bR^n$. This is the adjoint operator of $\cM_\sigma\in L(\bR^n;\cH)$ with respect to the inner product $\langle\cdot,\cdot\rangle_\Phi$, that is,
\begin{equation}\label{eq_mu-sigma-Phi}
	\langle\cM_\sigma x,y\rangle_\Phi=\int_{[0,\infty)}\big\langle\Phi(\theta)M_\sigma(\theta)x,y(\theta)\big\rangle\dmu=\langle x,\mu_{\sigma,\Phi}[y]\rangle
\end{equation}
for any $x\in\bR^n$ and $y\in\cH$. Furthermore, again by \eqref{eq_integrability-M_sigma} and \eqref{eq_weight-bound}, the matrix $Q_{\sigma,\Phi}\in\bR^{n\times n}$ is well-defined. This matrix is symmetric and satisfies
\begin{equation}\label{eq_Q-Phi}
	\big\|\cM_\sigma x\big\|_{L_2(\bR^d;(\cH,\langle\cdot,\cdot\rangle_\Phi))}^2=\tr\big[x^\top Q_{\sigma,\Phi} x\big]
\end{equation}
for any $x\in\bR^{n\times d}$. Here, $\|\cdot\|_{L_2(\bR^d;(\cH,\langle\cdot,\cdot\rangle_\Phi))}$ denotes the Hilbert--Schmidt norm for an operator from the Euclidean space $\bR^d$ to the separable Hilbert space $\cH$ equipped with the inner product $\langle\cdot,\cdot\rangle_\Phi$.
\end{itemize}
\end{rem}

Applying \cite[Theorem 4.2.5]{LiRo15} (with $\alpha=2$ in their notation)\footnote{Although \cite[Theorem 4.2.5]{LiRo15} assumes some additional integrability conditions with respect to $\bP$, such conditions can be dropped by a standard localization argument. See also \cite[Lemma 2.14]{Pa21} for a similar result (with simpler proof than \cite[Theorem 4.2.5]{LiRo15}) without any integrability assumptions with respect to $\bP$.} to our setting, we obtain the following It\^{o}'s formula for the squared norm $\|\cdot\|_\Phi^2$.

%% Lemma

\begin{lemm}\label{lemm_Ito}
Suppose that we are given a $d$-dimensional Brownian motion $W$ defined on a complete probability space $(\Omega,\cF,\bP)$, with respect to a filtration $\bF=(\cF_t)_{t\geq0}$ satisfying the usual conditions. Let $(\mu,M_b,M_\sigma)$ be a lifting basis. Let $Y$ be an $\cH$-valued continuous adapted process, and let $B$ and $\Sigma$ be $\cV^*$ and $L_2(\bR^d;\cH)$-valued progressively measurable processes, respectively. Assume that $\int^T_0\{\|Y_t\|_\cV^2+\|B_t\|_{\cV^*}^2+\|\Sigma_t\|_{L_2(\bR^d;\cH)}^2\}\,\diff t<\infty$ a.s.\ for any $T>0$, and that $Y$ satisfies
\begin{equation*}
	Y_t=Y_0+\int^t_0B_s\,\diff s+\int^t_0\Sigma_s\,\diff W_s
\end{equation*}
in $\cV^*$ for all $t\geq0$ a.s. Then, for any admissible weight function $\Phi$, it holds that
\begin{equation}\label{eq_Ito}
	\|Y_t\|_\Phi^2=\|Y_0\|_\Phi^2+\int^t_0\Big\{2\langle B_s,Y_s\rangle_{\cV^*,\cV,\Phi}+\|\Sigma_s\|_{L_2(\bR^d;(\cH,\langle\cdot,\cdot\rangle_\Phi))}^2\Big\}\,\diff s+2\int^t_0\langle Y_s,\Sigma_s\,\diff W_s\rangle_\Phi
\end{equation}
for all $t\geq0$ a.s.
\end{lemm}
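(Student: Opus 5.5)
The plan is to reduce \cref{lemm_Ito} to the standard It\^{o} formula \cite[Theorem 4.2.5]{LiRo15}, applied on a Gelfand triplet whose pivot space is $\cH$ equipped with the equivalent inner product $\langle\cdot,\cdot\rangle_\Phi$ instead of $\langle\cdot,\cdot\rangle_\cH$. By \cref{rem_Phi} (ii), $(\cH,\langle\cdot,\cdot\rangle_\Phi)$ is a separable Hilbert space whose norm is equivalent to $\|\cdot\|_\cH$. Identifying $\cH^*$ with $\cH$ through $y\mapsto\langle y,\cdot\rangle_\Phi$ gives a Gelfand triplet $\cV\hookrightarrow(\cH,\langle\cdot,\cdot\rangle_\Phi)\hookrightarrow\cV^*_\Phi$. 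The duality pairing of this triplet is $\langle\cdot,\cdot\rangle_{\cV^*,\cV,\Phi}$, the unique continuous extension of $(y_1,y_2)\mapsto\langle y_1,y_2\rangle_\Phi$ from $\cH\times\cV$.

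The first step is to check that the new dual space $\cV^*_\Phi$ coincides, as a space and with an equivalent norm, with the $\cV^*$ used in the paper. Concretely, the map $J:\cV^*\to\cV^*_\Phi$ should be given by $(Jz)(\theta)=(1+\theta)^{-1/2}\Phi(\theta)^{-1}z(\theta)$, so that
\begin{equation*}
\langle z,y\rangle_{\cV^*,\cV}=\langle Jz,y\rangle_{\cV^*,\cV,\Phi}.
\end{equation*}
The bounds \eqref{eq_weight-bound} show that $(1+\theta)^{-1/2}\Phi(\theta)^{-1}$ and its inverse $(1+\theta)^{1/2}\Phi(\theta)$ are bounded in operator norm by $C_\Phi$, $\mu$-a.e. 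Hence $J$ is simply multiplication by a bounded, boundedly invertible matrix field, and this multiplication preserves the weighted $L^2$ space with weight $(1+\theta)^{-3/2}$.

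In practice it is simpler to avoid $J$ and keep $\cV^*$ itself as the dual space. The Bochner integral $\int_0^t B_s\,\diff s$ in $\cV^*$ and the identity $Y_t=Y_0+\int_0^t B_s\,\diff s+\int_0^t\Sigma_s\,\diff W_s$ do not depend on the choice of pivot. What must be checked is that the embedding $\cH\hookrightarrow\cV^*$ used in the paper, namely the identity on functions of $\theta$, is compatible with the new pairing. That is, one needs $\langle h,y\rangle_{\cV^*,\cV,\Phi}=\langle h,y\rangle_\Phi$ for $h\in\cH$ and $y\in\cV$. This holds by the very definition in \cref{rem_Phi} (ii).

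The hypotheses of \cite[Theorem 4.2.5]{LiRo15} then transfer directly. The integrability of $\|Y\|_\cV^2$, $\|B\|_{\cV^*}^2$ and $\|\Sigma\|_{L_2(\bR^d;\cH)}^2$ is unchanged, because the norms on $\cH$, and hence the Hilbert--Schmidt norms into $(\cH,\langle\cdot,\cdot\rangle_\Phi)$, are equivalent. The cited theorem requires expectations to be finite, so I would localize with the stopping times
\begin{equation*}
\tau_N:=\inf\Big\{t\,\Big|\,\|Y_t\|_\cH>N\ \text{or}\ \int_0^t\big(\|Y_s\|_\cV^2+\|B_s\|_{\cV^*}^2+\|\Sigma_s\|_{L_2}^2\big)\diff s>N\Big\}\wedge N,
\end{equation*}
and also restrict $Y_0$ to the events $\{\|Y_0\|_\cH\le N\}\in\cF_0$. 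The stopped processes satisfy the required integrability conditions. Applying the theorem yields \eqref{eq_Ito} up to $\tau_N$, and letting $N\to\infty$ gives the formula for all $t$ almost surely; alternatively one can cite \cite[Lemma 2.14]{Pa21}, which needs no integrability. In \eqref{eq_Ito}, the stochastic integral term $\langle Y_s,\Sigma_s\,\diff W_s\rangle_\Phi$ and the Hilbert--Schmidt norm are exactly those computed with the pivot inner product.

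The main point needing care is the identification of the dual space and pairing in the first step. The theorem must be applied with a consistent triplet in which $\cH\hookrightarrow\cV^*$ is the same embedding as in the paper, so that the integrals and the equation are interpreted in the same space. The rest is routine localization.
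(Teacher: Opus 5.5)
Your approach is the paper's. There, \cref{lemm_Ito} is obtained by applying \cite[Theorem 4.2.5]{LiRo15} to the Gelfand triplet with pivot $(\cH,\langle\cdot,\cdot\rangle_\Phi)$. The paper keeps the same function space $\cV^*$ with the pairing $\langle\cdot,\cdot\rangle_{\cV^*,\cV,\Phi}$ of \cref{rem_Phi}~(ii), and removes the integrability assumptions either by a ``standard localization'' or by citing \cite[Lemma 2.14]{Pa21}. Your check that the embedding $\cH\hookrightarrow\cV^*$ is compatible with the new pairing is exactly what makes this legitimate.

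The one step that does not work as you wrote it is the claim that the stopped processes satisfy the hypotheses of \cite[Theorem 4.2.5]{LiRo15}. That theorem requires the process to have a $\cV$-valued version in $L^2([0,T]\times\Omega;\cV)$. But $Y_{t\wedge\tau_N}$ is frozen at $Y_{\tau_N}$ for $t\in(\tau_N,T]$. The condition $\int_0^T\|Y_t\|_\cV^2\,\diff t<\infty$ only controls $Y_t$ for a.e.\ $t$, so nothing guarantees $Y_{\tau_N}\in\cV$, let alone $\bE[(T-\tau_N)\|Y_{\tau_N}\|_\cV^2]<\infty$.

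This is easy to repair in either of two ways.
\begin{itemize}
\item Rely on \cite[Lemma 2.14]{Pa21}, as you already suggest, or on a stopping-time version of the It\^{o} formula.
\item Continue the path after $\tau_N$ by $\widetilde{Y}_t:=\cS((t-\tau_N)^+)Y_{t\wedge\tau_N}$, with drift $B_t\1_{\{t\le\tau_N\}}+\cA\widetilde{Y}_t\1_{\{t>\tau_N\}}$ and diffusion $\Sigma_t\1_{\{t\le\tau_N\}}$. By \cref{rem_SEE-framework}~(ii),(iii), this process is $\cV$-valued for $t>\tau_N$, satisfies the required equation in $\cV^*$, and has $\int_{\tau_N}^T\|\widetilde{Y}_t\|_\cV^2\,\diff t\le C_T\|Y_{\tau_N}\|_\cH^2\le C_TN^2$. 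Hence the cited theorem applies and yields \eqref{eq_Ito} on $[0,\tau_N]$.
\end{itemize}
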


%% Remark

\begin{rem}
As discussed previously, the pairing $\langle B_s,Y_s\rangle_{\cV^*,\cV,\Phi}$ in \eqref{eq_Ito} is understood in the sense of $\langle B_s,\widetilde{Y}_s\rangle_{\cV^*,\cV,\Phi}$, where $\widetilde{Y}:=Y\1_\cV(Y)$ is a $\cV$-valued progressively measurable version of $Y$; see \cref{rem_SEE-solution}. It should be emphasized that the process $Y$ is generally not a semimartingale on the Hilbert space $\cH$; indeed, it can only be viewed as a semimartingale when considered as a process in the larger space $\cV^*$. Nevertheless, the above It\^{o}'s formula remains valid for the squared (equivalent) norm $\|\cdot\|_\Phi^2$ on $\cH$, and the process $(\|Y_t\|_\Phi^2)_{t\geq0}$ is a nonnegative semimartingale.
\end{rem}

\section{Main results}\label{sec_main}

We now state our main results. The proofs of the key assertions (\cref{theo_coupling}, \cref{theo_Lyapunov} and \cref{prop_Lyapunov-sufficient}) in this section are deferred to \cref{sec_proof}, while the primary theorem on ergodicity (\cref{theo_main}) follows as a straightforward consequence of these results by virtue of the generalized Harris' theorem recalled in \cref{theo_HaMaSc11}.

For a given lifting basis $(\mu,M_b,M_\sigma)$ and measurable maps $b:\bR^n\to\bR^n$ and $\sigma:\bR^n\to\bR^{n\times d}$, we introduce the following assumption.

%% Assumption

\begin{assum}\label{assum_coupling}
\begin{itemize}
\item
$\kappa:=\inf\supp>0$.
\item
For $\mu$-a.e.\ $\theta\in[0,\infty)$, the matrix $M_\sigma(\theta)\in\bR^{n\times n}$ is symmetric and positive definite.
\item
There exist constants $C_{b,\Lip},C_{\sigma,\Lip}>0$ such that $|b(x)-b(x')|\leq C_{b,\Lip}|x-x'|$ and $|\sigma(x)-\sigma(x')|\leq C_{\sigma,\Lip}|x-x'|$ for all $x,x'\in\bR^n$.
\item
There exists a constant $C_\UE>0$ such that $\langle\sigma(x)\sigma(x)^\top\xi,\xi\rangle\geq \frac{|\xi|^2}{C_\UE}$ for all $\xi\in\bR^n$ and all $x\in\bR^n$.
\end{itemize}
\end{assum}

%% Remark

\begin{rem}\label{rem_exam-liftable}
\begin{itemize}
\item[(i)]
In \cref{assum_coupling}, we do not impose any restrictions on the values of the Lipschitz constants $C_{b,\Lip}$ and $C_{\sigma,\Lip}$ of $b:\bR^n\to\bR^n$ and $\sigma:\bR^n\to\bR^{n\times d}$. This stands in contrast to \cite{BeDeKr22,BiBoCaFr25,BiBoFr25}, where various results on long-time asymptotics (without a spectral gap) for SVEs and their Markovian lifts are established under the assumption that the Lipschitz and growth constants are sufficiently small to make the systems dissipative. Instead, we require the uniform ellipticity of the diffusion coefficient $\sigma$. It is worth noting that this does not imply that the diffusion of the SEE \eqref{eq_SEE} is uniformly elliptic on the entire state space $\cH$. On the contrary, the system is \emph{highly degenerate}, as only finite-dimensional noise $W$ acts on the infinite-dimensional state space. Notably, the noise dimension $d$ can be as small as one, provided that the dimension $n$ of the original SVE \eqref{eq_SVE} is one.
\item[(ii)]
For the lifting basis in \cref{exam_liftable} (i) generating the sum-of-exponentials type kernels $(K^{\exp}_b,K^{\exp}_\sigma)$, the first and second conditions in \cref{assum_coupling} are satisfied provided that $\kappa_i>0$ and $M_{\sigma,i}$ is symmetric and positive definite for each $i\in\{1,\dots,N\}$. In this case, we have $\kappa:=\inf\supp=\min\{\kappa_1,\dots,\kappa_N\}$. As for the lifting basis in \cref{exam_liftable} (ii) generating the tempered fractional kernels $(K^\fractional_b,K^\fractional_\sigma)$, the first and second conditions in \cref{assum_coupling} are satisfied provided that $0<\kappa_\sigma\leq\kappa_b$. In this case, we have $\kappa:=\inf\supp=\kappa_\sigma$. It should be noted that the requirement $\kappa:=\inf\supp>0$ necessitates exponential tempering of the kernels.
\end{itemize}
\end{rem}

The following theorem constitutes the key ingredient in applying the generalized Harris' theorem of \cite{HaMaSc11} (see also \cref{theo_HaMaSc11}). Furthermore, we establish an \emph{asymptotic log-Harnack inequality} which is of independent interest. In particular, it guarantees the uniqueness of the invariant probability measure (see, e.g., \cite[Theorem 2.1]{BaWaYu19}). In what follows, we use the notation introduced in \cref{defi_Phi} and \eqref{eq_defi-W_d}.

%% Theorem

\begin{theo}\label{theo_coupling}
Suppose that a lifting basis $(\mu,M_b,M_\sigma)$ and measurable maps $b:\bR^n\to\bR^n$, $\sigma:\bR^n\to\bR^{n\times d}$ satisfy \cref{assum_coupling}. Let $\{P_t\}_{t\geq0}$ be the Markov semigroup associated with the SEE \eqref{eq_SEE}. Then, there exists an admissible weight function $\Phi:[0,\infty)\to\bR^{n\times n}$, depending only on $(\mu,M_b,M_\sigma)$ and the constants $C_{b,\Lip}, C_{\sigma,\Lip}$, and $C_\UE$, such that the following assertions hold:
\begin{itemize}
\item[(i)]
For any $t\geq\frac{4\log2}{\kappa}$, the distance function $d_\Phi$ is contracting for $P_t$. Specifically, for any $y_1,y_2\in\cH$ with $d_\Phi(y_1,y_2)<1$, we have
\begin{equation}\label{eq_contracting}
	\bW_{d_\Phi}(P_t(y_1,\cdot),P_t(y_2,\cdot))\leq\frac{3}{4}d_\Phi(y_1,y_2).
\end{equation}
\item[(ii)]
For any $R>0$ and any $t\geq\frac{4R^2+2\log(8R)}{\kappa}$, the set $\overline{B}_{\Phi}(R):=\{y\in\cH\,|\,\|y\|_\Phi\leq R\}$ is $d_\Phi$-small for $P_t$. Specifically, for any $y_1,y_2\in \overline{B}_{\Phi}(R)$, we have
\begin{equation}\label{eq_small}
	\bW_{d_\Phi}(P_t(y_1,\cdot),P_t(y_2,\cdot))\leq1-\frac{1}{4}\exp(-2R^2).
\end{equation}
\item[(iii)]
The following asymptotic log-Harnack inequality holds: for any $y_1,y_2\in\cH$, any $t\geq0$ and any bounded Borel measurable function $f:\cH\to[1,\infty)$ such that
\begin{equation*}
	\|\nabla_\Phi\log f\|_\infty:=\sup_{\substack{y,y'\in\cH\\y\neq y'}}\frac{|\log f(y)-\log f(y')|}{\|y-y'\|_\Phi}<\infty,
\end{equation*}
it holds that
\begin{equation}\label{eq_Harnack}
	P_t\log f(y_1)\leq\log P_tf(y_2)+\frac{1}{2}\|y_1-y_2\|_\Phi^2+e^{-\kappa t/2}\|y_1-y_2\|_\Phi\|\nabla_\Phi\log f\|_\infty.
\end{equation}
In particular, the Markov semigroup $\{P_t\}_{t\geq0}$ possesses at most one invariant probability measure.
\end{itemize}
\end{theo}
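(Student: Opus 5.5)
The plan is to build a \emph{generalized coupling} by Girsanov-type control, in the spirit of \cite{BuKuSc20,HaMaSc11}, and to get all three assertions from one pathwise estimate in a well-chosen norm $\|\cdot\|_\Phi$. Fix $y_1,y_2\in\cH$. Let $Y^1$ solve the SEE \eqref{eq_SEE} from $y_1$. Let $\widetilde Y^2$ solve, from $y_2$ and with the same $W$, the controlled equation
\begin{equation*}
	\diff\widetilde Y^2_t=\big\{\cA\widetilde Y^2_t+\cM_bb(\mu[\widetilde Y^2_t])+\cM_\sigma\sigma(\mu[\widetilde Y^2_t])u_t\big\}\,\diff t+\cM_\sigma\sigma(\mu[\widetilde Y^2_t])\,\diff W_t .
\end{equation*}
The feedback control is taken to be
\begin{equation*}
	u_t=\sigma(\mu[\widetilde Y^2_t])^\top\big(\sigma\sigma^\top\big)^{-1}(\mu[\widetilde Y^2_t])\,\lambda\,\mu_{\sigma,\Phi}[Z_t],\qquad Z:=Y^1-\widetilde Y^2 .
\end{equation*}
This $u$ is well defined and satisfies $|u_t|\lesssim |\mu_{\sigma,\Phi}[Z_t]|$ by the uniform ellipticity in \cref{assum_coupling}. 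Since $\sigma$ is full rank and $\mu_{\sigma,\Phi}[\cdot]$ is bounded on $\cH$, the controlled equation is well posed by the same monotonicity argument as in \cref{prop_SEE-well-posed}.

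Applying \cref{lemm_Ito} to $Z$ and using \eqref{eq_A-Phi}, \eqref{eq_mu-b-Phi}, \eqref{eq_mu-sigma-Phi} and \eqref{eq_Q-Phi}, the drift of $\|Z_t\|_\Phi^2$ equals
\begin{equation*}
	-2\vnorm{Z}_\Phi^2+2\big\langle b(\mu[Y^1])-b(\mu[\widetilde Y^2]),\mu_{b,\Phi}[Z]\big\rangle-2\lambda|\mu_{\sigma,\Phi}[Z]|^2+\tr\big[(\Delta\sigma)^\top Q_{\sigma,\Phi}\Delta\sigma\big],
\end{equation*}
where $\Delta\sigma:=\sigma(\mu[Y^1])-\sigma(\mu[\widetilde Y^2])$. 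The martingale part is $2\langle\Delta\sigma^\top\mu_{\sigma,\Phi}[Z],\diff W\rangle$.

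\textbf{Main obstacle.} The goal is a \emph{deterministic} differential inequality
\begin{equation*}
	\diff\|Z_t\|_\Phi^2\leq-\kappa\|Z_t\|_\Phi^2\,\diff t-c\,|u_t|^2\,\diff t+\diff(\text{mart}).
\end{equation*}
This requires an interpolation inequality of the form
\begin{equation*}
	|\mu[z]|+|\mu_{b,\Phi}[z]|\leq\ep\vnorm{z}_\Phi+C_\ep|\mu_{\sigma,\Phi}[z]| ,
\end{equation*}
which lets the Lipschitz terms (of arbitrary size) be absorbed by $\vnorm{Z}_\Phi^2$ and by the $\lambda$-term. I would try to obtain it by splitting $\theta$ into $[\kappa,m)$ and $[m,\infty)$. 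The tail is handled as in \eqref{eq_mu-approximation} and \cref{lemm_mu-ep}. On $[\kappa,m)$ I would choose $\Phi(\theta)=\phi(\theta)M_\sigma(\theta)^{-1}$, suitably truncated and normalized so that \eqref{eq_weight-bound} holds and so that $\mu_{\sigma,\Phi}$ reproduces $\mu$ there; the symmetry and positivity of $M_\sigma$ are used here. Finally $\vnorm{Z}_\Phi^2\geq\kappa\|Z\|_\Phi^2$ because $\inf\supp=\kappa$. The diffusion term with $\Delta\sigma$ is still problematic, since it is not sign-definite. I would control it by making $Q_{\sigma,\Phi}$ small relative to $\lambda$, again via the choice of $\Phi$, so that it too is absorbed. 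The goal is then the pathwise bounds
\begin{equation*}
	\|Z_t\|_\Phi\leq e^{-\kappa t/2}\|y_1-y_2\|_\Phi\quad\text{and}\quad\int_0^\infty|u_t|^2\,\diff t\leq\|y_1-y_2\|_\Phi^2 ,
\end{equation*}
the latter after normalizing the constants.

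With these bounds, Girsanov's theorem shows that $\widetilde Y^2$ is, under $\diff\bQ=\cE(-\int u\,\diff W)\,\diff\bP$, a solution from $y_2$. Moreover $\KL(\bP\|\bQ)\leq\frac12\|y_1-y_2\|_\Phi^2$.
\begin{itemize}
\item[(i)] By the triangle inequality and Pinsker's inequality,
\begin{equation*}
	\bW_{d_\Phi}(P_t(y_1,\cdot),P_t(y_2,\cdot))\leq\bE\, d_\Phi(Y^1_t,\widetilde Y^2_t)+d_\TV(\bP,\bQ)\leq e^{-\kappa t/2}\delta+\tfrac12\delta ,
\end{equation*}
where $\delta=\|y_1-y_2\|_\Phi<1$. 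For $t\geq4\log2/\kappa$ we have $e^{-\kappa t/2}\leq\frac14$, which gives \eqref{eq_contracting}.
\item[(ii)] For $y_1,y_2\in\overline{B}_\Phi(R)$ the relative entropy is at most $2R^2$. The Bretagnolle--Huber inequality gives $1-d_\TV\geq\frac12e^{-2R^2}$. On that overlap part the coupled distance is at most $2Re^{-\kappa t/2}\leq\frac12\cdot\frac14e^{-2R^2}$ for the stated $t$. Combining these yields \eqref{eq_small}.
\item[(iii)] Use the Young/entropy inequality $\bE_\bP[\log f(\widetilde Y^2_t)]\leq\log\bE_\bQ[f(\widetilde Y^2_t)]+\KL(\bP\|\bQ)$. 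Then add $\bE|\log f(Y^1_t)-\log f(\widetilde Y^2_t)|\leq\|\nabla_\Phi\log f\|_\infty e^{-\kappa t/2}\|y_1-y_2\|_\Phi$. This gives \eqref{eq_Harnack}, and uniqueness of the invariant measure follows from \cite[Theorem 2.1]{BaWaYu19}.
\end{itemize}
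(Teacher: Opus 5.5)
Your architecture is the paper's. The term $\cM_\sigma\sigma(\mu[\widetilde Y^2_t])u_t=\lambda\cM_\sigma\mu_{\sigma,\Phi}[Z_t]$ is exactly the control drift in \eqref{eq_controlled-SEE}. The It\^o computation, the identity $D_\KL(\bP\|\bQ)=\frac12\bE_\bP\int|u|^2$, the gluing/Pinsker/Bretagnolle--Huber step and the entropy inequality for (iii) also match. There is, however, a genuine gap at the Girsanov step.

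\textbf{The Girsanov step.} The martingale term $2\int_0^t\langle\mu_{\sigma,\Phi}[Z_s],\Delta\sigma_s\,\diff W_s\rangle$ does not vanish unless $\sigma$ is constant. After localization and expectations, your differential inequality only yields $\bE\big[e^{\kappa t}\|Z_t\|_\Phi^2+\lambda\int_0^te^{\kappa s}|\mu_{\sigma,\Phi}[Z_s]|^2\,\diff s\big]\leq\|y_1-y_2\|_\Phi^2$. The pathwise bounds $\|Z_t\|_\Phi\leq e^{-\kappa t/2}\|y_1-y_2\|_\Phi$ and $\int_0^\infty|u_t|^2\,\diff t\leq\|y_1-y_2\|_\Phi^2$ are false in general. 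The expectation bounds do suffice for (i)--(iii). They do not suffice for the step where you used the pathwise bounds. Without a deterministic bound on $\int_0^t|u_s|^2\,\diff s$, Novikov's condition is unavailable. Moreover, $\bE\int_0^t|u_s|^2\,\diff s<\infty$ alone does not make $\sE=\cE(-\int u\,\diff W)$ a true martingale. So you have not shown that $\bQ$ is a probability measure under which $\widetilde Y^2$ solves the SEE from $y_2$.

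The paper handles this with a separate argument:
\begin{itemize}
\item[(a)] It first reduces to bounded $\sigma$ via \cref{lemm_bounded}. This is legitimate because $\Phi$ does not depend on the bound and $\bW_{d_\Phi}$ metrizes weak convergence.
\item[(b)] It localizes at $\tau_N$ and rewrites the $\|Z\|_\Phi^2$ inequality under $\widehat\bP_{\tau_N}$. The change of measure produces an extra drift $-2\lambda\langle\mu_{\sigma,\Phi}[Z],\Delta\sigma\,\sigma(\mu[\widetilde Y^2])^\dagger\mu_{\sigma,\Phi}[Z]\rangle$, which is controlled using $\|\sigma\|_{\op,\infty}$ and Gronwall.
\item[(c)] This gives $\bE[\sE_\tau\log\sE_\tau]\leq Ce^{CT}\|y_1-y_2\|_\Phi^2$ uniformly over stopping times $\tau\leq T$, hence class (DL) and the martingale property.
\end{itemize}

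\textbf{The main obstacle is not resolved.} The estimate you call the main obstacle is the heart of the theorem, and your sketch does not show it can be achieved. Your device for the It\^o correction also targets the wrong quantity. Taking $\lambda$ large only absorbs the $|\mu_{\sigma,\Phi}[Z]|^2$ part of $C_{\sigma,\Lip}^2|Q_{\sigma,\Phi}|_\op|\mu[Z]|^2$. The remainder $\ep^2|Q_{\sigma,\Phi}|_\op\vnorm{Z}_\Phi^2$, with $\ep$ from your interpolation inequality, must be absorbed by $\vnorm{Z}_\Phi^2$. Similarly, the drift term needs $\ep$ times the norm of $\mu_{b,\Phi}[\cdot]$ relative to $\vnorm{\cdot}_\Phi$ to be small. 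Both products are invariant under $\Phi\mapsto t\Phi$, so no normalization helps.

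These requirements compete. Making $\mu$ close to a multiple of $\mu_{\sigma,\Phi}$ forces $\Phi\approx M_\sigma^{-1}$ on most of $[\kappa,m)$ with $m$ large. That inflates $\mu_{b,\Phi}$ and $Q_{\sigma,\Phi}$ wherever $M_\sigma$ is small or $\theta$ is large.

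The paper resolves this with a four-piece weight $\Phi_\fa$:
\begin{itemize}
\item $M_\sigma^{-1}$ where $|M_\sigma^{-1}|_\op\leq L$ and $|M_\sigma|_\op\leq R$;
\item $(\delta\theta^{1/2}I_{n\times n}+M_\sigma)^{-1}$ where $|M_\sigma^{-1}|_\op>L$;
\item $I_{n\times n}$ where $|M_\sigma|_\op>R$;
\item $m^{1/2}\theta^{-1/2}I_{n\times n}$ on $[m,\infty)$.
\end{itemize}
It then shows that $\alpha(m,\delta,L,R)\beta(m,\delta,L)\to0$ under the coupled choice $\delta_mL_mm^{1/2}=1$, sending $R\to\infty$ before $m\to\infty$. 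A balancing argument of this kind is indispensable, and ``suitably truncated and normalized'' does not supply it.
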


The proof of the above theorem is provided in \cref{subsec_proof-coupling}.

%% Remark

\begin{rem}
In addition to the uniqueness of the invariant probability measure, the asymptotic log-Harnack inequality in assertion (iii) implies that the Markov semigroup $\{P_t\}_{t\geq0}$ is \emph{asymptotically strong Feller} and \emph{asymptotically irreducible}. Furthermore, it provides a \emph{gradient estimate} and an \emph{asymptotic heat kernel estimate}; for further details, see \cite[Theorem 2.1]{BaWaYu19}. In our previous work \cite[Section 4]{Ha24}, the asymptotic log-Harnack inequality was established for the Markovian lift of an SVE with a scalar completely monotone kernel, corresponding to the case $M_b=M_\sigma=I_{n\times n}$. The present assertion (iii) significantly extends this result to the case of matrix-valued kernels where $K_b$ and $K_\sigma$ may differ, allowing for cases where $K_b$ is not necessarily square-integrable.
\end{rem}

The construction of a Lyapunov function constitutes another important step in applying the generalized Harris' theorem of \cite{HaMaSc11} (see also \cref{theo_HaMaSc11}). The following assumption provides an abstract criterion for the existence of a Lyapunov function. For its statement, we again refer to the notation introduced in \cref{defi_Phi}.

%% Assumption

\begin{assum}\label{assum_Lyapunov-abstract}
There exist an admissible weight function $\Psi:[0,\infty)\to\bR^{n\times n}$ and constants $\delta\in(0,1)$, $\rho>0$ and $C_\Lyap>0$ such that
\begin{equation}\label{eq_coercivity}
	\big\langle b(\mu[y]),\mu_{b,\Psi}[y]\big\rangle+\frac{1}{2}\tr\big[\sigma(\mu[y])^\top Q_{\sigma,\Psi}\sigma(\mu[y])\big]\leq\delta\vnorm{y}_\Psi^2-\rho\|y\|_{\Psi}^2+C_\Lyap
\end{equation}
for any $y\in\cV$.
\end{assum}

%% Theorem

\begin{theo}\label{theo_Lyapunov}
Suppose that a lifting basis $(\mu,M_b,M_\sigma)$ and measurable maps $b:\bR^n\to\bR^n$, $\sigma:\bR^n\to\bR^{n\times d}$ satisfy \cref{assum_Lyapunov-abstract}. Assume that the SEE \eqref{eq_SEE} is weakly well-posed, and denote by $\{P_t\}_{t\geq0}$ the associated measurable Markov semigroup on $\cH$. Then, the function $V(y):=\|y\|_{\Psi}^2$ is a Lyapunov function for $\{P_t\}_{t\geq0}$. Furthermore, if $\pi\in\cP(\cH)$ is an invariant probability measure for $\{P_t\}_{t\geq0}$, then the following estimate holds:
\begin{equation}\label{eq_Lyapunov-square-pi}
	\int_{\cH}\Big\{\rho\|y\|_{\Psi}^2+(1-\delta)\vnorm{y}_\Psi^2\Big\}\,\pi(\diff y)\leq C_\Lyap.
\end{equation}
In particular, we have $\pi(\cV)=1$ and $\int_\cV\|y\|_\cV^2\,\pi(\diff y)<\infty$.
%Assume in addition that there exists a constant $C_{\sigma,\Bdd}>0$ such that $|\sigma(x)|_\op\leq C_{\sigma,\Bdd}$ for any $x\in\bR^n$. Then, for any $\delta>0$ satisfying
%\begin{equation}\label{eq_Lyapunov-rho_delta}
%	\rho_\delta:=\frac{\rho}{\delta}-C_{\sigma,\Bdd}^2\int_{[0,\infty)}|\Psi(\theta)^{1/2}M_\sigma(\theta)|_\op^2\dmu>0,
%\end{equation}
%the function $y\mapsto\exp(\delta\|y\|_\Psi^2)$ is also a Lyapunov function for $\{P_t\}_{t\geq0}$. Furthermore, if $\pi\in\cP(\cH)$ is an invariant probability measure for $\{P_t\}_{t\geq0}$, then we have
%\begin{equation}\label{eq_Lyapunov-exp-pi}
%	\int_{\cV}\exp\big(\delta\|y\|_\Psi^2\big)\Big\{\rho_\delta+(1-\delta)\|y\|_{\star,\Psi}^2\Big\}\,\pi(\diff y)\leq\rho_\delta\exp\left(\frac{C_\Lyap}{\rho_\delta}\right).
%\end{equation}
\end{theo}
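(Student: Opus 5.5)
The plan is to apply the It\^{o} formula of \cref{lemm_Ito} with the admissible weight $\Psi$ to a solution $Y$ of the SEE \eqref{eq_SEE}, taking $B_s=\cA Y_s+\cM_bb(\mu[Y_s])$ and $\Sigma_s=\cM_\sigma\sigma(\mu[Y_s])$. By \eqref{eq_A-Phi}, \eqref{eq_mu-b-Phi} and \eqref{eq_Q-Phi} (with $\Phi=\Psi$), the drift of $\|Y_t\|_\Psi^2$ equals
\begin{equation*}
	-2\vnorm{Y_s}_\Psi^2+2\big\langle b(\mu[Y_s]),\mu_{b,\Psi}[Y_s]\big\rangle+\tr\big[\sigma(\mu[Y_s])^\top Q_{\sigma,\Psi}\sigma(\mu[Y_s])\big].
\end{equation*}
Using \eqref{eq_coercivity}, this is bounded by $-2(1-\delta)\vnorm{Y_s}_\Psi^2-2\rho\|Y_s\|_\Psi^2+2C_\Lyap$. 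Hence
\begin{equation*}
	\|Y_t\|_\Psi^2+2\int^t_0\Big\{\rho\|Y_s\|_\Psi^2+(1-\delta)\vnorm{Y_s}_\Psi^2\Big\}\,\diff s\leq\|Y_0\|_\Psi^2+2C_\Lyap t+M_t,
\end{equation*}
where $M$ is a local martingale. The weak well-posedness assumption supplies a solution on some probability space for each deterministic initial value $y$, to which this computation applies.

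For the Lyapunov bound, apply It\^{o}'s formula to $e^{2\rho t}\|Y_t\|_\Psi^2$ and drop the nonpositive $\vnorm{\cdot}_\Psi^2$ term, which gives $\diff(e^{2\rho t}\|Y_t\|_\Psi^2)\leq 2C_\Lyap e^{2\rho t}\diff t+e^{2\rho t}\diff M_t$. Localize with stopping times $\tau_N$ (exit times of $\|Y\|_\cH^2$ or $\int\|Y\|_\cV^2$ above $N$, as in the proof of \cref{prop_SEE-well-posed}) so that the stopped stochastic integral is a true martingale. Take expectations and pass $N\to\infty$ via Fatou's lemma. This yields $P_tV(y)\leq e^{-2\rho t}V(y)+C_\Lyap/\rho$, so $V$ is a Lyapunov function with $C_V=1$, $\gamma_V=2\rho$ and $K_V=C_\Lyap/\rho$.

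For \eqref{eq_Lyapunov-square-pi}, let $\pi$ be invariant. By \cite[Lemma 4.1]{Bu14}, or directly from the Lyapunov bound, $\int V\,\diff\pi<\infty$. Start the solution from $Y_0\sim\pi$, which is possible since we may take product initial laws in the weak solution. Take the localized expectation of the first integrated inequality and let $N\to\infty$: monotone convergence handles the integral term and Fatou's lemma handles $\|Y_{t\wedge\tau_N}\|^2_\Psi$. This gives
\begin{equation*}
	\bE\|Y_t\|_\Psi^2+2\int_0^t\bE\big\{\rho\|Y_s\|_\Psi^2+(1-\delta)\vnorm{Y_s}_\Psi^2\big\}\diff s\leq\int V\diff\pi+2C_\Lyap t.
\end{equation*}
By stationarity, $\Law(Y_s)=\pi$ for every $s$. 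Using Tonelli, the integral term equals $2t\int\{\rho\|y\|_\Psi^2+(1-\delta)\vnorm{y}_\Psi^2\}\pi(\diff y)$, and the first term equals $\int V\diff\pi$, which is finite and cancels. Dividing by $2t$ gives \eqref{eq_Lyapunov-square-pi}. Here $\vnorm{\cdot}_\Psi$ must be extended to $\cH$ as $+\infty$ off $\cV$, which is lower semicontinuous and hence measurable. The integral is then finite, so $\pi(\cV)=1$. Moreover, $\|y\|_\cV^2\lesssim\|y\|_\Psi^2+\vnorm{y}_\Psi^2$ by \eqref{eq_weight-bound}, which gives $\int\|y\|_\cV^2\diff\pi<\infty$.

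The main obstacle is the careful localization. The stochastic integral must be stopped so that it is a genuine martingale, which uses $\cM_\sigma\in L(\bR^n;\cH)$ and the linear growth of $\sigma$ implicit in $\int_0^T|\sigma(\mu[Y_t])|^2\diff t<\infty$. The limits must be taken with the correct monotone/Fatou directions. In the stationary case, cancelling $\bE\|Y_t\|_\Psi^2$ against $\int V\diff\pi$ requires establishing finiteness first.
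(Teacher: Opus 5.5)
Your proof is correct. The Lyapunov part matches the paper; the bound \eqref{eq_Lyapunov-square-pi} is reached by a different route.

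\textbf{Lyapunov part.} Applying It\^o's formula to $\|Y_t\|_\Psi^2$ and then to $e^{2\rho t}\|Y_t\|_\Psi^2$, with $C_V=1$, $\gamma_V=2\rho$, $K_V=C_\Lyap/\rho$, is exactly what the paper does.

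\textbf{Localization.} One detail needs fixing. In this theorem $\sigma$ is only measurable, with no growth condition. Stopping when $\|Y\|_\cH^2$ or $\int\|Y\|_\cV^2$ exceeds $N$ therefore does not make the stopped stochastic integral a true martingale. The fix is easy, in any of three ways:
\begin{itemize}
\item also stop $\int_0^t|\sigma(\mu[Y_s])|^2\,\diff s$, which is finite a.s.\ by the definition of a solution;
\item stop the quadratic variation $\int_0^t|\sigma(\mu[Y_s])^\top\mu_{\sigma,\Psi}[Y_s]|^2\,\diff s$ directly, as the paper does;
\item note that your integrated inequality bounds the local martingale below by $-\|Y_0\|_\Psi^2-2C_\Lyap t$, so it is a supermartingale with nonpositive expectation.
\end{itemize}

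\textbf{The bound \eqref{eq_Lyapunov-square-pi}.} Write $U=\rho\|\cdot\|_\Psi^2+(1-\delta)\vnorm{\cdot}_\Psi^2$.

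The paper never runs the equation from $\pi$. It works only with deterministic initial points and the Ces\`aro kernel $\nu_t=\frac1t\int_0^tP_s\,\diff s$, obtaining $\int U\,\nu_t(y,\diff y')\le\|y\|_\Psi^2/(2t)+C_\Lyap$. It then integrates against $\pi$ after truncation, $\int(U\wedge N)\,\diff\pi\le\int\{\|y\|_\Psi^2/(2t)\wedge N\}\,\pi(\diff y)+C_\Lyap$, and lets $t\to\infty$, then $N\to\infty$. This needs neither an a priori moment of $\pi$ nor a solution with random initial law.

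Your route first proves $\int V\,\diff\pi<\infty$, which is the same truncation trick applied to the Lyapunov bound. You then start a stationary solution from $Y_0\sim\pi$, so that $\int V\,\diff\pi$ cancels at a fixed $t$. This works and avoids the limit $t\to\infty$. However, the claim that $Y_0\sim\pi$ gives $\Law(Y_s)=\pi$ uses that the solution law from a random initial datum is the $\pi$-mixture of the laws from deterministic data. That is standard under uniqueness in law, but slightly more than the deterministic-data weak well-posedness the paper invokes.

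You can bypass it: integrate the deterministic-initial-point estimate against $\pi(\diff y)$ and use $\int P_sU\,\diff\pi=\int U\,\diff\pi$. This identity follows from invariance and monotone convergence, together with joint measurability of $(s,y)\mapsto P_sU(y)$ from the measurability of the semigroup. Your argument then becomes the paper's, with the truncation $U\wedge N$ replaced by the a priori finiteness of $\int V\,\diff\pi$.
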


The proof of the above theorem is provided in \cref{subsec_proof-Lyapunov}.

%% Remark

\begin{rem}\label{rem_Lyapunov-compact}
The condition in \cref{assum_Lyapunov-abstract} is related to the Lyapunov-type condition in \cite[Chapter 7]{GaMa10}, where the ultimate boundedness for a general class of monotone SPDEs on an abstract Gelfand triplet $\cV\hookrightarrow\cH\hookrightarrow\cV^*$ is investigated. In that work, it is shown that the ultimate boundedness condition implies the existence of an invariant probability measure, provided that the embedding $\cV\hookrightarrow\cH$ is compact. However, this classical result cannot be directly applied to our framework, since the embedding $\cV\hookrightarrow\cH$ is not compact in typical cases of interest; see \cref{lemm_app-compact} for a characterization of the compactness of this embedding in our framework. This lack of compactness poses a technical challenge beyond the standard result on the general framework, as \cref{assum_Lyapunov-abstract} alone is not sufficient to conclude the existence of an invariant probability measure; see \cref{rem_Lyapunov} for more detailed discussions on this difficulty. To ensure the existence (as well as the uniqueness and the exponential ergodicity) of the invariant probability measure, it is essential to combine the Lyapunov approach in \cref{theo_Lyapunov} with the contractivity and $d_\Phi$-smallness conditions established in \cref{theo_coupling} (i) and (ii) for a specific distance $d_\Phi$, as will be demonstrated in \cref{theo_main} below.
\end{rem}

It is straightforward to show that \cref{assum_Lyapunov-abstract} holds if $\kappa:=\inf\supp>0$ and the maps $b:\bR^n\to\bR^n$ and $\sigma:\bR^n\to\bR^{n\times d}$ exhibit sub-linear growth; in this case, the required condition is satisfied for \emph{any} admissible weight function $\Psi$. The following proposition, the proof of which is also deferred to \cref{subsec_proof-Lyapunov}, provides another verifiable yet non-trivial sufficient condition for \cref{assum_Lyapunov-abstract}. In this context, the flexibility in the choice of the weight function $\Psi$ turns out to be crucial.

%% Proposition

\begin{prop}\label{prop_Lyapunov-sufficient}
For a lifting basis $(\mu,M_b,M_\sigma)$ and measurable maps $b:\bR^n\to\bR^n$ and $\sigma:\bR^n\to\bR^{n\times d}$, \cref{assum_Lyapunov-abstract} is satisfied if the following conditions hold:
\begin{itemize}
\item
$\kappa:=\inf\supp>0$.
\item
For $\mu$-a.e.\ $\theta\in[0,\infty)$, the matrix $M_b(\theta)\in\bR^{n\times n}$ is symmetric and nonnegative definite.
\item
There exist constants $C_{b,\LG},C_{b,\LG}'>0$ and a constant $\gamma>0$ satisfying $\gamma\int_{[\kappa,\infty)}\theta^{-1}|M_b(\theta)|_\op\dmu<1$ such that, for any $x \in \bR^n$,
\begin{equation*}
	|b(x)|\leq C_{b,\LG}(1+|x|)\ \ \text{and}\ \ \langle b(x),x\rangle\leq\gamma|x|^2+C_{b,\LG}'.
\end{equation*}
\item
There exist constants $p\in(0,1)$ and $C_{\sigma,\subLG}>0$ such that $|\sigma(x)|\leq C_{\sigma,\subLG}(1+|x|^p)$ for all $x\in\bR^n$.
\end{itemize}
\end{prop}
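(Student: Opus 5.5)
The plan is to take a \emph{scalar} admissible weight $\Psi(\theta)=\psi(\theta)I_{n\times n}$, with $\psi$ comparable to $(1+\theta)^{-1/2}$ and tuned to $M_b$, and to verify \eqref{eq_coercivity} directly for $y\in\cV$, writing $x:=\mu[y]$. The left-hand side of \eqref{eq_coercivity} has a drift part $\langle b(x),\mu_{b,\Psi}[y]\rangle$ and a diffusion part $\frac12\tr[\sigma(x)^\top Q_{\sigma,\Psi}\sigma(x)]$, and these are treated separately. On the right-hand side, the admissibility bound \eqref{eq_weight-bound} together with $\kappa=\inf\supp>0$ gives $\|y\|_\Psi^2\le\kappa^{-1}\vnorm{y}_\Psi^2$. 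So the dissipative term $-\rho\|y\|_\Psi^2$ costs only a fraction of $\vnorm{y}_\Psi^2$, and it suffices to bound the left-hand side by $\delta'\vnorm{y}_\Psi^2+C$ for some $\delta'<1$. Then $\delta:=\delta'+\rho\kappa^{-1}<1$ for small $\rho>0$.

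\emph{Diffusion term.} By \eqref{eq_integrability-M_sigma} and \eqref{eq_weight-bound}, $|Q_{\sigma,\Psi}|_\op<\infty$. The sub-linear growth of $\sigma$ then gives the bound $C(1+|x|^{2p})$. By \cref{lemm_mu-ep} (in its $\Psi$-version, which is equivalent by admissibility and $\kappa>0$), we have $|x|^2\le\ep\vnorm{y}_\Psi^2+C_\ep\|y\|_\Psi^2\le(\ep+C_\ep')\vnorm{y}_\Psi^2$. Since $2p<2$, Young's inequality gives $C|x|^{2p}\le\eta\vnorm{y}_\Psi^2+C_\eta$ for arbitrary $\eta>0$. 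So this term is harmless.

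\emph{Drift term.} This is the core, and here the hypotheses on $M_b$ and on $\gamma$ must enter. The heuristic comes from the ``quasi-stationary'' profile of \eqref{eq_SEE}: $y(\theta)\approx\theta^{-1}M_b(\theta)b(x)$, which yields $x\approx\big(\int\theta^{-1}M_b\dmu\big)b(x)$. Motivated by this, I would decompose
\[
y(\theta)=\theta^{-1}M_b(\theta)\,\xi+z(\theta)
\]
with a suitable vector $\xi$ (e.g.\ chosen so that the decomposition is orthogonal in the $\vnorm{\cdot}_\Psi$ inner product). I would then write $\mu_{b,\Psi}[y]$ and $x$ in terms of $\xi$ and $z$. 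The goal is an inequality of the form
\[
\langle b(x),\mu_{b,\Psi}[y]\rangle\le c_\psi\langle b(x),x\rangle+(\text{terms}\le\eta\vnorm{y}_\Psi^2+C),
\]
where $c_\psi>0$ depends on $\psi$. Symmetry and nonnegativity of $M_b$ are used to make the cross terms in $\xi$ have the right sign, via $\langle M_b v,v\rangle\ge0$ and $M_b^{1/2}$ with Cauchy--Schwarz. Then $\langle b(x),x\rangle\le\gamma|x|^2+C'_{b,\LG}$ applies. Finally, $|x|^2$ is compared with $\vnorm{y}_\Psi^2$ through Cauchy--Schwarz along the direction $\theta^{-1}M_b$, with a constant governed by $\int_{[\kappa,\infty)}\theta^{-1}|M_b(\theta)|_\op\dmu$. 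The hypothesis $\gamma\int\theta^{-1}|M_b|_\op\dmu<1$ is exactly what should make the resulting coefficient $\delta'$ strictly less than $1$. The linear growth $|b(x)|\le C_{b,\LG}(1+|x|)$ controls the remainder involving $z$, where $\psi$ is made small on the support of $M_b$ relative to the $z$-part.

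I expect the main obstacle to be this drift step: choosing $\psi$, and making the decomposition yield a sharp constant, so that only $\gamma\int\theta^{-1}|M_b|_\op\dmu$ (and not, say, $C_{b,\LG}$) appears in front of $\vnorm{y}_\Psi^2$. If a scalar $\psi$ proves too rigid, I would first allow a matrix weight of the form $\Psi(\theta)=\psi(\theta)I_{n\times n}+\lambda\,\theta^{-1}(1+\theta)^{-1/2}M_b(\theta)$, truncated to keep \eqref{eq_weight-bound}. This weight is still symmetric and positive definite because $M_b\ge0$, and it gives extra freedom to align $\mu_{b,\Psi}[y]$ with $x$.
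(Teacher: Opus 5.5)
Your reduction (using $\|y\|_\Psi^2\le\kappa^{-1}\vnorm{y}_\Psi^2$, it suffices to bound the left-hand side of \eqref{eq_coercivity} by $\delta'\vnorm{y}_\Psi^2+C$) and your treatment of the diffusion term are fine and match the paper. The gap is in the drift step, which you rightly call the core: it cannot be closed with the weights you propose.

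Since $C_{b,\LG}$ is unrestricted and $b$ is controlled only through $\langle b(x),x\rangle\le\gamma|x|^2+C_{b,\LG}'$, $b$ may have an arbitrarily large rotational part. Hence $\mu_{b,\Psi}[y]$ must agree with a positive multiple of $\mu[y]$ up to an error $s\vnorm{y}_\Psi$, with $s$ small compared to $1/C_{b,\LG}$. An $O(1)$ remainder handled through the linear growth of $b$ puts $C_{b,\LG}$ in front of $\vnorm{y}_\Psi^2$. This forces $M_b(\theta)\Psi(\theta)\approx cI_{n\times n}$, which no scalar weight can achieve when $M_b$ is not a multiple of the identity. The decomposition $y=\theta^{-1}M_b\xi+z$ does not change this and is never turned into an estimate.

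A concrete counterexample: take $n=2$, $\mu=\delta_1$ (so $\kappa=1$ and $\mu[y]=y$), $M_b\equiv\mathrm{diag}(1,2)$, $\gamma<1/2$, $\sigma$ bounded, and $b(x)=\gamma x+RJx$ with $J$ the rotation by $\pi/2$. All hypotheses of the proposition hold. For $\Psi=\psi I_{n\times n}$ and $y=(t,t)$, the drift term equals $\psi t^2(3\gamma+R)$. The right-hand side of \eqref{eq_coercivity} is at most $2\delta\psi t^2+C_\Lyap$. So for $R\ge2$ no scalar admissible weight satisfies \cref{assum_Lyapunov-abstract}.

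Your fallback $\psi I_{n\times n}+\lambda\theta^{-1}(1+\theta)^{-1/2}M_b$ with $\lambda>0$ fails in the same example. With $\lambda':=\lambda/\sqrt{2}$, one gets $M_b\Psi=\mathrm{diag}(\psi+\lambda',2\psi+4\lambda')$, which is even further from a multiple of the identity. At $y=(t,t)$ the drift term is at least $R(\psi+3\lambda')t^2$, while the right-hand side is at most $\delta(2\psi+3\lambda')t^2+C_\Lyap$. The fallback adds $M_b$ where it must be inverted.

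The missing idea is the paper's regularized inverse weight. For $m>\kappa$, set $\Psi_m(\theta)=(m^{-1}\theta^{1/2}I_{n\times n}+M_b(\theta))^{-1}$ on
$A_m:=\{\theta\in[\kappa,m)\,|\,M_b(\theta)\ \text{symmetric nonnegative definite},\ |M_b(\theta)|_\op\le m\}$,
and $\Psi_m(\theta)=\theta^{-1/2}I_{n\times n}$ on $[\kappa,\infty)\setminus A_m$.

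On $A_m$, $(I_{n\times n}-M_b\Psi_m)\Psi_m^{-1/2}=m^{-1}\theta^{1/2}(m^{-1}\theta^{1/2}I_{n\times n}+M_b)^{-1/2}$, whose squared norm is at most $m^{-1}\theta^{1/2}$. Cauchy--Schwarz then gives two bounds:
\begin{itemize}
\item $|\mu[y]-\mu_{b,\Psi_m}[y]|^2\le\beta_m\vnorm{y}_{\Psi_m}^2$, with $\beta_m\to0$ by dominated convergence;
\item $|\mu[y]|^2\le\big(\int_{[\kappa,\infty)}\theta^{-1}|\Psi_m(\theta)^{-1}|_\op\dmu\big)\vnorm{y}_{\Psi_m}^2$, where the constant tends to $\int_{[\kappa,\infty)}\theta^{-1}|M_b(\theta)|_\op\dmu$.
\end{itemize}
The second bound is where symmetry and nonnegativity of $M_b$ enter.

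Now write $\langle b(x),\mu_{b,\Psi_m}[y]\rangle=\langle b(x),x\rangle-\langle b(x),x-\mu_{b,\Psi_m}[y]\rangle$ and apply Young's inequality. This gives $(\gamma+\ep)|x|^2+\ep^{-1}C_{b,\LG}^2\beta_m\vnorm{y}_{\Psi_m}^2+C_{m,\ep}$. Choosing $\ep$ small and then $m$ large makes the total coefficient of $\vnorm{y}_{\Psi_m}^2$ smaller than $1$. Then $C_{b,\LG}$ survives only in a term killed by $\beta_m\to0$.
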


%% Remark

\begin{rem}
\begin{itemize}
\item[(i)]
Within the aforementioned conditions, we do not impose any restriction on the linear growth constant of the drift coefficient $b$, other than the coercivity condition $\langle b(x),x\rangle\leq\gamma|x|^2+C_{b,\LG}'$ with the specific positive constant $\gamma$ introduced above. This kind of coercivity condition often arises in Lyapunov-type estimates for SDEs. It is worth noting, however, that controlling the system under such an instantaneous condition is far from trivial in the context of the SVE \eqref{eq_SVE}, as the dynamics exhibit path-dependence due to the presence of the kernels. Similarly, it is also non-trivial from the viewpoint of the SEE \eqref{eq_SEE}, as its coefficients involve the non-local term $\mu[y]$ with low regularity. Nevertheless, \cref{theo_Lyapunov} and \cref{prop_Lyapunov-sufficient} establish suitable Lyapunov-type estimates. A key ingredient in these estimates is the construction of an admissible weight function $\Psi$ that exploits the specific structure of the SEE \eqref{eq_SEE} to satisfy the requirements in \cref{assum_Lyapunov-abstract}. For more details, see \cref{subsec_proof-Lyapunov}.
\item[(ii)]
By \eqref{eq_integrability-mu}, \eqref{eq_integrability-M_b}, and the assumption $\kappa:=\supp>0$, the Cauchy--Schwarz inequality yields
\begin{equation*}
	\int_{[\kappa,\infty)}\theta^{-1}|M_b(\theta)|_\op\dmu\leq\left(\int_{[\kappa,\infty)}\theta^{-1/2}\dmu\right)^{1/2}\left(\int_{[\kappa,\infty)}\theta^{-3/2}|M_b(\theta)|_\op^2\dmu\right)^{1/2}<\infty.
\end{equation*}
If $(K_b,K_\sigma)$ is generated by the lifting basis $(\mu,M_b,M_\sigma)$, the quantity $\int_{[\kappa,\infty)}\theta^{-1}|M_b(\theta)|_\op\dmu$ is closely related to the integral $\int^\infty_0|K_b(t)|_\op\,\diff t$. Indeed, the representation $K_b(t)=\int_{[\kappa,\infty)}e^{-\theta t}M_b(\theta)\dmu$ and Tonelli's theorem yield
\begin{equation*}
	\int^\infty_0|K_b(t)|_\op\,\diff t\leq\int^\infty_0\int_{[\kappa,\infty)}e^{-\theta t}|M_b(\theta)|_\op\dmu\,\diff t=\int_{[\kappa,\infty)}\theta^{-1}|M_b(\theta)|_\op\dmu<\infty.
\end{equation*}
Furthermore, by Fubini's theorem, we have
\begin{equation*}
	\int^\infty_0K_b(t)\,\diff t=\int^\infty_0\int_{[\kappa,\infty)}e^{-\theta t}M_b(\theta)\dmu\,\diff t=\int_{[\kappa,\infty)}\theta^{-1}M_b(\theta)\dmu.
\end{equation*}
Thus, if there exists a vector $v\in\bR^n\setminus\{0\}$ (independent of $\theta$) such that $M_b(\theta)v=|M_b(\theta)|_\op v$ for $\mu$-a.e.\ $\theta\in[0,\infty)$, then the following identities hold:
\begin{equation*}
	\left|\int^\infty_0K_b(t)\,\diff t\right|_\op=\int^\infty_0|K_b(t)|_\op\,\diff t=\int_{[\kappa,\infty)}\theta^{-1}|M_b(\theta)|_\op\dmu=\left|\int_{[\kappa,\infty)}\theta^{-1}M_b(\theta)\dmu\right|_\op.
\end{equation*}
\item[(iii)]
For the lifting basis in \cref{exam_liftable} (i) generating the sum-of-exponentials type kernels $(K^{\exp}_b,K^{\exp}_\sigma)$, the first and second conditions in \cref{prop_Lyapunov-sufficient} are satisfied provided that $\kappa_i>0$ and $M_{b,i}$ is symmetric and nonnegative definite for each $i\in\{1,\dots,N\}$. In this case, we have $\kappa:=\inf\supp=\min\{\kappa_1,\dots,\kappa_N\}$ and $\int_{[\kappa,\infty)}\theta^{-1}|M_b(\theta)|_\op\dmu=\sum^N_{i=1}\frac{|M_{b,i}|_\op}{\kappa_i}$. As for the lifting basis in \cref{exam_liftable} (ii) generating the tempered fractional kernels $(K^\fractional_b,K^\fractional_\sigma)$, the first and second conditions in \cref{prop_Lyapunov-sufficient} are satisfied provided that $\kappa_b,\kappa_\sigma>0$. In this case, we have $\kappa:=\inf\supp=\kappa_b\wedge\kappa_\sigma$ and $\int_{[\kappa,\infty)}\theta^{-1}|M_b(\theta)|_\op\dmu=\int^\infty_0|K_b(t)|_\op\,\diff t=\kappa_b^{-\alpha_b}$. These observations are comparable with \cref{rem_exam-liftable} (ii).
\end{itemize}
\end{rem}

As a direct consequence of \cref{theo_coupling} and \cref{theo_Lyapunov}, and by virtue of the generalized Harris' theorem of \cite{HaMaSc11} (see also \cref{theo_HaMaSc11}), we obtain the following main result regarding the exponential ergodicity of the Markovian lift.

%% Theorem

\begin{theo}\label{theo_main}
Suppose that a lifting basis $(\mu,M_b,M_\sigma)$ and measurable maps $b:\bR^n\to\bR^n$, $\sigma:\bR^n\to\bR^{n\times d}$ satisfy both \cref{assum_coupling} and \cref{assum_Lyapunov-abstract}. Then, for the Markov semigroup $\{P_t\}_{t\geq0}$ associated with the SEE \eqref{eq_SEE}, the following assertions hold:
\begin{itemize}
\item[(i)]
There exist constants $r,t_0>0$ such that the following spectral gap-type estimate holds:
\begin{equation}\label{eq_spectral-gap}
	\bW_{d_{\Phi,\Psi}}(P_t^*\nu_1,P_t^*\nu_2)\leq e^{-rt}\bW_{d_{\Phi,\Psi}}(\nu_1,\nu_2)
\end{equation}
for all $t\geq t_0$ and all $\nu_1,\nu_2\in\cP(\cH)$, where $d_{\Phi,\Psi}:\cH\times\cH\to[0,\infty)$ is a distance-like function given by
\begin{equation}\label{eq_distance-like}
	d_{\Phi,\Psi}(y_1,y_2):=\sqrt{\big(\|y_1-y_2\|_\Phi\wedge1\big)\big(1+\|y_1\|_{\Psi}^2+\|y_2\|_{\Psi}^2\big)},\ \ y_1,y_2\in\cH,
\end{equation}
and $\Phi$ and $\Psi$ are the admissible weight functions provided in \cref{theo_coupling} and \cref{assum_Lyapunov-abstract}, respectively.
\item[(ii)]
There exists a unique invariant probability measure $\pi\in\cP(\cH)$ for $\{P_t\}_{t\geq0}$, which satisfies
\begin{equation}\label{eq_IPM-integrability}
	\pi(\cV)=1\ \ \text{and}\ \ \int_{\cV}\|y\|_{\cV}^2\,\pi(\diff y)<\infty.
\end{equation}
Furthermore, there exists a constant $C>0$ such that the following exponential ergodicity holds:
\begin{equation}\label{eq_ergodicity}
	\bW_{d_{\Phi,\Psi}}(P_t(y,\cdot),\pi)\leq C(1+\|y\|_{\Psi})e^{-rt}
\end{equation}
for all $y\in\cH$ and all $t\geq0$.
\end{itemize}
\end{theo}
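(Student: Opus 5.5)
The plan is to verify the hypotheses of the generalized Harris' theorem (\cref{theo_HaMaSc11}) on $E=\cH$ with $d:=d_\Phi$ and $V(y):=\|y\|_\Psi^2$. Then $d_V$ in \cref{theo_HaMaSc11} is exactly $d_{\Phi,\Psi}$ from \eqref{eq_distance-like}, and assertion (i) is \eqref{eq_HaMaSc11-spectral-gap}.

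First, the structural requirements. \cref{assum_coupling} contains Lipschitz continuity of $b$ and $\sigma$, so \cref{prop_SEE-well-posed} gives well-posedness, hence weak well-posedness. The semigroup $\{P_t\}$ is Feller by \eqref{eq_SEE-apriori'}, and it is stochastically continuous, hence measurable, as noted after \cref{rem_SEE-solution}. \cref{theo_Lyapunov} applies under \cref{assum_Lyapunov-abstract}, so $V=\|\cdot\|_\Psi^2$ is a Lyapunov function with some constants $\gamma_V,C_V,K_V$. $V$ is continuous on $\cH$ because $\|\cdot\|_\Psi$ is an equivalent norm (\cref{rem_Phi} (ii)). The distance $d_\Phi=\|\cdot\|_\Phi\wedge1$ is a compatible metric bounded by $1$. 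It is therefore distance-like, and the condition $d_0\le\sqrt d$ holds with $d_0:=d_\Phi$, since $d_\Phi\le\sqrt{d_\Phi}$ when $d_\Phi\le1$ (or simply by \cref{rem_Phi} (i)-type reasoning).

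Next, contraction and smallness. \cref{theo_coupling} (i) gives contraction of $d_\Phi$ for $P_t$ for all $t\ge t_1:=4\log2/\kappa$. For smallness, consider the level set $\{V\le4K_V\}=\{\|y\|_\Psi\le2\sqrt{K_V}\}$. By the admissibility bounds \eqref{eq_weight-bound} for $\Phi$ and $\Psi$, both $\|\cdot\|_\Phi$ and $\|\cdot\|_\Psi$ are equivalent to $\|\cdot\|_\cH$. Hence there is $c>0$ with $\|y\|_\Phi\le c\|y\|_\Psi$, and the level set is contained in $\overline B_\Phi(R)$ with $R:=2c\sqrt{K_V}$. \cref{theo_coupling} (ii) then gives $d_\Phi$-smallness for all $t\ge t_2:=(4R^2+2\log(8R))/\kappa$; if $R$ is small one may enlarge $R$ so that $t_2>0$. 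Applying \cref{theo_HaMaSc11} now yields existence and uniqueness of $\pi$ together with \eqref{eq_spectral-gap} for $t\ge t_0$.

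For (ii), the integrability \eqref{eq_IPM-integrability} is the final statement of \cref{theo_Lyapunov}, which also gives $\int\|y\|_\Psi^2\,\pi(\diff y)\le C_\Lyap/\rho<\infty$. For \eqref{eq_ergodicity} with $t\ge t_0$, take $\nu_1=\delta_y$ and $\nu_2=\pi$ and use the product coupling together with $\sqrt{a+b}\le\sqrt a+\sqrt b$:
\[
\bW_{d_{\Phi,\Psi}}(\delta_y,\pi)\le\int\sqrt{1+\|y\|_\Psi^2+\|y'\|_\Psi^2}\,\pi(\diff y')\le C(1+\|y\|_\Psi).
\]
For $t\in[0,t_0)$, bound directly with the product coupling of $P_t(y,\cdot)$ and $\pi$. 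This gives a bound $\int\!\!\int\sqrt{1+V(z)+V(y')}$, which by Jensen's inequality and the Lyapunov bound $P_tV(y)\le C_VV(y)+K_V$ is at most $C(1+\|y\|_\Psi)$. Multiplying by $e^{rt_0}$ absorbs the missing exponential factor.

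The only genuine obstacle, the construction of $\Phi$ and the coupling estimates, is already contained in \cref{theo_coupling}. The remaining delicate points are matching the level set to a $\Phi$-ball via norm equivalence and handling small times in \eqref{eq_ergodicity}; both are routine.
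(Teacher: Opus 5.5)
Your proposal is correct and follows the paper's route. The paper obtains \cref{theo_main} directly from \cref{theo_HaMaSc11} with $d=d_\Phi$ and $V=\|\cdot\|_\Psi^2$, using \cref{theo_coupling} (i)--(ii) for contraction and smallness (norm equivalence places the level set of $V$ inside a $\Phi$-ball), \cref{theo_Lyapunov} for the Lyapunov property and the integrability of $\pi$, and $\nu_1=\delta_y$, $\nu_2=\pi$ for \eqref{eq_ergodicity}; your product-coupling and Jensen argument for $t<t_0$ spells out a detail the paper leaves implicit, and $d_0\le\sqrt{d}$ follows from $d_\Phi\le1$ alone, so the reference to \cref{rem_Phi} (i) is unnecessary.
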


%% Remark

\begin{rem}\label{rem_ergodicity}
\begin{itemize}
\item[(i)]
The estimate \eqref{eq_ergodicity} immediately yields that $P_t(y,\cdot)$ converges to the invariant probability measure $\pi$ weakly in $\cP(\cH)$ as $t\to\infty$ for any $y\in\cH$. Furthermore, note that there exists a constant $C>0$ such that
\begin{equation}\label{eq_H-distance-like}
	\|y_1-y_2\|_{\cH}\leq\sqrt{\big(\|y_1-y_2\|_\cH\wedge1\big)\big(1+2\|y_1\|_\cH^2+2\|y_2\|_\cH^2\big)}\leq Cd_{\Phi,\Psi}(y_1,y_2)
\end{equation}
for all $y_1,y_2\in\cH$. Consequently, \eqref{eq_ergodicity} yields the following exponential ergodicity in the $L^1$-Wasserstein distance:
\begin{equation}\label{eq_ergodicity-1}
	\bW_\cH(P_t(y,\cdot),\pi)\leq C(1+\|y\|_\cH)e^{-rt}
\end{equation}
for all $y\in\cH$ and all $t\geq0$ for some constant $C>0$, where $\bW_\cH:\cP(\cH)\times\cP(\cH)\to[0,\infty]$ is the $L^1$-Wasserstein (extended) metric\footnote{The extended metric $\bW_\cH:\cP(\cH)\times\cP(\cH)\to[0,\infty]$ is a metric on $\{\nu\in\cP(\cH)\,|\,\int_\cH\|y\|_\cH\,\nu(\diff y)<\infty\}$, while it can take an infinite value outside this set.} defined by
\begin{equation}\label{eq_Wasserstein-defi}
	\bW_\cH(\nu_1,\nu_2):=\inf_{\nu\in\sC(\nu_1,\nu_2)}\int_{\cH\times\cH}\|y_1-y_2\|_\cH\,\nu(\diff y_1,\diff y_2),\ \ \nu_1,\nu_2\in\cP(\cH).
\end{equation}
%By means of the Kantorovich--Rubinstein duality formula (see, e.g., \cite[Theorem 5.10 and Particular Case 5.16]{Vi09}), we have
%\begin{equation*}
%	\bW_\cH(\fp_1,\fp_2)=\sup_{f\in\Lip(\cH)}\left|\int_\cH f(y)\,\fp_1(\diff y)-\int_\cH f(y)\,\fp_2(\diff y)\right|
%\end{equation*}
%for any $\fp_1,\fp_2\in\cP_1(\cH)$, where
%\begin{equation*}
%	\Lip(\cH):=\left\{f:\cH\to\bR\relmiddle||f(y_1)-f(y_2)|\leq\|y_1-y_2\|_\cH\ \text{for any $y_1,y_2\in\cH$}\right\}.
%\end{equation*}
\item[(ii)]
The exponential weak ergodicity \eqref{eq_ergodicity} (or \eqref{eq_ergodicity-1}) has various applications to limit theorems, including the \emph{law of large numbers}, the \emph{central limit theorem}, the \emph{averaging principle} and the \emph{diffusion approximation}; see \cite[Chapters 5 and 6]{Ku18}. We will study these topics in more detail in future research. On the other hand, the ``spectral gap'' result \eqref{eq_spectral-gap} by itself plays a crucial role in showing the stability of invariant probability measures; see \cite[Section 4.1]{HaMaSc11}. In \cref{sec_approximation}, we exploit this spectral gap result to analyze a finite-dimensional approximation of the invariant probability measure $\pi\in\cP(\cH)$.
\end{itemize}
\end{rem}

Following the notation and observations in \cref{rem_path-distribution}, we obtain the following corollary as a direct consequence of \cref{theo_main}.

%% Corollary

\begin{cor}\label{cor_stationary-path}
Under the setting of \cref{theo_main}, the following assertions hold:
\begin{itemize}
\item[(i)]
The probability measure $\bfP^\pi\in\cP(\Lambda)$ is invariant under the time-shifts on $\Lambda$, in the sense that $\bfP^\pi\circ\sS_\tau^{-1}=\bfP^\pi$ in $\cP(\Lambda)$ for all $\tau\geq0$. Furthermore, for every $y\in\cH$, the convergence $\lim_{\tau\to\infty}\bfP^{\delta_y}\circ\sS_\tau^{-1}=\bfP^\pi$ holds weakly in $\cP(\Lambda)$.
\item[(ii)]
The probability measure $\bfQ^\pi\in\cP(L^2_\loc(0,\infty;\bR^n))$ is invariant under the time-shifts on $L^2_\loc(0,\infty;\bR^n)$, in the sense that $\bfQ^\pi\circ\sS_\tau^{-1}=\bfQ^\pi$ in $\cP(L^2_\loc(0,\infty;\bR^n))$ for all $\tau\geq0$. Furthermore, for every $y\in\cH$, the convergence $\lim_{\tau\to\infty}\bfQ^{\delta_y}\circ\sS_\tau^{-1}=\bfQ^\pi$ holds weakly in $\cP(L^2_\loc(0,\infty;\bR^n))$.
\end{itemize}
\end{cor}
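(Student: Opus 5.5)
The plan is to derive both assertions from \cref{theo_main} together with the continuity of $\nu\mapsto\bfP^\nu$ and the shift identities recorded in \cref{rem_path-distribution}.

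For the invariance in (i), \cref{rem_path-distribution} (i) gives $\bfP^\pi\circ\sS_\tau^{-1}=\bfP^{P_\tau^*\pi}$. Since $\pi$ is invariant by \cref{theo_main} (ii), $P_\tau^*\pi=\pi$, and so $\bfP^\pi\circ\sS_\tau^{-1}=\bfP^\pi$ for every $\tau\geq0$.

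For the convergence in (i), again by \cref{rem_path-distribution} (i) we have $\bfP^{\delta_y}\circ\sS_\tau^{-1}=\bfP^{P_\tau(y,\cdot)}$. It therefore suffices to show that $P_\tau(y,\cdot)\to\pi$ weakly in $\cP(\cH)$ as $\tau\to\infty$, and then use the continuity of $\cP(\cH)\ni\nu\mapsto\bfP^\nu\in\cP(\Lambda)$ with respect to weak convergence. That continuity is stated in \cref{rem_path-distribution} (i) as a consequence of \eqref{eq_SEE-apriori'}: one couples initial data via a coupling (for instance an almost sure Skorokhod realization) and runs both solutions with the same Brownian motion. The required weak convergence comes from \eqref{eq_ergodicity-1}, that is, $\bW_\cH(P_\tau(y,\cdot),\pi)\leq C(1+\|y\|_\cH)e^{-r\tau}\to0$. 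Convergence in the $L^1$-Wasserstein distance implies weak convergence: for bounded Lipschitz $f$, $|\int f\,\diff P_\tau(y,\cdot)-\int f\,\diff\pi|\leq\Lip(f)\,\bW_\cH$, and convergence against bounded Lipschitz test functions characterizes weak convergence on the Polish space $\cH$. Alternatively, Remark \ref{rem_ergodicity} (i) already asserts this weak convergence directly.

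For (ii), use $\bfQ^\nu=\bfP^\nu\circ\mu[\cdot]^{-1}$ from \cref{rem_path-distribution} (ii), with $\mu[\cdot]:\Lambda\to L^2_\loc(0,\infty;\bR^n)$ continuous. The shift commutes with this map, $\mu[\cdot]\circ\sS_\tau=\sS_\tau\circ\mu[\cdot]$, since both send $\eta$ to the class of $(\mu[\eta_{t+\tau}])_{t}$. Hence $\bfQ^\pi\circ\sS_\tau^{-1}=(\bfP^\pi\circ\sS_\tau^{-1})\circ\mu[\cdot]^{-1}=\bfQ^\pi$. The convergence statement transfers by the continuous mapping theorem, or equivalently through the continuity of $\nu\mapsto\bfQ^\nu$ noted in that remark.

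The only mildly delicate step is justifying the continuity of $\nu\mapsto\bfP^\nu$ under weak convergence. The paper asserts this in \cref{rem_path-distribution}, so I would cite it rather than reprove it; the rest is bookkeeping.
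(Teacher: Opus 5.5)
Your proposal is correct and follows the paper's own proof. Stationarity comes from the invariance of $\pi$ together with the shift identity $\bfP^\nu\circ\sS_\tau^{-1}=\bfP^{P_\tau^*\nu}$, and the convergence comes from the weak convergence $P_\tau(y,\cdot)\to\pi$ (from \cref{theo_main}) combined with the weak continuity of $\nu\mapsto\bfP^\nu$ and $\nu\mapsto\bfQ^\nu$ recorded in \cref{rem_path-distribution}. Your derivation of the $\bfQ$-shift identity via $\mu[\cdot]\circ\sS_\tau=\sS_\tau\circ\mu[\cdot]$ is simply an explicit check of the identity $\bfQ^\nu\circ\sS_\tau^{-1}=\bfQ^{P_\tau^*\nu}$ that the paper cites directly.
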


%% Proof

\begin{proof}
Since $\pi\in\cP(\cH)$ is invariant with respect to $\{P_t\}_{t\geq0}$ and $\lim_{t\to\infty}P_t(y,\cdot)=\pi$ weakly in $\cP(\cH)$ for any $y\in\cH$, the assertions follow from the identities $\bfP^\nu\circ\sS_\tau^{-1}=\bfP^{P_\tau^*\nu}$ in $\cP(\Lambda)$ and $\bfQ^\nu\circ\sS_\tau^{-1}=\bfQ^{P_\tau^*\nu}$ in $\cP(L^2_\loc(0,\infty;\bR^n))$ for $\tau\geq0$ and $\nu\in\cP(\cH)$, combined with the continuity of $\cP(\cH)\ni\nu\mapsto\bfP^\nu\in\cP(\Lambda)$ and $\cP(\cH)\ni\nu\mapsto\bfQ^\nu\in\cP(L^2_\loc(0,\infty;\bR^n))$ in the weak sense (see \cref{rem_path-distribution}).
\end{proof}

%% Remark

\begin{rem}\label{rem_Q-tilde}
Strictly speaking, the invariance of $\bfQ^\pi\in\cP(L^2_\loc(0,\infty;\bR^n))$ under the time-shifts on $L^2_\loc(0,\infty;\bR^n)$ established in \cref{cor_stationary-path} (ii) does not directly imply the stationarity of the finite-dimensional distributions of the process itself. Nevertheless, we can construct a stationary solution of the SVE \eqref{eq_SVE} in the latter sense. Indeed, under the setting of \cref{theo_main}, the invariance of $\pi\in\cP(\cH)$ with respect to $\{P_t\}_{t\geq0}$ and $\pi(\cV)=1$ ensure that
\begin{align*}
	\bfP^\pi\big(\left\{\eta\in\Lambda\relmiddle|\eta_{t_1},\dots,\eta_{t_\ell}\in\cV\right\}\big)=\int_\cV\pi(\diff y_0)\int_\cV P_{t_1}(y_0,\diff y_1)\int_\cV P_{t_2-t_1}(y_1,\diff y_2)\cdots\int_\cV P_{t_\ell-t_{\ell-1}}(y_{\ell-1},\diff y_\ell)=1
\end{align*}
for any $t_1,\dots,t_\ell\in[0,\infty)$.\footnote{Thanks to $\cB(\cV)=\{A\cap\cV\,|\,A\in\cB(\cH)\}\subset\cB(\cH)$ and \cref{lemm_app-Lambda}, we have $\{\eta\in\Lambda\,|\,\eta_{t_1},\dots,\eta_{t_\ell}\in\cV\}\in\cB(\Lambda)$, or more generally $\{\eta\in\Lambda\,|\,(\eta_{t_1},\dots,\eta_{t_\ell})\in A\}\in\cB(\Lambda)$ for any $A\in\cB(\cV^\ell)=\cB(\cV)^{\otimes\ell}$. Note, however, that the set $\{\eta\in\Lambda\,|\,\eta_t\in\cV\ \text{for any $t\geq0$}\}$ may not be a Borel set in $\Lambda$, as $t\mapsto\eta_t$ is not necessarily continuous in $\cV$ for each $\eta\in\Lambda$.} Based on this observation, together with $\mu[\cdot]\in L(\cV;\bR^n)$, we can define a consistent family of finite-dimensional distributions $\widetilde{\bfQ}^\pi_{(t_1,\dots,t_\ell)}\in\cP((\bR^n)^\ell)$ by
\begin{equation*}
	\widetilde{\bfQ}_{(t_1,\dots,t_\ell)}^\pi(A_1\times\cdots \times A_\ell):=\bfP^\pi\big(\left\{\eta\in\Lambda\relmiddle|\eta_{t_1},\dots,\eta_{t_\ell}\in\cV\ \text{and}\ \mu[\eta_{t_1}]\in A_1,\dots,\mu[\eta_{t_\ell}]\in A_\ell\right\}\big)
\end{equation*}
for any $A_1,\dots,A_\ell\in\cB(\bR^n)$, $t_1,\dots,t_\ell\in[0,\infty)$ and $\ell\in\bN$. Let $(\bR^n)^{[0,\infty)}$ be the space of all maps $\xi:[0,\infty)\to\bR^n$, which is equipped with the $\sigma$-algebra generated by the family of cylinder sets $\{\{\xi\in(\bR^n)^{[0,\infty)}\,|\xi_t\in A\}\,|\,t\in[0,\infty),\,A\in\cB(\bR^n)\}$. By the Kolmogorov extension theorem, there exists a unique probability measure $\widetilde{\bfQ}^\pi$ on the measurable space $(\bR^n)^{[0,\infty)}$ such that
\begin{equation*}
	\widetilde{\bfQ}^\pi\Big(\left\{\xi\in(\bR^n)^{[0,\infty)}\relmiddle|\big(\xi_{t_1},\dots,\xi_{t_\ell}\big)\in\cdot\right\}\Big)=\widetilde{\bfQ}^\pi_{(t_1,\dots,t_\ell)}\ \ \text{in $\cP((\bR^n)^\ell)$}
\end{equation*}
for any $t_1,\dots,t_\ell\in[0,\infty)$ and $\ell\in\bN$. The probability measure $\widetilde{\bfQ}^\pi\in\cP((\bR^n)^{[0,\infty)})$ represents the law of the process $X:=\mu[Y]=(\mu[Y_t])_{t\geq0}$ itself (rather than its $\diff t$-equivalence class as $\bfQ^\pi\in\cP(L^2_\loc(0,\infty;\bR^n))$), where $Y$ is a solution of the SEE \eqref{eq_SEE} with the initial distribution $\pi\in\cP(\cH)$. By \cref{prop_SEE-well-posed} (iii), $X$ solves the SVE \eqref{eq_SVE}. Furthermore, \cref{cor_stationary-path} (i) implies that the probability measure $\widetilde{\bfQ}^\pi\in\cP((\bR^n)^{[0,\infty)})$ is invariant under the time-shifts on $(\bR^n)^{[0,\infty)}$, in the sense that $\widetilde{\bfQ}^\pi\circ\sS_\tau^{-1}=\widetilde{\bfQ}^\pi$ in $\cP((\bR^n)^{[0,\infty)})$ for all $\tau\geq0$. Here, with a slight abuse of notation, as in \cref{rem_path-distribution}, we again denote by $\sS_\tau$ the time-shift operator $(\xi_t)_{t\geq0}\mapsto(\xi_{t+\tau})_{t\geq0}$ on $(\bR^n)^{[0,\infty)}$. This shift-invariance of $\widetilde{\bfQ}^\pi$ precisely means that the $\bR^n$-valued process $X=\mu[Y]$ is strictly stationary. We emphasize that $\bfQ^\pi\in\cP(L^2_\loc(0,\infty;\bR^n))$ and $\widetilde{\bfQ}^\pi\in\cP((\bR^n)^{[0,\infty)})$ are different objects; the former characterizes the law of the $\diff t$-equivalence class of the process $X=\mu[Y]$, and the latter characterizes the finite-dimensional distributions of the process $X=\mu[Y]$ itself (which can be interpreted as a ``good representative'' among the $\diff t$-equivalence class). In contrast to \cref{cor_stationary-path} (ii), our current result does \emph{not} imply the weak convergence of shifted finite-dimensional distributions, due to the general lack of continuity of the map $\Lambda\ni\eta\mapsto(\mu[\eta_{t_1}\1_\cV(\eta_{t_1})],\dots,\mu[\eta_{t_\ell}\1_\cV(\eta_{t_\ell})])\in(\bR^n)^\ell$ for fixed $t_1,\dots,t_\ell\in[0,\infty)$.
\end{rem}

Let us further examine the stationary distributions $\bfQ^\pi\in\cP(L^2_\loc(0,\infty;\bR^n))$ and $\widetilde{\bfQ}^\pi\in\cP((\bR^n)^{[0,\infty)})$ corresponding to the SVE \eqref{eq_SVE}. Admittedly, the state space $\cH$, the Markov semigroup $\{P_t\}_{t\geq0}$ on $\cH$, and the invariant probability measure $\pi\in\cP(\cH)$ associated with the SEE \eqref{eq_SEE} all depend on the specific choice of the lifting basis $(\mu,M_b,M_\sigma)$ which generates the kernels $K_b$ and $K_\sigma$. Nevertheless, it is natural to expect that the resulting stationary distributions for the original SVE \eqref{eq_SVE} depend only on the kernels $K_b,K_\sigma$ and the coefficients $b,\sigma$, remaining independent of the particular choice of the lifting basis. The following theorem shows that this is indeed the case in some sense.

%% Theorem

\begin{theo}\label{theo_lift-independence}
Let $(K_b,K_\sigma)$ be a liftable pair of kernels, and let $b:\bR^n\to\bR^n$ and $\sigma:\bR^n\to\bR^{n\times d}$ be measurable maps. Suppose that $(\mu_1,M_{b,1},M_{\sigma,1})$ and $(\mu_2,M_{b,2},M_{\sigma,2})$ are two lifting bases, both of which generate $(K_b,K_\sigma)$, such that \cref{assum_coupling} and \cref{assum_Lyapunov-abstract} are satisfied for each. For $i\in\{1,2\}$, consider the SEE \eqref{eq_SEE} on the Gelfand triplet $\cV_i\hookrightarrow\cH_i\hookrightarrow\cV^*_i$ associated with $(\mu_i,M_{b,i},M_{\sigma,i})$, and denote by $\{\bfP_i^{\nu_i}\}_{\nu_i\in\cP(\cH_i)}\subset\cP(\Lambda_i)$, $\{P_{i,t}\}_{t\geq0}$, and $\pi_i\in\cP(\cH_i)$ the corresponding laws of the paths of the solutions on the path space $\Lambda_i$, the associated Markov semigroup, and its invariant probability measure, respectively. Furthermore, let $\bfQ_i^{\pi_i}\in\cP(L^2_\loc(0,\infty;\bR^n))$ and $\widetilde{\bfQ}_i^{\pi_i}\in\cP((\bR^n)^{[0,\infty)})$ be the stationary distributions constructed as in \cref{rem_path-distribution} (ii) and in \cref{rem_Q-tilde}. Then, it holds that
\begin{equation}\label{eq_lift-independence-L^2}
	\bfQ_1^{\pi_1}=\bfQ_2^{\pi_2}\ \ \text{in $\cP\big(L^2_\loc(0,\infty;\bR^n)\big)$}
\end{equation}
and
\begin{equation}\label{eq_lift-independence-fdd}
	\widetilde{\bfQ}_1^{\pi_1}=\widetilde{\bfQ}_2^{\pi_2}\ \ \text{in $\cP\big((\bR^n)^{[0,\infty)}\big)$}.
\end{equation}
\end{theo}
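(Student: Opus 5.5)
The plan is to start both lifts from the zero initial condition, where they produce the same SVE, and pass to the stationary regime via \cref{cor_stationary-path}; the finite-dimensional statement then follows from an almost-everywhere identification plus a continuity argument.

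\textbf{Step 1: identification of $\bfQ_1^{\pi_1}$ and $\bfQ_2^{\pi_2}$.} For $i\in\{1,2\}$, let $Y^i$ solve the SEE on $\cV_i\hookrightarrow\cH_i\hookrightarrow\cV_i^*$ with $Y^i_0=0$, driven by the same Brownian motion $W$. By \cref{prop_SEE-well-posed} (iii), $X^i:=\mu_i[Y^i]$ solves the SVE \eqref{eq_SVE} with forcing term $x\equiv0$. Both lifting bases generate the same $(K_b,K_\sigma)$, so both processes solve literally the same equation. By the uniqueness in \cref{prop_SVE}, $X^1=X^2$ holds $\diff t\otimes\diff\bP$-a.e., and hence $\bfQ_1^{\delta_0}=\bfQ_2^{\delta_0}$. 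Applying the shift identity of \cref{rem_path-distribution} (ii), we get
\begin{equation*}
	\bfQ_1^{P_{1,\tau}^*\delta_0}=\bfQ_1^{\delta_0}\circ\sS_\tau^{-1}=\bfQ_2^{\delta_0}\circ\sS_\tau^{-1}=\bfQ_2^{P_{2,\tau}^*\delta_0}
\end{equation*}
for every $\tau\geq0$. Letting $\tau\to\infty$ and using \cref{cor_stationary-path} (ii) with $y=0$ for each $i$, both sides converge weakly, to $\bfQ_1^{\pi_1}$ and $\bfQ_2^{\pi_2}$ respectively. This gives \eqref{eq_lift-independence-L^2}.

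\textbf{Step 2: a.e.\ identification of finite-dimensional distributions.} Let $X^i=\mu_i[Y^i]$, where now $Y^i$ starts from $\pi_i$. By \cref{rem_Q-tilde}, $X^i$ is a jointly measurable process whose $\diff t$-class has law $\bfQ_i^{\pi_i}$ and whose finite-dimensional distributions are those of $\widetilde{\bfQ}_i^{\pi_i}$. Fix $\ell$, a bounded continuous $f$ on $(\bR^n)^\ell$, and bounded measurable $\varphi$ on $(0,T)^\ell$ with compact support. The functional
\begin{equation*}
	\xi\mapsto\int\varphi(t_1,\dots,t_\ell)f(\xi_{t_1},\dots,\xi_{t_\ell})\,\diff t_1\cdots\diff t_\ell
\end{equation*}
is well defined and measurable on $L^2_\loc(0,\infty;\bR^n)$, so its expectation depends only on $\bfQ_i^{\pi_i}$. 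By Fubini and Step 1, $\bE[f(X^1_{t_1},\dots,X^1_{t_\ell})]=\bE[f(X^2_{t_1},\dots,X^2_{t_\ell})]$ for Lebesgue-a.e.\ $(t_1,\dots,t_\ell)$. Using a countable convergence-determining family of $f$'s, the two $\ell$-dimensional laws coincide for a.e.\ tuple.

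\textbf{Step 3: continuity in probability (main obstacle).} To pass from a.e.\ tuples to all tuples, I would show that $t\mapsto X^i_t$ is continuous in $L^2(\bP)$ under the stationary start, i.e.\ $\bE|X^i_{t+h}-X^i_t|^2\to0$ as $h\to0$, uniformly in $t$. By stationarity it suffices to treat $t=0$. Writing the mild form \eqref{eq_SEE-mild} and applying $\mu_i[\cdot]$, the difference splits into three terms.
\begin{itemize}
\item[(a)] The term $\mu_i[\cS(h)Y_0-Y_0]$ tends to $0$ in $L^2(\pi_i)$ by dominated convergence. This uses $\pi_i(\cV_i)=1$, the bound $\int\|y\|_{\cV_i}^2\,\pi_i(\diff y)<\infty$ from \eqref{eq_IPM-integrability}, and the continuity $\cV_i\hookrightarrow L^1(\mu_i)$ from \cite[Lemma 2.5]{Ha24}.
\item[(b)] The drift integral $\int_0^hK_b(h-s)b(X_s)\,\diff s$ is bounded via Young's inequality by a constant times $\int_0^h|K_b|_\op$. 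The stochastic integral $\int_0^hK_\sigma(h-s)\sigma(X_s)\,\diff W_s$ is bounded in $L^2$ via It\^o's isometry by a constant times $\int_0^h|K_\sigma|_\op^2$. In both cases I use the stationary moment bound $\sup_s\bE|X_s|^2\leq C\int\|y\|_{\cV_i}^2\,\pi_i(\diff y)$ together with linear growth of $b$ and $\sigma$, and both kernel integrals vanish as $h\to0$ by \eqref{eq_integrability-K'}.
\end{itemize}
The delicate point is that the mild formula is only stated for fixed times, at the level of $\cV^*$ and not pointwise in $\theta$. I would need to redo the limiting argument $m\to\infty$ from the proof of \cref{prop_SEE-well-posed} (iii) at the two fixed times $0$ and $h$, rather than in $L^2(\diff t)$, using the stationarity bounds.

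\textbf{Step 4: conclusion.} Once continuity in probability holds, the map $(t_1,\dots,t_\ell)\mapsto\Law(X^i_{t_1},\dots,X^i_{t_\ell})$ is weakly continuous. Two continuous maps agreeing a.e.\ agree everywhere, so all finite-dimensional distributions coincide. Then the uniqueness part of the Kolmogorov extension theorem gives \eqref{eq_lift-independence-fdd}.
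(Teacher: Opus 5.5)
Your proposal is correct, and it takes a genuinely different route from the paper.

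\textbf{The paper's route.} The paper never compares the two SVE solutions directly. From $M_{b,1}\mu_1=M_{b,2}\mu_2$, $M_{\sigma,1}\mu_1=M_{\sigma,2}\mu_2$ and positive definiteness of $M_{\sigma,i}$, it gets $\mu_1\sim\mu_2$. It then builds the common lifting basis $(\overline{\mu},\overline{M}_b,\overline{M}_\sigma)$ with $\overline{\mu}=\mu_1+\mu_2$. The multiplication maps $\Xi_iy=\frac{\diff\mu_i}{\diff\overline{\mu}}y$ send solutions of the $i$-th SEE to solutions of the common one, and $\mu_i[y]=\overline{\mu}[\Xi_iy]$ on $\cV_i$. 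Hence $\pi_1\circ\Xi_1^{-1}$ and $\pi_2\circ\Xi_2^{-1}$ are both invariant for the common semigroup, so they coincide by the uniqueness in \cref{theo_coupling} (iii). Both \eqref{eq_lift-independence-L^2} and \eqref{eq_lift-independence-fdd} then follow by pushing forward. The finite-dimensional statement costs nothing extra there, because the identity $\mu_i[\eta_{t_j}]=\overline{\mu}[\Xi_i\eta_{t_j}]$ holds at every fixed time.

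\textbf{Your route.} Your Step 1 uses instead that the zero initial state gives the same forcing $x\equiv0$ for both lifts. Pathwise uniqueness for the SVE, the shift identity, and the convergence $\bfQ_i^{\delta_0}\circ\sS_\tau^{-1}\to\bfQ_i^{\pi_i}$ from \cref{cor_stationary-path} (ii) then give \eqref{eq_lift-independence-L^2} with no common lift. This characterizes $\bfQ^\pi$ intrinsically as the long-time limit of the SVE with zero forcing, which depends visibly only on $(K_b,K_\sigma,b,\sigma)$.

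The price is the finite-dimensional statement. Since $\eta\mapsto\mu[\eta_t]$ is not continuous on $\Lambda$, you need Steps 2--3, and these do go through.
\begin{itemize}
\item At a fixed time $h$, you have $Y_h\in\cV_i$ a.s.\ because $\pi_i(\cV_i)=1$. So $\mu_m[Y_h]\to\mu_i[Y_h]$ by dominated convergence.
\item The paper's bounds on the tails $\overline{\mu}_m[\cS(\cdot)\cM_b]$ and $\overline{\mu}_m[\cS(\cdot)\cM_\sigma]$ transfer to fixed times. Use Minkowski's integral inequality for the drift and It\^o's isometry for the noise. Together with the uniform stationary bound $\sup_s\bE|X_s|^2\leq\|\mu_i[\cdot]\|_{L(\cV_i;\bR^n)}^2\int\|y\|_{\cV_i}^2\,\pi_i(\diff y)$, this yields the Volterra identity at each fixed time and then mean-square continuity.
\end{itemize}

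\textbf{Comparison.} Your approach gives, as a byproduct, mean-square continuity of the stationary solution, which the paper does not prove. The paper's approach is shorter. It also gives a stronger structural fact: the invariant measures themselves correspond under a common lift, and this needs only uniqueness, not convergence, for that lift.
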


%% Proof

\begin{proof}
Since both $(\mu_1,M_{b,1},M_{\sigma,1})$ and $(\mu_2,M_{b,2},M_{\sigma,2})$ generate the same kernels $K_b$ and $K_\sigma$, the characterization of Borel measures by their Laplace transforms implies that
\begin{equation*}
	M_{b,1}(\theta)\,\mu_1(\diff\theta)=M_{b,2}(\theta)\,\mu_2(\diff\theta)\ \ \text{and}\ \ M_{\sigma,1}(\theta)\,\mu_1(\diff\theta)=M_{\sigma,2}(\theta)\,\mu_2(\diff\theta).
\end{equation*}
Furthermore, since $M_{\sigma,i}(\theta)$ is positive definite for $\mu_i$-a.e.\ $\theta\in[0,\infty)$ by \cref{assum_coupling} for each $i\in\{1,2\}$, we see that the two measures $\mu_1$ and $\mu_2$ are equivalent. Set $\overline{\mu}:=\mu_1+\mu_2$. Clearly, we have $0<\frac{\diff\mu_i}{\diff\overline{\mu}}<1$ $\overline{\mu}$-a.e. for $i\in\{1,2\}$. Define
\begin{equation*}
	\overline{M}_b(\theta):=\frac{\diff\mu_i}{\diff\overline{\mu}}(\theta)M_{b,i}(\theta)\ \ \text{and}\ \ \overline{M}_\sigma(\theta):=\frac{\diff\mu_i}{\diff\overline{\mu}}(\theta)M_{\sigma,i}(\theta)
\end{equation*}
for each $\theta\in[0,\infty)$ and $i\in\{1,2\}$. Note that $\overline{M}_b$ and $\overline{M}_\sigma$ are independent of $i\in\{1,2\}$. Moreover, $(\overline{\mu},\overline{M}_b,\overline{M}_\sigma)$ constitutes a lifting basis in the sense of \cref{defi_lifting-basis} and satisfies the first two conditions in \cref{assum_coupling}. For the given maps $b:\bR^n\to\bR^n$ and $\sigma:\bR^n\to\bR^{n\times d}$, consider the SEE \eqref{eq_SEE} on the Gelfand triplet $\overline{\cV}\hookrightarrow\overline{\cH}\hookrightarrow\overline{\cV}^*$ associated with $(\overline{\mu},\overline{M}_b,\overline{M}_\sigma)$, and denote the corresponding Markov semigroup on $\overline{\cH}$ by $\{\overline{P}_t\}_{t\geq0}$. For each $\overline{\nu}\in\cP(\overline{\cH})$, denote by $\overline{\bfP}^{\overline{\nu}}$ the law of the path of a solution with the initial distribution $\overline{\nu}$, which is a probability measure on the associated path space $\overline{\Lambda}$.

Let $i\in\{1,2\}$ be fixed. Define $\Xi_i:\cV_i^*\to\overline{\cV}^*$ by $\Xi_iy_i:=\frac{\diff\mu_i}{\diff\overline{\mu}}y_i$ for each $y_i\in\cV^*_i$. Clearly, this multiplication operator $\Xi_i$ belongs to $L(\cV^*_i,\overline{\cV}^*)$, $L(\cH_i,\overline{\cH})$ and $L(\cV_i,\overline{\cV})$. Furthermore, we have
\begin{equation}\label{eq_lift-independence-1}
	\mu_i[y_i]=\overline{\mu}[\Xi_iy_i]\ \ \text{for any $y_i\in\cV_i$}.
\end{equation}
Thus, for each solution $Y_i=(Y_{i,t})_{t\geq0}$ of the SEE \eqref{eq_SEE} corresponding to the lifting basis $(\mu_i,M_{b,i},M_{\sigma,i})$ with a given initial distribution $\nu_i\in\cP(\cH_i)$, the process $(\Xi_iY_{i,t})_{t\geq0}$ is a solution of the SEE \eqref{eq_SEE} corresponding to the lifting basis $(\overline{\mu},\overline{M}_b,\overline{M}_\sigma)$ with the initial distribution $\nu_i\circ\Xi_i^{-1}\in\cP(\overline{\cH})$. Hence, we have
\begin{equation}\label{eq_lift-independence-2}
	\bfP_i^{\nu_i}\circ\Xi_i^{-1}=\overline{\bfP}^{\nu_i\circ\Xi_i^{-1}}\ \ \text{in $\cP\big(\overline{\Lambda}\big)$ for any $\nu_i\in\cP\big(\cH_i\big)$},
\end{equation}
where, with a slight abuse of notation, we used the same symbol $\Xi_i$ as before to represent the map $\Lambda_i\ni(\eta_{i,t})_{t\geq0}\mapsto(\Xi_i\eta_{i,t})_{t\geq0}\in\overline{\Lambda}$. In particular, we have $(P_{i,t}^*\nu_i)\circ\Xi_i^{-1}=\overline{P}_t^*(\nu_i\circ\Xi_i^{-1})$ in $\cP(\overline{\cH})$ for any $\nu_i\in\cP(\cH_i)$ and $t\geq0$. Hence, for the invariant probability measure $\pi_i\in\cP(\cH_i)$ with respect to the Markov semigroup $\{P_{i,t}\}_{t\geq0}$, we have
\begin{equation*}
	\overline{P}_t^*\big(\pi_i\circ\Xi_i^{-1}\big)=(P_{i,t}^*\pi_i)\circ\Xi_i^{-1}=\pi_i\circ\Xi_i^{-1}
\end{equation*}
in $\cP(\overline{\cH})$ for any $t\geq0$. This indicates that $\pi_i\circ\Xi_i^{-1}\in\cP(\overline{\cH})$ is an invariant probability measure for the Markov semigroup $\{\overline{P}_t\}_{t\geq0}$. This conclusion remains valid for each $i\in\{1,2\}$. However, since the lifting basis $(\overline{\mu},\overline{M}_b,\overline{M}_\sigma)$ and the maps $b,\sigma$ satisfy \cref{assum_coupling}, \cref{theo_coupling} (iii) shows that the Markov semigroup $\{\overline{P}_t\}_{t\geq0}$ on $\overline{\cH}$ has at most one invariant probability measure. Hence, it must hold that
\begin{equation}\label{eq_lift-independence-3}
	\pi_1\circ\Xi_1^{-1}=\pi_2\circ\Xi_2^{-1}\ \ \text{in $\cP\big(\overline{\cH}\big)$}.
\end{equation}

Observe that, for each $i\in\{1,2\}$,
\begin{equation*}
	\bfQ_i^{\pi_i}=\bfP_i^{\pi_i}\circ\mu_i[\cdot]^{-1}=\big(\bfP_i^{\pi_i}\circ\Xi_i^{-1}\big)\circ\overline{\mu}[\cdot]^{-1}=\overline{\bfP}^{\pi_i\circ\Xi_i^{-1}}\circ\overline{\mu}[\cdot]^{-1}
\end{equation*}
in $\cP(L^2_\loc(0,\infty;\bR^n))$, where the first equality is precisely the definition of the probability measure $\bfQ_i^{\pi_i}\in\cP(L^2_\loc(0,\infty;\bR^n))$, the second equality is due to \eqref{eq_lift-independence-1}, and the third equality is due to \eqref{eq_lift-independence-2}. From this, together with \eqref{eq_lift-independence-3}, we obtain the desired first equality \eqref{eq_lift-independence-L^2}. To prove the second equality \eqref{eq_lift-independence-fdd}, take arbitrary $t_1,\dots,t_\ell\in[0,\infty)$ and $A_1,\dots,A_\ell\in\cB(\bR^n)$ with $\ell\in\bN$. Fix $i\in\{1,2\}$. Define $B_i,C_i\in\cB(\Lambda_i)$ and $\overline{B},\overline{C}\in\cB(\overline{\Lambda})$ by
\begin{align*}
	&B_i:=\left\{(\eta_{i,t})_{t\geq0}\in\Lambda_i\relmiddle|\eta_{i,t_1},\dots,\eta_{i,t_\ell}\in\cV_i\right\},\ \ C_i:=\left\{(\eta_{i,t})_{t\geq0}\in B_i\relmiddle|\mu_i[\eta_{i,t_1}]\in A_1,\dots,\mu_i[\eta_{i,t_\ell}]\in A_\ell\right\},\\
	&\overline{B}:=\left\{(\overline{\eta}_t)_{t\geq0}\in\overline{\Lambda}\relmiddle|\overline{\eta}_{t_1},\dots,\overline{\eta}_{t_\ell}\in\overline{\cV}\right\},\ \ \overline{C}:=\left\{(\overline{\eta}_t)_{t\geq0}\in\overline{B}\relmiddle|\overline{\mu}[\overline{\eta}_{t_1}]\in A_1,\dots,\overline{\mu}[\overline{\eta}_{t_\ell}]\in A_\ell\right\}.
\end{align*}
By \eqref{eq_lift-independence-1}, we see that $C_i=B_i\cap(\Xi_i^{-1}\overline{C})$. Furthermore, noting \cref{rem_Q-tilde}, we have $\bfP_i^{\pi_i}(B_i)=1$. Hence,
\begin{equation*}
	\widetilde{\bfQ}_i^{\pi_i}\left(\left\{(\xi_t)_{t\geq0}\in(\bR^n)^{[0,\infty)}\relmiddle|\xi_{t_1}\in A_1,\dots,\xi_{t_\ell}\in A_\ell\right\}\right)\\=\bfP_i^{\pi_i}(C_i)=\bfP_i^{\pi_i}\big(\Xi_i^{-1}\overline{C}\big)=\overline{\bfP}^{\pi_i\circ\Xi_i^{-1}}\big(\overline{C}\big),
\end{equation*}
where the first equality is due to the definition of the probability measure $\widetilde{\bfQ}_i^{\pi_i}\in\cP((\bR^n)^{[0,\infty)})$, the second equality is due to $C_i=B_i\cap(\Xi_i^{-1}\overline{C})$ and $\bfP_i^{\pi_i}(B_i)=1$, and the third equality is due to \eqref{eq_lift-independence-2}. The above and \eqref{eq_lift-independence-3} ensure that every finite-dimensional distributions of $\widetilde{\bfQ}_1^{\pi_1}\in\cP((\bR^n)^{[0,\infty)})$ and $\widetilde{\bfQ}_2^{\pi_2}\in\cP((\bR^n)^{[0,\infty)})$ coincide, proving the desired second equality \eqref{eq_lift-independence-fdd}. This completes the proof.
\end{proof}

%
%%% Corollary
%
%\begin{cor}\label{cor_mixing}
%Under the same setting as in \cref{theo_main}, let $Y^\st=(Y^\st_t)_{t\geq0}$ be the solution of the SEE with the initial distribution $\pi$ defined on a complete probability space $(\Omega,\cF,\bP)$. Then, we have $Y^\st_t\in\cV$ a.s.\ for any $t\geq0$. Moreover, $Y^\st$ is strictly stationary and mixing, and the following strong law of large numbers holds: for any measurable function $f:\cV\to\bR$ with $\sup_{y\in\cV}\frac{|f(y)|}{1+\|y\|_{\cV}^2}<\infty$, it holds that
%\begin{equation*}
%	\lim_{T\to\infty}\frac{1}{T}\int^T_0f(Y^\st_t)\,\diff t=\int_{\cV}f(y)\,\pi(\diff y)\ \ \text{a.s.}
%\end{equation*}
%Define an $\bR^n$-valued process $X^\st$ by $X^\st_t:=\mu[Y^\st_t]$, $t\geq0$. Then, $X^\st$ is the solution of the SVE with the forcing term $x(t)=\int_{[0,\infty)}e^{-\theta t}Y^\st_0(\theta)\dmu$, $t\geq0$. Furthermore, $X^\st$ is strictly stationary and mixing, and the following strong law of large numbers holds: for any measurable function $g:\bR^n\to\bR$ with $\sup_{x\in\bR^n}\frac{|g(x)|}{1+|x|^2}<\infty$, it holds that
%\begin{equation*}
%	\lim_{T\to\infty}\frac{1}{T}\int^T_0g(X^\st_t)\,\diff t=\int_{\cV} g(\mu[y])\,\pi(\diff y)\ \ \text{a.s.}
%\end{equation*}
%\end{cor}

%%%%%%%%%%%%%%%%%%%%%%%%%%%%%%%%%%
%%%%%%%%%%%%%%%%%%%%%%%%%%%%%%%%%%
%% Section
%%%%%%%%%%%%%%%%%%%%%%%%%%%%%%%%%%
%%%%%%%%%%%%%%%%%%%%%%%%%%%%%%%%%%

\section{Proofs of the main results}\label{sec_proof}

In this section, we provide proofs of \cref{theo_coupling}, \cref{theo_Lyapunov} and \cref{prop_Lyapunov-sufficient}. The most technically demanding part is the proof of \cref{theo_coupling}, where the ``change-of-norm'' technique and the ``generalized coupling approach'' play crucial roles; see \cref{subsec_proof-coupling}. The ``change-of-norm'' technique is also employed to construct the Lyapunov function in \cref{theo_Lyapunov} and \cref{prop_Lyapunov-sufficient}, as detailed in \cref{subsec_proof-Lyapunov}.

%%%%%%%%%%%%
%% Subsection
%%%%%%%%%%%%

\subsection{Proof of \cref{theo_coupling}: Contractivity, $d$-smallness, and the asymptotic log-Harnack inequality}\label{subsec_proof-coupling}

As shown in the work \cite{BuKuSc20}, in order to construct a ``good'' distance(-like) function satisfying the required conditions in the generalized Harris' theorem of \cite{HaMaSc11} (see \cref{theo_HaMaSc11}), a tractable strategy is to consider a \emph{generalized coupling} of probability measures. We follow this idea to show \cref{theo_coupling}. The most important and technical point is to construct a suitable admissible weight function and a generalized coupling satisfying the required conditions in \cite{BuKuSc20}; see \cref{lemm_coupling} below.

As a first step, we present the following standard lemma, which reduces the proof of \cref{theo_coupling} to the case where the maps $b:\bR^n\to\bR^n$ and $\sigma:\bR^n\to\bR^{n\times d}$ are bounded.

%% Lemma

\begin{lemm}\label{lemm_bounded}
Let $(\mu,M_b,M_\sigma)$ be a lifting basis. Suppose that $b:\bR^n\to\bR^n$ and $\sigma:\bR^n\to\bR^{n\times d}$ are measurable maps satisfying $|b(x)-b(x')|\leq C_{b,\Lip}|x-x'|$ and $|\sigma(x)-\sigma(x')|\leq C_{\sigma,\Lip}|x-x'|$ for all $x,x'\in\bR^n$, with some constants $C_{b,\Lip},C_{\sigma,\Lip}>0$. For each $N\in\bN$, define the truncated maps $b^N:\bR^n\to\bR^n$ and $\sigma^N:\bR^n\to\bR^{n\times d}$ by
\begin{equation*}
	b^N(x):=\begin{dcases}b(x)\ &\text{if $|x|\leq N$},\\b\left(\frac{N}{|x|}x\right)\ &\text{if $|x|>N$},\end{dcases}\ \ \text{and}\ \ \sigma^N(x):=\begin{dcases}\sigma(x)\ &\text{if $|x|\leq N$},\\\sigma\left(\frac{N}{|x|}x\right)\ &\text{if $|x|>N$}.\end{dcases}
\end{equation*}
Let $\{P_t\}_{t\geq0}$ and $\{P^N_t\}_{t\geq0}$ be the Markov semigroups associated with the SEEs \eqref{eq_SEE} with coefficients $(b,\sigma)$ and $(b^N,\sigma^N)$, respectively. Then, for any $t\geq0$ and any $y\in\cH$, we have $\lim_{N\to\infty}P^N_t(y,\cdot)=P_t(y,\cdot)$ weakly in $\cP(\cH)$.
\end{lemm}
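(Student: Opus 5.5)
We will couple the two equations synchronously and then use a stopping-time (localization) argument. Fix $y\in\cH$ and $t\geq0$, and work on a single filtered probability space with one Brownian motion $W$. Let $Y$ and $Y^N$ be the unique solutions (by \cref{prop_SEE-well-posed}) of the SEE \eqref{eq_SEE} with coefficients $(b,\sigma)$ and $(b^N,\sigma^N)$, respectively, both started from $Y_0=Y^N_0=y$.

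The truncated maps $b^N,\sigma^N$ are Lipschitz with the same constants $C_{b,\Lip},C_{\sigma,\Lip}$. Indeed, the radial projection $x\mapsto \frac{N}{|x|\vee N}x$ is $1$-Lipschitz, and $b^N,\sigma^N$ are compositions of $b,\sigma$ with it. Hence the constant $C$ in \eqref{eq_SEE-apriori} is uniform in $N$.

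Define the stopping time $\tau_N:=\inf\{s\geq0\,|\,\int^s_0|\mu[Y_r]|^2\,\diff r>N\}$. This alone does not force $|\mu[Y_s]|\leq N$, since $\mu[Y_s]$ is only defined $\diff s$-a.e. A cleaner route is to compare the two equations directly. Both have the same initial datum, and their difference satisfies an SEE with drift and diffusion differences
\[
\cM_b\big(b(\mu[Y_s])-b^N(\mu[Y^N_s])\big),\qquad \cM_\sigma\big(\sigma(\mu[Y_s])-\sigma^N(\mu[Y^N_s])\big).
\]
Split each difference as
\[
\big(b(\mu[Y_s])-b^N(\mu[Y_s])\big)+\big(b^N(\mu[Y_s])-b^N(\mu[Y^N_s])\big),
\]
and similarly for $\sigma$. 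The second parts are handled exactly as in the proof of \eqref{eq_SEE-apriori'}, using the monotonicity estimate \eqref{eq_SEE-monotonicity} (with $b^N,\sigma^N$ and uniform constants) together with It\^o's formula \cref{lemm_Ito}. The first parts are nonzero only when $|\mu[Y_s]|>N$, and there they are bounded by $C(1+|\mu[Y_s]|)$ by linear growth. Following the Gronwall/BDG argument of \cref{prop_SEE-well-posed}(ii), and using Young's inequality to absorb the $\cV$-norm coming from $\cM_b\in L(\bR^n;\cV^*)$, we obtain
\[
\bE\Big[\sup_{s\le t}\|Y_s-Y^N_s\|_\cH^2\Big]\leq Ce^{Ct}\,\bE\Big[\int^t_0\big(1+|\mu[Y_s]|\big)^2\1_{\{|\mu[Y_s]|>N\}}\,\diff s\Big].
\]

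The right-hand side tends to zero as $N\to\infty$ by dominated convergence. The integrand is dominated by $(1+|\mu[Y_s]|)^2\leq 2+2\|\mu\|_{L(\cV;\bR^n)}^2\|Y_s\|_\cV^2$, which is integrable by \eqref{eq_SEE-apriori} since $Y_0=y$ is deterministic. Moreover, the integrand tends to zero pointwise $\diff s\otimes\diff\bP$-a.e., because $\mu[Y_s]$ is finite a.e. Thus $Y^N_t\to Y_t$ in $L^2(\Omega;\cH)$, hence in probability. This gives $P^N_t(y,\cdot)=\Law(Y^N_t)\to\Law(Y_t)=P_t(y,\cdot)$ weakly in $\cP(\cH)$.

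The main obstacle is the drift term. The drift difference lives only in $\cV^*$, so in It\^o's formula we must pair it with $Y_s-Y^N_s\in\cV$ and absorb $\|Y_s-Y^N_s\|_\cV^2$ using the dissipative term $-\|\cdot\|_\cV^2$ from \eqref{eq_SEE-monotonicity}. This requires keeping a fraction of that term when applying Young's inequality. If the absorption constants turn out to be tight, we would instead use \cref{lemm_mu-ep} with a smaller $\ep$ to free up more room.
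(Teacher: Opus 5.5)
Your proposal is correct and follows essentially the same route as the paper. Both arguments use a synchronous coupling of $Y$ and $Y^N$, It\^o's formula for $\|Y-Y^N\|_\cH^2$, and a monotonicity-type bound whose only extra error term is supported on $\{|\mu[Y_s]|>N\}$, followed by Gronwall's inequality and dominated convergence via $\bE\int_0^t\|Y_s\|_\cV^2\,\diff s<\infty$.

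The differences are cosmetic. The paper bounds the truncation error by $C_{b,\Lip}|x|\1_{\{|x|>N\}}$ rather than by linear growth. It also takes expectations at a fixed time without the supremum/BDG step, which you do not need for weak convergence of $P^N_t(y,\cdot)$.
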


%% Proof

\begin{proof}
Let $N\in\bN$ be fixed. Note that
\begin{equation}\label{eq_Lip-N}
	|b^N(x)-b^N(x')|\leq C_{b,\Lip}|x-x'|\ \ \text{and}\ \ |\sigma^N(x)-\sigma^N(x')|\leq C_{\sigma,\Lip}|x-x'|
\end{equation}
for all $x,x'\in\bR^n$, and
\begin{equation}\label{eq_Lip-N'}
	|b(x)-b^N(x)|\leq C_{b,\Lip}|x|\1_{\{|x|>N\}}\ \ \text{and}\ \ |\sigma(x)-\sigma^N(x)|\leq C_{\sigma,\Lip}|x|\1_{\{|x|>N\}}
\end{equation}
for all $x\in\bR^n$. Fix an arbitrary $y\in\cH$. By \cref{prop_SEE-well-posed}, there exist unique solutions $Y$ and $Y^N$ to the SEEs \eqref{eq_SEE} with coefficients $(b,\sigma)$ and $(b^N,\sigma^N)$, respectively, driven by a common $d$-dimensional Brownian motion $W$ on a complete probability space $(\Omega,\cF,\bP)$, such that $Y_0=Y^N_0=y$. Note that $\Law_\bP(Y_t)=P_t(y,\cdot)$ and $\Law_\bP(Y^N_t)=P^N_t(y,\cdot)$ for any $t\geq0$. Furthermore, by \eqref{eq_SEE-apriori}, we have
\begin{equation}\label{eq_N-integrability}
	\bE\left[\sup_{t\in[0,T]}\Big\{\big\|Y_t\big\|_\cH^2+\big\|Y^N_t\big\|_\cH^2\Big\}+\int^T_0\Big\{\big\|Y_t\big\|_\cV^2+\big\|Y^N_t\big\|_\cV^2\Big\}\,\diff t\right]<\infty
\end{equation}
for any $T>0$.

Applying It\^{o}'s formula for the squared norm $\|\cdot\|_\cH^2$ in \cref{lemm_Ito} to the $\cH$-valued process $Y-Y^N$ yields
\begin{equation}\label{eq_cutoff-Ito}
	\|Y_t-Y^N_t\|_\cH^2=\int^t_0F^N\big(Y_s,Y^N_s\big)\,\diff s+2\int^t_0\Big\langle Y_s-Y^N_s,\cM_\sigma\big(\sigma(\mu[Y_s])-\sigma^N(\mu[Y^N_s])\big)\,\diff W_s\Big\rangle_\cH
\end{equation}
for any $t\geq0$ a.s., where the function $F^N:\cV\times\cV\to\bR$ is given by
\begin{equation*}
	F^N(y_1,y_2):=2\Big\langle\cA(y_1-y_2)+\cM_b\big(b(\mu[y_1])-b^N(\mu[y_2])\big),y_1-y_2\Big\rangle_{\cV^*,\cV}+\big\|\cM_\sigma\big(\sigma(\mu[y_1])-\sigma^N(\mu[y_2])\big)\big\|_{L_2(\bR^d;\cH)}^2
\end{equation*}
for $y_1,y_2\in\cV$. Analogously to the proof of \cref{prop_SEE-well-posed}, it follows from \eqref{eq_Lip-N}, \eqref{eq_Lip-N'}, and \eqref{eq_A-nonpositive'}, together with \cref{lemm_mu-ep}, $\cM_b\in L(\bR^n;\cV^*)$, $\cM_\sigma\in L(\bR^n;\cH)$, and Young's inequality, that there exists a constant $C>0$ depending only on the lifting basis $(\mu,M_b,M_\sigma)$ and the constants $C_{b,\Lip}$ and $C_{\sigma,\Lip}$ such that
\begin{equation}\label{eq_cutoff-estimate}
	F^N(y_1,y_2)\leq C\big\{\|y_1-y_2\|_\cH^2+\big|\mu[y_1]\big|^2\1_{\{|\mu[y_1]|>N\}}\big\}
\end{equation}
for any $y_1,y_2\in\cV$. Furthermore, by \eqref{eq_Lip-N}, \eqref{eq_Lip-N'}, and \eqref{eq_N-integrability}, along with the fact that $\cM_\sigma\in L(\bR^n;\cH)$, the stochastic integral in the right-hand side of \eqref{eq_cutoff-Ito} is a martingale under $\bP$. Hence, by taking the expectations on both sides of \eqref{eq_cutoff-Ito}, applying \eqref{eq_cutoff-estimate} and using Gronwall's inequality, we obtain
\begin{equation*}
	\bE\big[\|Y_t-Y^N_t\|_\cH^2\big]\leq Ce^{Ct}\bE\left[\int^t_0\big|\mu[Y_s]\big|^2\1_{\{|\mu[Y_s]|>N\}}\,\diff s\right]
\end{equation*}
for any $t\geq0$. Since $\mu[\cdot]\in L(\cV;\bR^n)$ and $\bE[\int^t_0\|Y_s\|_\cV^2\,\diff s]<\infty$, by the dominated convergence theorem, the right-hand side above tends to zero as $N\to\infty$, and hence $\lim_{N\to\infty}\bE[\|Y_t-Y^N_t\|_\cH^2]=0$ for any $t\geq0$. This in particular implies that $\lim_{N\to\infty}P^N_t(y,\cdot)=P_t(y,\cdot)$ weakly in $\cP(\cH)$ for any $t\geq0$. This completes the proof.
\end{proof}

Provided that $b:\bR^n\to\bR^n$ and $\sigma:\bR^n\to\bR^{n\times d}$ satisfy the conditions in \cref{assum_coupling}, the truncated maps $b^N:\bR^n\to\bR^n$ and $\sigma^N:\bR^n\to\bR^{n\times d}$ in \cref{lemm_bounded} satisfy the same conditions with the same constants $C_{b,\Lip}$, $C_{\sigma,\Lip}$ and $C_\UE$. Furthermore, for each admissible weight function $\Phi$, the map $\bW_{d_\Phi}:\cP(\cH)\times\cP(\cH)\to[0,1]$ is a metric on $\cP(\cH)$ that generates the topology of weak convergence. Thus, in proving \cref{theo_coupling}, we may assume without loss of generality that the coefficients $b$ and $\sigma$ are bounded. The key is to construct an admissible weight function $\Phi$ that depends only on the lifting basis $(\mu,M_b,M_\sigma)$ and the constants $C_{b,\Lip}$, $C_{\sigma,\Lip}$ and $C_\UE$, while remaining independent of the specific bounds of $b$ and $\sigma$.

The following lemma constitutes the most crucial step in the proof of \cref{theo_coupling}. In what follows, for two probability measures $\nu_1,\nu_2$ on a measurable space $(E,\cE)$ such that $\nu_1$ is absolutely continuous with respect to $\nu_2$, $D_\KL(\nu_1\|\nu_2):=\int_E\log\frac{\diff\nu_1}{\diff\nu_2}\,\diff\nu_1$ denotes the Kullback--Leibler divergence, also called relative entropy.

%% Lemma

\begin{lemm}\label{lemm_coupling}
Assume that a lifting basis $(\mu,M_b,M_\sigma)$ and measurable maps $b:\bR^n\to\bR^n$, $\sigma:\bR^n\to\bR^{n\times d}$ satisfy \cref{assum_coupling}. Suppose further that the map $\sigma:\bR^n\to\bR^{n\times d}$ is bounded. Then, there exist an admissible weight function $\Phi:[0,\infty)\to\bR^{n\times n}$, a family $\{(Y^{y_1,y_2}_t,\widehat{Y}^{y_1,y_2}_t)\}_{t\geq0,\,y_1,y_2\in\cH}$ of $\cH\times\cH$-valued random variables $(Y^{y_1,y_2}_t,\widehat{Y}^{y_1,y_2}_t)$ on a probability space $(\Omega,\cF,\bP)$, and a family $\{\widehat{\bP}^{y_1,y_2}_t\}_{t\geq0,\,y_1,y_2\in\cH}$ of probability measures on $(\Omega,\cF)$ with $\widehat{\bP}^{y_1,y_2}_t\sim\bP$, such that the following properties hold for any $t\geq0$ and $y_1,y_2\in\cH$:
\begin{align}
	\label{eq_coupling-1}&\Law_\bP(Y^{y_1,y_2}_t)=P_t(y_1,\cdot),\\
	\label{eq_coupling-2}&\Law_{\widehat{\bP}^{y_1,y_2}_t}(\widehat{Y}^{y_1,y_2}_t)=P_t(y_2,\cdot),\\
	\label{eq_coupling-3}&\bE_\bP\left[\big\|Y^{y_1,y_2}_t-\widehat{Y}^{y_1,y_2}_t\big\|_\Phi\right]\leq e^{-\kappa t/2}\|y_1-y_2\|_\Phi,\\
	\label{eq_coupling-4}&D_\KL\big(\bP\big\|\widehat{\bP}^{y_1,y_2}_t\big)\leq\frac{1}{2}\|y_1-y_2\|_\Phi^2.
\end{align}
The admissible weight function $\Phi$ can be chosen to depend only on the lifting basis $(\mu,M_b,M_\sigma)$ and the constants $C_{b,\Lip}$, $C_{\sigma,\Lip}$, and $C_\UE$ in \cref{assum_coupling}.
\end{lemm}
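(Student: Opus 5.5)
The plan is a generalized coupling built by a Girsanov-type control, together with a weight $\Phi$ chosen so that the control absorbs every term the dissipation cannot. Fix $y_1,y_2\in\cH$ and a Brownian motion $W$ on $(\Omega,\cF,\bF,\bP)$.

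\textbf{The coupled pair and the measure change.} Let $Y=Y^{y_1,y_2}$ be the solution of \eqref{eq_SEE} with $Y_0=y_1$. Let $\widehat Y$ solve the controlled equation
\begin{equation*}
	\diff\widehat Y_t=\big\{\cA\widehat Y_t+\cM_bb(\mu[\widehat Y_t])+\cM_\sigma\sigma(\mu[\widehat Y_t])h_t\big\}\,\diff t+\cM_\sigma\sigma(\mu[\widehat Y_t])\,\diff W_t,\qquad \widehat Y_0=y_2,
\end{equation*}
where $h_t$ is an $\bR^d$-valued feedback depending on $(Y_t,\widehat Y_t)$, specified below.

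Set $\widehat\bP^{y_1,y_2}_t:=\exp\big(-\int_0^th_s\,\diff W_s-\frac12\int_0^t|h_s|^2\,\diff s\big)\cdot\bP$. Under this measure, $\widehat W:=W+\int_0^\cdot h_s\,\diff s$ is a Brownian motion on $[0,t]$, so \eqref{eq_coupling-2} follows from uniqueness in law for \eqref{eq_SEE} (\cref{prop_SEE-well-posed}). Property \eqref{eq_coupling-1} is immediate. Moreover,
\begin{equation*}
	D_\KL(\bP\|\widehat\bP^{y_1,y_2}_t)=\tfrac12\bE_\bP\Big[\int_0^t|h_s|^2\,\diff s\Big],
\end{equation*}
so \eqref{eq_coupling-4} reduces to the energy bound $\bE\int_0^\infty|h_s|^2\,\diff s\le\|y_1-y_2\|_\Phi^2$.

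The boundedness of $\sigma$, together with uniform ellipticity, is used to make the Novikov condition and the existence of a strong solution of the controlled equation routine. For this I would localize via stopping times, since $h$ is only linearly bounded in $\|Z\|_\cV$. Alternatively one can use the bound $\bE\int|h|^2<\infty$ obtained below, which already justifies the measure change.

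\textbf{Choice of the control.} Put $Z:=\widehat Y-Y$, which satisfies $Z_0=y_2-y_1$. By uniform ellipticity, $\sigma(x)\sigma(x)^\top$ is invertible with inverse bounded by $C_\UE$. I would take
\begin{equation*}
	h_t:=-\lambda\,\sigma(\mu[\widehat Y_t])^\top\big(\sigma\sigma^\top\big)^{-1}(\mu[\widehat Y_t])\,\mu_{\sigma,\Phi}[Z_t],
\end{equation*}
so that $\cM_\sigma\sigma h=-\lambda\cM_\sigma\mu_{\sigma,\Phi}[Z]$ and $|h_t|^2\le\lambda^2C_\UE|\mu_{\sigma,\Phi}[Z_t]|^2$.

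Apply \cref{lemm_Ito} with the weight $\Phi$, and use \eqref{eq_A-Phi}, \eqref{eq_mu-b-Phi}, \eqref{eq_mu-sigma-Phi} and \eqref{eq_Q-Phi}. The drift of $\|Z\|_\Phi^2$ becomes
\begin{equation*}
	-2\vnorm{Z}_\Phi^2+2\big\langle\Delta b,\mu_{b,\Phi}[Z]\big\rangle-2\lambda|\mu_{\sigma,\Phi}[Z]|^2+\tr\big[\Delta\sigma^\top Q_{\sigma,\Phi}\Delta\sigma\big],
\end{equation*}
where $|\Delta b|\le C_{b,\Lip}|\mu[Z]|$ and $|\Delta\sigma|\le C_{\sigma,\Lip}|\mu[Z]|$.

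The goal is a pathwise inequality of the form
\begin{equation}\label{eq_plan-target}
	\text{drift}\le-\vnorm{Z}_\Phi^2-\lambda|\mu_{\sigma,\Phi}[Z]|^2\le-\kappa\|Z\|_\Phi^2-\lambda|\mu_{\sigma,\Phi}[Z]|^2 .
\end{equation}
The second inequality holds because $\vnorm{Z}_\Phi^2\ge\kappa\|Z\|_\Phi^2$ when $\inf\supp=\kappa$.

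Given \eqref{eq_plan-target}, the two remaining properties follow.
\begin{itemize}
\item[(a)] Taking expectations gives $\bE\|Z_t\|_\Phi^2\le e^{-\kappa t}\|Z_0\|_\Phi^2$, and Jensen's inequality then gives \eqref{eq_coupling-3} with rate $e^{-\kappa t/2}$.
\item[(b)] Integrating gives $\lambda\bE\int_0^\infty|\mu_{\sigma,\Phi}[Z]|^2\,\diff s\le\|Z_0\|_\Phi^2$. Combined with $|h|^2\le\lambda^2C_\UE|\mu_{\sigma,\Phi}[Z]|^2$, this yields \eqref{eq_coupling-4} once $\lambda C_\UE\le1$.
\end{itemize}
The normalisation in (b) is only the bookkeeping step of choosing the constants so that $\frac12$ appears. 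I would fix $\lambda=1/C_\UE$ and put the remaining freedom into $\Phi$.

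\textbf{Construction of $\Phi$, the main obstacle.} The hard part is to choose the admissible weight $\Phi$, depending only on $(\mu,M_b,M_\sigma)$ and $C_{b,\Lip},C_{\sigma,\Lip},C_\UE$, so that the bad terms are dominated by $\vnorm{Z}_\Phi^2+\lambda|\mu_{\sigma,\Phi}[Z]|^2$. The bad terms are bounded by $C|\mu_{b,\Phi}[Z]||\mu[Z]|+C|Q_{\sigma,\Phi}||\mu[Z]|^2$.

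I would take $\Phi=\varepsilon\Phi_{\mathrm{hi}}+\Phi_{\mathrm{lo}}$, split at a frequency cut-off $m$, as follows.
\begin{itemize}
\item[(1)] For the high-frequency part, $\Phi_{\mathrm{hi}}(\theta)=(1+\theta)^{-1/2}I_{n\times n}$ on $[m,\infty)$. Here the tail estimate \eqref{eq_mu-approximation} gives $|\overline\mu_m[Z]|^2\le o(1)\,\vnorm{Z}_\Phi^2$ as $m\to\infty$, so this part of $\mu[Z]$ is absorbed by the dissipation.
\item[(2)] For the low-frequency part, I would take $\Phi_{\mathrm{lo}}(\theta)=c\,M_\sigma(\theta)^{-1}$ on $[\kappa,m)$, after a truncation making it admissible. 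This choice is symmetric and positive definite, and it gives $\mu_{\sigma,\Phi}[Z]\approx c\,\mu_m[Z]$ up to a contribution controlled by $\varepsilon$. Hence $|\mu_m[Z]|^2$ is controlled by the control-generated term $\lambda|\mu_{\sigma,\Phi}[Z]|^2$.
\end{itemize}

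The delicate points are the following.
\begin{itemize}
\item[(i)] $Q_{\sigma,\Phi}$ and $\mu_{b,\Phi}$ scale with $\Phi$ itself, so the constants must close. I expect this to work by taking $c$ small relative to $\lambda$ and then $m$ large, but only after checking the order of the choices.
\item[(ii)] Admissibility \eqref{eq_weight-bound} must hold despite possible degeneracy of $M_\sigma$. I would handle this by replacing $M_\sigma^{-1}$ with a spectrally truncated inverse, sending the discarded directions into the $\varepsilon$-part.
\end{itemize}
If this direct construction fails to close, my fallback is to absorb $\Delta b$ as well by adding the projection of $\cM_b\Delta b$ onto the range of $\cM_\sigma$ in $h$. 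Only the residual would then be estimated.
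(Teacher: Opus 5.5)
Your architecture is the paper's. It uses the extra drift $\lambda\cM_\sigma\mu_{\sigma,\Phi}[Y-\widehat Y]$, It\^o's formula for $\|Y-\widehat Y\|_\Phi^2$, the target bound (drift $\le-\vnorm{Z}_\Phi^2-\lambda|\mu_{\sigma,\Phi}[Z]|^2$), and even the normalisation $\lambda=1/C_\UE$, which is what the paper's final rescaling $\Phi=C_\UE\lambda_\fa\Phi_\fa$ amounts to. The gap is the step you call the main obstacle: the regime you sketch for $\Phi$ does not close.

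First, $c$ must be large, not small. For $Z$ supported in $[\kappa,m)$ with $\Phi=cM_\sigma^{-1}$ there, $\mu_{\sigma,\Phi}[Z]=c\,\mu_m[Z]$. So the control term is quadratic in $c$, while $Q_{\sigma,\Phi}\succeq c\int_{[\kappa,m)}M_\sigma\dmu$ is only linear. Moreover, $|\mu_m[Z]|^2\le|\int_{[\kappa,m)}\theta^{-1}M_\sigma\dmu|_\op\,\vnorm{Z}_\Phi^2/c$ is sharp with an $O(1)$ constant. Hence the dissipation cannot absorb the $Q$-term unless $C_{\sigma,\Lip}$ is small, and you need roughly $\lambda c\gtrsim C_{\sigma,\Lip}^2|\int_{[\kappa,m)}M_\sigma\dmu|_\op$. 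This generally grows with $m$, so $c$ cannot be fixed before $m$.

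Second, once $c=c(m)$ is large, a tail weight $\ep(1+\theta)^{-1/2}$ with $\ep$ fixed (let alone small) is too weak. The bound $|\overline{\mu}_m[Z]|^2\le\ep^{-1}\int_{[m,\infty)}\theta^{-1}(1+\theta)^{1/2}\dmu\,\vnorm{Z}_\Phi^2$ is sharp. However, this tail part of $\mu[Z]$ is multiplied by $|Q_{\sigma,\Phi}|_\op\ge c(m)\mu([\kappa,m))$ (case $M_\sigma=I_{n\times n}$), not by an $O(1)$ constant, so it is not ``absorbed by the dissipation''. Concretely, take $n=d=1$, $M_b=M_\sigma\equiv1$, $\mu(\diff\theta)=\theta^{-\gamma}\1_{[\kappa,\infty)}(\theta)\,\diff\theta$ with $\gamma\in(\frac12,\frac56)$. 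The quantity that must be small is then of order $C_{\sigma,\Lip}^4m^{5/2-3\gamma}/(\lambda\ep)\to\infty$. The same objection applies to sending nearly degenerate directions of $M_\sigma$ into a small-weight part.

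The paper closes the argument as follows. It keeps the low block at the normalisation $M_\sigma^{-1}$. This is regularised as $(\delta\theta^{1/2}I_{n\times n}+M_\sigma)^{-1}$ where $|M_\sigma^{-1}|_\op>L$, and replaced by $I_{n\times n}$ where $|M_\sigma|_\op>R$. The tail is matched to this block at the cut-off, namely $m^{1/2}\theta^{-1/2}I_{n\times n}$ on $[m,\infty)$. Writing $\mu=\mu_{\sigma,\Phi}+\overline{\mu}_{\sigma,\Phi}$, the decisive quantity is the product $\alpha\beta$. Here $\alpha$ is the approximation error, $|\overline{\mu}_{\sigma,\Phi}[y]|^2\le\alpha\vnorm{y}_\Phi^2$, and $\beta$ is the size of $\Phi$, $|\Phi(\theta)|_\op\le\beta\theta^{-1/2}$, which controls $\mu_{b,\Phi}$ and $Q_{\sigma,\Phi}$. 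The choices $\delta=\delta_m$, $L=L_m$, $R\to\infty$, then $m\to\infty$, make $\alpha\beta\to0$. Only afterwards is the gain $\lambda_\fa$ (of order $\beta$) fixed and $\Phi$ rescaled by $C_\UE\lambda_\fa$. In your parametrisation this means $c$ of order $\beta/\lambda$ and $\ep$ of order $c\,m^{1/2}$, both large.

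Separately, the change of measure is not routine. Since $h$ is unbounded, Novikov's condition is unavailable. Also, $\bE_\bP\int_0^t|h_s|^2\,\diff s<\infty$ alone does not make the stochastic exponential a true martingale, since strict local martingales with finite energy exist. The paper localises and uses the boundedness of $\sigma$ to redo the Gronwall estimate under the tilted measures $\widehat\bP_{\tau_N}$. There the drift acquires a term bounded by $C\|\sigma\|_{\op,\infty}\|Z\|_{\Phi_\fa}^2$. This yields $\sup_{\tau\le T}\bE[\sE_\tau\log\sE_\tau]<\infty$, hence the class (DL) property. Without this step, \eqref{eq_coupling-2} is not established.
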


%% Remark

\begin{rem}
The joint distribution on $\cH\times\cH$ of the pair of random variables $(Y^{y_1,y_2}_t,\widehat{Y}^{y_1,y_2}_t)$ under $\bP$ can be regarded as a generalized coupling between two probability measures $P_t(y_1,\cdot)\in\cP(\cH)$ and $P_t(y_2,\cdot)\in\cP(\cH)$ in the sense that, although the first marginal $\Law_\bP(Y^{y_1,y_2}_t)$ coincides with $P_t(y_1,\cdot)$, the second marginal $\Law_\bP(\widehat{Y}^{y_1,y_2}_t)$ does not necessarily coincide with $P_t(y_2,\cdot)$. Instead, $\widehat{Y}^{y_1,y_2}_t$ should satisfy the constraints \eqref{eq_coupling-2}, \eqref{eq_coupling-3} and \eqref{eq_coupling-4}. We define $\widehat{Y}^{y_1,y_2}_t$ as the solution to a \emph{controlled SEE} (see \eqref{eq_controlled-SEE} below), which involves a (finite-dimensional) control process in the drift term. Recall that the SEE \eqref{eq_SEE} is a highly degenerate infinite-dimensional system. Hence, in order to meet the constraints \eqref{eq_coupling-2}, \eqref{eq_coupling-3} and \eqref{eq_coupling-4}, we have to control the dynamics in the infinite-dimensional space $\cH$ by a finite-dimensional control process. This is a significant challenge for a general infinite-dimensional model with degenerate noise, and a model-dependent analysis is required. Our construction of a generalized coupling is based on the specific structure of the SEE \eqref{eq_SEE}, where a careful treatment of the integral operator $\mu[\cdot]\in L(\cV;\bR^n)$ (which is not continuous in $\cH$) is essential. In order to manage the influence of $\mu[\cdot]$, we perform a ``change-of-norm'' technique based on \cref{defi_Phi}. A key idea is to construct a suitable admissible weight function $\Phi_\fa$ depending on a control parameter $\fa$ such that the operator $\mu_{\sigma,\Phi_\fa}[\cdot]$ (see \cref{defi_Phi}) approximates $\mu[\cdot]$ in some sense.
\end{rem}

%% Proof

\begin{proof}[Proof of \cref{lemm_coupling}]
Recall that $\kappa:=\inf\supp>0$ by \cref{assum_coupling}. Fix a tuple $\fa=(m,\delta,L,R)$ of constants $m,\delta,L,R>0$, which will be determined later. Define $\Phi_\fa:[0,\infty)\to\bR^{n\times n}$ by
\begin{align*}
	\Phi_\fa(\theta)&:=M_\sigma(\theta)^{-1}\1_{[\kappa,m)\cap A_L\cap B_R}(\theta)+\left(\delta\theta^{1/2}I_{n\times n}+M_\sigma(\theta)\right)^{-1}\1_{([\kappa,m)\setminus A_L)\cap B_R}(\theta)\\
	&\hspace{1cm}+I_{n\times n}\1_{[\kappa,m)\setminus B_R}(\theta)+m^{1/2}\theta^{-1/2}I_{n\times n}\1_{[m,\infty)}(\theta)
\end{align*}
for $\theta\in[0,\infty)$, where the sets $A_L,B_R\in\cB([0,\infty))$ are defined by
\begin{align*}
	&A_L:=\left\{\theta\in[0,\infty)\relmiddle|\text{$M_\sigma(\theta)\in\bR^{n\times n}$ is symmetric, positive definite and satisfies $\big|M_\sigma(\theta)^{-1}\big|_\op\leq L$}\right\},\\
	&B_R:=\left\{\theta\in[0,\infty)\relmiddle|\text{$M_\sigma(\theta)\in\bR^{n\times n}$ is symmetric, positive definite and satisfies $\big|M_\sigma(\theta)\big|_\op\leq R$}\right\}.
\end{align*}
By construction, we see that $\Phi_\fa(\theta)\in\bR^{n\times n}$ is symmetric, positive definite and satisfies
\begin{align*}
	&\big|\Phi_\fa(\theta)\big|_\op\leq\max\{L,\delta^{-1}\kappa^{-1/2},1\}\1_{[\kappa,m)}(\theta)+m^{1/2}\theta^{-1/2}\1_{[m,\infty)}(\theta),\\
	&\big|\Phi_\fa(\theta)^{-1}\big|_\op\leq\max\{\delta m^{1/2}+R,1\}\1_{[\kappa,m)}(\theta)+m^{-1/2}\theta^{1/2}\1_{[m,\infty)}(\theta),
\end{align*}
for any $\theta\geq\kappa:=\inf\supp>0$. Hence, \eqref{eq_weight-bound} holds for $\mu$-a.e.\ $\theta\in[0,\infty)$ for some constant $C_{\Phi_\fa}>0$ (which may depend on $\fa=(m,\delta,L,R)$), and thus $\Phi_\fa$ is an admissible weight function.

Let $W$ be a $d$-dimensional Brownian motion defined on a complete probability space $(\Omega,\cF,\bP)$, with respect to a filtration $\bF=(\cF_t)_{t\geq0}$ satisfying the usual conditions. In this proof, we denote by $\bE[\cdot]$ the expectation under the probability measure $\bP$. Fix $y_1,y_2\in\cH$. Let $Y=Y^{y_1,y_2}=Y^{y_1}$ be the solution to the SEE \eqref{eq_SEE} with the initial condition $Y_0=y_1$. Clearly, the relation \eqref{eq_coupling-1} holds. Now we introduce the following \emph{controlled SEE} defined on the same probability space $(\Omega,\cF,\bP)$ as $Y$:
\begin{equation}\label{eq_controlled-SEE}
	\begin{dcases}
	\diff\widehat{Y}_t(\theta)=-\theta\widehat{Y}_t(\theta)\,\diff t+\Big\{M_b(\theta)b(\mu[\widehat{Y}_t])+\lambda_\fa M_\sigma(\theta)\mu_{\sigma,\Phi_\fa}\big[Y_t-\widehat{Y}_t\big]\Big\}\,\diff t+M_\sigma(\theta)\sigma(\mu[\widehat{Y}_t])\,\diff W_t\\
	\hspace{6cm}t\geq0,\ \theta\in[0,\infty),\\
	\widehat{Y}_0(\theta)=y_2(\theta),\ \ \theta\in[0,\infty),
	\end{dcases}
\end{equation}
where $\lambda_\fa>0$ is a constant depending on $\fa=(m,\delta,L,R)$, which will be also determined later. Here, the operator $\mu_{\sigma,\Phi_\fa}[\cdot]\in L(\cH;\bR^n)$ is defined as in \cref{defi_Phi}. Analogously to the proof of \cref{prop_SEE-well-posed}, applying the general result on monotone SPDEs \cite[Theorem 4.2.4]{LiRo15} to the controlled SEE \eqref{eq_controlled-SEE}, we see that there exists a unique solution $\widehat{Y}=\widehat{Y}^{y_1,y_2}$ of \eqref{eq_controlled-SEE} (defined in the same manner as in \cref{defi_SEE-solution}). Furthermore, for any $T>0$, we have
\begin{equation}\label{eq_controlled-SEE-apriori}
	\bE\left[\sup_{t\in[0,T]}\Big\{\big\|Y_t\big\|_\cH^2+\big\|\widehat{Y}_t\big\|_\cH^2\Big\}+\int^T_0\Big\{\big\|Y_t\big\|_\cV^2+\big\|\widehat{Y}_t\big\|_\cV^2\Big\}\,\diff t\right]<\infty.
\end{equation}

Applying It\^{o}'s formula for the squared norm $\|\cdot\|_{\Phi_\fa}^2$ in \cref{lemm_Ito} to the $\cH$-valued continuous adapted process $Y-\widehat{Y}$ shows that
\begin{align*}
	\big\|Y_t-\widehat{Y}_t\big\|_{\Phi_\fa}^2&=\big\|y_1-y_2\big\|_{\Phi_\fa}^2+2\int^t_0\Big\langle Y_s-\widehat{Y}_s,\cM_\sigma\big(\sigma(\mu[Y_s])-\sigma(\mu[\widehat{Y}_s])\big)\,\diff W_s\Big\rangle_{\Phi_\fa}\\
	&\hspace{0.5cm}+\int^t_0\Big\{2\Big\langle\cA\big(Y_s-\widehat{Y}_s\big)+\cM_b\big(b(\mu[Y_s])-b(\mu[\widehat{Y}_s])\big)-\lambda_\fa\cM_\sigma\mu_{\sigma,\Phi_\fa}\big[Y_s-\widehat{Y}_s\big],Y_s-\widehat{Y}_s\Big\rangle_{\cV^*,\cV,\Phi_\fa}\\
	&\hspace{4cm}+\Big\|\cM_\sigma\big(\sigma(\mu[Y_s])-\sigma(\mu[\widehat{Y}_s])\big)\Big\|_{L_2(\bR^d;(\cH,\langle\cdot,\cdot\rangle_{\Phi_\fa}))}^2\Big\}\,\diff s
\end{align*}
for any $t\geq0$ a.s. Recalling \cref{defi_Phi} and the formulas \eqref{eq_A-Phi}, \eqref{eq_mu-b-Phi}, \eqref{eq_mu-sigma-Phi} and \eqref{eq_Q-Phi}, the above can be rewritten as
\begin{align*}
	&\big\|Y_t-\widehat{Y}_t\big\|_{\Phi_\fa}^2+2\int^t_0\vnorm[\big]{Y_s-\widehat{Y}_s}_{\Phi_\fa}^2\,\diff s+2\lambda_\fa\int^t_0\big|\mu_{\sigma,\Phi_\fa}\big[Y_s-\widehat{Y}_s\big]\big|^2\,\diff s\\
	&=\big\|y_1-y_2\big\|_{\Phi_\fa}^2+2\int^t_0\Big\langle\mu_{\sigma,\Phi_\fa}\big[Y_s-\widehat{Y}_s\big],\big(\sigma(\mu[Y_s])-\sigma(\mu[\widehat{Y}_s])\big)\,\diff W_s\Big\rangle\\
	&\hspace{0.3cm}+\int^t_0\Big\{2\Big\langle b(\mu[Y_s])-b(\mu[\widehat{Y}_s]),\mu_{b,\Phi_\fa}\big[Y_s-\widehat{Y}_s\big]\Big\rangle+\tr\Big[\big(\sigma(\mu[Y_s])-\sigma(\mu[\widehat{Y}_s])\big)^\top Q_{\sigma,\Phi_\fa}\big(\sigma(\mu[Y_s])-\sigma(\mu[\widehat{Y}_s])\big)\Big]\Big\}\,\diff s
\end{align*}
for any $t\geq0$ a.s. Applying the usual It\^{o}'s formula to the product of the function $e^{\kappa t}$ and the real semimartingale $\|Y_t-\widehat{Y}_t\|_{\Phi_\fa}^2$, we get
\begin{align*}
	&e^{\kappa t}\big\|Y_t-\widehat{Y}_t\big\|_{\Phi_\fa}^2+2\int^t_0e^{\kappa s}\vnorm[\big]{Y_s-\widehat{Y}_s}_{\Phi_\fa}^2\,\diff s+2\lambda_\fa\int^t_0e^{\kappa s}\big|\mu_{\sigma,\Phi_\fa}\big[Y_s-\widehat{Y}_s\big]\big|^2\,\diff s\\
	&=\big\|y_1-y_2\big\|_{\Phi_\fa}^2+2\int^t_0e^{\kappa s}\Big\langle\mu_{\sigma,\Phi_\fa}\big[Y_s-\widehat{Y}_s\big],\big(\sigma(\mu[Y_s])-\sigma(\mu[\widehat{Y}_s])\big)\,\diff W_s\Big\rangle\\
	&\hspace{1cm}+\int^t_0e^{\kappa s}\Big\{\kappa\|Y_s-\widehat{Y}_s\|_{\Phi_\fa}^2+2\Big\langle b(\mu[Y_s])-b(\mu[\widehat{Y}_s]),\mu_{b,\Phi_\fa}\big[Y_s-\widehat{Y}_s\big]\Big\rangle\\
	&\hspace{4cm}+\tr\Big[\big(\sigma(\mu[Y_s])-\sigma(\mu[\widehat{Y}_s])\big)^\top Q_{\sigma,\Phi_\fa}\big(\sigma(\mu[Y_s])-\sigma(\mu[\widehat{Y}_s])\big)\Big]\Big\}\,\diff s
\end{align*}
for any $t\geq0$ a.s. Since $\kappa=\inf\supp$, we have $\kappa\|Y_s-\widehat{Y}_s\|_{\Phi_\fa}^2\leq\vnorm{Y_s-\widehat{Y}_s}_{\Phi_\fa}^2$. By this estimate and the Lipschitz continuity of the maps $b:\bR^n\to\bR^n$ and $\sigma:\bR^n\to\bR^{n\times d}$ (see \cref{assum_coupling}), we obtain
\begin{equation}\label{eq_coupling-estimate-Ito}
\begin{split}
	&e^{\kappa t}\big\|Y_t-\widehat{Y}_t\big\|_{\Phi_\fa}^2+\int^t_0e^{\kappa s}\vnorm[\big]{Y_s-\widehat{Y}_s}_{\Phi_\fa}^2\,\diff s+2\lambda_\fa\int^t_0e^{\kappa s}\big|\mu_{\sigma,\Phi_\fa}\big[Y_s-\widehat{Y}_s\big]\big|^2\,\diff s\\
	&\leq\big\|y_1-y_2\big\|_{\Phi_\fa}^2+2\int^t_0e^{\kappa s}\Big\langle\mu_{\sigma,\Phi_\fa}\big[Y_s-\widehat{Y}_s\big],\big(\sigma(\mu[Y_s])-\sigma(\mu[\widehat{Y}_s])\big)\,\diff W_s\Big\rangle+\int^t_0e^{\kappa s}F_\fa\big(Y_s-\widehat{Y}_s\big)\,\diff s
\end{split}
\end{equation}
for any $t\geq0$ a.s., where the function $F_\fa:\cV\to[0,\infty)$ is defined by
\begin{equation*}
	F_\fa(y):=2C_{b,\Lip}\big|\mu[y]\big|\big|\mu_{b,\Phi_\fa}[y]\big|+C_{\sigma,\Lip}^2\big|Q_{\sigma,\Phi_\fa}\big|_\op\big|\mu[y]\big|^2,\ \ y\in\cV.
\end{equation*}

We provide an estimate of $F_\fa(y)$ in terms of $\vnorm{y}_{\Phi_\fa}$ and $|\mu_{\sigma,\Phi_\fa}[y]|$ for each $y\in\cV$. By the triangle inequality, we have
\begin{equation}\label{eq_coupling-F-Lip}
	F_\fa(y)\leq2C_{b,\Lip}\Big\{\big|\mu_{\sigma,\Phi_\fa}[y]\big|+\big|\overline{\mu}_{\sigma,\Phi_\fa}[y]\big|\Big\}\big|\mu_{b,\Phi_\fa}[y]\big|+2C_{\sigma,\Lip}^2\big|Q_{\sigma,\Phi_\fa}\big|_\op\Big\{\big|\mu_{\sigma,\Phi_\fa}[y]\big|^2+\big|\overline{\mu}_{\sigma,\Phi_\fa}[y]\big|^2\Big\},
\end{equation}
where $\overline{\mu}_{\sigma,\Phi_\fa}[\cdot]\in L(\cV;\bR^n)$ is defined by
\begin{equation*}
	\overline{\mu}_{\sigma,\Phi_\fa}[y]:=\mu[y]-\mu_{\sigma,\Phi_\fa}[y]=\int_{[0,\infty)}\big(I_{n\times n}-M_\sigma(\theta)^\top\Phi_\fa(\theta)\big)y(\theta)\dmu,\ \ y\in\cV.
\end{equation*}
We estimate the three terms $|\overline{\mu}_{\sigma,\Phi_\fa}[y]|$, $|\mu_{b,\Phi_\fa}[y]|$ and $|Q_{\sigma,\Phi_\fa}|_\op$. First, concerning $|\overline{\mu}_{\sigma,\Phi_\fa}[y]|$, observe that
\begin{align*}
	&\left|\left(I_{n\times n}-M_\sigma(\theta)^\top\Phi_\fa(\theta)\right)\Phi_\fa(\theta)^{-1/2}\right|_\op^2\\
	&=\left|\left(I_{n\times n}-M_\sigma(\theta)\left(\delta\theta^{1/2}I_{n\times n}+M_\sigma(\theta)\right)^{-1}\right)\left(\delta\theta^{1/2} I_{n\times n}+M_\sigma(\theta)\right)^{1/2}\right|_\op^2\1_{([\kappa,m)\setminus A_L)\cap B_R}(\theta)\\
	&\hspace{0.5cm}+\big|I_{n\times n}-M_\sigma(\theta)\big|_\op^2\1_{[\kappa,m)\setminus B_R}(\theta)+m^{-1/2}\theta^{1/2}\left|I_{n\times n}-m^{1/2}\theta^{-1/2}M_\sigma(\theta)\right|_\op^2\1_{[m,\infty)}(\theta)\\
	&\leq\delta\theta^{1/2}\1_{[\kappa,\infty)\setminus A_L}(\theta)+\big(1+|M_\sigma(\theta)|_\op\big)^2\1_{[\kappa,\infty)\setminus B_R}(\theta)+m^{-1/2}\theta^{1/2}\big(1+|M_\sigma(\theta)|_\op\big)^2\1_{[m,\infty)}(\theta)
\end{align*}
for any $\theta\geq\kappa:=\inf\supp>0$. From this, together with the Cauchy--Schwarz inequality, we have
\begin{align}
	\nonumber
	&\big|\overline{\mu}_{\sigma,\Phi_\fa}[y]\big|\\
	\nonumber
	&\leq\left\{\int_{[\kappa,\infty)}\theta^{-1}\left|\left(I_{n\times n}-M_\sigma(\theta)^\top\Phi_\fa(\theta)\right)\Phi_\fa(\theta)^{-1/2}\right|_\op^2\dmu\right\}^{1/2}\left\{\int_{[0,\infty)}\theta\left|\Phi_\fa(\theta)^{1/2}y(\theta)\right|^2\dmu\right\}^{1/2}\\
	\label{eq_coupling-F-1}
	&\leq\alpha(m,\delta,L,R)^{1/2}\vnorm{y}_{\Phi_\fa},
\end{align}
where $\alpha(m,\delta,L,R)\geq0$ is given by
\begin{align*}
	\alpha(m,\delta,L,R)&:=\delta\int_{[\kappa,\infty)\setminus A_L}\theta^{-1/2}\dmu+\int_{[\kappa,\infty)\setminus B_R}\theta^{-1}\big(1+|M_\sigma(\theta)|_\op\big)^2\dmu\\
	&\hspace{1cm}+m^{-1/2}\int_{[m,\infty)}\theta^{-1/2}\big(1+|M_\sigma(\theta)|_\op\big)^2\dmu.
\end{align*}
By the integrability conditions \eqref{eq_integrability-mu} and \eqref{eq_integrability-M_sigma} on $\mu$ and $M_\sigma$, combined with the assumption $\kappa:=\inf\supp>0$, the constant $\alpha(m,\delta,L,R)$ is finite. Next, concerning $|\mu_{b,\Phi_\fa}[y]|$, observe that
\begin{align*}
	&\left|M_b(\theta)^\top\Phi_\fa(\theta)^{1/2}\right|_\op^2\\
	&=\big|M_b(\theta)^\top M_\sigma(\theta)^{-1/2}\big|_\op^2\1_{[\kappa,m)\cap A_L\cap B_R}(\theta)+\left|M_b(\theta)^\top\left(\delta\theta^{1/2}I_{n\times n}+M_\sigma(\theta)\right)^{-1/2}\right|_\op^2\1_{([\kappa,m)\setminus A_L)\cap B_R}(\theta)\\
	&\hspace{1cm}+|M_b(\theta)|_\op^2\1_{[\kappa,m)\setminus B_R}(\theta)+m^{1/2}\theta^{-1/2}|M_b(\theta)|_\op^2\1_{[m,\infty)}(\theta)\\
	&\leq\beta(m,\delta,L)\theta^{-1/2}|M_b(\theta)|^2_\op\1_{[\kappa,\infty)}(\theta)
\end{align*}
for any $\theta\geq\kappa:=\inf\supp>0$, where the constant $\beta(m,\delta,L)>0$ is given by
\begin{equation*}
	\beta(m,\delta,L):=\max\Big\{Lm^{1/2},\delta^{-1},m^{1/2}\Big\}.
\end{equation*}
Hence, using the Cauchy--Schwarz inequality, we have
\begin{align}
	\nonumber
	\big|\mu_{b,\Phi_\fa}[y]\big|&\leq\left\{\int_{[\kappa,\infty)}\theta^{-1}\left|M_b(\theta)^\top\Phi_\fa(\theta)^{1/2}\right|_\op^2\dmu\right\}^{1/2}\left\{\int_{[0,\infty)}\theta\left|\Phi_\fa(\theta)^{1/2}y(\theta)\right|^2\dmu\right\}^{1/2}\\
	\label{eq_coupling-F-2}
	&\leq\left\{\beta(m,\delta,L)\int_{[\kappa,\infty)}\theta^{-3/2}|M_b(\theta)|_\op^2\dmu\right\}^{1/2}\vnorm{y}_{\Phi_\fa}.
\end{align}
Similarly, concerning $|Q_{\sigma,\Phi_\fa}|_\op$, we have
\begin{align*}
	&\left|M_\sigma(\theta)^\top\Phi_\fa(\theta)M_\sigma(\theta)\right|_\op\\
	&=|M_\sigma(\theta)|_\op\1_{[\kappa,m)\cap A_L\cap B_R}(\theta)+\left|M_\sigma(\theta)\left(\delta\theta^{1/2}I_{n\times n}+M_\sigma(\theta)\right)^{-1}M_\sigma(\theta)\right|_\op\1_{([\kappa,m)\setminus A_L)\cap B_R}(\theta)\\
	&\hspace{1cm}+|M_\sigma(\theta)|_\op^2\1_{[\kappa,m)\setminus B_R}(\theta)+m^{1/2}\theta^{-1/2}|M_\sigma(\theta)|_\op^2\1_{[m,\infty)}(\theta)\\
	&\leq\beta(m,\delta,L)\theta^{-1/2}|M_\sigma(\theta)|_\op^2\1_{[\kappa,\infty)}(\theta)
\end{align*}
for any $\theta\geq\kappa:=\inf\supp>0$. Hence, we have
\begin{equation}\label{eq_coupling-F-3}
	\big|Q_{\sigma,\Phi_\fa}\big|_\op\leq\beta(m,\delta,L)\int_{[\kappa,\infty)}\theta^{-1/2}|M_\sigma(\theta)|_\op^2\dmu.
\end{equation}
Combining \eqref{eq_coupling-F-Lip} with \eqref{eq_coupling-F-1}, \eqref{eq_coupling-F-2} and \eqref{eq_coupling-F-3}, and using Young's inequality, we get
\begin{equation}\label{eq_coupling-estimate-F}
	F_\fa(y)\leq\left(\ep(m,\delta,L,R)+\frac{1}{2}\right)\vnorm{y}_{\Phi_\fa}^2+C(m,\delta,L)\big|\mu_{\sigma,\Phi_\fa}[y]\big|^2
\end{equation}
for any $y\in\cV$, where the constants $\ep(m,\delta,L,R)\geq0$ and $C(m,\delta,L)>0$ are given by
\begin{align*}
	\ep(m,\delta,L,R)&:=2C_{b,\Lip}\left\{\alpha(m,\delta,L,R)\beta(m,\delta,L)\int_{[\kappa,\infty)}\theta^{-3/2}|M_b(\theta)|_\op^2\dmu\right\}^{1/2}\\
	&\hspace{0.5cm}+2C_{\sigma,\Lip}^2\alpha(m,\delta,L,R)\beta(m,\delta,L)\int_{[\kappa,\infty)}\theta^{-1/2}|M_\sigma(\theta)|_\op^2\dmu
\end{align*}
and
\begin{equation*}
	C(m,\delta,L):=2\beta(m,\delta,L)\left\{C_{b,\Lip}^2\int_{[\kappa,\infty)}\theta^{-3/2}|M_b(\theta)|_\op^2\dmu+C_{\sigma,\Lip}^2\int_{[\kappa,\infty)}\theta^{-1/2}|M_\sigma(\theta)|_\op^2\dmu\right\}.
\end{equation*}
By the integrability conditions \eqref{eq_integrability-M_b} and \eqref{eq_integrability-M_sigma} on $M_b$ and $M_\sigma$, together with $\kappa:=\inf\supp>0$, all the integrals appearing in the expressions of $\ep(m,\delta,L,R)$ and $C(m,\delta,L)$ are finite.

The constant $\ep(m,\delta,L,R)\geq0$ can be arbitrarily small by choosing suitable parameters $m,\delta,L,R>0$. Indeed, observe that
\begin{align*}
	\alpha(m,\delta,L,R)\beta(m,\delta,L)&=\delta\max\Big\{Lm^{1/2},\delta^{-1},m^{1/2}\Big\}\int_{[\kappa,\infty)\setminus A_L}\theta^{-1/2}\dmu\\
	&\hspace{0.5cm}+\max\Big\{Lm^{1/2},\delta^{-1},m^{1/2}\Big\}\int_{[\kappa,\infty)\setminus B_R}\theta^{-1}\big(1+|M_\sigma(\theta)|_\op\big)^2\dmu\\
	&\hspace{0.5cm}+m^{-1/2}\max\Big\{Lm^{1/2},\delta^{-1},m^{1/2}\Big\}\int_{[m,\infty)}\theta^{-1/2}\big(1+|M_\sigma(\theta)|_\op\big)^2\dmu.
\end{align*}
Note that $\mu([\kappa,\infty)\setminus(\bigcup_{L>0}A_L))=0$ and $\mu([\kappa,\infty)\setminus(\bigcup_{R>0}B_R))=0$ by \cref{assum_coupling}. Recalling again the integrability conditions \eqref{eq_integrability-mu} and \eqref{eq_integrability-M_sigma} on $\mu$ and $M_\sigma$, as well as the assumption $\kappa:=\inf\supp>0$, the dominated convergence theorem yields
\begin{align*}
	&\lim_{L\to\infty}\int_{[\kappa,\infty)\setminus A_L}\theta^{-1/2}\dmu=0,\\
	&\lim_{R\to\infty}\int_{[\kappa,\infty)\setminus B_R}\theta^{-1}\big(1+|M_\sigma(\theta)|_\op\big)^2\dmu=0,\ \ \text{and}\\
	&\lim_{m\to\infty}\int_{[m,\infty)}\theta^{-1/2}\big(1+|M_\sigma(\theta)|_\op\big)^2\dmu=0.
\end{align*}
Setting $\delta=\delta_m>0$ and $L=L_m>0$ according to $m>0$ by
\begin{align*}
	&\delta_m:=m^{-1/2}\left\{\int_{[m,\infty)}\theta^{-1/2}\big(1+|M_\sigma(\theta)|_\op\big)^2\dmu+\frac{1}{m}\right\}^{1/2}\ \ \text{and}\\
	&L_m:=\left\{\int_{[m,\infty)}\theta^{-1/2}\big(1+|M_\sigma(\theta)|_\op\big)^2\dmu+\frac{1}{m}\right\}^{-1/2},
\end{align*}
we see that
\begin{equation*}
	\lim_{m\to\infty}\lim_{R\to\infty}\alpha(m,\delta_m,L_m,R)\beta(m,\delta_m,L_m)=0.
\end{equation*}
Hence, we have
\begin{equation*}
	\lim_{m\to\infty}\lim_{R\to\infty}\ep(m,\delta_m,L_m,R)=0.
\end{equation*}
Therefore, there exist constants $m,\delta,L,R>0$ such that $\ep(m,\delta,L,R)\leq\frac{1}{2}$. Fix such a set of parameters $\fa=(m,\delta,L,R)$, and set $\lambda_\fa=C(m,\delta,L)>0$. Then, \eqref{eq_coupling-estimate-Ito} and \eqref{eq_coupling-estimate-F} yield
\begin{equation}\label{eq_coupling-estimate-key}
\begin{split}
	&e^{\kappa t}\big\|Y_t-\widehat{Y}_t\big\|_{\Phi_\fa}^2+\lambda_\fa\int^t_0e^{\kappa s}\big|\mu_{\sigma,\Phi_\fa}\big[Y_s-\widehat{Y}_s\big]\big|^2\,\diff s\\
	&\leq\big\|y_1-y_2\big\|_{\Phi_\fa}^2+2\int^t_0e^{\kappa s}\Big\langle\mu_{\sigma,\Phi_\fa}\big[Y_s-\widehat{Y}_s\big],\big(\sigma(\mu[Y_s])-\sigma(\mu[\widehat{Y}_s])\big)\,\diff W_s\Big\rangle
\end{split}
\end{equation}
for any $t\geq0$ a.s. We stress that the constant $\lambda_\fa>0$ and the admissible weight function $\Phi_\fa$ depend only on the lifting basis $(\mu,M_b,M_\sigma)$ and the constants $C_{b,\Lip},C_{\sigma,\Lip}>0$ in \cref{assum_coupling}. By virtue of the a priori estimate \eqref{eq_controlled-SEE-apriori}, along with the facts that $\mu_{\sigma,\Phi_\fa}[\cdot]\in L(\cH;\bR^n)$ and $\mu[\cdot]\in L(\cV;\bR^n)$, and the Lipschitz continuity of $\sigma:\bR^n\to\bR^{n\times d}$, the stochastic integral on the right-hand side of \eqref{eq_coupling-estimate-key} is a martingale under the probability measure $\bP$. Hence, taking the expectations on both sides of \eqref{eq_coupling-estimate-key} with respect to $\bP$, we obtain
\begin{equation}\label{eq_coupling-estimate-key-expectation}
	\bE\left[e^{\kappa t}\big\|Y_t-\widehat{Y}_t\big\|_{\Phi_\fa}^2+\lambda_\fa\int^t_0e^{\kappa s}\big|\mu_{\sigma,\Phi_\fa}\big[Y_s-\widehat{Y}_s\big]\big|^2\,\diff s\right]\leq\big\|y_1-y_2\big\|_{\Phi_\fa}^2
\end{equation}
for any $t\geq0$.

For each $x\in\bR^n$, let $\sigma(x)^\dagger\in\bR^{d\times n}$ be the pseudo-inverse of the matrix $\sigma(x)\in\bR^{n\times d}$. By \cref{assum_coupling}, we have $\sigma(x)^\dagger=\sigma(x)^\top(\sigma(x)\sigma(x)^\top)^{-1}$, $\sigma(x)\sigma(x)^\dagger=I_{n\times n}$ and $|\sigma(x)^\dagger|_\op\leq C_\UE^{1/2}$ for any $x\in\bR^n$. Define processes $u=u^{y_1,y_2}$ and $\sE=\sE^{y_1,y_2}$ by
\begin{equation*}
	u_t:=\lambda_\fa\sigma(\mu[\widehat{Y}_t])^\dagger\mu_{\sigma,\Phi_\fa}\big[Y_t-\widehat{Y}_t\big],\ \ t\geq0,
\end{equation*}
and
\begin{equation*}
	\sE_t:=\exp\left(-\int^t_0\langle u_s,\diff W_s\rangle-\frac{1}{2}\int^t_0|u_s|^2\,\diff s\right),\ \ t\geq0.
\end{equation*}
Note that $u$ is an $\bR^d$-valued progressively measurable process such that $\int^T_0|u_t|^2\,\diff t<\infty$ $\bP$-a.s.\ for any $T>0$, and $\sE$ is a positive local martingale under $\bP$. Now we show that the stochastic exponential $\sE$ is a martingale under the probability measure $\bP$. To do so, it suffices to show that $\sE$ belongs to the class (DL), which means that the family of random variables $\{\sE_\tau\,|\,\tau\in\cT_T\}$ is uniformly integrable with respect to $\bP$ for any $T>0$, where $\cT_T$ denotes the set of all stopping times $\tau$ such that $\tau\leq T$ a.s. Let $T>0$ and $\tau\in\cT_T$ be fixed. For each $N\in\bN$, define
\begin{equation*}
	\tau_N:=\tau\wedge\inf\left\{t\in[0,T]\relmiddle|\int^t_0\big|\mu_{\sigma,\Phi_\fa}\big[Y_s-\widehat{Y}_s\big]\big|^2\,\diff s\geq N\right\}.
\end{equation*}
Since $u\1_{[0,\tau_N]}$ satisfies Novikov's condition, the stopped process $\sE_{\cdot\wedge\tau_N}$ is a $\bP$-martingale, and hence the measure $\widehat{\bP}_{\tau_N}\sim\bP$ on $(\Omega,\cF)$ defined by $\frac{\diff\widehat{\bP}_{\tau_N}}{\diff\bP}:=\sE_{\tau_N}$ is a probability measure. By Girsanov's theorem, the process
\begin{equation*}
	\widehat{W}^{\tau_N}_t:=W_t+\int^{t\wedge\tau_N}_0u_s\,\diff s,\ \ t\in[0,T],
\end{equation*}
is a Brownian motion relative to $(\cF_t)_{t\in[0,T]}$ under the probability measure $\widehat{\bP}_{\tau_N}$. By \eqref{eq_coupling-estimate-key}, we have
\begin{align*}
	&e^{\kappa(t\wedge\tau_N)}\big\|Y_{t\wedge\tau_N}-\widehat{Y}_{t\wedge\tau_N}\big\|_{\Phi_\fa}^2+\lambda_\fa\int^{t\wedge\tau_N}_0e^{\kappa s}\big|\mu_{\sigma,\Phi_\fa}\big[Y_s-\widehat{Y}_s\big]\big|^2\,\diff s\\
	&\leq\big\|y_1-y_2\big\|_{\Phi_\fa}^2+2\int^{t\wedge\tau_N}_0e^{\kappa s}\Big\langle\mu_{\sigma,\Phi_\fa}\big[Y_s-\widehat{Y}_s\big],\big(\sigma(\mu[Y_s])-\sigma(\mu[\widehat{Y}_s])\big)\,\diff\widehat{W}^{\tau_N}_s\Big\rangle\\
	&\hspace{1cm}-2\lambda_\fa\int^{t\wedge\tau_N}_0e^{\kappa s}\Big\langle\mu_{\sigma,\Phi_\fa}\big[Y_s-\widehat{Y}_s\big],\big(\sigma(\mu[Y_s])-\sigma(\mu[\widehat{Y}_s])\big)\sigma(\mu[\widehat{Y}_s])^\dagger\mu_{\sigma,\Phi_\fa}\big[Y_s-\widehat{Y}_s\big]\Big\rangle\,\diff s
\end{align*}
for any $t\in[0,T]$ $\widehat{\bP}_{\tau_N}$-a.s. Noting that $\|\sigma\|_{\op,\infty}:=\sup_{x\in\bR^n}|\sigma(x)|_\op<\infty$ by the (temporal) assumption in this lemma, we see that the stopped stochastic integral appearing in the right-hand side above is a martingale under the probability measure $\widehat{\bP}_{\tau_N}$. Furthermore, we have
\begin{align*}
	&\left|\Big\langle\mu_{\sigma,\Phi_\fa}\big[Y_s-\widehat{Y}_s\big],\big(\sigma(\mu[Y_s])-\sigma(\mu[\widehat{Y}_s])\big)\sigma(\mu[\widehat{Y}_s])^\dagger\mu_{\sigma,\Phi_\fa}\big[Y_s-\widehat{Y}_s\big]\Big\rangle\right|\\
	&\leq2\|\sigma\|_{\op,\infty}C_\UE^{1/2}\big|\mu_{\sigma,\Phi_\fa}\big[Y_s-\widehat{Y}_s\big]\big|^2\\
	&\leq2\|\sigma\|_{\op,\infty}C_\UE^{1/2}\int_{[0,\infty)}\big|M_\sigma(\theta)^\top\Phi_\fa(\theta)^{1/2}\big|_\op^2\dmu\big\|Y_s-\widehat{Y}_s\big\|^2_{\Phi_\fa},
\end{align*}
where we used the Cauchy--Schwarz inequality in the last line. Note that $\int_{[0,\infty)}|M_\sigma(\theta)^\top\Phi_\fa(\theta)^{1/2}|_\op^2\dmu<\infty$, which follows from the integrability condition \eqref{eq_integrability-M_sigma} on $M_\sigma$ and the bound \eqref{eq_weight-bound} for the admissible weight function $\Phi_\fa$. Hence, denoting by $\widehat{\bE}_{\tau_N}[\cdot]$ the expectation under $\widehat{\bP}_{\tau_N}$, we have
\begin{align*}
	&\widehat{\bE}_{\tau_N}\left[e^{\kappa(t\wedge\tau_N)}\big\|Y_{t\wedge\tau_N}-\widehat{Y}_{t\wedge\tau_N}\big\|_{\Phi_\fa}^2+\lambda_\fa\int^{t\wedge\tau_N}_0e^{\kappa s}\big|\mu_{\sigma,\Phi_\fa}\big[Y_s-\widehat{Y}_s\big]\big|^2\,\diff s\right]\\
	&\leq\big\|y_1-y_2\big\|_{\Phi_\fa}^2+C\int^t_0\widehat{\bE}_{\tau_N}\Big[e^{\kappa(s\wedge\tau_N)}\big\|Y_{s\wedge\tau_N}-\widehat{Y}_{s\wedge\tau_N}\big\|_{\Phi_\fa}^2\Big]\,\diff s
\end{align*}
for any $t\in[0,T]$, where
\begin{equation*}
	C:=4\lambda_\fa\|\sigma\|_{\op,\infty}C_\UE^{1/2}\int_{[0,\infty)}\big|M_\sigma(\theta)^\top\Phi_\fa(\theta)^{1/2}\big|_\op^2\dmu<\infty.
\end{equation*}
By Gronwall's inequality, we get
\begin{equation*}
	\widehat{\bE}_{\tau_N}\left[e^{\kappa \tau_N}\big\|Y_{\tau_N}-\widehat{Y}_{\tau_N}\big\|_{\Phi_\fa}^2+\lambda_\fa\int^{\tau_N}_0e^{\kappa t}\big|\mu_{\sigma,\Phi_\fa}\big[Y_t-\widehat{Y}_t\big]\big|^2\,\diff t\right]\leq e^{CT}\big\|y_1-y_2\big\|_{\Phi_\fa}^2.
\end{equation*}
This estimate yields
\begin{align*}
	\bE\big[\sE_{\tau_N}\log\sE_{\tau_N}\big]&=\widehat{\bE}_{\tau_N}\left[-\int^{\tau_N}_0\big\langle u_t,\diff\widehat{W}^{\tau_N}_t\big\rangle+\frac{1}{2}\int^{\tau_N}_0|u_t|^2\,\diff t\right]\\
	&\leq\frac{C_\UE\lambda_\fa^2}{2}\widehat{\bE}_{\tau_N}\left[\int^{\tau_N}_0\big|\mu_{\sigma,\Phi_\fa}\big[Y_t-\widehat{Y}_t\big]\big|^2\,\diff t\right]\\
	&\leq\frac{C_\UE\lambda_\fa}{2}e^{CT}\big\|y_1-y_2\big\|_{\Phi_\fa}^2.
\end{align*}
Letting $N\to\infty$ and utilizing Fatou's lemma, we obtain
\begin{equation*}
	\bE\big[\sE_\tau\log\sE_\tau\big]\leq\frac{C_\UE\lambda_\fa}{2}e^{CT}\big\|y_1-y_2\big\|_{\Phi_\fa}^2.
\end{equation*}
This bound holds for any $\tau\in\cT_T$, and hence the family $\{\sE_\tau\,|\,\tau\in\cT_T\}$ is uniformly integrable under $\bP$. Since $T>0$ is arbitrary, we see that the local martingale $\sE$ belongs to the class (DL). Thus, $\sE$ is a martingale under the probability measure $\bP$.

For each $t\geq0$, define a measure $\widehat{\bP}_t=\widehat{\bP}^{y_1,y_2}_t\sim\bP$ on $(\Omega,\cF)$ by $\frac{\diff\widehat{\bP}_t}{\diff\bP}:=\sE_t$. The martingale property of the stochastic exponential $\sE$ under $\bP$ implies that $\widehat{\bP}_t$ is a probability measure. Therefore, by Girsanov's theorem, the process
\begin{equation*}
	\widehat{W}_s:=W_s+\int^s_0u_r\,\diff r,\ \ s\in[0,t],
\end{equation*}
is a Brownian motion relative to $(\cF_s)_{s\in[0,t]}$ under $\widehat{\bP}_t$. Note that the process $(\widehat{Y}_s)_{s\in[0,t]}$ solves the following SEE on $(\Omega,\cF_t,\widehat{\bP}_t)$ driven by the Brownian motion $\widehat{W}$:
\begin{equation*}
	\begin{dcases}
	\diff\widehat{Y}_s(\theta)=-\theta\widehat{Y}_s(\theta)\,\diff s+M_b(\theta)b(\mu[\widehat{Y}_s])\,\diff s+M_\sigma(\theta)\sigma(\mu[\widehat{Y}_s])\,\diff\widehat{W}_s,\ \ s\in[0,t],\ \theta\in[0,\infty),\\
	\widehat{Y}_0(\theta)=y_2(\theta),\ \ \theta\in[0,\infty).
	\end{dcases}
\end{equation*}
Hence, by the uniqueness in law for the SEE \eqref{eq_SEE}, which is ensured by the pathwise uniqueness and a Yamada--Watanabe-type result for general SEEs (see, e.g., \cite[Appendix E]{LiRo15}), we obtain $\Law_{\widehat{\bP}_t}(\widehat{Y}_t)=P_t(y_2,\cdot)$, proving \eqref{eq_coupling-2}. Furthermore,
\begin{align*}
	D_\KL\big(\bP\big\|\widehat{\bP}_t\big)&=\bE\left[\log\frac{1}{\sE_t}\right]=\frac{1}{2}\bE\left[\int^t_0|u_s|^2\,\diff s\right]\\
	&\leq\frac{C_\UE\lambda_\fa^2}{2}\bE\left[\int^t_0\big|\mu_{\sigma,\Phi_\fa}\big[Y_s-\widehat{Y}_s\big]\big|^2\,\diff s\right]\\
	&\leq\frac{C_\UE\lambda_\fa}{2}\big\|y_1-y_2\big\|_{\Phi_\fa}^2,
\end{align*}
where the last inequality follows from \eqref{eq_coupling-estimate-key-expectation}. On the other hand, another application of \eqref{eq_coupling-estimate-key-expectation} shows that
\begin{equation*}
	\bE\Big[\big\|Y_t-\widehat{Y}_t\big\|_{\Phi_\fa}\Big]\leq\bE\Big[\big\|Y_t-\widehat{Y}_t\big\|_{\Phi_\fa}^2\Big]^{1/2}\leq e^{-\kappa t/2}\big\|y_1-y_2\big\|_{\Phi_\fa}.
\end{equation*}
Now we define an admissible weight function $\Phi:[0,\infty)\to\bR^{n\times n}$ by $\Phi:=C_\UE\lambda_\fa\Phi_\fa$, which depends only on the lifting basis $(\mu,M_b,M_\sigma)$ and the constants $C_{b,\Lip}$, $C_{\sigma,\Lip}$ and $C_\UE$ in \cref{assum_coupling}. Noting that $C_\UE\lambda_\fa\|\cdot\|_{\Phi_\fa}^2=\|\cdot\|_\Phi^2$, we obtain the required estimates \eqref{eq_coupling-3} and \eqref{eq_coupling-4}. This completes the proof.
\end{proof}

Now we can prove \cref{theo_coupling}.

%% Proof

\begin{proof}[Proof of \cref{theo_coupling}]
By \cref{lemm_bounded} and its subsequent discussion, without loss of generality, we may assume that $\sigma:\bR^n\to\bR^{n\times d}$ is bounded. Let $\Phi$, $\{(Y^{y_1,y_2}_t,\widehat{Y}^{y_1,y_2}_t)\}_{t\geq0,y_1,y_2\in\cH}$ and $\{\widehat{\bP}^{y_1,y_2}_t\}_{t\geq0,y_1,y_2\in\cH}$ be as constructed in \cref{lemm_coupling}. Note that the admissible weight function $\Phi$ depends only on the lifting basis $(\mu,M_b,M_\sigma)$ and the constants $C_{b,\Lip}$, $C_{\sigma,\Lip}$ and $C_\UE$, while remaining independent of the bound of $\sigma$.

For each $t\geq0$ and $y_1,y_2\in\cH$, by \eqref{eq_coupling-2}, we have
\begin{equation}\label{eq_coupling-5}
	d_\TV\big(\Law_\bP\big(\widehat{Y}^{y_1,y_2}_t\big),P_t(y_2,\cdot)\big)=d_\TV\big(\Law_\bP\big(\widehat{Y}^{y_1,y_2}_t\big),\Law_{\widehat{\bP}^{y_1,y_2}_t}\big(\widehat{Y}^{y_1,y_2}_t\big)\big)\leq d_\TV\big(\bP,\widehat{\bP}^{y_1,y_2}_t\big).
\end{equation}
However, the well-known inequalities for the total variation distance (see, e.g., \cite[Lemma 2.5 and Equation (2.25)]{Ts09}) yield
\begin{equation*}
	d_\TV\big(\bP,\widehat{\bP}^{y_1,y_2}_t\big)\leq\sqrt{\frac{1}{2}D_\KL\big(\bP\big\|\widehat{\bP}^{y_1,y_2}_t\big)}
\end{equation*}
and
\begin{equation*}
	d_\TV\big(\bP,\widehat{\bP}^{y_1,y_2}_t\big)\leq1-\frac{1}{2}\exp\left(-D_\KL\big(\bP\big\|\widehat{\bP}^{y_1,y_2}_t\big)\right).
\end{equation*}
Applying these inequalities to \eqref{eq_coupling-5} and utilizing the estimate \eqref{eq_coupling-4}, we get
\begin{equation}\label{eq_coupling-6}
	d_\TV\big(\Law_\bP\big(\widehat{Y}^{y_1,y_2}_t\big),P_t(y_2,\cdot)\big)\leq\frac{1}{2}\|y_1-y_2\|_\Phi
\end{equation}
and
\begin{equation}\label{eq_coupling-7}
	d_\TV\big(\Law_\bP\big(\widehat{Y}^{y_1,y_2}_t\big),P_t(y_2,\cdot)\big)\leq1-\frac{1}{2}\exp\left(-\frac{1}{2}\|y_1-y_2\|_\Phi^2\right)
\end{equation}
for any $t\geq0$ and $y_1,y_2\in\cH$. Assertions (i) and (ii) now follow from \eqref{eq_coupling-1}, \eqref{eq_coupling-3}, \eqref{eq_coupling-6}, and \eqref{eq_coupling-7} by \cite[Theorem 2.4]{BuKuSc20}. For the sake of completeness, we provide the details of their proofs below.

For each $t\geq0$ and $y_1,y_2\in\cH$, by the ``coupling lemma'' (see, e.g., \cite[Theorem 4.1]{Vi09}), there exists an $\cH\times\cH$-valued random variable $(\widehat{\sY}^{y_1,y_2}_t,\sY^{y_1,y_2}_t)$ on a probability space $(\Omega',\cF',\bP')$ such that $\Law_{\bP'}(\widehat{\sY}^{y_1,y_2}_t)=\Law_\bP(\widehat{Y}^{y_1,y_2}_t)$, $\Law_{\bP'}(\sY^{y_1,y_2}_t)=P_t(y_2,\cdot)$, and
\begin{equation*}
	\bP'\big(\widehat{\sY}^{y_1,y_2}_t\neq\sY^{y_1,y_2}_t\big)=d_\TV\big(\Law_\bP\big(\widehat{Y}^{y_1,y_2}_t\big),P_t(y_2,\cdot)\big).
\end{equation*}
Furthermore, by the ``gluing lemma'' (see, e.g., \cite[Lemma 4.3.2]{Ku18}), we can construct an $\cH\times\cH\times\cH$-valued random variable $(\xi^{y_1,y_2}_t,\eta^{y_1,y_2}_t,\zeta^{y_1,y_2}_t)$ on a probability space $(\Omega'',\cF'',\bP'')$ such that $\Law_{\bP''}(\xi^{y_1,y_2}_t,\eta^{y_1,y_2}_t)=\Law_\bP(Y^{y_1,y_2}_t,\widehat{Y}^{y_1,y_2}_t)$ and $\Law_{\bP''}(\eta^{y_1,y_2}_t,\zeta^{y_1,y_2}_t)=\Law_{\bP'}(\widehat{\sY}^{y_1,y_2}_t,\sY^{y_1,y_2}_t)$. Note that
\begin{equation*}
	\Law_{\bP''}(\xi^{y_1,y_2}_t)=\Law_\bP(Y^{y_1,y_2}_t)=P_t(y_1,\cdot)
\end{equation*}
by \eqref{eq_coupling-1}, and
\begin{equation*}
	\Law_{\bP''}(\zeta^{y_1,y_2}_t)=\Law_{\bP'}(\sY^{y_1,y_2}_t)=P_t(y_2,\cdot).
\end{equation*}
Hence, the joint distribution $\Law_{\bP''}(\xi^{y_1,y_2}_t,\zeta^{y_1,y_2}_t)\in\cP(\cH\times\cH)$ is a (true) coupling between the two probability measures $P_t(y_1,\cdot)\in\cP(\cH)$ and $P_t(y_2,\cdot)\in\cP(\cH)$. Furthermore, we have
\begin{equation*}
	\bE_{\bP''}\big[\big\|\xi^{y_1,y_2}_t-\eta^{y_1,y_2}_t\big\|_\Phi\big]=\bE_\bP\big[\big\|Y^{y_1,y_2}_t-\widehat{Y}^{y_1,y_2}_t\big\|_\Phi\big]
\end{equation*}
and
\begin{equation*}
	\bP''\big(\eta^{y_1,y_2}_t\neq\zeta^{y_1,y_2}_t\big)=\bP'\big(\widehat{\sY}^{y_1,y_2}_t\neq\sY^{y_1,y_2}_t\big)=d_\TV\big(\Law_\bP\big(\widehat{Y}^{y_1,y_2}_t\big),P_t(y_2,\cdot)\big).
\end{equation*}
Recalling the definition of the metric $d_\Phi(y_1,y_2):=\|y_1-y_2\|_\Phi\wedge1$, and utilizing the above properties of $(\xi^{y_1,y_2}_t,\eta^{y_1,y_2}_t,\zeta^{y_1,y_2}_t)$, we have
\begin{align*}
	\bW_{d_\Phi}(P_t(y_1,\cdot),P_t(y_2,\cdot))&\leq\bE_{\bP''}\big[d_\Phi(\xi^{y_1,y_2}_t,\zeta^{y_1,y_2}_t)\big]\\
%	&=\bE_{\bP''}\left[d_\Phi(\xi^{y_1,y_2}_t,\zeta^{y_1,y_2}_t)\1_{\{\eta^{y_1,y_2}_t=\zeta^{y_1,y_2}_t\}}\right]+\bE_{\bP''}\left[d_\Phi(\xi^{y_1,y_2}_t,\zeta^{y_1,y_2}_t)\1_{\{\eta^{y_1,y_2}_t\neq\zeta^{y_1,y_2}_t\}}\right]\\
	&\leq\bE_{\bP''}\big[\big\|\xi^{y_1,y_2}_t-\eta^{y_1,y_2}_t\big\|_\Phi\big]+\bP''\big(\eta^{y_1,y_2}_t\neq\zeta^{y_1,y_2}_t\big)\\
	&=\bE_\bP\big[\big\|Y^{y_1,y_2}_t-\widehat{Y}^{y_1,y_2}_t\big\|_\Phi\big]+d_\TV\big(\Law_\bP\big(\widehat{Y}^{y_1,y_2}_t\big),P_t(y_2,\cdot)\big).
\end{align*}
Together with \eqref{eq_coupling-3} and \eqref{eq_coupling-6}, this implies that
\begin{equation}\label{eq_contracting'}
	\bW_{d_\Phi}(P_t(y_1,\cdot),P_t(y_2,\cdot))\leq\left(e^{-\kappa t/2}+\frac{1}{2}\right)\|y_1-y_2\|_\Phi
\end{equation}
for all $t\geq0$ and all $y_1,y_2\in\cH$. Furthermore, using \eqref{eq_coupling-7} instead of \eqref{eq_coupling-6}, we obtain
\begin{equation}\label{eq_small'}
	\bW_{d_\Phi}(P_t(y_1,\cdot),P_t(y_2,\cdot))\leq e^{-\kappa t/2}\|y_1-y_2\|_\Phi+1-\frac{1}{2}\exp\left(-\frac{1}{2}\|y_1-y_2\|_\Phi^2\right)
\end{equation}
for all $t\geq0$ and all $y_1,y_2\in\cH$.

On the one hand, if $t\geq\frac{4\log2}{\kappa}$ and $d_\Phi(y_1,y_2)<1$, then the estimate \eqref{eq_contracting'}, combined with $e^{-\kappa t/2}+\frac{1}{2}\leq\frac{3}{4}$ and $\|y_1-y_2\|_\Phi=d_\Phi(y_1,y_2)$, yields \eqref{eq_contracting}. This shows that the distance function $d_\Phi$ is contracting for $P_t$, thereby establishing assertion (i). On the other hand, for any $R>0$, if $t\geq\frac{4R^2+2\log(8R)}{\kappa}$ and $y_1,y_2\in\overline{B}_{\Phi}(R):=\{y\in\cH\,|\,\|y\|_\Phi\leq R\}$, then the estimate \eqref{eq_small'}, together with $\|y_1-y_2\|_\Phi\leq2R$ and $e^{-\kappa t/2}2R+1-\frac{1}{2}e^{-2R^2}\leq1-\frac{1}{4}e^{-2R^2}$, implies the estimate \eqref{eq_small}. This indicates that the set $\overline{B}_{\Phi}(R)$ is $d_\Phi$-small for $P_t$, proving assertion (ii).

It remains to prove assertion (iii). Let $t\geq0$ and $y_1,y_2\in\cH$ be fixed, and let $f:\cH\to[1,\infty)$ be a bounded Borel measurable function with $\|\nabla_\Phi\log f\|_\infty<\infty$. By \eqref{eq_coupling-1} and the Lipschitz continuity of $\log f$, we have
\begin{equation*}
	P_t\log f(y_1)=\bE_\bP\big[\log f(Y^{y_1,y_2}_t)\big]\leq\bE_\bP\big[\log f\big(\widehat{Y}^{y_1,y_2}_t\big)\big]+\bE_\bP\big[\|Y^{y_1,y_2}_t-\widehat{Y}^{y_1,y_2}_t\|_\Phi\big]\|\nabla_\Phi\log f\|_\infty.
\end{equation*}
Furthermore, by Jensen's inequality and \eqref{eq_coupling-2}, we have
\begin{align*}
	\bE_\bP\big[\log f\big(\widehat{Y}^{y_1,y_2}_t\big)\big]&=\bE_\bP\left[\log\left(\frac{\diff\widehat{\bP}^{y_1,y_2}_t}{\diff\bP}f(\widehat{Y}^{y_1,y_2}_t)\right)\right]+\bE_\bP\left[\log\frac{\diff\bP}{\diff\widehat{\bP}^{y_1,y_2}_t}\right]\\
	&\leq\log\bE_\bP\left[\frac{\diff\widehat{\bP}^{y_1,y_2}_t}{\diff\bP}f\big(\widehat{Y}^{y_1,y_2}_t\big)\right]+\bE_\bP\left[\log\frac{\diff\bP}{\diff\widehat{\bP}^{y_1,y_2}_t}\right]\\
	&=\log P_tf(y_2)+D_\KL\big(\bP\big\|\widehat{\bP}^{y_1,y_2}_t\big).
\end{align*}
Hence, we obtain
\begin{equation*}
	P_t\log f(y_1)\leq\log P_tf(y_2)+D_\KL\big(\bP\big\|\widehat{\bP}^{y_1,y_2}_t\big)+\bE_\bP\big[\|Y_t-\widehat{Y}^{y_1,y_2}_t\|_\Phi\big]\|\nabla_\Phi\log f\|_\infty.
\end{equation*}
Combined with \eqref{eq_coupling-3} and \eqref{eq_coupling-4}, this estimate yields the desired asymptotic log-Harnack inequality \eqref{eq_Harnack} in assertion (iii). Finally, according to \cite[Theorem 2.1]{BaWaYu19}, the asymptotic log-Harnack inequality ensures that an invariant probability measure for $\{P_t\}_{t\geq0}$, if it exists, must be unique. This completes the proof.
\end{proof}

%%%%%%%%%%%%
%% Subsection
%%%%%%%%%%%%

\subsection{Proofs of \cref{theo_Lyapunov} and \cref{prop_Lyapunov-sufficient}: Construction of a Lyapunov function}\label{subsec_proof-Lyapunov}

Our next purpose is to construct a Lyapunov function for the SEE \eqref{eq_SEE}. First, we prove \cref{theo_Lyapunov} under the abstract condition in \cref{assum_Lyapunov-abstract}.

%% Proof

\begin{proof}[Proof of \cref{theo_Lyapunov}]
Let $y\in\cH$ be fixed, and let $(Y,W,\Omega,\cF,\bF,\bP)$ be a weak solution of the SEE \eqref{eq_SEE} with initial condition $y$, the existence and uniqueness in law of which are assumed in this theorem. Note that $P_t(y,\cdot)=\Law_\bP(Y_t)$ for any $t\geq0$. Let $\Psi:[0,\infty)\to\bR^{n\times n}$, $\delta\in(0,1)$, $\rho>0$ and $C_\Lyap>0$ be as in \cref{assum_Lyapunov-abstract}. By applying It\^{o}'s formula for the squared (equivalent) norm $\|\cdot\|_{\Psi}^2$ in \cref{lemm_Ito}, and utilizing the formulas \eqref{eq_A-Phi}, \eqref{eq_mu-b-Phi}, \eqref{eq_mu-sigma-Phi}, and \eqref{eq_Q-Phi}, we obtain
\begin{equation}\label{eq_Lyapunov-Ito-square}
\begin{split}
	\|Y_t\|_{\Psi}^2+2\int^t_0\vnorm{Y_s}_\Psi^2\,\diff s&=\|y\|_{\Psi}^2+2\int^t_0\left\{\big\langle b(\mu[Y_s]),\mu_{b,\Psi}[Y_s]\big\rangle+\frac{1}{2}\tr\big[\sigma(\mu[Y_s])^\top Q_{\sigma,\Psi}\sigma(\mu[Y_s])\big]\right\}\,\diff s\\
	&\hspace{1cm}+2\int^t_0\big\langle\mu_{\sigma,\Psi}[Y_s],\sigma(\mu[Y_s])\,\diff W_s\big\rangle
\end{split}
\end{equation}
for any $t\geq0$ a.s. Hence, by the condition \eqref{eq_coercivity} in \cref{assum_Lyapunov-abstract}, we see that
\begin{equation}\label{eq_Lyapunov-estimate-square}
	\|Y_t\|_{\Psi}^2+2\int^t_0\Big\{\rho\|Y_s\|_{\Psi}^2+(1-\delta)\vnorm{Y_s}_\Psi^2\Big\}\,\diff s\leq\|y\|_{\Psi}^2+2C_\Lyap t+2\int^t_0\big\langle\mu_{\sigma,\Psi}[Y_s],\sigma(\mu[Y_s])\,\diff W_s\big\rangle
\end{equation}
for any $t\geq0$ a.s. We emphasize that $\rho>0$ and $1-\delta>0$ by the assumption. For each $N\in\bN$, define a stopping time $\tau_N$ by
\begin{equation*}
	\tau_N:=\inf\left\{t\geq0\relmiddle|\int^t_0\big|\sigma(\mu[Y_s])^\top\mu_{\sigma,\Psi}[Y_s]\big|^2\,\diff s\geq N\right\}.
\end{equation*}
Note that $\lim_{N\to\infty}\tau_N=\infty$ a.s. Clearly, the stochastic integral on the right-hand side of \eqref{eq_Lyapunov-estimate-square} stopped at $\tau_N$ is a martingale. Hence, by taking the expectations on both sides of \eqref{eq_Lyapunov-estimate-square} with $t$ replaced by $t\wedge\tau_N$, we obtain
\begin{equation*}
	\bE\left[\|Y_{t\wedge\tau_N}\|_{\Psi}^2+2\int^{t\wedge\tau_N}_0\Big\{\rho\|Y_s\|_{\Psi}^2+(1-\delta)\vnorm{Y_s}_\Psi^2\Big\}\,\diff s\right]\leq\|y\|_{\Psi}^2+2C_\Lyap t.
\end{equation*}
Taking the limit $N\to\infty$ and utilizing Fatou's lemma, we obtain
\begin{equation}\label{eq_Lyapunov-estimate-square-key}
	\bE\left[\|Y_t\|_{\Psi}^2+2\int^t_0\Big\{\rho\|Y_s\|_{\Psi}^2+(1-\delta)\vnorm{Y_s}_\Psi^2\Big\}\,\diff s\right]\leq\|y\|_{\Psi}^2+2C_\Lyap t
\end{equation}
for any $t\geq0$. On the other hand, applying It\^{o}'s formula to $e^{2\rho t}\|Y_t\|_{\Psi}^2$ and utilizing \eqref{eq_Lyapunov-Ito-square}, \eqref{eq_coercivity}, along with a stopping time argument similar to the one above, shows that
\begin{equation}\label{eq_Lyapunov-estimate-square-exp}
	\bE\left[e^{2\rho t}\|Y_t\|_{\Psi}^2+2(1-\delta)\int^t_0e^{2\rho s}\vnorm{Y_s}_\Psi^2\,\diff s\right]\leq\|y\|_{\Psi}^2+\frac{C_\Lyap(e^{2\rho t}-1)}{\rho}
\end{equation}
for any $t\geq0$.

The estimate \eqref{eq_Lyapunov-estimate-square-exp} in particular implies that
\begin{equation*}
	\int_\cH\|y'\|_\Psi^2\,P_t(y,\diff y')\leq e^{-2\rho t}\|y\|_\Psi^2+\frac{C_\Lyap}{\rho}
\end{equation*}
for all $t\geq0$ and all $y\in\cH$. Hence, the function $V=\|\cdot\|_\Psi^2$ is a Lyapunov function for $\{P_t\}_{t\geq0}$.

Define a lower semi-continuous function $U:\cH\to[0,\infty]$ by $U(y):=\rho\|y\|_{\Psi}^2+(1-\delta)\vnorm{y}_\Psi^2$ for $y\in\cH$. For each $t>0$, define a probability kernel $\nu_t:\cH\times\cB(\cH)\to[0,1]$ by $\nu_t(y,A):=\frac{1}{t}\int^t_0P_s(y,A)\,\diff s$ for each $y\in\cH$ and $A\in\cB(\cH)$; this probability kernel is well-defined thanks to the measurability assumption for the Markov semigroup $\{P_t\}_{t\geq0}$. Then, by the estimate \eqref{eq_Lyapunov-estimate-square-key}, we have
\begin{equation}\label{eq_Lyapunov-estimate-square-P}
	\int_\cH U(y')\,\nu_t(y,\diff y')\leq\frac{\|y\|_{\Psi}^2}{2t}+C_\Lyap
\end{equation}
for any $y\in\cH$ and $t>0$. Suppose that we are given an invariant probability measure $\pi\in\cP(\cH)$ for $\{P_t\}_{t\geq0}$. Note that $\pi$ is invariant with respect to $\nu_t$ for any $t>0$. Hence, using the estimate \eqref{eq_Lyapunov-estimate-square-P}, for any $N\in\bN$ and $t>0$, we have
\begin{align*}
	\int_\cH\big\{U(y)\wedge N\big\}\,\pi(\diff y)&=\int_\cH\int_\cH\big\{U(y')\wedge N\big\}\,\nu_t(y,\diff y')\,\pi(\diff y)\\
	&\leq\int_\cH\left\{\int_\cH U(y')\,\nu_t(y,\diff y')\,\wedge N\right\}\,\pi(\diff y)\\
	&\leq\int_\cH\left\{\frac{\|y\|_{\Psi}^2}{2t}\wedge N\right\}\,\pi(\diff y)+C_\Lyap.
\end{align*}
The dominated convergence theorem ensures that the integral in the last line above tends to zero as $t\to\infty$ for each fixed $N\in\bN$. Hence, by taking the limit as $t\to\infty$, subsequently letting $N\to\infty$, and applying the monotone convergence theorem, we obtain $\int_\cH U(y)\,\pi(\diff y)\leq C_\Lyap$. This demonstrates that the estimate \eqref{eq_Lyapunov-square-pi} holds. Since $y\mapsto(\rho\|y\|_{\Psi}^2+(1-\delta)\vnorm{y}_\Psi^2)^{1/2}$ defines a norm on $\cV$ equivalent to $\|\cdot\|_\cV$, the estimate \eqref{eq_Lyapunov-square-pi} ensures that $\pi(\cV)=1$ and $\int_\cV\|y\|_\cV^2\,\pi(\diff y)<\infty$. This completes the proof.
\end{proof}

%% Remark

\begin{rem}\label{rem_Lyapunov}
Since the map $y\mapsto(\rho\|y\|_{\Psi}^2+(1-\delta)\vnorm{y}_\Psi^2)^{1/2}$ defines a norm on $\cV$ equivalent to $\|\cdot\|_\cV$, the estimate \eqref{eq_Lyapunov-estimate-square-P}, together with Markov's inequality, implies that
\begin{equation}\label{eq_tight}
	\lim_{R\to\infty}\sup_{t\geq1}\nu_t\big(y,\cH\setminus\overline{B}_\cV(R)\big)=0\ \ \text{for any $y\in\cH$},
\end{equation}
where $\overline{B}_{\cV}(R):=\{y\in\cV\,|\,\|y\|_{\cV}\leq R\}$. By \cref{lemm_app-compact}, if $\supp\cap[0,m]$ is a finite set for each $m>0$, then the embedding $\cV\hookrightarrow\cH$ is compact, and hence the set $\overline{B}_{\cV}(R)$ is a compact subset of $\cH$ for any $R>0$. Therefore, in this case, \eqref{eq_tight} implies that the family of probability measures $\{\nu_t(y,\cdot)\}_{t\geq1}$ is tight on $\cH$ for any $y\in\cH$, and hence (assuming that $\{P_t\}_{t\geq0}$ satisfies the Feller property) the Krylov--Bogoliubov theorem (see, e.g., \cite[Corollary 11.8]{DaPrZa14}) ensures the existence of an invariant probability measure $\pi\in\cP(\cH)$ for the Markov semigroup $\{P_t\}_{t\geq0}$. However, if $\supp\cap[0,m]$ is an infinite set for some $m>0$, which is a typical case we are interested in, then by \cref{lemm_app-compact} the embedding $\cV\hookrightarrow\cH$ is not compact, and hence \eqref{eq_tight} does no longer result in the tightness of $\{\nu_t(y,\cdot)\}_{t\geq1}$. In this case, the estimate \eqref{eq_Lyapunov-estimate-square-P} alone is not sufficient to conclude the existence of an invariant probability measure. See also the discussions in \cref{rem_Lyapunov-compact} and \cref{rem_app-compact}.
\end{rem}

Next, we prove \cref{prop_Lyapunov-sufficient}, which provides a verifiable sufficient condition for \cref{assum_Lyapunov-abstract}. As in the proof of \cref{lemm_coupling}, the ``change-of-norm'' technique plays a crucial role. A key idea is to construct an admissible weight function $\Psi_m$ depending on a parameter $m>0$ such that the operator $\mu_{b,\Psi_m}[\cdot]$ approximates the integral operator $\mu[\cdot]$ appearing in the SEE \eqref{eq_SEE} in some sense. This strategy parallels that used in the proof of \cref{lemm_coupling}, where an admissible weight function $\Phi_\fa$ was constructed to approximate $\mu[\cdot]$ via the operator $\mu_{\sigma,\Phi_\fa}[\cdot]$.

%% Proof

\begin{proof}[Proof of \cref{prop_Lyapunov-sufficient}]
Assume that the lifting basis $(\mu,M_b,M_\sigma)$ and the maps $b:\bR^n\to\bR^n$ and $\sigma:\bR^n\to\bR^{n\times d}$ satisfy the conditions in \cref{prop_Lyapunov-sufficient}. Fix a constant $m>\kappa:=\inf\supp>0$, which will be determined later. Define
\begin{equation*}
	\Psi_m(\theta):=\left(m^{-1}\theta^{1/2}I_{n\times n}+M_b(\theta)\right)^{-1}\1_{A_m}(\theta)+\theta^{-1/2}I_{n\times n}\1_{[\kappa,\infty)\setminus A_m}(\theta),\ \ \theta\in[0,\infty),
\end{equation*}
where
\begin{equation*}
	A_m:=\left\{\theta\in[\kappa,m)\relmiddle|\text{$M_b(\theta)$ is symmetric, nonnegative definite and satisfies $|M_b(\theta)|_\op\leq m$}\right\}.
\end{equation*}
Analogously to the proof of \cref{lemm_coupling}, we can show that $\Psi_m:[0,\infty)\to\bR^{n\times n}$ is an admissible weight function. Furthermore, we define a bounded linear operator $\overline{\mu}_{b,\Psi_m}[\cdot]:\cV\to\bR^n$ by
\begin{equation*}
	\overline{\mu}_{b,\Psi_m}[y]:=\mu[y]-\mu_{b,\Psi_m}[y]=\int_{[0,\infty)}\big(I_{n\times n}-M_b(\theta)^\top\Psi_m(\theta)\big)y(\theta)\dmu
\end{equation*}
for $y\in\cV$.

Let $y\in\cV$ be fixed. By the assumption, we have
\begin{align*}
	&\big\langle b(\mu[y]),\mu_{b,\Psi_m}[y]\big\rangle+\frac{1}{2}\tr\big[\sigma(\mu[y])^\top Q_{\sigma,\Psi_m}\sigma(\mu[y])\big]\\
	&=\big\langle b(\mu[y]),\mu[y]\big\rangle-\big\langle b(\mu[y]),\overline{\mu}_{b,\Psi_m}[y]\big\rangle+\frac{1}{2}\tr\big[\sigma(\mu[y])^\top Q_{\sigma,\Psi_m}\sigma(\mu[y])\big]\\
	&\leq\gamma|\mu[y]|^2+C_{b,\LG}'+C_{b,\LG}\big(1+|\mu[y]|\big)\big|\overline{\mu}_{b,\Psi_m}[y]\big|+\frac{1}{2}C_{\sigma,\subLG}^2|Q_{\sigma,\Psi_m}|_\op\big(1+|\mu[y]|^p\big)^2.
\end{align*}
Since $p\in(0,1)$, Young's inequality yields that, for any $\ep\in(0,1)$,
\begin{equation}\label{eq_Lyapunov-G}
	\big\langle b(\mu[y]),\mu_{b,\Psi_m}[y]\big\rangle+\frac{1}{2}\tr\big[\sigma(\mu[y])^\top Q_{\sigma,\Psi_m}\sigma(\mu[y])\big]\leq(\gamma+\ep)|\mu[y]|^2+\frac{C_{b,\LG}^2}{\ep}\big|\overline{\mu}_{b,\Psi_m}[y]\big|^2+C_{m,\ep},
\end{equation}
where $C_{m,\ep}>0$ is a constant which depends only on $|Q_{\sigma,\Psi_m}|_\op$, $\ep$, $C_{b,\LG}'$, $C_{\sigma,\subLG}$ and $p$. Now we estimate $|\mu[y]|^2$ and $|\overline{\mu}_{b,\Psi_m}[y]|^2$ in terms of $\vnorm{y}_{\Psi_m}^2$. First, recalling the assumption that $\kappa:=\inf\supp>0$, the Cauchy--Schwarz inequality and the definition of the admissible weight function $\Psi_m$ yield
\begin{align}
	\nonumber
	|\mu[y]|^2&\leq\left\{\int_{[\kappa,\infty)}\theta^{-1}\big|\Psi_m(\theta)^{-1/2}\big|_\op^2\dmu\right\}\left\{\int_{[0,\infty)}\theta\big|\Psi_m(\theta)^{1/2}y(\theta)\big|^2\dmu\right\}\\
	\label{eq_Lyapunov-G-1}
	&=\left\{m^{-1}\int_{A_m}\theta^{-1/2}\dmu+\int_{A_m}\theta^{-1}|M_b(\theta)|_\op\dmu+\int_{[\kappa,\infty)\setminus A_m}\theta^{-1/2}\dmu\right\}\vnorm{y}_{\Psi_m}^2.
\end{align}
Second, noting that
\begin{align*}
	&\left|\big(I_{n\times n}-M_b(\theta)^\top\Psi_m(\theta)\big)\Psi_m(\theta)^{-1/2}\right|^2_\op\\
	&=\left|\left(I_{n\times n}-M_b(\theta)\left(m^{-1}\theta^{1/2}I_{n\times n}+M_b(\theta)\right)^{-1}\right)\left(m^{-1}\theta^{1/2}I_{n\times n}+M_b(\theta)\right)^{1/2}\right|_\op^2\1_{A_m}(\theta)\\
	&\hspace{1cm}+\theta^{1/2}\big|I_{n\times n}-\theta^{-1/2}M_b(\theta)\big|_\op^2\1_{[\kappa,\infty)\setminus A_m}(\theta)\\
	&\leq m^{-1}\theta^{1/2}\1_{A_m}(\theta)+2\theta^{1/2}\1_{[\kappa,\infty)\setminus A_m}(\theta)+2\theta^{-1/2}|M_b(\theta)|_\op^2\1_{[\kappa,\infty)\setminus A_m}(\theta)
\end{align*}
for any $\theta\geq\kappa:=\inf\supp>0$, we have
\begin{align}
	\nonumber
	&\big|\overline{\mu}_{b,\Psi_m}[y]\big|^2\\
	\nonumber
	&\leq\left\{\int_{[\kappa,\infty)}\theta^{-1}\left|\big(I_{n\times n}-M_b(\theta)^\top\Psi_m(\theta)\big)\Psi_m(\theta)^{-1/2}\right|^2_\op\dmu\right\}\left\{\int_{[0,\infty)}\theta\big|\Psi_m(\theta)^{1/2}y(\theta)\big|^2\dmu\right\}\\
	\label{eq_Lyapunov-G-2}
	&\leq\left\{m^{-1}\int_{A_m}\theta^{-1/2}\dmu+2\int_{[\kappa,\infty)\setminus A_m}\theta^{-1/2}\dmu+2\int_{[\kappa,\infty)\setminus A_m}\theta^{-3/2}|M_b(\theta)|_\op^2\dmu\right\}\vnorm{y}_{\Psi_m}^2.
\end{align}
Combining \eqref{eq_Lyapunov-G}, \eqref{eq_Lyapunov-G-1} and \eqref{eq_Lyapunov-G-2}, we get
\begin{equation*}
	\big\langle b(\mu[y]),\mu_{b,\Psi_m}[y]\big\rangle+\frac{1}{2}\tr\big[\sigma(\mu[y])^\top Q_{\sigma,\Psi_m}\sigma(\mu[y])\big]\leq\delta_{m,\ep}\vnorm{y}_{\Psi_m}^2+C_{m,\ep},
\end{equation*}
where the constant $\delta_{m,\ep}>0$ is defined by
\begin{align*}
	\delta_{m,\ep}&:=(\gamma+\ep)\left\{m^{-1}\int_{A_m}\theta^{-1/2}\dmu+\int_{A_m}\theta^{-1}|M_b(\theta)|_\op\dmu+\int_{[\kappa,\infty)\setminus A_m}\theta^{-1/2}\dmu\right\}\\
	&\hspace{1cm}+\frac{C_{b,\LG}^2}{\ep}\left\{m^{-1}\int_{A_m}\theta^{-1/2}\dmu+2\int_{[\kappa,\infty)\setminus A_m}\theta^{-1/2}\dmu+2\int_{[\kappa,\infty)\setminus A_m}\theta^{-3/2}|M_b(\theta)|^2\dmu\right\}.
\end{align*}
By the integrability conditions \eqref{eq_integrability-mu} and \eqref{eq_integrability-M_b} on $\mu$ and $M_b$, together with the assumption that $\kappa:=\inf\supp>0$, all the integrals appearing in the definition of $\delta_{m,\ep}$ are finite. Furthermore, we have $\gamma\int_{[\kappa,\infty)}\theta^{-1}|M_b(\theta)|_\op\dmu<1$ and $\mu([\kappa,\infty)\setminus\bigcup_{m>\kappa}A_m)=0$ by the assumption. Hence, the dominated convergence theorem yields
\begin{equation*}
	\lim_{\ep\to0}\lim_{m\to\infty}\delta_{m,\ep}=\gamma\int_{[\kappa,\infty)}\theta^{-1}|M_b(\theta)|_\op\dmu<1.
\end{equation*}
Thus, we can take a sufficiently small $\ep\in(0,1)$ and sufficiently large $m\in(\kappa,\infty)$ such that $\delta_{m,\ep}<1$. Finally, noting that $\vnorm{y}_{\Psi_m}^2\geq\kappa\|y\|_{\Psi_m}^2$ with $\kappa:=\inf\supp>0$, we have
\begin{equation*}
	\big\langle b(\mu[y]),\mu_{b,\Psi_m}[y]\big\rangle+\frac{1}{2}\tr\big[\sigma(\mu[y])^\top Q_{\sigma,\Psi_m}\sigma(\mu[y])\big]\leq\frac{1+\delta_{m,\ep}}{2}\vnorm{y}_{\Psi_m}^2-\frac{\kappa(1-\delta_{m,\ep})}{2}\|y\|_{\Psi_m}^2+C_{m,\ep}.
\end{equation*}
This indicates that \cref{assum_Lyapunov-abstract} holds for the admissible weight function $\Psi=\Psi_m$ and constants $\delta=\frac{1+\delta_{m,\ep}}{2}\in(0,1)$, $\rho=\frac{\kappa(1-\delta_{m,\ep})}{2}>0$ and $C_\Lyap=C_{m,\ep}>0$. This completes the proof.
\end{proof}

%%%%%%%%%%%%%%%%%%%%%%%%%%%%%%%%%%
%%%%%%%%%%%%%%%%%%%%%%%%%%%%%%%%%%
%% Section
%%%%%%%%%%%%%%%%%%%%%%%%%%%%%%%%%%
%%%%%%%%%%%%%%%%%%%%%%%%%%%%%%%%%%

\section{Finite-dimensional approximation of the invariant probability measure}\label{sec_approximation}

In this section, we establish approximation results for the invariant probability measure $\pi\in\cP(\cH)$ and the stationary law $\bfP^\pi\in\cP(\Lambda)$ associated with the (infinite-dimensional) SEE \eqref{eq_SEE} via their counterparts of (finite-dimensional) SDEs. In particular, the results presented here extend previous studies on Markovian approximations of SVEs, such as \cite{AbiJaEu19,AlKe24,BaBr23}. Furthermore, our approach offers a mathematically rigorous framework that supports the validity of Markovian embedding-type procedures widely employed in statistical physics, as discussed in the \hyperref[sec_intro]{Introduction}.

Let $(\mu,M_b,M_\sigma)$ be a lifting basis. We introduce a sequence $\{(\Delta_k,a_k,M_{b,k},M_{\sigma,k})\}_{k\in\bN}$ of finite families $(\Delta_k,a_k,M_{b,k},M_{\sigma,k})=\{(\Delta_k^{(i)},a_k^{(i)},M_{b,k}^{(i)},M_{\sigma,k}^{(i)})\}^{I_k}_{i=1}$ for each $k\in\bN$, where
\begin{equation*}
	\big(\Delta_k^{(i)},a_k^{(i)},M_{b,k}^{(i)},M_{\sigma,k}^{(i)}\big)\in\cB([0,\infty))\times[0,\infty)\times\bR^{n\times n}\times\bR^{n\times n}
\end{equation*}
for $i\in\{1,\dots,I_k\}$ with $I_k\in\bN$, such that the following hold:
\begin{itemize}
\item
For each $k\in\bN$ and $i\in\{1,\dots,I_k\}$, the set $\Delta_k^{(i)}\in\cB([0,\infty))$ is bounded and satisfies $\mu(\Delta_k^{(i)})>0$.
\item
For each $k\in\bN$ and $i,j\in\{1,\dots,I_k\}$ with $i\neq j$, it holds that $\Delta_k^{(i)}\cap\Delta_k^{(j)}=\emptyset$.
\item
Setting
\begin{equation}\label{eq_ep_k}
\begin{split}
	\ep_k&:=\max_{i\in\{1,\dots,I_k\}}\mu\text{-}\esssup_{\theta\in\Delta_k^{(i)}}\frac{|\theta-a_k^{(i)}|}{1+\theta}+\left(\int_{[0,\infty)}(1+\theta)^{-3/2}\left|M_b(\theta)-\sum^{I_k}_{i=1}M_{b,k}^{(i)}\1_{\Delta_k^{(i)}}(\theta)\right|_\op^2\dmu\right)^{1/2}\\
	&\hspace{0.5cm}+\left(\int_{[0,\infty)}(1+\theta)^{-1/2}\left|M_\sigma(\theta)-\sum^{I_k}_{i=1}M_{\sigma,k}^{(i)}\1_{\Delta_k^{(i)}}(\theta)\right|_\op^2\dmu\right)^{1/2},
\end{split}
\end{equation}
it holds that $\lim_{k\to\infty}\ep_k=0$.
\end{itemize}
Noting the integrability conditions \eqref{eq_integrability-M_b} and \eqref{eq_integrability-M_sigma} on $M_b$ and $M_\sigma$, such a sequence always exists and can be easily constructed using a standard approximation argument for $L^2$-functions by simple functions. We call a sequence $\{(\Delta_k,a_k,M_{b,k},M_{\sigma,k})\}_{k\in\bN}$ satisfying the above conditions an \emph{approximating component} for the lifting basis $(\mu,M_b,M_\sigma)$.

Suppose that we are given measurable maps $b:\bR^n\to\bR^n$ and $\sigma:\bR^n\to\bR^{n\times d}$. Let $W$ be a $d$-dimensional Brownian motion defined on a complete probability space $(\Omega,\cF,\bP)$ with respect to a filtration $\bF=(\cF_t)_{t\geq0}$ satisfying the usual conditions. For each $k\in\bN$, consider the following finite-dimensional SDE on $(\bR^n)^{I_k}$ for an It\^{o} process $Z_k=(Z_{k,t})_{t\geq0}=((Z_{k,t}^{(i)})^{I_k}_{i=1})_{t\geq0}$:
\begin{equation}\label{eq_approximating-SDE}
\begin{split}
	&\diff Z_{k,t}^{(i)}=-a_k^{(i)}Z_{k,t}^{(i)}\,\diff t+M_{b,k}^{(i)}\,b\left(\sum^{I_k}_{j=1}\mu\big(\Delta_k^{(j)}\big)Z_{k,t}^{(j)}\right)\,\diff t+M_{\sigma,k}^{(i)}\,\sigma\left(\sum^{I_k}_{j=1}\mu\big(\Delta_k^{(j)}\big)Z_{k,t}^{(j)}\right)\,\diff W_t,\\
	&\hspace{7cm}t\geq0,\ i\in\{1,\dots,I_k\}.
\end{split}
\end{equation}
If the coefficients $b:\bR^n\to\bR^n$ and $\sigma:\bR^n\to\bR^{n\times d}$ are Lipschitz continuous, then the above SDE is strongly well-posed, and the solution forms a time-homogeneous Markov process on $(\bR^n)^{I_k}$.

%% Remark

\begin{rem}\label{rem_approximating-SVE}
Applying It\^{o}'s formula to the process $e^{a_k^{(i)}t}Z_{k,t}^{(i)}$ for each $i\in\{1,\dots,I_k\}$, we see that the solution $Z_k$ of the SDE \eqref{eq_approximating-SDE} satisfies
\begin{equation*}
	Z_{k,t}^{(i)}=e^{-a_k^{(i)}t}Z_{k,0}^{(i)}+\int^t_0e^{-a_k^{(i)}(t-s)}M_{b,k}^{(i)}\,b(X_{k,s})\,\diff s+\int^t_0e^{-a_k^{(i)}(t-s)}M_{\sigma,k}^{(i)}\,\sigma(X_{k,s})\,\diff W_s
\end{equation*}
for $t\geq0$ and $i\in\{1,\dots,I_k\}$, where $X_{k,t}:=\sum^{I_k}_{i=1}\mu(\Delta_k^{(i)})Z_{k,t}^{(i)}$. Multiplying both sides by $\mu(\Delta_k^{(i)})$ and summing over $i\in\{1,\dots,I_k\}$ shows that the $\bR^n$-valued progressively measurable (and continuous) process $X_k=(X_{k,t})_{t\geq0}$ solves the following SVE:
\begin{equation}\label{eq_approximating-SVE}
	X_{k,t}=x_k(t)+\int^t_0K_{b,k}(t-s)b(X_{k,s})\,\diff s+\int^t_0K_{\sigma,k}(t-s)\sigma(X_{k,s})\,\diff W_s,\ \ t\geq0,
\end{equation}
where
\begin{equation*}
	x_k(t)=\sum^{I_k}_{i=1}e^{-a_k^{(i)}t}\mu\big(\Delta_k^{(i)}\big)Z_{k,0}^{(i)},\ \ K_{b,k}(t)=\sum^{I_k}_{i=1}e^{-a_k^{(i)}t}\mu\big(\Delta_k^{(i)}\big)M_{b,k}^{(i)},\ \ K_{\sigma,k}(t)=\sum^{I_k}_{i=1}e^{-a_k^{(i)}t}\mu\big(\Delta_k^{(i)}\big)M_{\sigma,k}^{(i)},
\end{equation*}
for $t\geq0$. Note that the forcing term $x_k$ and kernels $K_{b,k}$ and $K_{\sigma,k}$ are of the (finite) sum-of-exponentials type as in \cref{exam_liftable} (i).
\end{rem}

For each $\nu_k\in\cP((\bR^n)^{I_k})$, we denote by $\bfP_k^{\nu_k}\in\cP(C([0,\infty);(\bR^n)^{I_k}))$ the law of a solution $Z_k=(Z_{k,t})_{t\geq0}$ to the approximating SDE \eqref{eq_approximating-SDE} with initial distribution $\nu_k$, where $C([0,\infty);(\bR^n)^{I_k})$ is the Polish space of $(\bR^n)^{I_k}$-valued continuous functions on $[0,\infty)$, which is equipped with the complete metric defined by \eqref{eq_metric-C}. The associated Markov semigroup $\{P_{k,t}\}_{t\geq0}$ on $(\bR^n)^{I_k}$ is defined by
\begin{equation*}
	P_{k,t}(z_k,A_k):=\bfP_k^{\delta_{z_k}}\left(\left\{\zeta_k\in C\big([0,\infty);(\bR^n)^{I_k}\big)\relmiddle|\zeta_{k,t}\in A_k\right\}\right)
\end{equation*}
for $z_k\in(\bR^n)^{I_k}$, $A_k\in\cB((\bR^n)^{I_k})$, and $t\geq0$.

The purpose of this section is to show that the invariant probability measure $\pi_k\in\cP((\bR^n)^{I_k})$ for the Markov semigroup $\{P_{k,t}\}_{t\geq0}$ ``converges'' to the invariant probability measure $\pi\in\cP(\cH)$ for the Markov semigroup $\{P_t\}_{t\geq0}$ associated with the SEE \eqref{eq_SEE} in a suitable sense. Beyond the convergence of invariant probability measures, we investigate the ``convergence'' of the stationary laws $\bfP_k^{\pi_k}\in\cP(C([0,\infty);(\bR^n)^{I_k}))$ for the finite-dimensional SDEs \eqref{eq_approximating-SDE} to the limit stationary law $\bfP^\pi\in\cP(\Lambda)$ for the SEE \eqref{eq_SEE}. More precise statements are given in \cref{theo_approximation-IPM} below.

Since the SEE \eqref{eq_SEE} and the approximating SDEs \eqref{eq_approximating-SDE} are defined on different state spaces, in order to compare the solutions, we have to embed the finite-dimensional space $(\bR^n)^{I_k}$ to $\cH$ in a suitable manner. To this end, for each $k\in\bN$, let us introduce a bounded linear operator $\Upsilon_k:(\bR^n)^{I_k}\to\cV$ by
\begin{equation*}
	(\Upsilon_kz_k)(\theta):=\sum^{I_k}_{i=1}z_k^{(i)}\1_{\Delta_k^{(i)}}(\theta),\ \ \theta\in[0,\infty),
\end{equation*}
for $z_k=(z_k^{(i)})^{I_k}_{i=1}\in(\bR^n)^{I_k}$. Note that
\begin{equation*}
	\big\|\Upsilon_kz_k\big\|_{\cH}=\left(\sum^{I_k}_{i=1}\int_{\Delta_k^{(i)}}(1+\theta)^{-1/2}\dmu\,\big|z_k^{(i)}\big|^2\right)^{1/2}\ \ \text{and}\ \ \big\|\Upsilon_kz_k\big\|_{\cV}=\left(\sum^{I_k}_{i=1}\int_{\Delta_k^{(i)}}(1+\theta)^{1/2}\dmu\,\big|z_k^{(i)}\big|^2\right)^{1/2}
\end{equation*}
for each $z_k=(z_k^{(i)})^{I_k}_{i=1}\in(\bR^n)^{I_k}$. Both the maps $z_k\mapsto\|\Upsilon_kz_k\|_\cH$ and $z_k\mapsto\|\Upsilon_kz_k\|_\cV$ define (equivalent) norms on the finite-dimensional space $(\bR^n)^{I_k}$, as each $\Delta_k^{(i)}$ is bounded and satisfies $\mu(\Delta_k^{(i)})>0$.

%% Lemma

\begin{lemm}\label{lemm_approximating-SDE}
Let a lifting basis $(\mu,M_b,M_\sigma)$ and its approximating component $\{(\Delta_k,a_k,M_{b,k},M_{\sigma,k})\}_{k\in\bN}$ be given. Assume that $b:\bR^n\to\bR^n$ and $\sigma:\bR^n\to\bR^{n\times d}$ are Lipschitz continuous.
\begin{itemize}
\item[(i)]
Suppose that we are given a $d$-dimensional Brownian motion $W$ defined on a complete probability space $(\Omega,\cF,\bP)$, with respect to a filtration $\bF=(\cF_t)_{t\geq0}$ satisfying the usual conditions. Then, for any $k\in\bN$ and any $(\bR^n)^{I_k}$-valued $\cF_0$-measurable random variable $Z_{k,0}$, the SDE \eqref{eq_approximating-SDE} has a unique solution $Z_k=(Z_{k,t})_{t\geq0}$ with the prescribed initial condition $Z_{k,0}$. This solution forms a time-homogeneous Markov process on $(\bR^n)^{I_k}$, and its Markov semigroup $\{P_{k,t}\}_{t\geq0}$ is stochastically continuous and satisfies the Feller property.
\item[(ii)]
There exists a constant $C_0>0$ such that, for each $k\in\bN$, any solution $Z_k=(Z_{k,t})_{t\geq0}$ of the SDE \eqref{eq_approximating-SDE} and any solution $Y=(Y_t)_{t\geq0}$ of the SEE \eqref{eq_SEE} driven by a common $d$-dimensional Brownian motion $W$ on a complete probability space $(\Omega,\cF,\bP)$, we have
\begin{equation}\label{eq_approximation1}
	\bE\left[\sup_{t\in[0,T]}\big\|\Upsilon_kZ_{k,t}\big\|^2_{\cH}+\int^T_0\big\|\Upsilon_kZ_{k,t}\big\|_{\cV}^2\,\diff t\relmiddle|\cF_0\right]\leq C_0e^{C_0T}\left(1+\big\|\Upsilon_kZ_{k,0}\big\|_{\cH}^2\right)
\end{equation}
and
\begin{equation}\label{eq_approximation2}
\begin{split}
	&\bE\left[\sup_{t\in[0,T]}\big\|Y_t-\Upsilon_kZ_{k,t}\big\|^2_{\cH}+\int^T_0\big\|Y_t-\Upsilon_kZ_{k,t}\big\|_{\cV}^2\,\diff t\relmiddle|\cF_0\right]\\
	&\leq C_0e^{C_0T}\big\|Y_0-\Upsilon_kZ_{k,0}\big\|_\cH^2+C_0e^{C_0T}\ep_k^2\left(1+\big\|\Upsilon_kZ_{k,0}\big\|_{\cH}^2\right),
\end{split}
\end{equation}
for any $T>0$ a.s., where $\ep_k>0$ is the constant defined in \eqref{eq_ep_k}.
\item[(iii)]
If \cref{assum_Lyapunov-abstract} further holds, then there exists a natural number $k_0$ such that, for each $k\in\bN$ with $k\geq k_0$, the Markov semigroup $\{P_{k,t}\}_{t\geq0}$ associated with the SDE \eqref{eq_approximating-SDE} possesses at least one invariant probability measure $\pi_k\in\cP((\bR^n)^{I_k})$. Furthermore, it holds that
\begin{equation}\label{eq_approximation-IPM-integrability}
	\sup_{k\geq k_0}\sup_{\pi_k\in\Pi_k}\int_{(\bR^n)^{I_k}}\big\|\Upsilon_kz_k\big\|_{\cV}^2\,\pi_k(\diff z_k)<\infty,
\end{equation}
where $\Pi_k\subset\cP((\bR^n)^{I_k})$ denotes the (non-empty) set of all invariant probability measures for $\{P_{k,t}\}_{t\geq0}$.
\end{itemize}
\end{lemm}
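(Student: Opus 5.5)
The plan is to view the finite-dimensional SDE \eqref{eq_approximating-SDE} as an SEE on the \emph{same} Gelfand triplet $\cV\hookrightarrow\cH\hookrightarrow\cV^*$. I will set $Y^k_t:=\Upsilon_kZ_{k,t}$, $a_k(\theta):=\sum_ia_k^{(i)}\1_{\Delta_k^{(i)}}(\theta)$, $\widetilde M_{b,k}(\theta):=\sum_iM_{b,k}^{(i)}\1_{\Delta_k^{(i)}}(\theta)$ and $\widetilde M_{\sigma,k}(\theta):=\sum_iM_{\sigma,k}^{(i)}\1_{\Delta_k^{(i)}}(\theta)$. Since $\mu[\Upsilon_kz_k]=\sum_j\mu(\Delta_k^{(j)})z_k^{(j)}$, the process $Y^k$ solves
\begin{equation*}
	\diff Y^k_t=\big\{\cA_kY^k_t+\widetilde\cM_{b,k}b(\mu[Y^k_t])\big\}\diff t+\widetilde\cM_{\sigma,k}\sigma(\mu[Y^k_t])\,\diff W_t,
\end{equation*}
where $(\cA_ky)(\theta)=-a_k(\theta)y(\theta)$ and $Y^k$ vanishes off $\bigcup_i\Delta_k^{(i)}$. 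This equation is an honest finite-dimensional It\^o equation, so \cref{lemm_Ito} applies to $Y^k$ and to $Y-Y^k$.

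\emph{Assertion (i).} This is standard, because the coefficients of \eqref{eq_approximating-SDE} are globally Lipschitz on $(\bR^n)^{I_k}$. Strong existence and uniqueness, the Markov property, stochastic continuity and the Feller property (from $L^2$-continuity in the initial datum) all follow from classical SDE theory.

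\emph{Assertion (ii).} By \eqref{eq_ep_k}, $|\theta-a_k(\theta)|\le\ep_k(1+\theta)$ $\mu$-a.e.\ on $\bigcup_i\Delta_k^{(i)}$. This gives $\|(\cA-\cA_k)y\|_{\cV^*}\le\ep_k\|y\|_\cV$ for $y$ supported there. In addition, $\|\cM_b-\widetilde\cM_{b,k}\|_{L(\bR^n;\cV^*)}\le\ep_k$ and $\|\cM_\sigma-\widetilde\cM_{\sigma,k}\|_{L(\bR^n;\cH)}\le\ep_k$.

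Write $\bar\ep:=\sup_k\ep_k<\infty$. For all $k$ with $\ep_k\le1/2$, the bound $a_k\ge\theta-\ep_k(1+\theta)$ gives $\langle\cA_ky,y\rangle_{\cV^*,\cV}\le C\|y\|_\cH^2-\frac12\|y\|_\cV^2$. The operator norms of $\widetilde\cM_{b,k}$ and $\widetilde\cM_{\sigma,k}$ are bounded by those of $\cM_b,\cM_\sigma$ plus $\bar\ep$. Hence the coercivity computation of \cref{prop_SEE-well-posed} (with \cref{lemm_mu-ep}, Young's inequality, conditional BDG, localization and Gronwall) runs with constants uniform in $k$, and yields \eqref{eq_approximation1}. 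The finitely many $k$ with $\ep_k>1/2$ are handled one by one. For these, $\|\Upsilon_k\cdot\|_\cH$ and $\|\Upsilon_k\cdot\|_\cV$ are equivalent norms on $(\bR^n)^{I_k}$, so finite-dimensional estimates apply, and $C_0$ is enlarged accordingly.

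For \eqref{eq_approximation2}, I will apply \cref{lemm_Ito} to $Y-Y^k$. The drift difference splits as the monotone part $\cA(Y-Y^k)+\cM_b(b(\mu[Y])-b(\mu[Y^k]))$, treated exactly as in \eqref{eq_SEE-monotonicity}, plus the errors $(\cA-\cA_k)Y^k+(\cM_b-\widetilde\cM_{b,k})b(\mu[Y^k])$. Each error is paired with $Y-Y^k$ in $\cV$ and bounded by $\ep_k(1+\|Y^k\|_\cV)\|Y-Y^k\|_\cV$. The diffusion difference splits in the same way, the error part being bounded by $\ep_k(1+|\mu[Y^k]|)$. After Young's inequality (absorbing $\frac14\|Y-Y^k\|_\cV^2$), the remainder is $C\ep_k^2\int_0^T(1+\|Y^k_t\|_\cV^2)\diff t$. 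This is controlled by \eqref{eq_approximation1}, and BDG plus Gronwall then give \eqref{eq_approximation2}; again the $k$ with $\ep_k>1/2$ are trivial from \eqref{eq_approximation1}, \eqref{eq_SEE-apriori} and $\ep_k^2\ge1/4$.

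\emph{Assertion (iii).} This is the main obstacle: I have to transfer \cref{assum_Lyapunov-abstract} to the perturbed coefficients uniformly in $k$. I will apply \cref{lemm_Ito} with the weight $\Psi$ to $Y^k$. The drift term becomes the left side of \eqref{eq_coercivity} plus perturbations:
\begin{itemize}
\item[(a)] $\langle(\cA_k-\cA)y,y\rangle_{\cV^*,\cV,\Psi}$, bounded by $C_\Psi\ep_k\|y\|_\cV^2$ using \eqref{eq_weight-bound};
\item[(b)] $\langle b(\mu[y]),\cdot\rangle$ against the $\widetilde M_{b,k}-M_b$ adjoint, bounded by $C\ep_k(1+|\mu[y]|)\|y\|_\cV\le C\ep_k(1+\|y\|_\cV^2)$, since $b$ has linear growth by the Lipschitz assumption;
\item[(c)] the trace term with $Q_{\sigma,\Psi}$ replaced by its $\widetilde M_{\sigma,k}$ version, with difference bounded by $C\ep_k(1+\|y\|_\cV^2)$.
\end{itemize}
Since $\|y\|_\cV^2$ is comparable to $\|y\|_\Psi^2+\vnorm{y}_\Psi^2$, for $k\ge k_0$ with $\ep_k$ small these perturbations are absorbed into $(1-\delta)\vnorm{y}_\Psi^2+\rho\|y\|_\Psi^2$. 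This gives \eqref{eq_coercivity} for the approximating system with constants $\delta',\rho',C'$ independent of $k$.

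Repeating the proof of \cref{theo_Lyapunov} then yields, for all $k\ge k_0$, the Ces\`aro bound \eqref{eq_Lyapunov-estimate-square-P} for $U(\Upsilon_kz)\gtrsim\|\Upsilon_kz\|_\cV^2$. Because $\|\Upsilon_k\cdot\|_\cV$ is a norm on the finite-dimensional space $(\bR^n)^{I_k}$, the Ces\`aro averages are tight. The Krylov--Bogoliubov theorem together with the Feller property from (i) then gives existence of $\pi_k$. Finally, the invariance argument at the end of the proof of \cref{theo_Lyapunov} gives $\int\|\Upsilon_kz\|_\cV^2\,\pi_k(\diff z)\le C$ for every $\pi_k\in\Pi_k$ with $C$ independent of $k$, which is \eqref{eq_approximation-IPM-integrability}.
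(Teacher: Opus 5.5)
Your proposal is correct and follows essentially the same route as the paper. You rewrite $\Upsilon_kZ_k$ as an SEE on the same Gelfand triplet with coefficients $\cA_k,\cM_{b,k},\cM_{\sigma,k}$, and use the $\ep_k$-bounds on the coefficient differences to run the coercivity/monotonicity estimates uniformly in $k$, treating the finitely many bad $k$ by finite-dimensional norm equivalence. You then transfer \eqref{eq_coercivity} to the approximating system to obtain tightness, Krylov--Bogoliubov, and the uniform $\cV$-moment bound.

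The differences are cosmetic. For \eqref{eq_approximation1} you bound $\langle\cA_ky,y\rangle$ directly via $a_k\geq\theta-\ep_k(1+\theta)$ instead of splitting off $(\cA_k-\cA)y$. Your separate handling of $\ep_k>1/2$ in \eqref{eq_approximation2} is harmless but unnecessary, since the Young-inequality bound there holds for every $k$.
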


%% Proof

\begin{proof}
For each $k\in\bN$, since the coefficients of the SDE \eqref{eq_approximating-SDE} on $(\bR^n)^{I_k}$ are Lipschitz continuous, assertion (i) follows from standard results on finite-dimensional SDEs. Furthermore, there exists a constant $\widetilde{C}_k>0$ (which may depend on $k\in\bN$) such that
\begin{equation}\label{eq_SDE-apriori-k}
	\bE\left[\sup_{t\in[0,T]}\big|Z_{k,t}\big|^2\relmiddle|\cF_0\right]\leq\widetilde{C}_ke^{\widetilde{C}_kT}\left(1+\big|Z_{k,0}\big|^2\right)<\infty
\end{equation}
for any $T>0$ a.s. Here, $|\cdot|$ denotes the standard Euclidean norm on $(\bR^n)^{I_k}$.

To prove assertions (ii) and (iii), let us introduce additional notation for use throughout this proof. For each $k\in\bN$, we set $\cV_k:=\{\Upsilon_kz_k\,|\,z_k\in(\bR^n)^{I_k}\}$. Note that $\cV_k\subset\cV$. Furthermore, for any $y_k=\Upsilon_kz_k\in\cV_k$ with $z_k=(z_k^{(i)})^{I_k}_{i=1}\in(\bR^n)^{I_k}$, we have
\begin{equation*}
	\mu[y_k]=\mu\big[\Upsilon_kz_k\big]=\sum^{I_k}_{i=1}\mu\big(\Delta_k^{(i)}\big)z_k^{(i)}.
\end{equation*}
We define $\cA_k:\cV_k\to\cV_k$ and $\cM_{b,k},\cM_{\sigma,k}:\bR^n\to\cV_k$ by
\begin{equation*}
	\cA_ky_k:=-\sum^{I_k}_{i=1}a_k^{(i)}z_k^{(i)}\1_{\Delta_k^{(i)}}
\end{equation*}
for $y_k=\Upsilon_kz_k$ with $z_k=(z_k^{(i)})^{I_k}_{i=1}\in(\bR^n)^{I_k}$, and
\begin{equation*}
	\cM_{b,k}x:=\sum^{I_k}_{i=1}M_{b,k}^{(i)}x\1_{\Delta_k^{(i)}},\ \ \cM_{\sigma,k}x:=\sum^{I_k}_{i=1}M_{\sigma,k}^{(i)}x\1_{\Delta_k^{(i)}},
\end{equation*}
for $x\in\bR^n$. Recalling the definition \eqref{eq_ep_k} of $\ep_k$, for any $y_k=\Upsilon_kz_k\in\cV_k$ with $z_k=(z_k^{(i)})^{I_k}_{i=1}\in(\bR^n)^{I_k}$, we have
\begin{align}
	\nonumber
	\big\|(\cA_k-\cA)y_k\big\|_{\cV^*}&=\left(\sum^{I_k}_{i=1}\int_{\Delta_k^{(i)}}(1+\theta)^{-3/2}\big|\theta-a_k^{(i)}\big|^2\big|z_k^{(i)}\big|^2\dmu\right)^{1/2}\\
	\nonumber
	&\leq\ep_k\left(\sum^{I_k}_{i=1}\int_{\Delta_k^{(i)}}(1+\theta)^{1/2}\big|z_k^{(i)}\big|^2\dmu\right)^{1/2}\\
	\label{eq_A-k}
	&=\ep_k\|y_k\|_\cV.
\end{align}
Furthermore, for any $x\in\bR^n$, we have
\begin{equation}\label{eq_M_b-k}
	\big\|(\cM_{b,k}-\cM_b)x\big\|_{\cV^*}=\left(\int_{[0,\infty)}(1+\theta)^{-3/2}\left|\left(\sum^{I_k}_{i=1}M_{b,k}^{(i)}\1_{\Delta_k^{(i)}}(\theta)-M_b(\theta)\right)x\right|^2\dmu\right)^{1/2}\leq\ep_k|x|
\end{equation}
and
\begin{equation}\label{eq_M_sigma-k}
	\big\|(\cM_{\sigma,k}-\cM_\sigma)x\big\|_\cH=\left(\int_{[0,\infty)}(1+\theta)^{-1/2}\left|\left(\sum^{I_k}_{i=1}M_{\sigma,k}^{(i)}\1_{\Delta_k^{(i)}}(\theta)-M_\sigma(\theta)\right)x\right|^2\dmu\right)^{1/2}\leq\ep_k|x|.
\end{equation}

Now we prove the estimate \eqref{eq_approximation1} in assertion (ii). Recalling \eqref{eq_SDE-apriori-k}, together with the fact that $\|\Upsilon_k\cdot\|_\cH$, $\|\Upsilon_k\cdot\|_\cV$ and $|\cdot|$ are equivalent norms on the finite-dimensional space $(\bR^n)^{I_k}$, it suffices to show that there exists a constant $C_0>0$ and a number $k_1\in\bN$ such that \eqref{eq_approximation1} holds for any $k\in\bN$ with $k\geq k_1$. For a while, we fix an arbitrary $k\in\bN$. Let $Z_k=(Z_{k,t})_{t\geq0}$ be a solution to the SDE \eqref{eq_approximating-SDE}, and set $Y_{k,t}:=\Upsilon_kZ_{k,t}$ for $t\geq0$. By the above observations, the $\cV_k$-valued process $Y_k=(Y_{k,t})_{t\geq0}$ satisfies
\begin{equation}\label{eq_SEE-UpsilonZ}
	Y_{k,t}=Y_{k,0}+\int^t_0\big\{\cA_kY_{k,s}+\cM_{b,k}b(\mu[Y_{k,s}])\big\}\,\diff s+\int^t_0\cM_{\sigma,k}\sigma(\mu[Y_{k,s}])\,\diff W_s
\end{equation}
for any $t\geq0$ a.s. Applying It\^{o}'s formula for $\|\cdot\|_\cH^2$ in \cref{lemm_Ito} to $Y_k$ yields
\begin{equation}\label{eq_Ito-f-k}
	\big\|Y_{k,t}\big\|_\cH^2=\big\|Y_{k,0}\big\|_\cH^2+\int^t_0f_k(Y_{k,s})\,\diff s+2\int^t_0\big\langle Y_{k,s},\cM_{\sigma,k}\sigma(\mu[Y_{k,s}])\,\diff W_s\big\rangle_\cH
\end{equation}
for any $t\geq0$ a.s., where $f_k:\cV_k\to\bR$ is defined by
\begin{equation*}
	f_k(y_k):=2\big\langle\cA_ky_k+\cM_{b,k}b(\mu[y_k]),y_k\big\rangle_{\cV^*,\cV}+\big\|\cM_{\sigma,k}\sigma(\mu[y_k])\big\|_{L_2(\bR^d;\cH)}^2,\ \ y_k\in\cV_k.
\end{equation*}
Decompose $f_k$ as $f_k=f_k^{(1)}+f_k^{(2)}$, where
\begin{align*}
	&f_k^{(1)}(y_k):=2\big\langle\cA y_k,y_k\big\rangle_{\cV^*,\cV}+2\big\langle\cM_bb(\mu[y_k]),y_k\big\rangle_{\cV^*,\cV}+\big\|\cM_\sigma\sigma(\mu[y_k])\big\|_{L_2(\bR^d;\cH)}^2,\\
	&f_k^{(2)}(y_k):=2\big\langle(\cA_k-\cA)y_k,y_k\big\rangle_{\cV^*,\cV}+2\big\langle(\cM_{b,k}-\cM_b)b(\mu[y_k]),y_k\big\rangle_{\cV^*,\cV}\\
	&\hspace{3cm}+\big\langle(\cM_{\sigma,k}-\cM_\sigma)\sigma(\mu[y_k]),(\cM_{\sigma,k}+\cM_\sigma)\sigma(\mu[y_k])\big\rangle_{L_2(\bR^d;\cH)},
\end{align*}
for $y_k\in\cV_k$. On the one hand, analogously to the proof of \cref{prop_SEE-well-posed} (ii), by employing \eqref{eq_A-nonpositive'}, $\cM_b\in L(\bR^n;\cV)$, $\cM_\sigma\in L(\bR^n;\cH)$, the linear growth of $b:\bR^n\to\bR^n$ and $\sigma:\bR^n\to\bR^{n\times d}$, and \cref{lemm_mu-ep}, together with Young's inequality, we see that there exists a constant $C_1>0$, independent of $k$, such that
\begin{equation*}
	f_k^{(1)}(y_k)\leq C_1\|y_k\|_\cH^2-\frac{3}{2}\|y_k\|_\cV^2+C_1
\end{equation*}
for any $y_k\in\cV_k$. On the other hand, from \eqref{eq_A-k}, \eqref{eq_M_b-k} and \eqref{eq_M_sigma-k}, along with the linear growth of $b$ and $\sigma$ and the fact that $\mu[\cdot]\in L(\cV;\bR^n)$, we see that there exists a constant $C_2>0$, also independent of $k$, such that
\begin{align*}
	f_k^{(2)}(y_k)\leq C_2\ep_k\Big(1+\|y_k\|_\cV^2\Big)
\end{align*}
for any $y_k\in\cV_k$. Now, we choose $k_1\in\bN$ such that $C_2\ep_k\leq\frac{1}{2}$ for all $k\geq k_1$. This is possible since $\lim_{k\to\infty}\ep_k=0$ by assumption. Let $k\geq k_1$. Then, we have
\begin{equation}\label{eq_f-k}
	f_k(y_k)=f_k^{(1)}(y_k)+f_k^{(2)}(y_k)\leq C_1\|y_k\|_\cH^2-\|y_k\|_\cV^2+C_1+\frac{1}{2}
\end{equation}
for any $y_k\in\cV_k$. By \eqref{eq_Ito-f-k} and \eqref{eq_f-k}, we have
\begin{align*}
	&\big\|Y_{k,t}\big\|_\cH^2+\int^t_0\big\|Y_{k,s}\big\|_\cV^2\,\diff s\\
	&\leq\big\|Y_{k,0}\big\|_\cH^2+C_1\int^t_0\big\|Y_{k,s}\big\|_\cH^2\,\diff s +\left(C_1+\frac{1}{2}\right)t+2\int^t_0\big\langle Y_{k,s},\cM_{\sigma,k}\sigma(\mu[Y_{k,s}])\,\diff W_s\big\rangle_\cH
\end{align*}
for any $t\geq0$ a.s. From this estimate, by arguments analogous to the proof of \cref{prop_SEE-well-posed} (ii), it can be shown that
\begin{equation*}
	\bE\left[\sup_{t\in[0,T]}\big\|Y_{k,t}\big\|_\cH^2+\int^T_0\big\|Y_{k,t}\big\|_\cV^2\,\diff t\relmiddle|\cF_0\right]\leq C_0e^{C_0T}\left(1+\big\|Y_{k,0}\big\|_\cH^2\right)
\end{equation*}
for any $T>0$ a.s., where $C_0>0$ is a constant independent of $k$. Thus, \eqref{eq_approximation1} holds for all $k\geq k_1$. This shows assertion (i).

Next, we prove \eqref{eq_approximation2} in assertion (ii). Let $k\in\bN$ be fixed. In view of \eqref{eq_SEE-UpsilonZ}, applying It\^{o}'s formula for $\|\cdot\|_\cH^2$ in \cref{lemm_Ito} to the process $Y-Y_k$ yields
\begin{equation}\label{eq_Ito-g-k}
\begin{split}
	\big\|Y_t-Y_{k,t}\big\|_\cH^2&=\big\|Y_0-Y_{k,0}\big\|_\cH^2+\int^t_0g_k(Y_s,Y_{k,s})\,\diff s\\
	&\hspace{0.5cm}+2\int^t_0\Big\langle Y_s-Y_{k,s},\big(\cM_\sigma\sigma(\mu[Y_s])-\cM_{\sigma,k}\sigma(\mu[Y_{k,s}])\big)\,\diff W_s\Big\rangle_\cH
\end{split}
\end{equation}
for any $t\geq0$ a.s., where the function $g_k:\cV\times\cV_k\to\bR$ is defined by
\begin{align*}
	g_k(y,y_k)&:=2\big\langle\cA y+\cM_bb(\mu[y])-\cA_ky_k-\cM_{b,k}b(\mu[y_k]),y-y_k\big\rangle_{\cV^*,\cV}\\
	&\hspace{2cm}+\big\|\cM_\sigma\sigma(\mu[y])-\cM_{\sigma,k}\sigma(\mu[y_k])\big\|_{L_2(\bR^d;\cH)}^2,\ \ (y,y_k)\in\cV\times\cV_k.
\end{align*}
Decompose $g_k$ as $g_k=g_k^{(1)}+g_k^{(2)}$, where
\begin{align*}
	&g_k^{(1)}(y,y_k):=2\big\langle\cA(y-y_k),y-y_k\big\rangle_{\cV^*,\cV}+2\big\langle\cM_b\big(b(\mu[y])-b(\mu[y_k])\big),y-y_k\big\rangle_{\cV^*,\cV}\\
	&\hspace{2cm}+\big\|\cM_\sigma\big(\sigma(\mu[y])-\sigma(\mu[y_k])\big)\big\|_{L_2(\bR^d;\cH)}^2,\\
	&g_k^{(2)}(y,y_k):=2\big\langle(\cA-\cA_k)y_k,y-y_k\big\rangle_{\cV^*,\cV}+2\big\langle(\cM_b-\cM_{b,k})b(\mu[y_k]),y-y_k\big\rangle_{\cV^*,\cV}\\
	&\hspace{2cm}+2\big\langle(\cM_\sigma-\cM_{\sigma,k})\sigma(\mu[y_k]),\cM_\sigma\big(\sigma(\mu[y])-\sigma(\mu[y_k])\big)\Big\rangle_{L_2(\bR^d;\cH)}\\
	&\hspace{2cm}+\big\|(\cM_\sigma-\cM_{\sigma,k})\sigma(\mu[y_k])\big\|_{L_2(\bR^d;\cH)}^2,
\end{align*}
for $(y,y_k)\in\cV\times\cV_k$. Analogously to the proof of \cref{prop_SEE-well-posed} (ii), by using \eqref{eq_A-nonpositive'}, $\cM_b\in L(\bR^n;\cV)$, $\cM_\sigma\in L(\bR^n;\cH)$, Lipschitz continuity of $b:\bR^n\to\bR^n$ and $\sigma:\bR^n\to\bR^{n\times d}$, and \cref{lemm_mu-ep}, together with Young's inequality, we see that there exists a constant $C_3>0$, independent of $k$, such that
\begin{align*}
	g_k^{(1)}(y,y_k)&\leq C_3\|y-y_k\|_\cH^2-\frac{3}{2}\|y-y_k\|_\cV^2
\end{align*}
for any $(y,y_k)\in\cV\times\cV_k$. On the other hand, by utilizing \eqref{eq_A-k}, \eqref{eq_M_b-k}, \eqref{eq_M_sigma-k}, Lipschitz continuity of $b$ and $\sigma$, and $\mu[\cdot]\in L(\cV;\bR^n)$, together with Young's inequality, we see that there exists a constant $C_4>0$, independent of $k$, such that
\begin{align*}
	g_k^{(2)}(y,y_k)&\leq\frac{1}{2}\|y-y_k\|_\cV^2+C_4\ep_k^2\left\{1+\|y_k\|_\cV^2\right\}
\end{align*}
for any $(y,y_k)\in\cV\times\cV_k$. Combining the above estimates, we obtain
\begin{equation}\label{eq_g-k}
	g_k(y,y_k)=g_k^{(1)}(y,y_k)+g_k^{(2)}(y,y_k)\leq C_3\|y-y_k\|_\cH^2-\|y-y_k\|_\cV^2+C_4\ep_k^2\left\{1+\|y_k\|_\cV^2\right\}
\end{equation}
for any $(y,y_k)\in\cV\times\cV_k$. By \eqref{eq_Ito-g-k} and \eqref{eq_g-k}, we have
\begin{align*}
	&\big\|Y_t-Y_{k,t}\big\|_\cH^2+\int^t_0\big\|Y_s-Y_{k,s}\big\|_\cV^2\,\diff s\\
	&\leq\big\|Y_0-Y_{k,0}\big\|_\cH^2+C_3\int^t_0\big\|Y_s-Y_{k,s}\big\|_\cH^2\,\diff s+C_4\ep_k^2\int^t_0\Big\{1+\big\|Y_{k,s}\big\|_\cV^2\Big\}\,\diff s\\
	&\hspace{1cm}+2\int^t_0\Big\langle Y_s-Y_{k,s},\big(\cM_\sigma\sigma(\mu[Y_s])-\cM_{\sigma,k}\sigma(\mu[Y_{k,s}])\big)\,\diff W_s\Big\rangle_\cH
\end{align*}
for any $t\geq0$ a.s. From this, by the same manner as in the proof of \cref{prop_SEE-well-posed} (ii), we can show that there exists a constant $C_5>0$, independent $k$, such that
\begin{align*}
	&\bE\left[\sup_{t\in[0,T]}\big\|Y_t-Y_{k,t}\big\|_\cH^2+\int^T_0\big\|Y_t-Y_{k,t}\big\|_\cV^2\,\diff t\relmiddle|\cF_0\right]\\
	&\leq C_5e^{C_5T}\big\|Y_0-Y_{k,0}\big\|_\cH^2+C_5e^{C_5T}\ep_k^2\bE\left[\int^T_0\Big\{1+\big\|Y_{k,t}\big\|_\cV^2\Big\}\,\diff t\relmiddle|\cF_0\right]
\end{align*}
for any $T>0$. Combining this estimate with \eqref{eq_approximation1} ensures that \eqref{eq_approximation2} holds for any $T>0$ and $k\in\bN$ for some constant $C_0>0$.

Lastly, we prove assertion (iii). We additionally assume that \cref{assum_Lyapunov-abstract} holds. Recall the notations in \cref{defi_Phi}. Let $k\in\bN$ be fixed. Let $\Psi$ be the admissible weight function in \cref{assum_Lyapunov-abstract}. In view of \eqref{eq_SEE-UpsilonZ}, applying It\^{o}'s formula for $\|\cdot\|_{\Psi}^2$ in \cref{lemm_Ito} to $Y_k$ yields
\begin{equation}\label{eq_Ito-h-k}
	\big\|Y_{k,t}\big\|_{\Psi}^2=\big\|Y_{k,0}\big\|_{\Psi}^2+\int^t_0h_k(Y_{k,s})\,\diff s+2\int^t_0\big\langle Y_{k,s},\cM_{\sigma,k}\sigma(\mu[Y_{k,s}])\,\diff W_s\big\rangle_{\Psi}
\end{equation}
for any $t\geq0$ a.s., where $h_k:\cV_k\to\bR$ is defined by
\begin{equation*}
	h_k(y_k):=2\big\langle\cA_ky_k+\cM_{b,k}b(\mu[y_k]),y_k\big\rangle_{\cV^*,\cV,\Psi}+\big\|\cM_{\sigma,k}\sigma(\mu[y_k])\big\|_{L_2(\bR^d;(\cH,\langle\cdot,\cdot\rangle_{\Psi}))}^2,\ \ y_k\in\cV_k.
\end{equation*}
Decompose $h_k$ as $h_k=h_k^{(1)}+h_k^{(2)}$, where
\begin{align*}
	&h_k^{(1)}(y_k):=2\big\langle\cA y_k,y_k\big\rangle_{\cV^*,\cV,\Psi}+2\big\langle b(\mu[y_k]),\mu_{b,\Psi}[y_k]\big\rangle+\tr\big[\sigma(\mu[y_k])^\top Q_{\sigma,\Psi}\sigma(\mu[y_k])\big],\\
	&h_k^{(2)}(y_k):=2\big\langle(\cA_k-\cA)y_k,y_k\big\rangle_{\cV^*,\cV,\Psi}+2\big\langle(\cM_{b,k}-\cM_b)b(\mu[y_k]),y_k\big\rangle_{\cV^*,\cV,\Psi}\\
	&\hspace{3cm}+\big\langle(\cM_{\sigma,k}-\cM_\sigma)\sigma(\mu[y_k]),(\cM_{\sigma,k}+\cM_\sigma)\sigma(\mu[y_k])\big\rangle_{L_2(\bR^d;(\cH,\langle\cdot,\cdot\rangle_{\Psi}))}
\end{align*}
for $y_k\in\cV_k$. Combining the equality \eqref{eq_A-Phi} and the estimate \eqref{eq_coercivity} in \cref{assum_Lyapunov-abstract}, we have
\begin{equation*}
	h_k^{(1)}(y_k)\leq-2\Big\{\rho\|y_k\|_{\Psi}^2+(1-\delta)\vnorm{y_k}_\Psi^2\Big\}+2C_\Lyap
\end{equation*}
for any $y_k\in\cV_k$. Since the map $y\mapsto\big(\rho\|y\|_{\Psi}^2+(1-\delta)\vnorm{y}_\Psi^2)^{1/2}$ defines a norm on $\cV$ equivalent to $\|\cdot\|_\cV$, by \eqref{eq_A-k}, \eqref{eq_M_b-k} and \eqref{eq_M_sigma-k}, together with the linear growth of $b:\bR^n\to\bR^n$ and $\sigma:\bR^n\to\bR^{n\times d}$ and the fact that $\mu[\cdot]\in L(\cV;\bR^n)$, we see that there exists a constant $C_6>0$, independent of $k$, such that
\begin{equation*}
	h_k^{(2)}(y_k)\leq C_6\ep_k\Big\{1+\rho\|y_k\|_{\Psi}^2+(1-\delta)\vnorm{y_k}_\Psi^2\Big\}
\end{equation*}
for any $y_k\in\cV_k$. Now we take $k_0\in\bN$ such that $C_6\ep_k\leq1$ for any $k\in\bN$ with $k\geq k_0$, and fix such a $k$. Then, we have
\begin{equation}\label{eq_h-k}
	h_k(y_k)=h_k^{(1)}(y_k)+h_k^{(2)}(y_k)\leq-\Big\{\rho\|y_k\|_{\Psi}^2+(1-\delta)\vnorm{y_k}_\Psi^2\Big\}+2C_\Lyap+1
\end{equation}
for any $y_k\in\cV_k$. By \eqref{eq_Ito-h-k} and \eqref{eq_h-k}, we obtain
\begin{equation}\label{eq_UpsilonZ-Ito-Lyapunov'}
\begin{split}
	&\big\|Y_{k,t}\big\|_{\Psi}^2+\int^t_0\Big\{\rho\big\|Y_{k,s}\big\|_{\Psi}^2+(1-\delta)\vnorm[\big]{Y_{k,s}}_\Psi^2\Big\}\,\diff s\\
	&\leq\big\|Y_{k,0}\big\|_\Psi^2+\big(2C_\Lyap+1\big)t+2\int^t_0\big\langle Y_{k,s},\cM_{\sigma,k}\sigma(\mu[Y_{k,s}])\,\diff W_s\big\rangle_{\Psi}
\end{split}
\end{equation}
for any $t\geq0$ a.s. Let $\{P_{k,t}\}_{t\geq0}$ be the Markov semigroup on $(\bR^n)^{I_k}$ associated with the SDE \eqref{eq_approximating-SDE}. For each $t>0$, define a probability kernel $\nu_{k,t}:(\bR^n)^{I_k}\times\cB((\bR^n)^{I_k})\to[0,1]$ by $\nu_{k,t}(z_k,A_k):=\frac{1}{t}\int^t_0P_{k,s}(z_k,A_k)\,\diff s$ for $z_k\in(\bR^n)^{I_k}$ and $A_k\in\cB((\bR^n)^{I_k})$. Analogously to the proof of \cref{theo_Lyapunov} (see \cref{subsec_proof-Lyapunov}), recalling $Y_{k,t}=\Upsilon_kZ_{k,t}$, the estimate \eqref{eq_UpsilonZ-Ito-Lyapunov'} ensures that
\begin{equation*}
	\int_{(\bR^n)^{I_k}}\Big\{\rho\big\|\Upsilon_kz_k'\big\|_{\Psi}^2+(1-\delta)\vnorm[\big]{\Upsilon_kz_k'}_\Psi^2\Big\}\,\nu_{k,t}(z_k,\diff z_k')\leq\frac{\big\|\Upsilon_kz_k\big\|_{\Psi}^2}{t}+2C_\Lyap+1
\end{equation*}
for all $t>0$ and all $z_k\in(\bR^n)^{I_k}$. Since $z_k\mapsto(\rho\|\Upsilon_kz_k\|_{\Psi}^2+(1-\delta)\vnorm{\Upsilon_kz_k}_\Psi^2)^{1/2}$ defines a norm on the finite-dimensional space $(\bR^n)^{I_k}$, the above estimate immediately implies that the family of probability measures $\{\nu_{k,t}(z_k,\cdot)\}_{t\geq1}$ on $(\bR^n)^{I_k}$ is tight for any $z_k\in(\bR^n)^{I_k}$; see \cref{rem_Lyapunov}. Hence, by the Krylov--Bogoliubov theorem (see, e.g., \cite[Corollary 11.8]{DaPrZa14}), we see that there exists at least one invariant probability measure $\pi_k\in\cP((\bR^n)^{I_k})$ for $\{P_{k,t}\}_{t\geq0}$. Furthermore, based on the estimate for $\nu_{k,t}$ obtained above, an argument analogous to the final part of the proof of \cref{theo_Lyapunov} (see \cref{subsec_proof-Lyapunov}) implies that
\begin{equation*}
	\int_{(\bR^n)^{I_k}}\Big\{\rho\big\|\Upsilon_kz_k\big\|_{\Psi}^2+(1-\delta)\vnorm[\big]{\Upsilon_kz_k}_\Psi^2\Big\}\,\pi_k(\diff z_k)\leq 2C_\Lyap+1.
\end{equation*}
Since $y\mapsto(\rho\|y\|_{\Psi}^2+(1-\delta)\vnorm{y}_\Psi^2)^{1/2}$ defines a norm on $\cV$ equivalent to $\|\cdot\|_\cV$, we see that \eqref{eq_approximation-IPM-integrability} holds. This completes the proof.
\end{proof}

Under the settings of \cref{lemm_approximating-SDE} (iii), for each $\pi_k\in\Pi_k$ with $k\geq k_0$, the law $\bfP_k^{\pi_k}\in\cP(C([0,\infty);(\bR^n)^{I_k})$ of a solution $Z_k=(Z_{k,t})_{t\geq0}$ to the approximating SDE \eqref{eq_approximating-SDE} with initial distribution $\pi_k$ is invariant under the time-shifts on $C([0,\infty);(\bR^n)^{I_k})$; see \cref{cor_stationary-path} (i) for the corresponding result on the stationary law $\bfP^\pi\in\cP(\Lambda)$ of the SEE \eqref{eq_SEE}.

With a slight abuse of notation, we continue to denote by $\Upsilon_k$ the map that sends each $\zeta_k=(\zeta_{k,t})_{t\geq0}\in C([0,\infty);(\bR^n)^{I_k})$ to $(\Upsilon_k\zeta_{k,t})_{t\geq0}\in\Lambda$. Then, we can regard $\Upsilon_k$ as a continuous map from $C([0,\infty);(\bR^n)^{I_k})$ to $\Lambda$. For each $T\in(0,\infty)$, define an extended pseudo-metric $\bW_{\Lambda_T}:\cP(\Lambda)\times\cP(\Lambda)\to[0,\infty]$ on $\cP(\Lambda)$ by
\begin{equation}\label{eq_Wasserstein-Lambda-T}
	\bW_{\Lambda_T}(\fp_1,\fp_2):=\inf_{\fp\in\sC(\fp_1,\fp_2)}\int_{\Lambda\times\Lambda}\|\eta_1-\eta_2\|_{\Lambda_T}\,\fp(\diff\eta_1,\diff\eta_2)
\end{equation}
for $\fp_1,\fp_2\in\cP(\Lambda)$, where the seminorm $\|\cdot\|_{\Lambda_T}$ on $\Lambda$ is defined by \eqref{eq_Lambda-norm}.

The following theorem is the main result of this section, establishing the weak convergence of $\bfP_k^{\pi_k}\circ\Upsilon_k^{-1}$ to $\bfP^\pi$ as probability measures on the Polish space $\Lambda$.

%% Theorem

\begin{theo}\label{theo_approximation-IPM}
Suppose that we are given a lifting basis $(\mu,M_b,M_\sigma)$ and measurable maps $b:\bR^n\to\bR^n$ and $\sigma:\bR^n\to\bR^{n\times d}$ satisfying both \cref{assum_coupling} and \cref{assum_Lyapunov-abstract}. Let $\{(\Delta_k,a_k,M_{b,k},M_{\sigma,k})\}_{k\in\bN}$ be an approximating component for $(\mu,M_b,M_\sigma)$. Let $k_0\in\bN$ be the number as given in \cref{lemm_approximating-SDE} (iii). For each $k\in\bN$ with $k\geq k_0$, let $\Pi_k\subset\cP((\bR^n)^{I_k})$ be the (non-empty) set of all invariant probability measures for the Markov semigroup $\{P_{k,t}\}_{t\geq0}$ associated with the SDE \eqref{eq_approximating-SDE}. Furthermore, let $\pi\in\cP(\cH)$ be the (unique) invariant probability measure for the Markov semigroup $\{P_t\}_{t\geq0}$ associated with the SEE \eqref{eq_SEE}. Then, for any given $T\in(0,\infty)$, we have
\begin{equation}\label{eq_approximation-IPM-path}
	\lim_{k\to\infty}\sup_{\pi_k\in\Pi_k}\bW_{\Lambda_T}\left(\bfP^\pi,\bfP_k^{\pi_k}\circ\Upsilon_k^{-1}\right)=0.
\end{equation}
\end{theo}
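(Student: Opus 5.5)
The plan is to compare the two stationary laws through a synchronous coupling on a finite window and to handle the mismatch of initial laws with the spectral gap \eqref{eq_spectral-gap}. Fix $T>0$, $k\geq k_0$ and $\pi_k\in\Pi_k$, and set $\nu_k:=\pi_k\circ\Upsilon_k^{-1}\in\cP(\cH)$.

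\textbf{Step 1: reduction to the initial laws.} First I would bound $\bW_{\Lambda_T}(\bfP^{\nu},\bfP_k^{\pi_k}\circ\Upsilon_k^{-1})$ for an arbitrary $\nu\in\cP(\cH)$. Take any coupling of $\nu$ and $\nu_k$, realized as an $\cF_0$-measurable pair $(Y_0,\Upsilon_kZ_{k,0})$. Drive both the SEE \eqref{eq_SEE} and the SDE \eqref{eq_approximating-SDE} by the same $W$. Then \eqref{eq_approximation2}, combined with Jensen's inequality, gives
\begin{equation*}
	\bE\big[\|Y-\Upsilon_kZ_k\|_{\Lambda_T}\big]\leq C_T\,\bE\big[\|Y_0-\Upsilon_kZ_{k,0}\|_\cH\big]+C_T\ep_k\,\bE\big[1+\|\Upsilon_kZ_{k,0}\|_\cH^2\big]^{1/2}.
\end{equation*}
The last expectation is bounded uniformly in $k\geq k_0$ and $\pi_k\in\Pi_k$ by \eqref{eq_approximation-IPM-integrability}. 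Taking the infimum over couplings yields
\begin{equation*}
	\bW_{\Lambda_T}\big(\bfP^\nu,\bfP_k^{\pi_k}\circ\Upsilon_k^{-1}\big)\leq C_T\bW_\cH(\nu,\nu_k)+C\ep_k.
\end{equation*}
The same estimate with a common initial law (so that the $\ep_k$ term is absent) shows that $\bW_{\Lambda_T}(\bfP^{\nu_1},\bfP^{\nu_2})\leq C_T\bW_\cH(\nu_1,\nu_2)$. With $\nu=\pi$, it therefore suffices to prove $\sup_{\pi_k\in\Pi_k}\bW_\cH(\pi,\nu_k)\to0$.

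\textbf{Step 2: convergence of invariant measures via the spectral gap.} Using the invariance of $\pi$ and $\pi_k$, I would write, for a fixed large $t\geq t_0$,
\begin{equation*}
	\bW_{d_{\Phi,\Psi}}(\pi,\nu_k)\leq\bW_{d_{\Phi,\Psi}}(P_t^*\pi,P_t^*\nu_k)+\bW_{d_{\Phi,\Psi}}\big(P_t^*\nu_k,(P_{k,t}^*\pi_k)\circ\Upsilon_k^{-1}\big).
\end{equation*}
Because $d_{\Phi,\Psi}$ is not a metric, I would invoke its weak triangle inequality from \cite{HaMaSc11}, or instead run the whole argument with the comparable metric they construct. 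By \eqref{eq_spectral-gap} the first term is at most $e^{-rt}\bW_{d_{\Phi,\Psi}}(\pi,\nu_k)$. For the second term, start the SEE from $\Upsilon_kZ_{k,0}$ with $Z_{k,0}\sim\pi_k$, and use the synchronous coupling and \eqref{eq_approximation2} with zero initial discrepancy. The function $d_{\Phi,\Psi}$ is bounded by $C\|y_1-y_2\|_\cH^{1/2}(1+\|y_1\|_\cH+\|y_2\|_\cH)$, so by Cauchy--Schwarz, \eqref{eq_approximation1}, \eqref{eq_SEE-apriori} and \eqref{eq_approximation-IPM-integrability}, the second term is $O(C_t\ep_k^{1/2})$ uniformly in $\pi_k$. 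Rearranging gives $(1-e^{-rt})\bW_{d_{\Phi,\Psi}}(\pi,\nu_k)\leq C_t\ep_k^{1/2}\to0$.

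\textbf{Step 3: back to $\bW_\cH$.} By \eqref{eq_H-distance-like}, $\bW_\cH\leq C\bW_{d_{\Phi,\Psi}}$, which gives $\sup_{\pi_k}\bW_\cH(\pi,\nu_k)\to0$, and Step 1 concludes \eqref{eq_approximation-IPM-path}.

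\textbf{Main obstacle.} The hard part is Step 2. The issue is the non-metric nature of $d_{\Phi,\Psi}$ together with the need for uniformity over all $\pi_k\in\Pi_k$, which requires second-moment control of $\nu_k$ uniform in $k$. Only this control (given by \eqref{eq_approximation-IPM-integrability}) is what makes the Lyapunov weights in $d_{\Phi,\Psi}$ harmless. If the triangle inequality for $\bW_{d_{\Phi,\Psi}}$ is problematic, my fallback is to iterate the contraction directly on the triple coupling (SEE from $\pi$, SEE from $\nu_k$, SDE from $\pi_k$) over time steps $t_0$. The $\cH$-error injected per step is $O(\ep_k)$, and the accumulated errors are summed geometrically using $e^{-rt_0}$.
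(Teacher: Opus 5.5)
Your proposal is correct and is essentially the paper's proof. The spectral gap \eqref{eq_spectral-gap} combined with the weak triangle inequality for $\bW_{d_{\Phi,\Psi}}$ gives $\sup_{\pi_k\in\Pi_k}\bW_{d_{\Phi,\Psi}}(\pi,\pi_k\circ\Upsilon_k^{-1})=O(\ep_k^{1/2})$, which is passed to $\bW_\cH$ and then to the path space via the synchronous coupling and \eqref{eq_approximation2}; to make the rearrangement valid, choose $t$ so that $C_1e^{-rt}\le\frac12$ (the factor is $1-C_1e^{-rt}$, not $1-e^{-rt}$) and note that $\bW_{d_{\Phi,\Psi}}(\pi,\pi_k\circ\Upsilon_k^{-1})<\infty$ a priori by the second-moment bounds. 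The only other differences are the order of presentation and that the paper explicitly pulls a coupling in $\sC(\pi,\pi_k\circ\Upsilon_k^{-1})$ back to $\cH\times(\bR^n)^{I_k}$ via the Lusin--Suslin theorem to realize the second component as $\Upsilon_kZ_{k,0}$, a step you take for granted but which is harmless since $\Upsilon_k$ is a linear injection from a finite-dimensional space.
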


%% Remark

\begin{rem}\label{rem_approximation-IPM}
\begin{itemize}
\item[(i)]
Recall the definition \eqref{eq_Lambda-metric} of the (bounded) metric $d_\Lambda:\Lambda\times\Lambda\to[0,1]$ on the Polish space $\Lambda$. Denote by $\bW_{\Lambda}:\cP(\Lambda)\times\cP(\Lambda)\to[0,1]$ the associated Wasserstein metric, that is,
\begin{equation*}
	\bW_{\Lambda}(\fp_1,\fp_2):=\inf_{\fp\in\sC(\fp_1,\fp_2)}\int_{\Lambda\times\Lambda}d_\Lambda(\eta_1,\eta_2)\,\fp(\diff\eta_1,\diff\eta_2)
\end{equation*}
for $\fp_1,\fp_2\in\cP(\Lambda)$. The convergence result \eqref{eq_approximation-IPM-path} with arbitrary $T\in(0,\infty)$ immediately implies that
\begin{equation*}
	\lim_{k\to\infty}\sup_{\pi_k\in\Pi_k}\bW_{\Lambda}\left(\bfP^\pi,\bfP_k^{\pi_k}\circ\Upsilon_k^{-1}\right)=0,
\end{equation*}
which is equivalent to the (uniform) weak convergence of $\bfP_k^{\pi_k}\circ\Upsilon_k^{-1}$ to $\bfP^\pi$ in $\cP(\Lambda)$. The result \eqref{eq_approximation-IPM-path} incorporates the weak convergence with a kind of uniform integrability of the convergent sequence.
\item[(ii)]
Since the evaluation map $\Lambda\ni\eta=(\eta_t)_{t\geq0}\mapsto\eta_0\in\cH$ is Lipschitz continuous with respect to the semi-norm $\|\cdot\|_{\Lambda_T}$, the convergence result \eqref{eq_approximation-IPM-path} for the stationary laws on the path space $\Lambda$ implies the following convergence result for the invariant probability measures with respect to the $L^1$-Wasserstein (extended) metric $\bW_\cH:\cP(\cH)\times\cP(\cH)\to[0,\infty]$ defined by \eqref{eq_Wasserstein-defi}:
\begin{equation}\label{eq_approximation-IPM-1}
	\lim_{k\to\infty}\sup_{\pi_k\in\Pi_k}\bW_\cH\big(\pi,\pi_k\circ\Upsilon_k^{-1}\big)=0,
\end{equation}
%or equivalently,
%\begin{equation}\label{eq_approximation-IPM-1-dual}
%	\lim_{k\to\infty}\sup_{\pi_k\in\Pi_k}\sup_{f\in\Lip(\cH)}\left|\int_\cH f(y)\,\pi(\diff y)-\int_\cH f(y)\,\big(\pi_k\circ\Upsilon_k^{-1}\big)(\diff y)\right|=0.
%\end{equation}
which furthermore implies the (uniform) weak convergence of $\pi_k\circ\Upsilon_k^{-1}$ to $\pi$ in $\cP(\cH)$. However, our result does \emph{not} imply the weak convergence of $\pi_k\circ\Upsilon_k^{-1}$ to $\pi$ in the topology of $\cP(\cV)$, although they can be seen as probability measures on $\cV$. This is because the evaluation map is not necessarily continuous in $\cV$; it is continuous only in $\cH$ in general.
\end{itemize}
\end{rem}

Before going into the technical details, let us outline the main idea of the proof of \cref{theo_approximation-IPM}. As a first step, we will show the following convergence result with respect to the invariant probability measures:
\begin{equation}\label{eq_approximation-IPM-d}
	\lim_{k\to\infty}\sup_{\pi_k\in\Pi_k}\bW_{d_{\Phi,\Psi}}\big(\pi,\pi_k\circ\Upsilon_k^{-1}\big)=0,
\end{equation}
where the distance-like function $d_{\Phi,\Psi}:\cH\times\cH\to[0,\infty)$ is defined by \eqref{eq_distance-like} in \cref{theo_main} (i). Recalling the standard inequalities in \eqref{eq_H-distance-like}, we see that \eqref{eq_approximation-IPM-d} implies the $L^1$-Wasserstein-convergence \eqref{eq_approximation-IPM-1} for the invariant probability measures. In the next step, combining \eqref{eq_approximation-IPM-1} with the stability estimate \eqref{eq_approximation2}, we will show the convergence \eqref{eq_approximation-IPM-path} of the stationary laws on the path space $\Lambda$ with respect to the semi-norm $\|\cdot\|_{\Lambda_T}$ for any $T\in(0,\infty)$. The most important step in the proof is to derive \eqref{eq_approximation-IPM-d}. Here, the ``spectral gap'' result for the Markov semigroup $\{P_t\}_{t\geq0}$ with respect to the distance-like function $d_{\Phi,\Psi}$ in \cref{theo_main} (i) plays a crucial role. In this step, we borrow the idea from \cite[Section 4.1]{HaMaSc11}.

%% Proof

\begin{proof}[Proof of \cref{theo_approximation-IPM}]
First, we prove the convergence result \eqref{eq_approximation-IPM-d} for the invariant probability measures. Note that
\begin{equation*}
	\sup_{k\geq k_0}\sup_{\pi_k\in\Pi_k}\bW_{d_{\Phi,\Psi}}\big(\pi,\pi_k\circ\Upsilon_k^{-1}\big)<\infty,
\end{equation*}
which follows from the definition \eqref{eq_distance-like} of the distance-like function $d_{\Phi,\Psi}$ and the estimates \eqref{eq_IPM-integrability} and \eqref{eq_approximation-IPM-integrability}. Furthermore, analogously to the proof of \cite[Lemma 4.14]{HaMaSc11}, we can easily check that there exists a constant $C_1>0$ such that
\begin{equation*}
	d_{\Phi,\Psi}(y_1,y_3)\leq C_1\big\{d_{\Phi,\Psi}(y_1,y_2)+d_{\Phi,\Psi}(y_2,y_3)\big\}
\end{equation*}
for any $y_1,y_2,y_3\in\cH$. This implies that
\begin{equation}\label{eq_triangle-like}
	\bW_{d_{\Phi,\Psi}}(\nu_1,\nu_3)\leq C_1\big\{\bW_{d_{\Phi,\Psi}}(\nu_1,\nu_2)+\bW_{d_{\Phi,\Psi}}(\nu_2,\nu_3)\big\}
\end{equation}
for any $\nu_1,\nu_2,\nu_3\in\cP(\cH)$.

Let $k\in\bN$ with $k\geq k_0$ and $\pi_k\in\Pi_k$ be fixed. Let $t_1:=t_0\vee\frac{\log(2C_1)}{r}$, where $r>0$ and $t_0>0$ are the constants arising in \cref{theo_main} (i), and $C_1>0$ is the constant appearing in \eqref{eq_triangle-like}. Then, we have
\begin{align*}
	\bW_{d_{\Phi,\Psi}}\big(\pi,\pi_k\circ\Upsilon_k^{-1}\big)&=\bW_{d_{\Phi,\Psi}}\big(P_{t_1}^*\pi,\big(P_{k,t_1}^*\pi_k\big)\circ\Upsilon_k^{-1}\big)\\
	&\leq C_1\bW_{d_{\Phi,\Psi}}\big(P_{t_1}^*\pi,P_{t_1}^*\big(\pi_k\circ\Upsilon_k^{-1}\big)\big)+C_1\bW_{d_{\Phi,\Psi}}\big(P_{t_1}^*\big(\pi_k\circ\Upsilon_k^{-1}\big),\big(P_{k,t_1}^*\pi_k\big)\circ\Upsilon_k^{-1}\big)\\
	&\leq C_1e^{-rt_1}\bW_{d_{\Phi,\Psi}}\big(\pi,\pi_k\circ\Upsilon_k^{-1}\big)+C_1\bW_{d_{\Phi,\Psi}}\big(P_{t_1}^*\big(\pi_k\circ\Upsilon_k^{-1}\big),\big(P_{k,t_1}^*\pi_k\big)\circ\Upsilon_k^{-1}\big),
\end{align*}
where we used the invariance of $\pi$ and $\pi_k$ with respect to $P_{t_1}$ and $P_{k,t_1}$, respectively, in the first line, the estimate \eqref{eq_triangle-like} in the second line, and the spectral gap estimate \eqref{eq_spectral-gap} in the third line. Since $\bW_{d_{\Phi,\Psi}}(\pi,\pi_k\circ\Upsilon_k^{-1})<\infty$ and $C_1e^{-rt_1}\leq\frac{1}{2}$, we obtain
\begin{equation}\label{eq_estimate-approximation-1}
	\bW_{d_{\Phi,\Psi}}\big(\pi,\pi_k\circ\Upsilon_k^{-1}\big)\leq 2C_1\bW_{d_{\Phi,\Psi}}\big(P_{t_1}^*\big(\pi_k\circ\Upsilon_k^{-1}\big),\big(P_{k,t_1}^*\pi_k\big)\circ\Upsilon_k^{-1}\big).
\end{equation}

Now we estimate the right-hand side of \eqref{eq_estimate-approximation-1}. Let $(\Omega,\cF,\bP)$ be a complete probability space which supports an $(\bR^n)^{I_k}$-valued random variable $Z_{k,0}$ distributed according to $\pi_k$ and a $d$-dimensional Brownian motion $W$ independent of $Z_{k,0}$. Let $\bF=(\cF_t)_{t\geq0}$ be the augmentation of the filtration generated by $Z_{k,0}$ and $W$. Let $Y=(Y_t)_{t\geq0}$ be the solution to the SEE \eqref{eq_SEE} with the $\cH$-valued initial condition $Y_0=\Upsilon_kZ_{k,0}$, and let $Z_k=(Z_{k,t})_{t\geq0}$ be the solution to the SDE \eqref{eq_approximating-SDE} with the $(\bR^n)^{I_k}$-valued initial condition $Z_{k,0}$, both of which are driven by the same Brownian motion $W$. Then, the joint distribution of the pair of random variables $(Y_{t_1},\Upsilon_kZ_{k,t_1})$ on $\cH\times\cH$ under $\bP$ constitutes a coupling between the probability measures $P_{t_1}^*(\pi_k\circ\Upsilon_k^{-1})\in\cP(\cH)$ and $(P_{k,t_1}^*\pi_k)\circ\Upsilon_k^{-1}\in\cP(\cH)$. Hence, we have
\begin{equation*}
	\bW_{d_{\Phi,\Psi}}\big(P_{t_1}^*\big(\pi_k\circ\Upsilon_k^{-1}\big),\big(P_{k,t_1}^*\pi_k\big)\circ\Upsilon_k^{-1}\big)\leq\bE\big[d_{\Phi,\Psi}\left(Y_{t_1},\Upsilon_kZ_{k,t_1}\right)\big].
\end{equation*}
Recalling the definition \eqref{eq_distance-like} of the distance-like function $d_{\Phi,\Psi}$, by the above estimate and the Cauchy--Schwarz inequality, we have
\begin{align*}
	&\bW_{d_{\Phi,\Psi}}\big(P_{t_1}^*\big(\pi_k\circ\Upsilon_k^{-1}\big),\big(P_{k,t_1}^*\pi_k\big)\circ\Upsilon_k^{-1}\big)\\
	&\leq\bE\left[\big\|Y_{t_1}-\Upsilon_kZ_{k,t_1}\big\|_\Phi\wedge1\right]^{1/2}\left\{1+\bE\left[\big\|Y_{t_1}\big\|_{\Psi}^2\right]+\bE\left[\big\|\Upsilon_kZ_{k,t_1}\big\|_{\Psi}^2\right]\right\}^{1/2}.
\end{align*}
Recall that $\|\cdot\|_\Phi$ and $\|\cdot\|_{\Psi}$ are norms on $\cH$ and equivalent to $\|\cdot\|_\cH$. Since $Y_0=\Upsilon_kZ_{k,0}$ and $\Law_\bP(Z_{k,0})=\pi_k\in\cP((\bR^n)^{I_k})$, by using the estimates \eqref{eq_SEE-apriori}, \eqref{eq_approximation1} and \eqref{eq_approximation2}, we obtain
\begin{align}
	\nonumber
	\bW_{d_{\Phi,\Psi}}\big(P_{t_1}^*\big(\pi_k\circ\Upsilon_k^{-1}\big),\big(P_{k,t_1}^*\pi_k\big)\circ\Upsilon_k^{-1}\big)&\leq C_2\ep_k^{1/2}\left(1+\bE\left[\big\|\Upsilon_kZ_{k,0}\big\|_\cH^2\right]\right)^{3/4}\\
	\label{eq_estimate-approximation-2}
	&=C_2\ep_k^{1/2}\left(1+\int_{(\bR^n)^{I_k}}\big\|\Upsilon_kz_k\big\|_\cH^2\,\pi_k(\diff z_k)\right)^{3/4},
\end{align}
where $C_2>0$ is a constant which does not depend on $k$ or $\pi_k$.

By \eqref{eq_estimate-approximation-1} and \eqref{eq_estimate-approximation-2}, together with \eqref{eq_approximation-IPM-integrability}, we obtain
\begin{equation*}
	\sup_{\pi_k\in\Pi_k}\bW_{d_{\Phi,\Psi}}\left(\pi,\pi_k\circ\Upsilon_k^{-1}\right)\leq C_3\ep_k^{1/2}
\end{equation*}
for all $k\in\bN$ with $k\geq k_0$, where $C_3>0$ is a constant independent of $k$. Since $\lim_{k\to\infty}\ep_k=0$, we obtain \eqref{eq_approximation-IPM-d}. Recalling the standard estimate \eqref{eq_H-distance-like}, the convergence \eqref{eq_approximation-IPM-d} for the distance-like function $d_{\Phi,\Psi}$ implies the $L^1$-Wasserstein-convergence \eqref{eq_approximation-IPM-1} for the invariant probability measures.

Fix an arbitrary $T\in(0,\infty)$. Our next purpose is to prove \eqref{eq_approximation-IPM-path}. To this end, let $k\geq k_0$, $\pi_k\in\Pi_k$ and $\nu_k\in\sC(\pi,\pi_k\circ\Upsilon_k^{-1})$ be fixed. Define $\Gamma_k:\cH\times(\bR^n)^{I_k}\to\cH\times\cH$ by $\Gamma_k(y,z_k):=(y,\Upsilon_kz_k)$ for $(y,z_k)\in\cH\times(\bR^n)^{I_k}$. Clearly, the map $\Gamma_k$ is continuous and injective. Hence, by the Lusin--Suslin theorem (see, e.g., \cite[Theorem 15.1]{Ke95}), we obtain $\Gamma_k(\Xi)\in\cB(\cH\times\cH)$ for any $\Xi\in\cB(\cH\times(\bR^n)^{I_k})$. Consequently we can define a set function $\widetilde{\nu}_k:\cB(\cH\times(\bR^n)^{I_k})\to[0,1]$ by $\widetilde{\nu}_k(\Xi):=\nu_k(\Gamma_k(\Xi))$ for $\Xi\in\cB(\cH\times(\bR^n)^{I_k})$. Again by the injectivity of $\Gamma_k$, we see that the set function $\widetilde{\nu}_k$ is a $\sigma$-additive measure. Furthermore, since
\begin{equation*}
	\widetilde{\nu}_k(\cH\times(\bR^n)^{I_k})=\nu_k(\cH\times\Upsilon_k((\bR^n)^{I_k}))=\pi_k(\Upsilon_k^{-1}(\Upsilon_k((\bR^n)^{I_k})))=\pi_k((\bR^n)^{I_k})=1,
\end{equation*}
we have $\widetilde{\nu}_k\in\cP(\cH\times(\bR^n)^{I_k})$. Observe that
\begin{align}
	\label{eq_tilde-Q-1}
	&\widetilde{\nu}_k(A\times(\bR^n)^{I_k})=\nu_k(A\times\Upsilon_k((\bR^n)^{I_k}))=\nu_k(A\times\cH)=\pi(A)
\ \ \text{for any $A\in\cB(\cH)$},\\
	\label{eq_tilde-Q-2}
	&\widetilde{\nu}_k(\cH\times B)=\nu_k(\cH\times\Upsilon_k(B))=\pi_k(\Upsilon_k^{-1}(\Upsilon_k(B)))=\pi_k(B)\ \ \text{for any $B\in\cB\big((\bR^n)^{I_k}\big)$, and}\\
	\label{eq_tilde-Q-3}
	&\widetilde{\nu}_k(\Gamma_k^{-1}(C))=\nu_k(\Gamma_k(\Gamma_k^{-1}(C)))=\nu_k(C\cap(\cH\times\Upsilon_k((\bR^n)^{I_k})))=\nu_k(C)\ \ \text{for any $C\in\cB(\cH\times\cH)$}.
\end{align}

Consider a complete probability space $(\Omega,\cF,\bP)$ which supports an $\cH\times(\bR^n)^{I_k}$-valued random variable $(Y_0,Z_{k,0})$ distributed according to $\widetilde{\nu}_k\in\cP(\cH\times(\bR^n)^{I_k})$ and a $d$-dimensional Brownian motion $W$ independent of $(Y_0,Z_{k,0})$. We equip $(\Omega,\cF,\bP)$ with the augmented filtration $\bF$ generated by $(Y_0,Z_{k,0})$ and $W$. Under this setting, let $Y=(Y_t)_{t\geq0}$ be the solution to the SEE \eqref{eq_SEE} with the prescribed $\cH$-valued initial condition $Y_0$, and let $Z_k=(Z_{k,t})_{t\geq0}$ be the solution to the approximating SDE \eqref{eq_approximating-SDE} with the prescribed $(\bR^n)^{I_k}$-valued initial condition $Z_{k,0}$. By \eqref{eq_tilde-Q-1} and \eqref{eq_tilde-Q-2}, the joint distribution of the paths of $(Y_t)_{t\geq0}$ and $(\Upsilon_kZ_{k,t})_{t\geq0}$ on $\Lambda\times\Lambda$ constitutes a coupling between the probability measures $\bfP^\pi\in\cP(\Lambda)$ and $\bfP^{\pi_k}_k\circ\Upsilon_k^{-1}\in\cP(\Lambda)$. Thus, we have
\begin{align*}
	&\bW_{\Lambda_T}\big(\bfP^\pi,\bfP^{\pi_k}_k\circ\Upsilon_k^{-1}\big)\\
	&\leq\bE\left[\left(\sup_{t\in[0,T]}\big\|Y_t-\Upsilon_kZ_{k,t}\big\|_\cH^2+\int^T_0\big\|Y_t-\Upsilon_kZ_{k,t}\big\|_\cV^2\,\diff t\right)^{1/2}\right]\\
	&\leq\bE\left[C_0^{1/2}e^{C_0T/2}\big\|Y_0-\Upsilon_kZ_{k,0}\big\|_\cH+C_0^{1/2}e^{C_0T/2}\ep_k\left(1+\big\|\Upsilon_kZ_{k,0}\big\|_\cH\right)\right]\\
	&=\int_{\cH\times(\bR^n)^{I_k}}\left\{C_0^{1/2}e^{C_0T/2}\big\|y-\Upsilon_kz_k\big\|_\cH+C_0^{1/2}e^{C_0T/2}\ep_k\left(1+\big\|\Upsilon_kz_k\big\|_\cH\right)\right\}\,\widetilde{\nu}_k(\diff y,\diff z_k)\\
	&=C_0^{1/2}e^{C_0T/2}\int_{\cH\times\cH}\big\|y-y'\big\|_\cH\,\nu_k(\diff y,\diff y')+C_0^{1/2}e^{C_0T/2}\ep_k\left(1+\int_{(\bR^n)^{I_k}}\big\|\Upsilon_kz_k\big\|_\cH\,\pi_k(\diff z_k)\right),
\end{align*}
where we used the estimate \eqref{eq_approximation2} and the conditional Jensen's inequality in the second inequality and the relations \eqref{eq_tilde-Q-2} and \eqref{eq_tilde-Q-3} in the final equality. Taking the infimum over $\nu_k\in\sC(\pi,\pi_k\circ\Upsilon_k^{-1})$ and then the supremum over $\pi_k\in\Pi_k$, we obtain
\begin{align*}
	&\sup_{\pi_k\in\Pi_k}\bW_{\Lambda_T}\big(\bfP^\pi,\bfP^{\pi_k}_k\circ\Upsilon_k^{-1}\big)\\
	&\leq C_0^{1/2}e^{C_0T/2}\sup_{\pi_k\in\Pi_k}\bW_\cH\big(\pi,\pi_k\circ\Upsilon_k^{-1}\big)+C_0e^{C_0T/2}\ep_k\left(1+\sup_{\pi_k\in\Pi_k}\int_{(\bR^n)^{I_k}}\big\|\Upsilon_kz_k\big\|_\cH\,\pi_k(\diff z_k)\right).
\end{align*}
By \eqref{eq_approximation-IPM-1} and \eqref{eq_approximation-IPM-integrability}, we see that the right-hand side above tends to zero as $k\to\infty$, and hence \eqref{eq_approximation-IPM-path} holds. This completes the proof.
\end{proof}

%% Remark

\begin{rem}
The proof of \cref{theo_approximation-IPM} also yields a quantitative estimate for the convergence \eqref{eq_approximation-IPM-path}. Specifically, we obtain a convergence order of $O(\ep_k^{1/2})$, where $\ep_k$ is defined by \eqref{eq_ep_k}. The exponent $1/2$ stems from the square root in the definition \eqref{eq_distance-like} of the distance-like function $d_{\Phi,\Psi}$. Note, however, that this order is not necessarily optimal.
\end{rem}

An important consequence of the convergence result \eqref{eq_approximation-IPM-path} for the stationary laws on the path space $\Lambda$, rather than merely the convergence \eqref{eq_approximation-IPM-1} for the invariant probability measures, is that it enables us to derive the corresponding convergence for the original SVE \eqref{eq_SVE}, as we will show in \cref{cor_approximation-IPM} below. Recall the observations in \cref{rem_path-distribution} (ii), \cref{cor_stationary-path} (ii) and \cref{rem_Q-tilde}, where the stationary laws $\bfQ^\pi\in\cP(L^2_\loc(0,\infty;\bR^n))$ and $\widetilde{\bfQ}^\pi\in\cP((\bR^n)^{[0,\infty)})$ for the original SVE \eqref{eq_SVE} were constructed from $\bfP^\pi\in\cP(\Lambda)$; the former characterizes the distribution of the $\diff t$-equivalence class of the stationary solution, while the latter characterizes the finite-dimensional distributions $\widetilde{\bfQ}^\pi_{(t_1,\dots,t_\ell)}\in\cP((\bR^n)^\ell)$ of the stationary solution itself. Recall also that $\bfQ^\pi$ and $\widetilde{\bfQ}^\pi$ are independent of the choice of the lifting basis $(\mu,M_b,M_\sigma)$ in the sense of \cref{theo_lift-independence}.

To establish the corresponding results on approximations of $\bfQ^\pi$ and $\widetilde{\bfQ}^\pi$ by means of the finite-dimensional SDEs \eqref{eq_approximating-SDE}, let us introduce the following additional notations. As in \cref{rem_path-distribution} (ii), for each $k\in\bN$ and $\nu_k\in\cP((\bR^n)^{I_k})$, we set $\bfQ_k^{\nu_k}:=(\bfP_k^{\nu_k}\circ\Upsilon_k^{-1})\circ\mu[\cdot]^{-1}\in\cP(L^2_\loc(0,\infty;\bR^n))$, where $\mu[\cdot]$ is understood as a continuous map from $\Lambda$ to $L^2_\loc(0,\infty;\bR^n)$. This represents the law of the $\diff t$-equivalence class of the $\bR^n$-valued process $X_{k,t}:=\sum^{I_k}_{i=1}\mu(\Delta_k^{(i)})Z_{k,t}^{(i)}$, $t\geq0$, where the $(\bR^n)^{I_k}$-valued process $Z_k=(Z_{k,t})_{t\geq0}$ is a solution to the approximating SDE \eqref{eq_approximating-SDE} with the initial distribution $\nu_k$. Recall that $X_k=(X_{k,t})_{t\geq0}$ solves the approximating SVE \eqref{eq_approximating-SVE} with sum-of-exponentials type kernels; see \cref{rem_approximating-SVE}. Furthermore, we denote by $\widetilde{\bfQ}_k^{\nu_k}\in\cP((\bR^n)^{[0,\infty)})$ the law of $X_k=(X_{k,t})_{t\geq0}$ (not its $\diff t$-equivalence class). This probability measure is consistent to the family of the finite-dimensional distributions $\widetilde{\bfQ}_{k,(t_1,\dots,t_\ell)}^{\nu_k}\in\cP((\bR^n)^\ell)$ with $t_1,\dots,t_\ell\geq0$ and $\ell\in\bN$, where
\begin{equation*}
	\widetilde{\bfQ}_{k,(t_1,\dots,t_\ell)}^{\nu_k}(A_1\times\cdots\times A_\ell)=\bfP_k^{\nu_k}\left(\left\{\zeta_k=(\zeta_{k,t})_{t\geq0}\in C([0,\infty);(\bR^n)^{I_k})\relmiddle|\mu[\Upsilon_k\zeta_{k,t_1}]\in A_1,\dots,\mu[\Upsilon_k\zeta_{k,t_\ell}]\in A_\ell\right\}\right)
\end{equation*}
for any $A_1,\dots,A_\ell\in\cB(\bR^n)$. These notations are comparable to the ones introduced in \cref{rem_Q-tilde}. Note that, given an invariant probability measure $\pi_k\in\cP((\bR^n)^{I_k})$ for the Markov semigroup $\{P_{k,t}\}_{t\geq0}$, the laws $\bfQ_k^{\pi_k}\in\cP(L^2_\loc(0,\infty;\bR^n))$ and $\widetilde{\bfQ}_k^{\pi_k}\in\cP((\bR^n)^{[0,\infty)})$ are invariant under the time-shifts on $L^2_\loc(0,\infty;\bR^n)$ and $(\bR^n)^{[0,\infty)}$, respectively.

Similarly to \eqref{eq_Wasserstein-Lambda-T}, for each $T\in(0,\infty)$, we define an extended pseudo-metric $\bW_{L^2(0,T;\bR^n)}:\cP(L^2_\loc(0,\infty;\bR^n))\times\cP(L^2_\loc(0,\infty;\bR^n))\to[0,\infty]$ on $\cP(L^2_\loc(0,\infty;\bR^n))$ by
\begin{equation*}
	\bW_{L^2(0,T;\bR^n)}(\fq_1,\fq_2):=\inf_{\fq\in\sC(\fq_1,\fq_2)}\int_{L^2_\loc(0,\infty;\bR^n)\times L^2_\loc(0,\infty;\bR^n)}\big\|\xi_1-\xi_2\big\|_{L^2(0,T;\bR^n)}\,\fq(\diff\xi_1,\diff\xi_2)
\end{equation*}
for $\fq_1,\fq_2\in\cP(L^2_\loc(0,\infty;\bR^n))$. Furthermore, for each $\ell\in\bN$, we define an extended metric $\bW_{(\bR^n)^\ell}:\cP((\bR^n)^\ell)\times\cP((\bR^n)^\ell)\to[0,\infty]$ on $\cP((\bR^n)^\ell)$ by
\begin{equation*}
	\bW_{(\bR^n)^\ell}\big(\widetilde{\fq}_1,\widetilde{\fq}_2\big):=\inf_{\widetilde{\fq}\in\sC(\widetilde{\fq}_1,\widetilde{\fq}_2)}\int_{(\bR^n)^\ell\times(\bR^n)^\ell}\max_{j\in\{1,\dots,\ell\}}\big|x_{1,j}-x_{2,j}\big|\,\widetilde{\fq}\big(\diff(x_{1,j})^\ell_{j=1},\diff(x_{2,j})^\ell_{j=1}\big)
\end{equation*}
for $\widetilde{\fq}_1,\widetilde{\fq}_2\in\cP((\bR^n)^\ell)$, where $|\cdot|$ denotes the standard Euclidean norm on $\bR^n$.

Now we are ready to state a corollary of \cref{theo_approximation-IPM}.

%% Corollary

\begin{cor}\label{cor_approximation-IPM}
Under the same setting of \cref{theo_approximation-IPM}, for any $T\in(0,\infty)$ and $\ell\in\bN$, we have
\begin{equation}\label{eq_approximation-IPM-Q-L^2}
	\lim_{k\to\infty}\sup_{\pi_k\in\Pi_k}\bW_{L^2(0,T;\bR^n)}\big(\bfQ^\pi,\bfQ^{\pi_k}_k\big)=0,
\end{equation}
and
\begin{equation}\label{eq_approximation-IPM-Q-finite}
	\lim_{k\to\infty}\sup_{\pi_k\in\Pi_k}\sup_{t_1,\dots,t_\ell\in[0,T]}\bW_{(\bR^n)^\ell}\left(\widetilde{\bfQ}^\pi_{(t_1,\dots,t_\ell)},\widetilde{\bfQ}^{\pi_k}_{k,(t_1,\dots,t_\ell)}\right)=0.
\end{equation}
\end{cor}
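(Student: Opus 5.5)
Both assertions will be derived from \cref{theo_approximation-IPM} by pushing a near-optimal coupling on the path space $\Lambda$ through the maps defining $\bfQ$ and $\widetilde{\bfQ}$, together with a bound for $\mu[\cdot]$ in terms of $\|\cdot\|_{\Lambda_T}$. Fix $T$, $k\geq k_0$ and $\pi_k\in\Pi_k$, and let $\fp\in\sC(\bfP^\pi,\bfP_k^{\pi_k}\circ\Upsilon_k^{-1})$ be arbitrary.

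For \eqref{eq_approximation-IPM-Q-L^2}, push $\fp$ forward by $\mu[\cdot]\times\mu[\cdot]:\Lambda\times\Lambda\to L^2_\loc(0,\infty;\bR^n)^2$. By \cref{rem_path-distribution} (ii) and the definition of $\bfQ_k^{\pi_k}$, the result lies in $\sC(\bfQ^\pi,\bfQ_k^{\pi_k})$. Since $\mu[\cdot]\in L(\cV;\bR^n)$, we have
\[
\|\mu[\eta_1]-\mu[\eta_2]\|_{L^2(0,T;\bR^n)}\leq\|\mu[\cdot]\|_{L(\cV;\bR^n)}\Big(\int_0^T\|\eta_{1,t}-\eta_{2,t}\|_\cV^2\,\diff t\Big)^{1/2}\leq C\|\eta_1-\eta_2\|_{\Lambda_T}.
\]
Hence $\bW_{L^2(0,T;\bR^n)}(\bfQ^\pi,\bfQ_k^{\pi_k})\leq C\,\bW_{\Lambda_T}(\bfP^\pi,\bfP_k^{\pi_k}\circ\Upsilon_k^{-1})$, and \eqref{eq_approximation-IPM-Q-L^2} follows from \eqref{eq_approximation-IPM-path}.

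The finite-dimensional assertion \eqref{eq_approximation-IPM-Q-finite} is the main obstacle. Pointwise evaluation $\eta\mapsto\mu[\eta_t]$ is not controlled by $\|\cdot\|_{\Lambda_T}$, since it only bounds $\|\eta_t\|_\cH$ uniformly and $\|\eta_t\|_\cV$ in $L^2$ in time. I would therefore avoid using \eqref{eq_approximation-IPM-path} as a black box and instead use the stationary pathwise coupling constructed in its proof.

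Take $(Y_0,Z_{k,0})\sim\widetilde\nu_k$ with $\nu_k$ nearly optimal for $\bW_\cH(\pi,\pi_k\circ\Upsilon_k^{-1})$. Let $Y$ and $Z_k$ be driven by a common $W$, and set $Y_k:=\Upsilon_kZ_k$. For $t_1,\dots,t_\ell\in[0,T]$, the vector $(\mu[Y_{t_j}\1_\cV(Y_{t_j})],\mu[Y_{k,t_j}])_j$ gives a coupling of $\widetilde{\bfQ}^\pi_{(t_1,\dots,t_\ell)}$ and $\widetilde{\bfQ}^{\pi_k}_{k,(t_1,\dots,t_\ell)}$. This holds because $Y_{t_j}\in\cV$ a.s. by stationarity and $\pi(\cV)=1$, as in \cref{rem_Q-tilde}. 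It then suffices to bound $\sup_{t\leq T}\bE[|\mu[Y_t-Y_{k,t}]|]$ uniformly in $\pi_k$, since $\bE\max_j|\cdot|\leq\sum_j\bE|\cdot|$ costs at most a factor $\ell$.

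To obtain a pointwise-in-time $\cV$-type bound, I would use the mild formulation \eqref{eq_SEE-mild} and its analogue for $Y_k$ (as in \cref{rem_approximating-SVE}), exactly as in the proof of \cref{prop_SEE-well-posed} (iii). Applying $\mu$ writes $\mu[Y_t]-\mu[Y_{k,t}]$ as three pieces:
\begin{itemize}
\item[(a)] a forcing-term difference $\mu[\cS(t)Y_0]-\mu[\cS_k(t)Y_{k,0}]$;
\item[(b)] Volterra integrals of $K_b-K_{b,k}$ and $K_\sigma-K_{\sigma,k}$ against $b(\mu[Y])$ and $\sigma(\mu[Y])$;
\item[(c)] Volterra integrals of $K_{b,k}$ and $K_{\sigma,k}$ against $b(\mu[Y])-b(\mu[Y_k])$ and $\sigma(\mu[Y])-\sigma(\mu[Y_k])$.
\end{itemize}

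Term (a) is the delicate one. Rather than estimating it directly, I would exploit stationarity: $\bE|\mu[Y_t]-\mu[Y_{k,t}]|$ should be controlled by an average over a small time window, through a Volterra Gronwall argument comparing with nearby times. Failing that, I would restart the coupling at time $t-\tau$ using the Markov property and the shift-invariance of both stationary laws. For terms (b) and (c) I would use Itô's isometry, Young's inequality, the $\ep_k$-closeness of the kernels (which follows from \eqref{eq_ep_k}), the uniform moment bounds \eqref{eq_IPM-integrability} and \eqref{eq_approximation-IPM-integrability}, and the $L^2(0,T)$ convergence already established, closing with a Volterra-type Gronwall inequality with kernel $|K_{b,k}|+|K_{\sigma,k}|^2$. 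The resulting bound is uniform in $t\in[0,T]$ and tends to $0$ as $k\to\infty$, which gives \eqref{eq_approximation-IPM-Q-finite}.
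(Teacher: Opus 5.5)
Your argument for \eqref{eq_approximation-IPM-Q-L^2} is the paper's own: push a coupling forward under $\mu[\cdot]$ and use $\|\mu[\eta]\|_{L^2(0,T;\bR^n)}\leq\|\mu[\cdot]\|_{L(\cV;\bR^n)}\|\eta\|_{\Lambda_T}$.

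For \eqref{eq_approximation-IPM-Q-finite} there is a genuine gap, because your treatment of the forcing-term difference (a) is not an argument.
\begin{itemize}
\item The phrase ``should be controlled by an average over a small time window'' gives no pointwise-in-time bound. The coupled pair $(Y,Y_k)$ is not jointly stationary; only its marginals are. You also establish no time regularity of $t\mapsto\bE|\mu[Y_t-Y_{k,t}]|$.
\item The ``restart at $t-\tau$'' alternative is left unexecuted. It would need at least: boundedness of $\mu[\cS(\tau)\cdot]$ on $\cH$; closeness of the approximating semigroup to $\cS(\tau)$; uniform-in-$k$ smallness of the Volterra pieces over $[t-\tau,t]$ for kernels that are only $L^1$; and a Volterra--Gronwall argument for (c) whose input is small only in $L^2$ in time.
\item At $t=0$, term (a) is exactly $\mu[Y_0-Y_{k,0}]$. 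A coupling chosen nearly optimal for $\bW_\cH$ says nothing about this quantity, since $\mu[\cdot]$ is not continuous on $\cH$. So your route cannot avoid the idea below.
\end{itemize}

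The missing idea is that the obstacle you identify disappears once \cref{lemm_mu-ep} is combined with stationarity. Then \eqref{eq_approximation-IPM-path} can be used as a black box after all. Take any coupling $\fp_k$ of $\bfP^\pi$ and $\bfP_k^{\pi_k}\circ\Upsilon_k^{-1}$ and any $\ep>0$. Since $\pi(\cV)=1$ and $\Upsilon_k$ maps into $\cV$, we have $\eta_{t_j},\eta'_{t_j}\in\cV$ $\fp_k$-a.s., and
\[
\big|\mu[\eta_{t_j}-\eta'_{t_j}]\big|\leq\ep^{1/2}\big(\|\eta_{t_j}\|_\cV+\|\eta'_{t_j}\|_\cV\big)+C_{\mu,\ep}^{1/2}\|\eta-\eta'\|_{\Lambda_T}.
\]
By stationarity, $\eta_{t_j}\sim\pi$ and $\eta'_{t_j}\sim\pi_k\circ\Upsilon_k^{-1}$. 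Hence the $\cV$-terms are bounded uniformly in $k\geq k_0$, $\pi_k\in\Pi_k$ and $t_j\in[0,T]$, by \eqref{eq_IPM-integrability} and \eqref{eq_approximation-IPM-integrability}. They only need to be bounded, not small, because the factor $\ep^{1/2}$ is free.

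Taking the infimum over $\fp_k$ and using \eqref{eq_approximation-IPM-path} gives
\[
\limsup_{k\to\infty}\sup_{\pi_k\in\Pi_k}\sup_{t_1,\dots,t_\ell\in[0,T]}\bW_{(\bR^n)^\ell}\big(\widetilde{\bfQ}^\pi_{(t_1,\dots,t_\ell)},\widetilde{\bfQ}^{\pi_k}_{k,(t_1,\dots,t_\ell)}\big)\leq C\ell\ep^{1/2}.
\]
Letting $\ep\downarrow0$ finishes the proof, with no mild formulation, kernel-approximation estimates, or Volterra--Gronwall argument required.
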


%% Remark

\begin{rem}
The convergence result \eqref{eq_approximation-IPM-Q-L^2} for the laws of the $\diff t$-equivalence classes is an immediate consequence of \eqref{eq_approximation-IPM-path}, by virtue of the continuity of the map $\mu[\cdot]:\Lambda\to L^2_\loc(0,\infty;\bR^n)$, as demonstrated in the first part of the proof below. In contrast, the convergence result \eqref{eq_approximation-IPM-Q-finite} for the finite-dimensional distributions is less trivial, as the pointwise evaluation $\eta\mapsto(\mu[\eta_{t_1}\1_\cV(\eta_{t_1})],\dots,\mu[\eta_{t_\ell}\1_\cV(\eta_{t_\ell})])$ is not necessarily continuous from $\Lambda$ to $(\bR^n)^{\ell}$, and \eqref{eq_approximation-IPM-path} does not necessarily imply weak convergence in $\cP(\cV)$ for a fixed time parameter; see also \cref{rem_approximation-IPM} (ii). Nevertheless, by leveraging the property of the map $\mu[\cdot]:\cV\to\bR^n$ in \cref{lemm_mu-ep}, combined with the integrability results \eqref{eq_IPM-integrability} and \eqref{eq_approximation-IPM-integrability} for the invariant probability measures with respect to the $\cV$-norm, we can derive \eqref{eq_approximation-IPM-Q-finite} from \eqref{eq_approximation-IPM-path}. This derivation is detailed in the second part of the proof.
\end{rem}

%% Proof

\begin{proof}[Proof of \cref{cor_approximation-IPM}]
The convergence \eqref{eq_approximation-IPM-Q-L^2} immediately follows from \eqref{eq_approximation-IPM-path} and the following estimate:
\begin{equation*}
	\|\mu[\eta]\|_{L^2(0,T;\bR^n)}\leq\|\mu[\cdot]\|_{L(\cV;\bR^n)}\left(\int^T_0\|\eta_t\|_\cV^2\,\diff t\right)^{1/2}\leq\|\mu[\cdot]\|_{L(\cV;\bR^n)}\|\eta\|_{\Lambda_T}
\end{equation*}
for any $\eta\in\Lambda$. Indeed, for any $k\geq k_0$, $\pi_k\in\Pi_k$ and $\fp_k\in\sC(\bfP^\pi,\bfP^{\pi_k}_k\circ\Upsilon_k^{-1})$, notice that the probability measure $\fq_k\in\cP(L^2_\loc(0,\infty;\bR^n)\times L^2_\loc(0,\infty;\bR^n))$ given by
\begin{equation*}
	\fq_k(D):=\fp_k\left(\left\{(\eta,\eta')\in\Lambda\times\Lambda\relmiddle|\big(\mu[\eta],\mu[\eta']\big)\in D\right\}\right),\ \ D\in \cB\big(L^2_\loc(0,\infty;\bR^n)\times L^2_\loc(0,\infty;\bR^n)\big),
\end{equation*}
constitutes a coupling between the probability measures $\bfQ^\pi\in\cP(L^2_\loc(0,\infty;\bR^n))$ and $\bfQ_k^{\pi_k}\in\cP(L^2_\loc(0,\infty;\bR^n))$. Hence,
\begin{align*}
	\bW_{L^2(0,T;\bR^n)}\big(\bfQ^\pi,\bfQ_k^{\pi_k}\big)&\leq\int_{L^2_\loc(0,\infty;\bR^n)\times L^2_\loc(0,\infty;\bR^n)}\big\|\xi-\xi'\big\|_{L^2(0,T;\bR^n)}\,\fq_k(\diff\xi,\diff\xi')\\
	&=\int_{\Lambda\times\Lambda}\big\|\mu[\eta-\eta']\big\|_{L^2(0,T;\bR^n)}\,\fp_k(\diff\eta,\diff\eta')\\
	&\leq\|\mu[\cdot]\|_{L(\cV;\bR^n)}\int_{\Lambda\times\Lambda}\|\eta-\eta'\|_{\Lambda_T}\,\fp_k(\diff\eta,\diff\eta').
\end{align*}
Taking the infimum over $\fp_k\in\sC(\bfP^\pi,\bfP^{\pi_k}_k\circ\Upsilon_k^{-1})$ and then the supremum over $\pi_k\in\Pi_k$, we obtain
\begin{equation*}
	\sup_{\pi_k\in\Pi_k}\bW_{L^2(0,T;\bR^n)}\big(\bfQ^\pi,\bfQ_k^{\pi_k}\big)\leq\|\mu[\cdot]\|_{L(\cV;\bR^n)}\sup_{\pi_k\in\Pi_k}\bW_{\Lambda_T}\big(\bfP^\pi,\bfP^{\pi_k}_k\circ\Upsilon_k^{-1}\big).
\end{equation*}
Since $\|\mu[\cdot]\|_{L(\cV;\bR^n)}<\infty$, by \eqref{eq_approximation-IPM-path}, we obtain \eqref{eq_approximation-IPM-Q-L^2}.

To prove the convergence \eqref{eq_approximation-IPM-Q-finite} of finite-dimensional distributions, let $k\geq k_0$, $\pi_k\in\Pi_k$, $t_1,\dots,t_\ell\in[0,T]$ and $\fp_k\in\sC(\bfP^\pi,\bfP^{\pi_k}_k\circ\Upsilon_k^{-1})$ be fixed. Noting \cref{rem_Q-tilde} and $\Upsilon_k:(\bR^n)^{I_k}\to\cV$, we see that
\begin{equation*}
	\fp_k\left(\left\{(\eta,\eta')\in\Lambda\times\Lambda\relmiddle|\eta_{t_1},\dots,\eta_{t_\ell},\eta'_{t_1},\dots,\eta'_{t_\ell}\in\cV\right\}\right)=1.
\end{equation*}
Define $\widetilde{\fq}_k\in\cP((\bR^n)^\ell\times(\bR^n)^\ell)$ by
\begin{equation*}
	\widetilde{\fq}_k(E):=\fp_k\left(\left\{(\eta,\eta')\in\Lambda\times\Lambda\relmiddle|
	\begin{aligned}
	&\eta_{t_1},\dots,\eta_{t_\ell},\eta'_{t_1},\dots,\eta'_{t_\ell}\in\cV\ \text{and}\\
	&\left(\big(\mu\big[\eta_{t_j}\big]\big)^\ell_{j=1},\big(\mu\big[\eta'_{t_j}\big]\big)^\ell_{j=1}\right)\in E
	\end{aligned}
	\right\}\right),\ \ E\in\cB\big((\bR^n)^\ell\times(\bR^n)^\ell\big).
\end{equation*}
Then, we see that $\widetilde{\fq}_k$ is a coupling between the probability measures $\widetilde{\bfQ}^\pi_{(t_1,\dots,t_\ell)}\in\cP((\bR^n)^\ell)$ and $\widetilde{\bfQ}^{\pi_k}_{k,(t_1,\dots,t_\ell)}\in\cP((\bR^n)^\ell)$. Hence, we have
\begin{align*}
	\bW_{(\bR^n)^\ell}\left(\widetilde{\bfQ}^\pi_{(t_1,\dots,t_\ell)},\widetilde{\bfQ}^{\pi_k}_{k,(t_1,\dots,t_\ell)}\right)&\leq\int_{(\bR^n)^\ell\times(\bR^n)^\ell}\max_{j\in\{1,\dots,\ell\}}\big|x_j-x'_j\big|\,\widetilde{\fq}_k\big(\diff(x_j)^\ell_{j=1},\diff(x'_j)^\ell_{j=1}\big)\\
	&=\int_{\Lambda\times\Lambda}\max_{j\in\{1,\dots,\ell\}}\big|\mu\big[\eta_{t_j}-\eta'_{t_j}\big]\big|\1_{\cV^{2\ell}}\big(\eta_{t_1},\dots,\eta_{t_\ell},\eta'_{t_1},\dots,\eta'_{t_\ell}\big)\,\fp_k(\diff\eta,\diff\eta').
\end{align*}
Fix an arbitrary number $\ep>0$, and let $C_{\mu,\ep}>0$ be the constant arising in \cref{lemm_mu-ep}. Then, we have
\begin{align*}
	\bW_{(\bR^n)^\ell}\left(\widetilde{\bfQ}^\pi_{(t_1,\dots,t_\ell)},\widetilde{\bfQ}^{\pi_k}_{k,(t_1,\dots,t_\ell)}\right)&\leq\int_{\Lambda\times\Lambda}\max_{j\in\{1,\dots,\ell\}}\Big(\ep\big\|\eta_{t_j}-\eta'_{t_j}\big\|_\cV^2+C_{\mu,\ep}\big\|\eta_{t_j}-\eta'_{t_j}\big\|_\cH^2\Big)^{1/2}\,\fp_k(\diff\eta,\diff\eta')\\
	&\leq\ep^{1/2}\sum^\ell_{j=1}\left\{\int_{\Lambda}\|\eta_{t_j}\|_\cV\,\bfP^\pi(\diff\eta)+\int_{C([0,\infty);(\bR^n)^{I_k})}\big\|\Upsilon_k\zeta_{k,t_j}\big\|_\cV\,\bfP^{\pi_k}_k(\diff\zeta_k)\right\}\\
	&\hspace{1cm}+C_{\mu,\ep}^{1/2}\int_{\Lambda\times\Lambda}\sup_{t\in[0,T]}\|\eta_t-\eta'_t\|_\cH\,\fp_k(\diff\eta,\diff\eta'),
\end{align*}
where the first inequality follows from \cref{lemm_mu-ep}, while the second is a consequence of the fact that $\fp_k\in\sC(\bfP^\pi,\bfP^{\pi_k}_k\circ\Upsilon_k^{-1})$. By the invariance of $\pi\in\cP(\cH)$ and $\pi_k\in\cP((\bR^n)^{I_k})$ with respect to $\{P_t\}_{t\geq0}$ and $\{P_{k,t}\}_{t\geq0}$, respectively, and the inequality $\sup_{t\in[0,T]}\|\eta_t-\eta'_t\big\|_\cH\leq\|\eta-\eta'\|_{\Lambda_T}$, we obtain
\begin{align*}
	&\bW_{(\bR^n)^\ell}\left(\widetilde{\bfQ}^\pi_{(t_1,\dots,t_\ell)},\widetilde{\bfQ}^{\pi_k}_{k,(t_1,\dots,t_\ell)}\right)\\
	&\leq\ep^{1/2}\ell\left\{\int_\cH\|y\|_\cV\,\pi(\diff y)+\int_{(\bR^n)^{I_k}}\big\|\Upsilon_kz_k\big\|_\cV\,\pi_k(\diff z_k)\right\}+C_{\mu,\ep}^{1/2}\int_{\Lambda\times\Lambda}\|\eta-\eta'\|_{\Lambda_T}\,\fp_k(\diff\eta,\diff\eta').
\end{align*}
Taking the infimum over $\fp_k\in\sC(\bfP^\pi,\bfP^{\pi_k}_k\circ\Upsilon_k^{-1})$ and then the supremum over $t_1,\dots,t_\ell\in[0,T]$ and $\pi_k\in\Pi_k$, we obtain
\begin{align*}
	&\sup_{\pi_k\in\Pi_k}\sup_{t_1,\dots,t_\ell\in[0,T]}\bW_{(\bR^n)^\ell}\left(\widetilde{\bfQ}^\pi_{(t_1,\dots,t_\ell)},\widetilde{\bfQ}^{\pi_k}_{k,(t_1,\dots,t_\ell)}\right)\\
	&\leq\ep^{1/2}\ell\left\{\int_\cH\|y\|_\cV\,\pi(\diff y)+\sup_{\pi_k\in\Pi_k}\int_{(\bR^n)^{I_k}}\big\|\Upsilon_kz_k\big\|_\cV\,\pi_k(\diff z_k)\right\}+C_{\mu,\ep}^{1/2}\sup_{\pi_k\in\Pi_k}\bW_{\Lambda_T}\big(\bfP^\pi,\bfP^{\pi_k}_k\circ\Upsilon_k^{-1}\big).
\end{align*}
By using \eqref{eq_approximation-IPM-path}, we have
\begin{align*}
	&\limsup_{k\to\infty}\sup_{\pi_k\in\Pi_k}\sup_{t_1,\dots,t_\ell\in[0,T]}\bW_{(\bR^n)^\ell}\left(\widetilde{\bfQ}^\pi_{(t_1,\dots,t_\ell)},\widetilde{\bfQ}^{\pi_k}_{k,(t_1,\dots,t_\ell)}\right)\\
	&\leq\ep^{1/2}\ell\left\{\int_\cH\|y\|_\cV\,\pi(\diff y)+\sup_{k\geq k_0}\sup_{\pi_k\in\Pi_k}\int_{(\bR^n)^{I_k}}\big\|\Upsilon_kz_k\big\|_\cV\,\pi_k(\diff z_k)\right\}.
\end{align*}
In view of the integrability conditions \eqref{eq_IPM-integrability} and \eqref{eq_approximation-IPM-integrability}, taking the limit $\ep\downarrow0$ yields the desired assertion \eqref{eq_approximation-IPM-Q-finite}. This completes the proof.
\end{proof}

%%%%%%%%%%%%%%%%%%%%%%%%%%%%%%%%%%
%%%%%%%%%%%%%%%%%%%%%%%%%%%%%%%%%%
%% Appendix
%%%%%%%%%%%%%%%%%%%%%%%%%%%%%%%%%%
%%%%%%%%%%%%%%%%%%%%%%%%%%%%%%%%%%

\appendix
\setcounter{theo}{0}
\setcounter{equation}{0}

\section{Appendix}\label{appendix}

%%%%%%%%%%%%%%
%% Section
%%%%%%%%%%%%%%

In the context of monotone SPDEs defined on a Gelfand triplet $\cV\hookrightarrow\cH\hookrightarrow\cV^*$, it is essential to determine whether the embedding $\cV\hookrightarrow\cH$ is compact. Indeed, while various general theories exist, several fundamental results---including the well-posedness for SPDEs with fully local monotone coefficients \cite{LiRo15,RoShZh24} and the ergodicity derived from ultimate boundedness \cite[Chapter 7]{GaMa10}---require this compactness property. The following lemma provides a characterization of the compactness of the embedding in our framework. It reveals that, in typical cases of interest, the embedding $\cV\hookrightarrow\cH$ is not compact, which means that the aforementioned results cannot be directly applied to our setting.

%% Lemma

\begin{lemm}\label{lemm_app-compact}
For the Banach spaces $\cH$ and $\cV$ defined in \cref{sec_pre}, the embedding $\cV\hookrightarrow\cH$ is compact if and only if $\supp\cap[0,m]$ is a finite set for any $m\in(0,\infty)$.
\end{lemm}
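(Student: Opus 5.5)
We treat the two directions separately, both resting on the weights: $\cV$ carries $(1+\theta)^{1/2}$ and $\cH$ carries $(1+\theta)^{-1/2}$, so their ratio $(1+\theta)^{-1}$ is small only for large $\theta$.

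For sufficiency, suppose $\supp\cap[0,m]$ is finite for every $m$. We approximate the embedding $\iota:\cV\hookrightarrow\cH$ in operator norm by finite-rank operators. For $m\in\bN$ define $\Pi_m y:=y\1_{[0,m]}$. Since $\mu$ restricted to $[0,m]$ is supported on finitely many atoms $\theta_1,\dots,\theta_N$, the range of $\Pi_m$ is contained in the finite-dimensional space of maps determined by $(y(\theta_j))_{j\le N}\in(\bR^n)^N$. Hence $\Pi_m:\cV\to\cH$ is finite rank. Moreover
\begin{equation*}
	\|y-\Pi_m y\|_\cH^2=\int_{(m,\infty)}(1+\theta)^{-1/2}|y(\theta)|^2\dmu\leq(1+m)^{-1}\|y\|_\cV^2,
\end{equation*}
so $\|\iota-\Pi_m\|_{L(\cV,\cH)}\leq(1+m)^{-1/2}\to0$. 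A norm limit of finite-rank operators is compact, which proves this direction.

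For necessity, suppose $\supp\cap[0,m]$ is infinite for some $m$. Then $[0,m]$ contains infinitely many points of $\supp$, so we can choose pairwise disjoint Borel sets $D_j\subset[0,m+1]$ with $0<\mu(D_j)<\infty$. To construct them, take distinct support points $\theta_j$ that are either isolated (hence atoms) or accumulate at a limit point, and let $D_j$ be small disjoint neighborhoods; each has positive measure by the definition of the support. Finiteness of $\mu(D_j)$ follows from \eqref{eq_integrability-mu} and the boundedness of $D_j$.

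Now set $y_j:=c_j\1_{D_j}e_1$, with $c_j>0$ chosen so that $\|y_j\|_\cH=1$. On $[0,m+1]$ the two weights are comparable, with ratio at most $2+m$, so $\|y_j\|_\cV\le(2+m)^{1/2}$ and the sequence is bounded in $\cV$. Because the supports are disjoint, $\|y_j-y_k\|_\cH^2=2$ for $j\neq k$. Thus $(y_j)$ has no $\cH$-Cauchy subsequence, and the embedding is not compact.

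The main obstacle is the careful selection of the disjoint positive-measure sets $D_j$ from an infinite subset of the support. This step needs separate treatment of the case of infinitely many atoms and the case of an accumulation point where $\mu$ may be diffuse. Both cases are handled by choosing the neighborhoods inductively, with radii smaller than half the distances between the chosen points.
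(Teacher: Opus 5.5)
Your proof is correct. The necessity direction is essentially the paper's argument: disjoint sets of positive finite measure around infinitely many support points in a bounded interval, with normalized indicators that are bounded in $\cV$ and uniformly separated in $\cH$. The paper normalizes in $\cV$ and gets $\|y_i-y_j\|_\cH^2\geq 2/(1+m)$; you normalize in $\cH$ and bound the $\cV$-norm by $(2+m)^{1/2}$, which is the same computation.

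The sufficiency direction is where you genuinely differ. The paper argues by hand. After disposing of the finite-support case, it enumerates the atoms $\theta_i\uparrow\infty$ and extracts from a bounded sequence in $\cV$ a diagonal subsequence converging at every atom. It then uses Fatou's lemma to place the limit in the closed unit ball of $\cV$, and splits $\|y_{k_\ell}-y\|_\cH^2$ into finitely many atoms plus a tail bounded by $(1+\theta_{i+1})^{-1}\|y_{k_\ell}-y\|_\cV^2$. You use the same tail bound but package it as the operator-norm estimate $\|\iota-\Pi_m\|_{L(\cV,\cH)}\leq(1+m)^{-1/2}$ for finite-rank truncations, and then invoke closedness of the compact operators.

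Your route is shorter, needs no enumeration of $\supp$ or case distinction, and isolates the mechanism: decay of the weight ratio $(1+\theta)^{-1}$ plus finite dimensionality of each truncated piece. The paper's route is self-contained and shows directly that $\overline{B}_\cV$ is compact, not merely relatively compact, in $\cH$, which is the form used in \cref{rem_Lyapunov}. From your version you would recover this by adding the lower semicontinuity of $\|\cdot\|_\cV$ on $\cH$ to get closedness.

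Finally, the set selection you flag as the main obstacle needs no case analysis. Any infinite bounded set of reals contains a strictly monotone sequence, and each term of such a sequence is at positive distance from all the others. The open intervals between consecutive midpoints are then disjoint neighbourhoods of support points, hence of positive measure, whether $\mu$ is atomic or diffuse there.
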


%% Remark

\begin{rem}\label{rem_app-compact}
By virtue of the Lebesgue decomposition of the Borel measure $\mu$ on $[0,\infty)$, the condition that $\supp\cap[0,m]$ is a finite set for every $m\in(0,\infty)$ is equivalent to $\mu$ being of the form $\mu=\sum^\infty_{i=1}c_i\delta_{\theta_i}$ for some $c_i,\theta_i\geq0$ with $\lim_{i\to\infty}\theta_i=\infty$. Specifically, for the lifting basis generating the sum-of-exponentials type kernels in \cref{exam_liftable} (i), the embedding $\cV\hookrightarrow\cH$ is indeed compact. In contrast, in typical cases of interest, such as the tempered fractional kernels in \cref{exam_liftable} (ii), the embedding $\cV\hookrightarrow\cH$ is never compact.
\end{rem}

%% Proof

\begin{proof}[Proof of \cref{lemm_app-compact}]
To prove the ``if part'', suppose that $\supp\cap[0,m]$ is a finite set for any $m\in(0,\infty)$. If $\supp$ itself is a finite set, then the Hilbert space $\cH$ is finite-dimensional, making the embedding $\cV\hookrightarrow\cH$ trivially compact. Otherwise, let $\supp=\{\theta_i\}_{i\in\bN}$ be an infinite set, where $\{\theta_i\}_{i\in\bN}\subset[0,\infty)$ is a strictly increasing sequence such that $\lim_{i\to\infty}\theta_i=\infty$. In this case, the measure $\mu$ is of the form $\mu=\sum^\infty_{i=1}c_i\delta_{\theta_i}$ with $c_i>0$. Let $\overline{B}_\cV:=\{y\in\cV\,|\,\|y\|_\cV\leq1\}$ be the unit ball in $\cV$, and take an arbitrary sequence $\{y_k\}_{k\in\bN}\subset\overline{B}_\cV$. Noting that $\{y_k(\theta_i)\}_{k\in\bN}\subset\bR^n$ is bounded for each $i\in\bN$, by Cantor's diagonal argument, we can find an increasing sequence $\{k_\ell\}_{\ell\in\bN}\subset\bN$ and a sequence $\{e_i\}_{i\in\bN}\subset\bR^n$ such that $\lim_{\ell\to\infty}y_{k_\ell}(\theta_i)=e_i$ in $\bR^n$ for any $i\in\bN$. Define $y:=\sum^\infty_{i=1}e_i\1_{\{\theta_i\}}$. By Fatou's lemma, we have
\begin{equation*}
	\|y\|_\cV^2=\sum^\infty_{i=1}c_i(1+\theta_i)^{1/2}|e_i|^2\leq\liminf_{\ell\to\infty}\sum^\infty_{i=1}c_i(1+\theta_i)^{1/2}|y_{k_\ell}(\theta_i)|^2=\liminf_{\ell\to\infty}\big\|y_{k_\ell}\big\|_\cV^2\leq1,
\end{equation*}
and hence $y\in\overline{B}_\cV$. Moreover, since the sequence $\{\theta_i\}_{i\in\bN}\subset[0,\infty)$ is increasing, we have
\begin{equation*}
	\|y_{k_\ell}-y\|_\cH^2=\sum^\infty_{j=1}c_j(1+\theta_j)^{-1/2}\big|y_{k_\ell}(\theta_j)-e_j\big|^2\leq\sum^i_{j=1}c_j(1+\theta_j)^{-1/2}\big|y_{k_\ell}(\theta_j)-e_j\big|^2+(1+\theta_{i+1})^{-1}\|y_{k_\ell}-y\|_\cV^2
\end{equation*}
for any $\ell\in\bN$ and $i\in\bN$. For each $i\in\bN$, the first term in the right-hand side tends to zero as $\ell\to\infty$. The second term tends to zero as $i\to\infty$ uniformly in $\ell\in\bN$, since $\|y_{k_\ell}-y\|_\cV\leq2$ and $\lim_{i\to\infty}\theta_i=\infty$. Thus, $\lim_{\ell\to\infty}\|y_{k_\ell}-y\|_\cH=0$, proving that the set $\overline{B}_\cV$ is sequentially compact in $\cH$. Consequently, the embedding $\cV\hookrightarrow\cH$ is compact.

To prove the ``only if'' part, suppose that there exists an $m\in(0,\infty)$ such that $\supp\cap[0,m]$ is an infinite set. We can then choose a strictly increasing sequence $\{\theta_i\}_{i\in\bN}\subset\supp\cap(0,m)$. Let $\theta_0=0$, and define $A_i:=(\frac{\theta_{i-1}+\theta_i}{2},\frac{\theta_i+\theta_{i+1}}{2})$ for each $i\in\bN$. By construction, $\{A_i\}_{i\in\bN}$ is a sequence of disjoint open intervals contained in $[0,m]$ such that $\mu(A_i)\in(0,\infty)$ for any $i\in\bN$. Take an $e\in\bR^n$ such that $|e|=1$. For each $i\in\bN$, define $c_i:=\int_{A_i}(1+\theta)^{1/2}\dmu\in(0,\infty)$ and $y_i:=\frac{1}{\sqrt{c_i}}e\1_{A_i}\in\cV$. Then, we have $\|y_i\|_{\cV}=1$ for any $i\in\bN$. However, for any $i,j\in\bN$ with $i\neq j$, we have
\begin{equation*}
	\|y_i-y_j\|_{\cH}^2=\frac{\int_{A_i}(1+\theta)^{-1/2}\dmu}{\int_{A_i}(1+\theta)^{1/2}\dmu}+\frac{\int_{A_j}(1+\theta)^{-1/2}\dmu}{\int_{A_j}(1+\theta)^{1/2}\dmu}\geq\frac{2}{1+m}.
\end{equation*}
Thus, $\{y_i\}_{i\in\bN}$ does not have a Cauchy subsequence in $\cH$, implying that the embedding $\cV\hookrightarrow\cH$ is not compact.
\end{proof}

The following lemma establishes some fundamental properties of the path space of the Markovian lift. More generally, it addresses the path space of solutions to general monotone SPDEs defined on a Gelfand triplet. Here, we consider a general setting that goes beyond the specific framework of Markovian lifts.

%% Lemma

\begin{lemm}\label{lemm_app-Lambda}
Let $(\cH,\|\cdot\|_\cH,\langle\cdot,\cdot\rangle_\cH)$ be a separable Hilbert space and $\cV$ be a dense subspace of $\cH$. Assume that $\cV$ is equipped with a norm $\|\cdot\|_\cV$ such that $(\cV,\|\cdot\|_\cV)$ is a reflexive separable Banach space continuously embedded into $\cH$. Define
\begin{equation*}
	\Lambda:=\left\{\eta:[0,\infty)\to\cH\relmiddle|\text{$t\mapsto\eta_t$ is strongly continuous in $\cH$ and}\ \int^T_0\|\eta_t\|_\cV^2\,\diff t<\infty\ \text{for any $T>0$}\right\}
\end{equation*}
and
\begin{equation*}
	d_\Lambda(\eta,\eta'):=\sum_{T\in\bN}\frac{1}{2^T}\big\{\|\eta-\eta'\|_{\Lambda_T}\wedge1\big\}\ \ \text{for $\eta,\eta'\in\Lambda$},
\end{equation*}
where
\begin{equation*}
	\|\eta\|_{\Lambda_T}:=\left(\sup_{t\in[0,T]}\|\eta_t\|_\cH^2+\int^T_0\|\eta_t\|_\cV^2\,\diff t\right)^{1/2}\ \ \text{for $\eta\in\Lambda$ and $T\in(0,\infty)$}.
\end{equation*}
Then, $(\Lambda,d_\Lambda)$ is a complete separable metric space, and $\{\|\cdot\|_{\Lambda_T}\}_{T\in\bN}$ is a countable family of seminorms on $\Lambda$ generating the topology of $(\Lambda,d_\Lambda)$. Furthermore, the Borel $\sigma$-algebra $\cB(\Lambda)$ coincides with the $\sigma$-algebra generated by the family $\{\{\eta\in\Lambda\,|\,\eta_t\in A\}\,|\,t\in[0,\infty),\,A\in\cB(\cH)\}$.
\end{lemm}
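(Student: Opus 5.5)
The plan is to prove the three claims in order: metric and completeness, separability, and finally the identification of the Borel $\sigma$-algebra with the cylinder $\sigma$-algebra.

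\textbf{Metric structure and completeness.} Each $\|\cdot\|_{\Lambda_T}$ is a seminorm on $\Lambda$. It is finite because $\eta$ is $\cH$-continuous on $[0,T]$ and $\int_0^T\|\eta_t\|_\cV^2\,\diff t<\infty$. Since $\|\eta\|_{\Lambda_T}\ge\|\eta_0\|_\cH$ and the family is increasing in $T$, it is separating. The series defining $d_\Lambda$ is then a metric, and $d_\Lambda(\eta^k,\eta)\to0$ if and only if $\|\eta^k-\eta\|_{\Lambda_T}\to0$ for every $T\in\bN$. This shows that the seminorms generate the topology.

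For completeness, take a $d_\Lambda$-Cauchy sequence $(\eta^k)$. For each $T$ it is Cauchy in $C([0,T];\cH)$ and in $L^2(0,T;\cV)$, both of which are Banach spaces. We therefore get a limit $\eta\in C([0,\infty);\cH)$ and, for each $T$, a limit $\xi\in L^2(0,T;\cV)$. Because $\cV\hookrightarrow\cH$ is continuous, $\eta^k\to\xi$ in $L^2(0,T;\cH)$. Hence $\xi=\eta$ for a.e.\ $t$, which gives $\int_0^T\|\eta_t\|_\cV^2\,\diff t<\infty$ (with $\|\cdot\|_\cV=\infty$ off $\cV$, a lower semicontinuous extension) and $\|\eta^k-\eta\|_{\Lambda_T}\to0$.

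\textbf{Separability.} Consider the map $\iota:\Lambda\to C([0,\infty);\cH)\times L^2_\loc(0,\infty;\cV)$ given by $\eta\mapsto(\eta,[\eta])$. It is an isometric embedding for suitable product metrics. The target space is separable, because $\cH$ and $\cV$ are separable, so $C([0,T];\cH)$ and $L^2(0,T;\cV)$ are separable. A subspace of a separable metric space is separable, so $\Lambda$ is separable. This argument does not require density of nice functions, which avoids a potential subtlety.

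\textbf{Borel $\sigma$-algebra.} This is the step I expect to be the main obstacle. Write $\cG$ for the cylinder $\sigma$-algebra.

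For $\cG\subset\cB(\Lambda)$: each evaluation $\eta\mapsto\eta_t$ is continuous from $\Lambda$ to $\cH$, so every cylinder set is Borel.

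For the converse, since $\Lambda$ is separable, it suffices to show that each map $\eta\mapsto\|\eta-\eta'\|_{\Lambda_T}$ is $\cG$-measurable; open balls generate $\cB(\Lambda)$ by separability. The term $\sup_{t\in[0,T]}\|\eta_t-\eta'_t\|_\cH$ equals a supremum over rational $t$ by continuity, so it is $\cG$-measurable.

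The integral term $\int_0^T\|\eta_t-\eta'_t\|_\cV^2\,\diff t$ needs more care. I would realise $\|\cdot\|_\cV$ on $\cH$ as a countable supremum of $\cH$-continuous functions. Using reflexivity and density of $\cV$ in $\cH$, choose a countable set $\{h_j\}\subset\cH$ such that $\|v\|_\cV=\sup_j\langle h_j,v\rangle_\cH$ for $v\in\cV$, and such that this supremum is $+\infty$ for $v\notin\cV$. This is the standard Liu--Röckner construction: $\cH$ is dense in $\cV^*$ for a reflexive $\cV$, so one takes a countable subset of $\cH$ dense in the unit ball of $\cV^*$. Then $(t,\eta)\mapsto\|\eta_t\|_\cV$ is $\cB([0,T])\otimes\cG$-measurable, because $\eta$ is continuous in $t$ and the map is jointly measurable as a countable supremum of Carathéodory functions. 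Tonelli's theorem then gives $\cG$-measurability of the integral. Together these show that each ball is in $\cG$, hence $\cB(\Lambda)=\cG$.
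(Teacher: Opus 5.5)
Your proof is correct, but it differs from the paper's at each step.

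\textbf{Completeness.} The paper takes the limit $\eta$ only in $C([0,\infty);\cH)$. It then gets both $\int_0^T\|\eta_t\|_\cV^2\,\diff t<\infty$ and $\int_0^T\|\eta^k_t-\eta_t\|_\cV^2\,\diff t\to0$ from the lower semicontinuity of the extended norm $\|\cdot\|_\cV:\cH\to[0,\infty]$ and Fatou's lemma. Your identification of the $C([0,T];\cH)$- and $L^2(0,T;\cV)$-limits inside $L^2(0,T;\cH)$ is an equally valid alternative.

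\textbf{Separability.} The paper shows that $C([0,\infty);\cV)$ is dense in $\Lambda$ by mollifying an even extension of $\eta$. This requires checking that the convolution is $\cV$-continuous and converges in $L^2(0,T;\cV)$. Your embedding $\eta\mapsto(\eta,[\eta])$ into the separable space $C([0,\infty);\cH)\times L^2_\loc(0,\infty;\cV)$ is shorter and avoids this.

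\textbf{Borel $\sigma$-algebra.} The paper uses that $\Lambda$ is already Polish and applies the Lusin--Suslin theorem to the continuous injection $\Lambda\hookrightarrow C([0,\infty);\cH)$. Hence $\cB(\Lambda)$ is the trace of $\cB(C([0,\infty);\cH))$, which equals the cylinder $\sigma$-algebra by the classical rational-times argument. This needs completeness and descriptive set theory, but nothing about $\|\cdot\|_\cV$. It also shows, as a byproduct, that $\Lambda$ is a Borel subset of $C([0,\infty);\cH)$. Your route shows that $\eta\mapsto\|\eta-\eta'\|_{\Lambda_T}$ is $\cG$-measurable via joint measurability and Tonelli, then covers open sets by countably many balls. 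It needs only separability plus a measurability property of the $\cV$-norm.

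\textbf{One overstated detail.} An \emph{arbitrary} countable subset $\{h_j\}\subset\cH$ dense in the unit ball of $\cV^*$ need not satisfy $\sup_j\langle h_j,v\rangle_\cH=+\infty$ for every $v\in\cH\setminus\cV$. For a fixed such $v$, the functional $\langle\cdot,v\rangle_\cH$ is $\|\cdot\|_{\cV^*}$-unbounded on $\cH$, so its kernel is $\cV^*$-dense and the $h_j$ can all be chosen inside it. You would need extra structure, e.g.\ taking the elements of $\cV^*$-norm at most one in the $\bQ$-linear span of a countable $\cH$-dense set. However, you never actually use this property. For $\eta,\eta'\in\Lambda$ one has $\eta_t-\eta'_t\in\cV$ for a.e.\ $t$, so the integral does not see the values off $\cV$. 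More simply still, you could compose the lower semicontinuous (hence Borel) extension of $\|\cdot\|_\cV$ with the jointly measurable map $(t,\eta)\mapsto\eta_t$.
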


%% Remark

\begin{rem}\label{rem_app-Lambda}
\begin{itemize}
\item[(i)]
Since $\cV$ and $\cH$ are separable Banach spaces with continuous embedding $\cV\subset\cH$, the Lusin--Suslin theorem (see, e.g., \cite[Theorem 15.1]{Ke95}) implies that  $\cB(\cV)=\{A\cap\cV\,|\,A\in\cB(\cH)\}\subset\cB(\cH)$. Furthermore, by defining $\|y\|_\cV=\infty$ for $y\in\cH\setminus\cV$, the map $\cH\ni y\mapsto\|y\|_\cV\in[0,\infty]$ is lower semi-continuous on $\cH$ and hence $\cB(\cH)$-measurable (see, e.g., \cite[Exercise 4.2.3]{LiRo15}). Hence, the integral $\int^T_0\|\eta_t\|_\cV^2\,\diff t\in[0,\infty]$ is well-defined for all $T>0$ for every $\cB(\cH)$-measurable function $\eta:[0,\infty)\to\cH$.
\item[(ii)]
By the final assertion of \cref{lemm_app-Lambda}, for every $\cH$-valued measurable process $Y=(Y_t)_{t\geq0}$ on a complete probability space $(\Omega,\cF,\bP)$ such that $t\mapsto Y_t$ is strongly continuous in $\cH$ and $\int^T_0\|Y_t\|_\cV^2\,\diff t<\infty$ a.s.\ for any $T>0$, the sample-path map $\omega\mapsto(Y_t(\omega))_{t\geq0}$ defines a Borel-measurable random variable taking values in the Polish space $\Lambda$, and its law on $\Lambda$ is uniquely characterized by the ``finite-dimensional distributions'' $\bP((Y_{t_1},\dots,Y_{t_\ell})\in A)$ with $t_1,\dots,t_\ell\in[0,\infty)$, $A\in\cB(\cH^\ell)$ and $\ell\in\bN$.
\end{itemize}
\end{rem}

Although the results above are expected to be well-known and have been implicitly invoked in studies on monotone SPDEs (e.g., \cite[Appendix E]{LiRo15} and \cite{Ha25a}), a formal proof is often omitted. For completeness, we provide a proof below. Recall that for a separable Banach space $(\cX,\|\cdot\|_\cX)$, the space $C([0,\infty);\cX)$ of $\cX$-valued continuous functions on $[0,\infty)$, equipped with the metric defined by \eqref{eq_metric-C}, is a complete separable metric space.

%% Proof

\begin{proof}[Proof of \cref{lemm_app-Lambda}]
Obviously, $d_\Lambda$ defines a metric on $\Lambda$, and $\{\|\cdot\|_{\Lambda_T}\}_{T\in\bN}$ is a countable family of seminorms on $\Lambda$ generating the topology of $(\Lambda,d_\Lambda)$. In the following, we prove the completeness and separability of $\Lambda$, as well as the identification of its associated Borel $\sigma$-algebra $\cB(\Lambda)$. To this end, we observe from the embedding $\cV\hookrightarrow\cH$ and the definition of the metric $d_\Lambda$ that
\begin{equation}\label{eq_app_Lambda-embedding}
	C([0,\infty);\cV)\hookrightarrow\Lambda\hookrightarrow C([0,\infty);\cH).
\end{equation}

To prove the completeness of $\Lambda$, let $\{\eta^k\}_{k\in\bN}$ be a Cauchy sequence in $\Lambda$. By the continuous embedding \eqref{eq_app_Lambda-embedding}, $\{\eta^k\}_{k\in\bN}$ is also a Cauchy sequence in the complete metric space $C([0,\infty);\cH)$. Hence there exists an $\eta\in C([0,\infty);\cH)$ such that $\lim_{k\to\infty}\sup_{t\in[0,T]}\|\eta^k_t-\eta_t\|_\cH=0$ for any $T\in(0,\infty)$. Furthermore, the lower semi-continuity of the map $\cH\ni y\mapsto\|y\|_\cV\in[0,\infty]$ and Fatou's lemma yield
\begin{equation*}
	\int^T_0\|\eta_t\|_\cV^2\,\diff t\leq\int^T_0\liminf_{k\to\infty}\big\|\eta^k_t\big\|_\cV^2\,\diff t\leq\liminf_{k\to\infty}\int^T_0\big\|\eta^k_t\big\|_\cV^2\,\diff t<\infty
\end{equation*}
and
\begin{equation*}
	\limsup_{k\to\infty}\int^T_0\big\|\eta^k_t-\eta_t\big\|_\cV^2\,\diff t\leq\limsup_{k\to\infty}\int^T_0\liminf_{\ell\to\infty}\big\|\eta^k_t-\eta^\ell_t\big\|_\cV^2\,\diff t\leq\lim_{k,\ell\to\infty}\int^T_0\big\|\eta^k_t-\eta^\ell_t\big\|_\cV^2\,\diff t=0
\end{equation*}
for any $T\in(0,\infty)$. Hence, we have $\eta\in\Lambda$ and $\lim_{k\to\infty}d_\Lambda(\eta^k,\eta)=0$. This demonstrates that the metric space $(\Lambda,d_\Lambda)$ is complete.

To prove the separability of $\Lambda$, recalling the continuous embedding \eqref{eq_app_Lambda-embedding} and the separability of $C([0,\infty);\cV)$, it suffices to show that $C([0,\infty);\cV)$ is dense in $\Lambda$. Let $\eta\in\Lambda$ be fixed. We extend the domain of $\eta$ to $\bR$ by setting $\eta_t:=\eta_{-t}$ for $t\in(-\infty,0)$. Then, we can regard $\eta$ as a strongly continuous map from $\bR$ to $\cH$ such that $\int_{[-T,T]}\|\eta_t\|_\cV^2\,\diff t<\infty$ for any $T\in(0,\infty)$. For each $\ep>0$, let $\phi_\ep:\bR\to\bR$ be the standard mollifier supported on $[-\ep,\ep]$, and define $\eta^\ep_t:=\int_\bR\phi_\ep(t-s)\eta_s\,\diff s$ for $t\in[0,\infty)$, where the integral is well-defined as a Bochner integral on $\cH$. On the one hand, thanks to the integrability $\int_{[-T,T]}\|\eta_t\|_\cV^2\,\diff t<\infty$ for each $T>0$, we see that $\eta^\ep\in C([0,\infty);\cV)$ for each $\ep>0$ and $\lim_{\ep\downarrow0}\int_{[0,T]}\|\eta^\ep_t-\eta_t\|_\cV^2\,\diff t=0$ for any $T>0$. On the other hand, thanks to the strong continuity of $\bR\ni t\mapsto\eta_t$ in $\cH$, we have $\lim_{\ep\downarrow0}\sup_{t\in[0,T]}\|\eta^\ep_t-\eta_t\|_\cH=0$ for any $T>0$. Thus, $\eta^\ep$ converges to $\eta$ in $\Lambda$ as $\ep\downarrow0$, showing the density of $C([0,\infty);\cV)$ in $\Lambda$. Hence, $\Lambda$ is separable.

We prove the last assertion for the Borel $\sigma$-algebra $\cB(\Lambda)$ on $\Lambda$. First, since both $\Lambda$ and $C([0,\infty);\cH)$ are Polish spaces and $\Lambda\hookrightarrow C([0,\infty);\cH)$, the Lusin--Suslin theorem (see, e.g., \cite[Theorem 15.1]{Ke95}) implies that $\Lambda\in\cB(C([0,\infty);\cH))$ and $\cB(\Lambda)=\{B\cap\Lambda\,|\,B\in\cB(C([0,\infty);\cH))\}$. Denote by $\cG(C([0,\infty);\cH))$ the $\sigma$-algebra on $C([0,\infty);\cH)$ generated by the family of cylinder sets $\{\{\eta\in C([0,\infty);\cH)\,|\,\eta_t\in A\}\,|\,t\in[0,\infty),\,A\in\cB(\cH)\}$. As a next step, we show that $\cB(C([0,\infty);\cH))=\cG(C([0,\infty);\cH))$. On the one hand, since every evaluation map $\eta\mapsto\eta_t$ for $t\in[0,\infty)$ is continuous from $C([0,\infty);\cH)$ to $\cH$, we have $\cG(C([0,\infty);\cH))\subset\cB(C([0,\infty);\cH))$. On the other hand, for any $\check{\eta}\in C([0,\infty);\cH)$, $T>0$ and $r>0$, we have
\begin{equation*}
	\left\{\eta\in C([0,\infty);\cH)\relmiddle|\sup_{t\in[0,T]}\big\|\eta_t-\check{\eta}_t\big\|_\cH<r\right\}=\bigcup_{q\in(0,r)\cap\bQ}\bigcap_{s\in[0,T]\cap\bQ}\left\{\eta\in C([0,\infty);\cH)\relmiddle|\big\|\eta_s-\check{\eta}_s\big\|_\cH\leq q\right\},
\end{equation*}
which belongs to $\cG(C([0,\infty);\cH))$. Since the topology of $C([0,\infty);\cH)$ is generated by the family of sets of the above forms, we obtain $\cB(C([0,\infty);\cH))\subset\cG(C([0,\infty);\cH))$, concluding that $\cB(C([0,\infty);\cH))=\cG(C([0,\infty);\cH))$. Therefore, the Borel $\sigma$-algebra $\cB(\Lambda)$ on $\Lambda$, which is equal to $\{B\cap\Lambda\,|\,B\in\cB(C([0,\infty);\cH))\}$, coincides with $\{B\cap\Lambda\,|\,B\in\cG(C([0,\infty);\cH))\}$. By the standard monotone-class argument, we see that the latter is also equal to the $\sigma$-algebra generated by the family $\{\{\eta\in\Lambda\,|\,\eta_t\in A\}\,|\,t\in[0,\infty),\,A\in\cB(\cH)\}$. This completes the proof.
\end{proof}

%%%%%%%%%%%%%%%%%%%%%%%%%%%%
%%%%%% Acknowledgements
%%%%%%%%%%%%%%%%%%%%%%%%%%%%

\section*{Acknowledgments}
The author would like to thank Kiyoshi Kanazawa for insightful discussions and valuable comments regarding the physical background of this work.

%%%%%%%%%%%%%%%%%%%%%%%%%%%%
%%%%%% References
%%%%%%%%%%%%%%%%%%%%%%%%%%%%

\end{document}